\definecolor{darkblue}{rgb}{0,0,0.6}
\newcommand{\C}{\mathbb{C}}
\newcommand{\Q}{\mathbb{Q}}
\newcommand{\R}{\mathbb{R}}
\newcommand{\Z}{\mathbb{Z}}
\renewcommand{\H}{\mathbb{H}}
\newcommand{\F}{\mathbb{F}}
\renewcommand{\l}{\Lambda}
\newcommand{\wh}{\widehat}
\newcommand{\wt}{\widetilde}
\newcommand{\ol}{\overline}
\newcommand{\nsurj}{\twoheadrightarrow\hspace{-4.2mm}\parbox{1mm}{/}\hspace{3.2mm}}
\newcommand{\EE}{\mathscr{E}}
\DeclareMathOperator{\Aut}{Aut}
\DeclareMathOperator{\BPG}{BPG}
\DeclareMathOperator{\Hom}{Hom}
\DeclareMathOperator{\id}{id}
\DeclareMathOperator{\End}{End}
\DeclareMathOperator{\Res}{Res}
\DeclareMathOperator{\PSL}{PSL}
\DeclareMathOperator{\LF}{LF}
\DeclareMathOperator{\IM}{Im}
\DeclareMathOperator{\Ker}{Ker}
\DeclareMathOperator{\Wh}{Wh}
\DeclareMathOperator{\Syl}{Syl}
\DeclareMathOperator{\GL}{GL}
\DeclareMathOperator{\SL}{SL}
\DeclareMathOperator{\Cls}{Cls}
\DeclareMathOperator{\Inn}{Inn}
\DeclareMathOperator{\Out}{Out}
\DeclareMathOperator{\SF}{SF}
\DeclareMathOperator{\Picent}{Picent}
\DeclareMathOperator{\B}{\mathcal{B}}
\DeclareMathOperator{\PC}{PC}
\DeclareMathOperator{\SFC}{SFC}
\DeclareMathOperator{\G}{\Gamma}
\DeclareMathOperator{\MNEC}{MNEC}
\DeclareMathOperator{\pc}{PC}
\DeclareMathOperator{\Quot}{Quot}
\DeclareMathOperator{\QF}{QF}
\DeclareMathOperator{\FS}{FS}
\newtheorem{thm}{Theorem}[section]
\newtheorem*{thm*}{Theorem}
\newtheorem{prop}[thm]{Proposition}
\newtheorem*{prop*}{Proposition}
\newtheorem{lemma}[thm]{Lemma}
\newtheorem{corollary}[thm]{Corollary}
\newtheorem*{corollary*}{Corollary}
\newtheorem{question}[thm]{Question}
\newtheorem*{question*}{Question}
\newtheorem*{rep@theorem}{\rep@title}
\newcommand{\newreptheorem}[2]{%
	\newenvironment{rep#1}[1]{%
		\def\rep@title{#2 \ref{##1}}%
		\begin{rep@theorem}}%
		{\end{rep@theorem}}}
\newtheorem{thmx}{Theorem}
\newtheorem{algorithm}[thm]{Algorithm}
\theoremstyle{definition}
\newtheorem*{theorem*}{Theorem}
\theoremstyle{remark}
\newtheorem{remark}[thm]{Remark}
\newtheorem*{remark*}{Remark}
\renewcommand{\l}{\Lambda}
\newenvironment{clist}[1]
{\begin{enumerate}[\normalfont #1]}
{\end{enumerate}}
\newcommand\dhxrightarrow[2][]{%
  \mathrel{\ooalign{$\xrightarrow[#1\mkern4mu]{#2\mkern4mu}$\cr%
  \hidewidth$\rightarrow\mkern4mu$}}
}
\newcounter{savesection}
\newcounter{apdxsection}
\renewcommand\appendix{\par
  \setcounter{savesection}{\value{section}}%
  \setcounter{section}{\value{apdxsection}}%
  \setcounter{subsection}{0}%
  \gdef\thesection{\@Alph\c@section}}
\newcommand\unappendix{\par
  \setcounter{apdxsection}{\value{section}}%
  \setcounter{section}{\value{savesection}}%
  \setcounter{subsection}{0}%
  \gdef\thesection{\@arabic\c@section}}
\begin{document}

\title[Cancellation for projective modules over integral group rings]{The cancellation property for projective modules over integral group rings}

\author{John Nicholson}
\address{School of Mathematics and Statistics, University of Glasgow, United Kingdom}
\email{john.nicholson@glasgow.ac.uk}

\subjclass[2020]{Primary 20C05, 20C10; Secondary 19B28.}


\begin{abstract}
We obtain a partial classification of the finite groups $G$ for which the integral group ring $\Z G$ has projective cancellation, i.e. for which $P \oplus \Z G \cong Q \oplus \Z G$ implies $P \cong Q$ for projective $\Z G$-modules $P$ and $Q$. In particular, we determine when projective cancellation holds for a finite group with no exceptional binary polyhedral quotients.
To do this, we prove a cancellation theorem based on a relative version of the Eichler condition.
We then use a group theoretic argument to precisely determine the class of groups not covered by this result. 
The final classification is then obtained by applying results of Swan, Chen and Bley-Hofmann-Johnston which show failure of projective cancellation for certain groups.
\end{abstract}

\maketitle

\vspace{-5mm}
\section{Introduction}

Let $G$ be a finite group, let $\Z G$ denote the integral group ring and recall that a \textit{$\Z G$-lattice} is a $\Z G$-module whose underlying abelian group is $\Z^n$ for some $n$.  This can equivalently be viewed as an integral representation $G \to \GL_n(\Z)$.
Consider the following \textit{cancellation problem}: for $\Z G$-lattices $M$, $N$, when does $M \oplus \Z G \cong N \oplus \Z G$ imply $M \cong N$?
This is grounded in fundamental problems in both topology and number theory.
Its resolution, for certain $\Z G$-lattices, is essential in the classification of finite 2-complexes up to homotopy equivalence \cite{Jo03-book,Ni19}, closed 4-manifolds up to homeomorphism \cite{HK88,HK88b}, and in determining when a number field has a normal integral basis \cite{Ta81}.
These applications have so far only been obtained for particular finite groups $G$, and the apparent intractability of the cancellation problem over arbitrary finite groups presents an obstacle to extending such results to a more general setting.

We say that $\Z G$, or the group $G$, has \textit{projective cancellation} (PC) if $M \oplus \Z G \cong N \oplus \Z G$ implies $M \cong N$ for all finitely generated projective $\Z G$-modules $M$ and $N$.
This is equivalent to asking that finitely generated projective $\Z G$-modules are determined by their image in the K-group $K_0(\Z G)$. 
This is the simplest case of the cancellation problem for $\Z G$-lattices and is of particular significance since projective $\Z G$-modules arise naturally in many of the applications (e.g. \cite{Ta81}) and PC often implies cancellation for other $\Z G$-lattices (e.g. \cite{Ni20b,Ni19}).

This was first studied by Swan \cite{Sw60-II} who showed that PC fails for the quaternion group $Q_{32}$ of order 32 \cite{Sw62}.
Jacobinski \cite{Ja68} showed that $\Z G$ has PC if $G$ satisfies the \textit{Eichler condition}, which asks that the Wedderburn decomposition of $\R G$ has no copies of the quaternions $\H = M_1(\H)$. This holds if and only if $G$ has no quotient which is a \textit{binary polyhedral group}, i.e. a quaternion group $Q_{4n}$ for $n \ge 2$, or one of the exceptional groups $\widetilde{T}$, $\widetilde{O}$, $\widetilde{I}$ which are the binary tetrahedral, octahedral and icosahedral groups.
Fr\"{o}hlich \cite{Fr75} showed that, if $G$ has a quotient $H$ and $\Z G$ has PC, then $\Z H$ has PC.
Finally Swan \cite{Sw83} showed that, if $G$ is a binary polyhedral group, then PC fails for $Q_{4n}$ for $n \ge 6$ and holds otherwise. This leaves open the case where $G$ has a quotient of the form 
$Q_8$, $Q_{12}$, $Q_{16}$, $Q_{20}$, $\wt T$, $\wt O$ or $\wt I$
but no quotient of the form $Q_{4n}$ for $n \ge 6$.

In this article, we will introduce a method for dealing with this remaining family of groups. 
We also consider \textit{stably free cancellation} (SFC) which asks that cancellation holds for all $M$, $N$ free, i.e. stably free $\Z G$-modules are free. 
The following is a special case of the results we obtain. 

\begin{thmx} \label{thm:main-classification}
Let $G$ be a finite group such that $G \nsurj \wt T$, $\wt O$ or $\wt I$. 
Then the following are equivalent:
\begin{clist}{(i)}
\item $\Z G$ has projective cancellation
\item $\Z G$ has stably free cancellation
\item $G$ has no quotients of the form $Q_{4n}$ for $n \ge 6$, $Q_{4n} \times C_2$ for $2 \le n \le 5$, $G_{(32,14)}$, $G_{(36,7)}$, $G_{(64,14)}$ or $G_{(100,7)}$.
\end{clist}
\end{thmx}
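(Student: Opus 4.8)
The plan is to prove Theorem~\ref{thm:main-classification} as the combination of three logically independent parts: the implication (iii)$\Rightarrow$(i), the implication (i)$\Rightarrow$(ii), and the implication (ii)$\Rightarrow$(iii). The middle implication is trivial, since free modules are in particular projective. So the content lies in the outer two. For (iii)$\Rightarrow$(i) I would first reduce to the situation left open by the work of Jacobinski, Fr\"{o}hlich and Swan described in the introduction: by Jacobinski's theorem and Fr\"{o}hlich's descent result, the only groups we must treat are those having a quotient among $Q_8$, $Q_{12}$, $Q_{16}$, $Q_{20}$, $\wt T$, $\wt O$, $\wt I$ but no quotient $Q_{4n}$ with $n \ge 6$; and since by hypothesis $G \nsurj \wt T$ and $G \nsurj \wt O$, the relevant binary polyhedral quotients are among $Q_8$, $Q_{12}$, $Q_{16}$, $Q_{20}$, $\wt I$. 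The heart of the matter is then the relative Eichler condition and the cancellation theorem built on it (stated earlier in the paper): I would apply that theorem to show that $\Z G$ has PC whenever $G$ avoids the explicit list in (iii). This requires identifying, via a group-theoretic analysis, exactly which finite groups fail the relative Eichler condition — the claim will be that these are precisely the groups admitting one of the listed quotients $Q_{4n}$ ($n\ge 6$), $Q_{4n}\times C_2$ ($2\le n\le 5$), $\wt I\times C_2$, $G_{(32,14)}$, $G_{(36,7)}$, $G_{(64,14)}$, $G_{(100,7)}$ — so that outside this list the cancellation theorem applies directly.

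For the converse (ii)$\Rightarrow$(iii), I would argue contrapositively: if $G$ has one of the quotients in the forbidden list, then $\Z G$ fails SFC. By Fr\"{o}hlich's result (applied to SFC rather than PC, which holds by the same argument) it suffices to show that each group on the list itself fails SFC. For $Q_{4n}$ with $n \ge 6$ this is Swan's theorem \cite{Sw83}. For the remaining seven families — $Q_{4n}\times C_2$ with $2\le n\le 5$, $\wt I\times C_2$, and the four sporadic groups $G_{(32,14)}$, $G_{(36,7)}$, $G_{(64,14)}$, $G_{(100,7)}$ — I would invoke the computations of Chen and of Bley--Hofmann--Johnston cited in the abstract, which exhibit explicit non-free stably free modules (equivalently, non-trivial elements of the relevant locally free class group detected by a non-surjective reduced norm or by direct computation of the kernel of the natural map to $K_0$).

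The main obstacle I anticipate is the group-theoretic classification underlying (iii)$\Rightarrow$(i): proving that the list of groups in (iii) is \emph{exactly} the set of finite groups failing the relative Eichler condition (among those with no $Q_{4n}$, $n \ge 6$, quotient and covered by the hypothesis $G \nsurj \wt T, \wt O$). This is a finite but delicate case analysis: one must control which rational representations with endomorphism algebra $\H$ can occur, track how the relative Eichler condition behaves under passing to subgroups and quotients and under direct products with $C_2$, and rule out all groups not on the list while confirming that each listed group genuinely fails the condition. The appearance of the specific small groups $G_{(32,14)}$, $G_{(36,7)}$, $G_{(64,14)}$, $G_{(100,7)}$ suggests this analysis bottoms out in a bounded search, but organizing it so that no case is missed is where the real work lies. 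A secondary point requiring care is checking that the hypothesis $G \nsurj \wt T, \wt O$ is exactly what is needed to make the trichotomy clean: without it, groups with $\wt T$ or $\wt O$ quotients would require separate (and apparently harder) treatment, which is presumably why they are excluded from this particular theorem.
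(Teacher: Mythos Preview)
Your overall architecture and your argument for (ii)$\Rightarrow$(iii) are correct and match the paper. The gap is in (iii)$\Rightarrow$(i).

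You claim that, under the hypothesis $G \nsurj \wt T, \wt O$, any $G$ avoiding the list in (iii) must have an Eichler quotient among $C_1, Q_8, Q_{12}, Q_{16}, Q_{20}, \wt I$, so that the relative cancellation theorem applies. This is false: take $G = \wt I^2$. It satisfies $G \nsurj \wt T, \wt O$ and avoids every group in (iii) (for instance $\wt I^2 \nsurj \wt I \times C_2$ since $\wt I$ is perfect and so $\wt I^2$ has no $C_2$ quotient), yet $m_{\H}(\wt I^2) = 4 > 2 = m_{\H}(\wt I)$, so $\wt I$ is \emph{not} an Eichler quotient of $\wt I^2$, and neither is any other group on your list. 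More generally the whole infinite tower $\wt I^m$ arises this way, so the group-theoretic step does not ``bottom out in a bounded search'' as you anticipate.

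The paper repairs this by enlarging the set of target Eichler quotients to include the infinite family $\wt T^n \times \wt I^m$ (which under your hypothesis collapses to $\{\wt I^m : m \ge 0\}$). Two things are then needed that your proposal does not address. First, one must establish PC lifting for each $\wt I^m$; this is done not via the central-ideal argument of \cref{thm:main-group-rings} but via Swan's separate cancellation theorem (\cref{thm:exc-swan}), checking the maximal-order and reduced-norm hypotheses there. Second, one must compute the minimal non-Eichler covers of the infinite family $\{\wt I^m\}$ and show nothing new appears beyond the groups already in (iii); this is the content of \cref{thm:MNEC-calculation-main} together with the Fundamental Lemma, and is genuinely an argument about an infinite family rather than a finite search. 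Once both pieces are in place, the dichotomy of \cref{thm:main-group-theory} yields exactly the list in (iii) after one discards the groups there that have a $\wt T$ or $\wt O$ quotient --- which is how the hypothesis $G \nsurj \wt T, \wt O$ actually enters the proof.
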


Here $G_{(n,m)}$ denotes the $m$th group of order $n$ in GAP's Small Groups library \cite{BEO02, GAP4}, $C_n$ denotes the cyclic group of order $n$, and $G \nsurj H$ means that $H$ is not a quotient of $G$.

\begin{remark} \label{remark:after-main}
(a) The previous results summarised above determine PC and SFC for finite groups $G$ such that $G \nsurj Q_8$, $Q_{12}$, $Q_{16}$, $Q_{20}$, $\wt T$, $\wt O$, $\wt I$. For such groups, $\Z G$ has PC if and only if $\Z G$ has SFC if and only if $G \nsurj Q_{4n}$ for all $n \ge 6$. \cref{thm:main-classification} can be viewed as an extension of this result whereby we exclude just three binary polyhedral quotients rather than seven.

(b) A more general statement holds, though is more complicated to state.
Firstly, we can replace the condition $G \nsurj \wt T$, $\wt O$ or $\wt I$ with the much weaker condition that $G \nsurj \wt T \times C_2$, $\wt I \times C_2$, $G_{(96,66)}$, $G_{(384,18129)}$, $G_{(1152,155476)}$ or $Q_8 \rtimes \wt T^n$ (see \cref{s:groups-exceptional} for a definition) for $n \ge 1$. We would then need to add $\wt O \times C_2$ to the list in (iii). This follows from \cref{thm:MNEC-calculation-main} and \cref{fig:G_2}.
Secondly, we have that (i) $\Rightarrow$ (ii) $\Rightarrow$ (iii) for an arbitrary finite group $G$.

(c) If $G$ is a finite 2-group, then $G$ has no quotient of the form $\wt T$, $\wt O$ or $\wt I$ since $3 \mid |\wt T|, |\wt O|, |\wt I|$. For such groups, \cref{thm:main-classification} implies that $\Z G$ has PC if and only if $\Z G$ has SFC if and only if $G \nsurj Q_8 \times C_2$, $Q_{16} \times C_2$, $G_{(32,14)}$, $G_{(64,14)}$ or $Q_{2^n}$ for $n \ge 5$.
This determines PC and SFC for almost all finite groups subject to thevconjecture that almost all finite groups are $2$-groups.
For example, more than 99\% of groups of order at most 2000 are of order 1024 \cite{BEO02} (see also \cite{Bu22}).
\end{remark}

We will also establish the following. This applies to different groups to \cref{thm:main-classification}. For example, $\wt T \times C_2^2$ is covered by this result whilst $G_{(36,7)}$ is not. 

\begin{thmx}  \label{thm:main-prescribed}
Let $G$ be a finite group such that $G \twoheadrightarrow C_2^2$. Then the following are equivalent:
\begin{clist}{(i)}
\item
$\Z G$ has projective cancellation
\item
$\Z G$ has stably free cancellation
\item
$G$ has no quotients of the form $Q_{4n}$ for $n \ge 6$ even, $Q_{4n} \times C_2$ for $n=2,4$ or $n \ge 3$ odd, $\wt T \times C_2^2$, $\wt O \times C_2$, $\wt I \times C_2^2$, $G_{(32,14)}$ or $G_{(64,14)}$.
\end{clist}
\end{thmx}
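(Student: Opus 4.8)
The plan is to prove the equivalence as a cycle $(i) \Rightarrow (ii) \Rightarrow (iii) \Rightarrow (i)$. The first implication is immediate and holds for every finite group: a stably free module is projective, and SFC is precisely the restriction of PC to the case where the cancelled complement is free.

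For $(ii) \Rightarrow (iii)$ I would argue by contraposition, showing that $\Z G$ fails SFC as soon as $G$ surjects onto one of the groups in the list of $(iii)$. The key reduction is that failure of SFC ascends along quotient maps — the stably free analogue of Fr\"ohlich's theorem, which holds because base change along the ring surjection $\Z G \surj \Z H$ is compatible with the locally free class groups and lets a non-free stably free $\Z H$-module be lifted to a non-free stably free $\Z G$-module — so it suffices to treat the listed groups themselves. For $Q_{4n}$ with $n \ge 6$ this is Swan's theorem, which in turn handles $Q_{4n}\times C_2$ for $n$ odd $\ge 7$ by a further ascent; the remaining groups $Q_{4n}\times C_2$ with $n \le 5$, together with $\wt T \times C_2^2$, $\wt O \times C_2$, $\wt I \times C_2$, $G_{(32,14)}$ and $G_{(64,14)}$, are finitely many explicit groups whose failure of SFC I would import from the work of Chen and of Bley--Hofmann--Johnston, supplemented by the computations in the body of the paper.

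The substance is $(iii) \Rightarrow (i)$. Here I would apply the cancellation theorem of the paper, which supplies a sufficient condition for $\Z G$ to have PC in the shape of a relative Eichler condition — asserting, roughly, that each totally definite quaternionic Wedderburn component of $\Q G$ occurs with enough multiplicity relative to the $2$-rank of $G^{\mathrm{ab}}$. It then remains to prove that, under the standing hypothesis $G \surj C_2^2$, this relative Eichler condition holds whenever $G$ has no quotient from the list of $(iii)$; equivalently, that if $G \surj C_2^2$ and the relative Eichler condition fails then $G$ has such a quotient. For this I would use the group-theoretic classification carried out in the section on exceptional groups, which identifies the \emph{minimal} finite groups violating the relative Eichler condition — assembled from the binary polyhedral groups $Q_8$, $Q_{12}$, $Q_{16}$, $Q_{20}$, $\wt T$, $\wt O$, $\wt I$ together with controlled abelian data — and then exploit the hypothesis $G \surj C_2^2$: a short counting argument of Goursat type shows that any binary polyhedral quotient $P$ of $G$ is promoted to a quotient $P \times C_2^{s}$ with $s = \max(0,\, 2 - \dim_{\F_2} H_1(P;\F_2))$, so that $Q_{4n}$ with $n$ odd, and $\wt T$, $\wt O$, $\wt I$, give rise to quotients surjecting onto $Q_{4n}\times C_2$, $\wt T\times C_2^2$, $\wt O\times C_2$, $\wt I\times C_2$ respectively, while $Q_8$ and $Q_{16}$ alone (where the $C_2^2$ is forced to coincide with the abelianization) are not obstructions. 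Checking that every minimal obstruction, after this promotion, surjects onto a member of the list in $(iii)$ then closes the cycle.

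I expect the group-theoretic step to be the main obstacle. Two things must be pinned down exactly: the precise form of the relative Eichler condition, which has to be simultaneously strong enough to be read off from the quotients of $G$ and weak enough to hold for all the groups it must cover; and the finite but delicate classification of minimal obstructions, which turns on the interaction between the quaternionic components of $\Q G$ and the $2$-torsion in $G^{\mathrm{ab}}$. That the obstruction list is genuinely sporadic at small order — with $G_{(32,14)}$, $G_{(64,14)}$ and $Q_{12}\times C_2$, $Q_{16}\times C_2$, $Q_{20}\times C_2$ occurring as honest exceptions where cancellation fails although the classical Eichler condition is only barely violated — is exactly why the computational input of Chen and Bley--Hofmann--Johnston is essential rather than decorative.
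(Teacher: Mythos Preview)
Your cycle structure and the implications $(i)\Rightarrow(ii)\Rightarrow(iii)$ are correct and match the paper. The gap is in $(iii)\Rightarrow(i)$, and it comes in two pieces.

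First, your description of the relative Eichler condition is not the one the paper uses. The condition is not a numerical inequality between quaternionic multiplicities and the $2$-rank of $G^{\mathrm{ab}}$; it is simply that some quotient $H$ of $G$ satisfies $m_{\H}(G)=m_{\H}(H)$, and the cancellation theorem (\cref{thm:main-cancellation}) then says $\Z G$ has PC provided $H$ is one of the groups for which PC lifting has been verified. The paper in fact introduces a fourth equivalent condition: $(iv)$ $G$ has an Eichler quotient $C_2^2$, $Q_8$, or $Q_{16}$; then $(iv)\Rightarrow(i)$ is \cref{thm:main-cancellation} and the real work is $(iii)\Rightarrow(iv)$.

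Second, your Goursat promotion argument correctly computes the first level $\MNEC(C_2^2)$: if $G\twoheadrightarrow C_2^2$ has a binary polyhedral quotient $P$, then $G\twoheadrightarrow P\times C_2^s$ with $s\ge\max(0,2-\dim_{\F_2}H_1(P;\F_2))$, and this lands in the list of $(iii)$ for every $P$ except $Q_8$ and $Q_{16}$. You acknowledge that $Q_8$ and $Q_{16}$ ``are not obstructions'' at this stage --- but then your argument stops. The paper does not stop: it iterates, asking whether $G$ is $Q_8$-Eichler or $Q_{16}$-Eichler. If not, $G$ has a quotient in $\MNEC_{B_1(C_2^2)}(\{Q_8,Q_{16}\})$, which the paper computes (via the extension-classification machinery of Algorithm~\ref{alg:MNEC(G)-original}, not Goursat) to be exactly $\{Q_8\times C_2,\,Q_{16}\times C_2,\,G_{(32,14)},\,G_{(64,14)}\}$. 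The two sporadic groups $G_{(32,14)}=C_4\cdot Q_8$ and $G_{(64,14)}=C_4\cdot Q_{16}$ are non-split central extensions; they are not of the form $P\times C_2^s$ for any binary polyhedral $P$, so no Goursat promotion of a BPG will ever produce them. This is precisely why they appear in the list of $(iii)$, and why your promotion argument alone cannot close the cycle.
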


\begin{remark} \label{remark:after-prescribed}
(a) If $G \twoheadrightarrow Q_8$ or $Q_{16}$, then $G \twoheadrightarrow C_2^2$ and so \cref{thm:main-prescribed} completely determines whether such a group $G$ has PC or SFC respectively (without the assumption that $G \nsurj \wt T$, $\wt O$, $\wt I$). If $G \twoheadrightarrow Q_8$, then $\Z G$ has PC (or SFC) if and only if $G \nsurj Q_8 \times C_2$ or $G_{(32,14)}$. If $G \twoheadrightarrow Q_{16}$, then $\Z G$ has PC (or SFC) if and only if $G \nsurj Q_{16} \times C_2, G_{(32,14)}$ or $G_{(64,14)}$.

(b) We have that (i) $\Rightarrow$ (ii) $\Rightarrow$ (iii) for an arbitrary finite group $G$.

(c) If $G$ is a non-cyclic finite 2-group, then $G \twoheadrightarrow C_2^2$ (see \cite[p253]{Mi26}). In particular, as in \cref{remark:after-main} (c), \cref{thm:main-prescribed} conjecturally determines PC and SFC for almost all finite groups.
\end{remark}

Knowing that PC and SFC coincide in each of these cases is of interest since there are currently no known examples of a group ring $\Z G$ where this is not the case.
Note that there exists a $\Z$-order in a semisimple $\Q$-algebra which has SFC but not PC \cite[Theorem 1.3]{SV19}.
 We therefore ask:

\begin{question} \label{question:PC=SFC}
Does there exist a finite group $G$ such that $\Z G$ has $\SFC$ but not $\PC$?
\end{question}

In the remainder of the introduction, we will introduce the method behind Theorems \ref{thm:main-classification} and \ref{thm:main-prescribed} as well as more general classification results which are discussed in \cref{ss:discussion}.
The method is based on two key new ideas. The first is a relative version of Jacobinski's theorem for quotients $G \twoheadrightarrow H$ satisfying the \textit{relative Eichler condition} (\cref{ss:cancellation-thm}).
The second is the notion of \textit{Eichler simple groups} and the \textit{Fundamental Lemma} which makes it possible to determine the groups which are not covered by cancellation theorems based on the relative Eichler condition (\cref{ss:group-theory}).
The outcome of this approach is that, provided a given list of finite groups can be shown to fail PC (resp. SFC), we would obtain a complete classification of the finite groups with PC (resp. SFC).
Our main results are then obtained by using results of Swan \cite{Sw83}, Chen \cite{Ch86} and Bley-Hofmann-Johnston \cite{BHJ24} which show failure of SFC for a number of these groups (\cref{ss:discussion}).

\newcommand{\depth}{1}
\addtocontents{toc}{\protect\setcounter{tocdepth}{\depth}}

\subsection*{Conventions}

From now on, $G$ and $H$ will denote finite groups. All groups will be considered up to group isomorphism. 
We often state properties of $\Z G$ as properties of $G$ (e.g. `$G$ has PC'). 
We write $G \twoheadrightarrow H$ (resp. $G \nsurj H$) to denote the statement that $H$ is a quotient of $G$ (resp. $H$ is not a quotient of $G$).
For a ring $R$, all $R$-modules will be taken to be finitely generated left $R$-modules.

\renewcommand{\depth}{2}
\addtocontents{toc}{\protect\setcounter{tocdepth}{\depth}}

\subsection{The relative Eichler condition and cancellation lifting}
 \label{ss:cancellation-thm}

Let $m_{\H}(G)$ denote the number of copies of the quaternions $\H = M_1(\H)$ in the Wedderburn decomposition of $\R G$, so that $G$ satisfies the Eichler condition if and only if $m_{\H}(G)=0$.
We say that a pair of finite groups $(G,H)$ satisfies the \textit{relative Eichler condition} if $G$ has a quotient $H$ and $m_{\H}(G) = m_{\H}(H)$. When this holds, we say $H$ is an \textit{Eichler quotient} of $G$ and that $G$ is an \textit{Eichler cover} of $H$. Otherwise, we say $H$ is a \textit{non-Eichler quotient} of $G$ and that $G$ is a \textit{non-Eichler cover} of $H$.

We say that $\Z H$ satisfies \textit{PC lifting} (resp. \textit{SFC lifting}) if $\Z G$ has PC (resp. SFC) whenever $G$ is an Eichler cover of $H$.
Note that this property entails that $\Z H$ itself has PC (resp. SFC).

We will establish the following relative version of Jacobinski's theorem \cite{Ja68}. 
This is a special case of a more general result which applies to $\Z$-orders in semisimple $\Q$-algebras (\cref{thm:canc-general}).

\begin{thmx} \label{thm:main-group-rings}
Let $H$ be a finite group such that:
\begin{clist}{(i)}
\item The map $\Z H^\times \to K_1(\Z H)$ is surjective
\item Every finitely generated projective $\Z H$-module is left isomorphic to an ideal in $\Z H$ which is generated by central elements.
\end{clist}
If $\Z H$ has $\PC$, then $\Z H$ has $\PC$ lifting, i.e. if $G$ has an Eichler quotient $H$, then $\Z G$ has $\PC$.
\end{thmx}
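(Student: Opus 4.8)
The plan is to reduce to a cancellation statement about rank-one locally free modules, and then to analyse these using the splitting of $\Q G$ induced by $G\twoheadrightarrow H$, patching together the part of $\Z G$ coming from $H$ with a complementary ``Eichler part'' by means of a conductor square. First I reduce to rank one. Let $P,Q$ be finitely generated projective $\Z G$-modules with $P\oplus\Z G\cong Q\oplus\Z G$. Projective $\Z G$-modules are locally free, so $P$ and $Q$ are locally free of a common rank $r$, and since $\Z G$ has stable range at most $2$ we may write $P\cong\Z G^{r-1}\oplus M$, $Q\cong\Z G^{r-1}\oplus N$ with $M,N$ locally free ideals of $\Z G$; the hypothesis then reads $M\oplus\Z G^{r}\cong N\oplus\Z G^{r}$. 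Jacobinski's cancellation theorem allows one to cancel a single free summand against a locally free module of rank $\ge 2$ with no Eichler hypothesis, so applying it $r-1$ times reduces us to proving: if $M,N$ are locally free ideals of $\Z G$ with $M\oplus\Z G\cong N\oplus\Z G$, then $M\cong N$.

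Next I set up the conductor square. Let $I=\ker(\Z G\twoheadrightarrow\Z H)$. The induced surjection $\Q G\twoheadrightarrow\Q H$ of semisimple algebras is split by a central idempotent $e$, so $\Q G\cong C\times\Q H$ with $C:=e\Q G$, and the relative Eichler condition $m_{\H}(G)=m_{\H}(H)$ says exactly that $C$ satisfies the Eichler condition. Put $\Lambda_C:=e\Z G$, an order in $C$; the surjections $\Z G\twoheadrightarrow\Z H$ and $\Z G\twoheadrightarrow\Lambda_C$ fit into a conductor square with finite bottom-right corner $\ol\Lambda$, so by Milnor patching a locally free $\Z G$-ideal is the same datum as a locally free $\Z H$-ideal, a locally free $\Lambda_C$-ideal, and an isomorphism of their reductions over $\ol\Lambda$. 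Now let $\ol M=M/IM$, $\ol N=N/IN$ be the induced $\Z H$-ideals and $M_C=eM$, $N_C=eN$ the induced $\Lambda_C$-ideals. Applying $\Z H\otimes_{\Z G}-$ to $M\oplus\Z G\cong N\oplus\Z G$ yields $\ol M\oplus\Z H\cong\ol N\oplus\Z H$, hence $\ol M\cong\ol N$ since $\Z H$ has $\PC$; applying $\Lambda_C\otimes_{\Z G}-$ yields $M_C\oplus\Lambda_C\cong N_C\oplus\Lambda_C$, hence $M_C\cong N_C$ by Jacobinski's theorem, since $C$ satisfies the Eichler condition.

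The remaining task is to patch the isomorphisms $\ol M\cong\ol N$ and $M_C\cong N_C$ into an isomorphism $M\cong N$. After transporting $M$ along these, $M$ and $N$ are patched from the same pair of ideals, the gluing data differing by a unit $u\in\ol\Lambda^{\times}$, and $M\cong N$ precisely when $u$ lies in the subgroup of $\ol\Lambda^{\times}$ generated by the images of $\Aut_{\Z H}(\ol N)$ and $\Aut_{\Lambda_C}(N_C)$. This is where hypotheses (i) and (ii) enter: (ii) lets me take $\ol M,\ol N$ to be ideals generated by central elements, which being two-sided have left and right order equal to $\Z H$, so that $\Aut_{\Z H}(\ol N)$ is identified with $\Z H^{\times}$ and the patching stays inside $\ol\Lambda^{\times}$; and (i), the surjectivity of $\Z H^{\times}\to K_1(\Z H)$, guarantees that the part of $u$ attributable to $H$ is realized by a unit of $\Z H$ and not merely by an invertible matrix over $\Z H$. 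On the other side, the Eichler condition on $C$, through Eichler's strong approximation theorem for $C^{\times}$, provides exactly the flexibility needed on the $\Lambda_C$-side to account for the rest of $u$. The bookkeeping is governed by the Mayer--Vietoris sequence of the conductor square, in which the boundary map sends the class of $u$ in $K_1(\ol\Lambda)$ to $[M]-[N]\in\wt K_0(\Z G)$; this vanishes because $M\oplus\Z G\cong N\oplus\Z G$, so by exactness $u$ lies in the image of $K_1(\Z H)\oplus K_1(\Lambda_C)\to K_1(\ol\Lambda)$, and hypotheses (i), (ii) together with strong approximation match this image against the automorphism-realizable subgroup above; hence $u$ is realizable and $M\cong N$.

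I expect the last step to be the main obstacle: identifying exactly which gluing data along the conductor square give \emph{isomorphic}, as opposed to merely stably isomorphic, $\Z G$-modules, and verifying that hypotheses (i) and (ii), together with the Eichler condition on $C$, are precisely strong enough to trivialise the resulting obstruction. This is the content that must be organised in a form valid for an arbitrary $\Z$-order, yielding the more general \cref{thm:canc-general} of which \cref{thm:main-group-rings} is the special case.
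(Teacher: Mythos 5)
Your overall route is the paper's: reduce to rank one, set up the Milnor square $\mathcal{R}_{G,H}$ associated to $G\twoheadrightarrow H$, cancel on the $\Z H$-factor using PC and on the complementary Eichler factor $\Lambda_C$ using Jacobinski, then control the Milnor gluing datum $u\in\ol\Lambda^\times$ by a Mayer--Vietoris/double-coset count driven by hypotheses (i), (ii) and the Eichler condition. The one place your sketch elides something substantive is the claim that two-sidedness of $\ol N$ identifies $\Aut_{\Z H}(\ol N)$ with $\Z H^\times$ \emph{compatibly with the patching}: the abstract isomorphism $\Aut_{\Z H}(\ol N)\cong(\Z H)^\times$ does hold for any invertible two-sided ideal, but the map $\Aut(\ol N)\to\ol\Lambda^\times$ appearing in the Milnor correspondence depends on a choice of trivialization $\ol\Lambda\otimes_{\Z H}\ol N\cong\ol\Lambda$, and for its image to equal $j_1((\Z H)^\times)$ that trivialization must be a bimodule map, which in turn requires first replacing $\ol N$ by a bimodule-isomorphic two-sided ideal whose index is coprime to $|\ol\Lambda|$ --- this is \cref{prop:embed-ideal}, and without it the image is only a conjugate of $j_1((\Z H)^\times)$. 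The paper then proceeds slightly differently from your sketch at this point: rather than matching $\Aut(\ol N)\backslash\,\ol\Lambda^\times\slash\Aut(N_C)$ against $K_\mathcal{R}$ directly, it packages the coprime-index trivialization into a locally free $(\Z G,\Z G)$-bimodule $I$ with $(i_1)_\#I\cong\ol N$ and $(i_2)_\#I$ free (\cref{lemma:lifting-two-sided-ideals}) and uses $I\otimes_{\Z G}-$ to reduce to the base case $\ol N=\Z H$, where \cref{lemma:unit-rep-double-cosets} gives the count; both routes rest on the same lemmas, so your argument yields essentially the same proof once the embedding step is made explicit.
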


This generalises \cite[Theorem A]{Ni18}, which applied in the case of SFC and did not require condition (ii). The proof is based on the decomposition of $\Z G$ as a pullback of rings $\Z H \times_{(\Z/n\Z)[H]} \Lambda$ (see \cref{lemma:G-H-Square}). Condition (ii) implies that projective $\Z H$-modules are locally free as $(\Z H, \Z H)$-bimodules (\cref{subsection:picard}).
The main technical tool is \cref{lemma:lifting-two-sided-ideals} which shows that, modulo the action of locally free $(\Z G,\Z G)$-bimodules, an arbitrary projective $\Z G$-module $P$ can be assumed to map to a free module over $\Z H$ under extension of scalars.
Condition (i) then implies that the class $[P]$ has cancellation via a Mayer-Vietoris argument (\cref{subsection:fibre-squares}). 

We would next like to use \cref{thm:main-group-rings} to establish PC for many of the groups which remain after the results of Jacobinski, Fr\"{o}hlich and Swan. 
Swan showed that two additional families of groups have PC: $H \times K$ where 
$H \in \{Q_8, Q_{12}, Q_{16}, Q_{20}, \wt T, \wt O, \wt I \}$
and $K \nsurj C_2$ \cite[Theorem 15.5]{Sw83}, and $\wt T^n \times \wt I^m$ for any $n, m \ge 0$  \cite[Theorem 13.7]{Sw83}.
The following recovers PC in both cases.

\begin{thmx} \label{thm:main-cancellation}
Let $G$ be a finite group which has an Eichler quotient $H$ of the form
\begin{equation} 
\label{eq:BPGs+TxI} C_1, \, Q_8, \, Q_{12}, \, Q_{16}, \, Q_{20}, \, \widetilde{T}, \, \widetilde{O}, \, \widetilde{I} \quad \text{or} \quad \widetilde{T}^n \times \widetilde{I}^m \text{ for $n, m \ge 0$}.	
\end{equation}
Then $\Z G$ has $\PC$. That is, for each group $H$ listed above, $\Z H$ satisfies $\PC$ lifting.
\end{thmx}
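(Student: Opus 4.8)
The plan is to deduce \cref{thm:main-cancellation} from \cref{thm:main-group-rings}. Fix a group $H$ from the list \eqref{eq:BPGs+TxI}. By \cref{thm:main-group-rings}, to prove that $\Z H$ satisfies $\PC$ lifting -- equivalently, that $\Z G$ has $\PC$ for every Eichler cover $G$ of $H$ -- it is enough to establish three things: (a) $\Z H$ has $\PC$; (b) the natural map $\Z H^\times \to K_1(\Z H)$ is surjective; and (c) every finitely generated projective $\Z H$-module is left isomorphic to an ideal of $\Z H$ that is generated by central elements. Item (a) is classical: when $H=C_1$ it is the fact that $\Z$ is a principal ideal domain, for $H\in\{Q_8,Q_{12},Q_{16},Q_{20},\wt T,\wt O,\wt I\}$ it is \cite[Theorem 15.5]{Sw83} with trivial second factor, and for $H=\wt T^n\times\wt I^m$ it is \cite[Theorem 13.7]{Sw83}. (This is why \cref{thm:main-cancellation} \emph{recovers} Swan's cancellation results rather than reproving them.) So the substance lies in verifying (b) and (c) for each $H$ on the list.

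For (b) I would use that $K_1(\Z H)$ is generated by the images of $\Z H^\times$ and of $\SK_1(\Z H)$, which reduces the claim to $\SK_1(\Z H)=0$ together with the statement that every element of the torsion-free quotient $K_1(\Z H)/\SK_1(\Z H)\hookrightarrow K_1(\Q H)$ lifts to a unit. For the finitely many binary polyhedral groups this is immediate from the explicit determination of $\Wh(H)$, $K_1(\Z H)$ and $\SK_1(\Z H)$ in \cite{Sw83}; indeed it is exactly the verification already carried out in \cite{Ni18} in order to prove $\SFC$ lifting for these groups, and so it may be quoted. For the infinite family $\wt T^n\times\wt I^m$ I would prove a product lemma: if groups $H_1$ and $H_2$ each satisfy (b), then so does $H_1\times H_2$, using the identification $\Z[H_1\times H_2]\cong\Z H_1\otimes_\Z\Z H_2$ and the behaviour of $K_1$ and of unit groups under this decomposition; condition (b) for $\wt T^n\times\wt I^m$ then follows from the cases $H=\wt T$, $\wt I$ by induction on $n+m$.

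Condition (c) is the genuinely new ingredient -- it plays no role in the $\SFC$-lifting results of \cite{Ni18} -- and concerns the structure of projective modules rather than $K$-theory alone. It is standard (following Swan, cf.\ \cite{Sw83}) that every finitely generated projective $\Z H$-module of positive rank is isomorphic to the direct sum of a free module and a locally free rank-one ideal of $\Z H$, so it suffices to realise each class of the projective class group $\widetilde{K}_0(\Z H)$ by an ideal generated by central elements. For $H=C_1,Q_8,Q_{12},Q_{16},Q_{20},\wt T,\wt O,\wt I$ the class group $\widetilde{K}_0(\Z H)$ is trivial by Swan's computations \cite{Sw83}, so the only such ideal up to isomorphism is $(1)=\Z H$, which is generated by a central element, and (c) follows at once. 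For $H=\wt T^n\times\wt I^m$ the class group need not vanish; here I would again exploit the tensor-product decomposition $\Z[H_1\times H_2]\cong\Z H_1\otimes_\Z\Z H_2$ together with Swan's explicit description of the locally free $\Z\wt T$- and $\Z\wt I$-modules and the analysis of the two-sided (Picard) action from \cref{subsection:picard}, to show that every projective over the product is, up to isomorphism and up to the action of locally free bimodules, extended from a central subring -- which gives (c).

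The step I expect to be the main obstacle is condition (c) for the infinite family $\wt T^n\times\wt I^m$. Since condition (ii) of \cref{thm:main-group-rings} is absent from the earlier $\SFC$ framework there is no ready-made statement to invoke, and although each factor is ``small'' -- trivial projective class group, completely understood projectives -- controlling the projective class group and the Picard group of a tensor product of several group rings simultaneously is delicate, and this is where the argument will need genuine work rather than a citation.
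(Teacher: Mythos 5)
Your plan is to derive the theorem entirely from Theorem~\ref{thm:main-group-rings}, but this is not how the paper proceeds and it would not go through as written. Two concrete problems. First, your verification of condition (c) for the binary polyhedral groups rests on the assertion that $\widetilde{K}_0(\Z H)$ is trivial, which is false: Swan showed $|\Cls(\Z Q_{4n})|=2$ for $2\le n\le5$ and $|\Cls(\Z\wt T)|=2$, so in each of these cases there is a non-free locally free class which must be realised by a \emph{centrally generated} ideal. That realisation is precisely the content of Proposition~\ref{thm:q8-20=two-sided}: for $Q_8$ and $Q_{16}$ the nontrivial class is the Swan module $(N,3)$, generated by the central elements $N$ and $3$; for $Q_{12}$ and $Q_{20}$ the paper must exhibit the Beyl--Waller ideals $P_{a,b}$ as $(\alpha_{s,t},r)$ with $\alpha_{s,t}$ central, which is the substance of Lemmas~\ref{lemma:alpha=central} and~\ref{lemma:Pab-centrally-generated}. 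None of this follows from a vanishing class group. In fact the paper verifies conditions (i)--(ii) of Theorem~\ref{thm:main-group-rings} only for $H\in\{Q_8,Q_{12},Q_{16},Q_{20}\}$ (with a remark that the same works for $\wt T$).

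Second, for $\wt O$, $\wt I$ and $\wt T^n\times\wt I^m$ the paper does not verify (i) or (ii) at all, and your route to (b) via a ``product lemma'' for $K_1$-surjectivity under $\Z[H_1\times H_2]\cong\Z H_1\otimes_\Z\Z H_2$ is unsupported: the unit group and $K_1$ of a tensor product of group rings do not decompose along the factors (even the single case $\wt T\times C_2$ in Lemma~\ref{lemma:TxC2} already needs hyperelementary induction), and condition (c) for the product is more delicate still, as you yourself flag. The paper sidesteps all of this by a change of method. For $H=\wt O$ and $H=\wt T^n\times\wt I^m$ it invokes Swan's cancellation theorem for orders whose quaternionic projection is a maximal order (Theorem~\ref{thm:exc-swan}), together with the observation (Lemma~\ref{lemma:exc-2}) that $\Gamma_{\Z G}\cong\Gamma_{\Z H}$ for any Eichler cover $G$ of $H$, and then checks Swan's reduced-norm and ramification hypotheses using $|C(\Gamma)|=1$ for the relevant maximal orders. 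This is an essentially different mechanism from the Milnor-square argument behind Theorem~\ref{thm:main-group-rings}, and it is what makes the infinite family tractable. Your proposal therefore has a genuine gap at exactly the point you identified as the main obstacle; the paper's resolution is not to push the infinite family through Theorem~\ref{thm:main-group-rings} but to use a second cancellation theorem that you have not accounted for.
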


The proof is in two parts. Firstly, we establish PC lifting for $H \in \{Q_8, Q_{12}, Q_{16}, Q_{20}\}$ using \cref{thm:main-group-rings}. We verify (i) using the results of \cite{MOV83} (see also \cite{Ni18}) and we verify (ii) by explicitly constructing projective ideals generated by central elements over each group (see \cref{thm:q8-20=two-sided}). Secondly, the case $H=\wt O$ or $\wt T^n \times \wt I^m$ for $n,m \ge 0$ is dealt with using a cancellation theorem of Swan \cite[Theorem 13.1]{Sw83}. This result was previously used by Swan to establish PC for the groups themselves but, by verifying additional hypothesis, we show that it can be used to also establish PC lifting (see \cref{prop:canc-2}).
The following question remains open.

\begin{question} \label{question:PC-lifting}
Does every group with $\PC$ satisfy $\PC$ lifting?
That is, if $G$ has an Eichler quotient $H$ such that $\Z H$ has $\PC$, then does $\Z G$ always have $\PC$?
\end{question}

\subsection{Eichler simple groups the Fundamental Lemma}
\label{ss:group-theory}

For a class of finite groups $S$, we say that $G$ is \textit{$S$-Eichler} if $G$ has an Eichler quotient $H$ for some $H \in S$. 
For example, \cref{thm:main-cancellation} says that $S$-Eichler implies PC where $S$ is the list of groups in \cref{eq:BPGs+TxI}.
We say that a finite group $G$ is \textit{Eichler simple} if it has no proper Eichler quotients. 
Examples include $C_1$, binary polyhedral groups, $\wt T^n \times \wt I^m$ for $n,m \ge 0$, $H \times C_2$ for $H$ a binary polyhedral group, and the groups listed in \cref{thm:main-classification} (iii) such as $G_{(32,14)}$. We say that a class of Eichler simple groups $S$ is \textit{closed under quotients} if $G \in S$ and $G \twoheadrightarrow H$ for $H$ an Eichler simple group implies that $H \in S$.
We will show:

\begin{lemma}[Fundamental Lemma] \label{lemma:fundamental-lemma}
Let $S$ be a class of Eichler simple groups which is closed under quotients. 
Then a finite group $G$ is $S$-Eichler if and only if $G$ has no quotient in $\MNEC(S)$.	
\end{lemma}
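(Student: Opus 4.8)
The plan is to isolate a single invariant, the \emph{Eichler core} of a finite group, establish its basic properties, and then dispatch the two implications of the equivalence — one formally, one by induction on $|G|$.

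First I would record that a quotient of a semisimple $\R$-algebra is a product of some of its simple factors; applied to $\R G \twoheadrightarrow \R[G/N]$ this shows that a simple factor $A \cong \H$ of $\R G$ is a factor of $\R[G/N]$ exactly when $N$ lies in the kernel of the representation affording $A$, so $m_{\H}(G/N) \le m_{\H}(G)$, with equality iff every simple factor $\cong \H$ of $\R G$ is afforded by a representation killing $N$. The key structural step is then: for a fixed finite group $G$ the set $\mathcal{N}(G) = \{N \trianglelefteq G : m_{\H}(G/N) = m_{\H}(G)\}$ is closed under the join $\langle N, N'\rangle$, since if $N, N' \in \mathcal{N}(G)$ then every simple factor $\cong \H$ of $\R G$ is afforded by a representation killing both $N$ and $N'$, hence killing $\langle N, N'\rangle$. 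Thus $\mathcal{N}(G)$ has a greatest element $N_{\max}(G)$; setting $\EE(G) := G/N_{\max}(G)$, I would check that $\EE(G)$ is the \emph{unique} Eichler simple Eichler quotient of $G$, that $\EE(\EE(G)) = \EE(G)$, and — using transitivity of the Eichler quotient relation — that $G$ is $S$-Eichler if and only if $\EE(G) \in S$. I expect this paragraph (best packaged as a preliminary lemma) to be the main obstacle; the rest is short, and the hypothesis that $S$ is closed under quotients enters only in the forward implication.

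For ``$G$ is $S$-Eichler $\Rightarrow$ $G$ has no quotient in $\MNEC(S)$'' I would prove the stronger statement that, when $S$ is closed under quotients, every quotient $Q$ of an $S$-Eichler group $G$ is again $S$-Eichler (no member of $\MNEC(S)$ is $S$-Eichler, so this suffices). Write $Q = G/K$; applying the structural step to $Q$ gives $\EE(Q) = G/M$, where $M$ is the largest normal subgroup of $G$ with $K \le M$ and $m_{\H}(G/M) = m_{\H}(Q)$. Now $N_{\max}(G) \le M$: putting $M' = \langle N_{\max}(G), M\rangle$, every simple factor $\cong \H$ of $\R G$ is afforded by a representation killing $N_{\max}(G)$, hence the $m_{\H}(Q)$ such factors that survive in $\R Q$ are afforded by representations killing $M'$, so $m_{\H}(G/M') = m_{\H}(Q)$ and maximality of $M$ forces $M' = M$. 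Therefore $\EE(Q)$ is a quotient of $\EE(G) \in S$ and is Eichler simple, so $\EE(Q) \in S$ by closure under quotients; that is, $Q$ is $S$-Eichler.

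For the converse I would show by strong induction on $|G|$ that a finite group with no quotient in $\MNEC(S)$ is $S$-Eichler. Suppose $G$ is a minimal counterexample. If $G$ is Eichler simple, then $\EE(G) = G \notin S$; if every proper quotient of $G$ is $S$-Eichler then $G$ itself lies in $\MNEC(S)$, contradicting the hypothesis, and otherwise some proper quotient of $G$ is not $S$-Eichler, so by minimality it has a quotient in $\MNEC(S)$, hence so does $G$ — again a contradiction. If $G$ is not Eichler simple, then $\EE(G)$ is a proper quotient of $G$ and therefore has no quotient in $\MNEC(S)$, so by minimality $\EE(G)$ is $S$-Eichler; but then $\EE(G) = \EE(\EE(G)) \in S$, so $G$ is $S$-Eichler — a contradiction. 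This settles both directions.
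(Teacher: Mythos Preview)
Your proof is correct and takes a genuinely different route from the paper. The paper proves both implications via the \emph{Eichler Pushout Lemma}: given a quotient $G \twoheadrightarrow H_1$ and an Eichler quotient $G \twoheadrightarrow H_2$, the pushout $G/(N_1 N_2)$ is an Eichler quotient of $H_1$. The forward direction then follows by pushing out against a quotient $H' \in \MNEC(S)$ to force $H' \in S$; the reverse direction picks $H \in S$ a quotient of $G$ of maximal order and finds a quotient of $G$ in $\MNEC(H)$. Your approach instead packages everything into the single invariant $\EE(G)$, whose existence encodes the join-closure of $\mathcal{N}(G)$ (essentially the Pushout Lemma restricted to two Eichler quotients), and your functoriality statement $\EE(Q) \twoheadleftarrow \EE(G)$ for $Q$ a quotient of $G$ recovers the full strength of the Pushout Lemma in a different guise. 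The reduction to the criterion $\EE(G) \in S$ is clean and buys you a more conceptual argument; the paper's approach is more hands-on but avoids setting up the extra machinery.

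Two steps in your write-up are a bit quick and deserve a sentence each. First, the parenthetical ``no member of $\MNEC(S)$ is $S$-Eichler'' needs the observation that such a member $Q$ is Eichler simple: applying your functoriality with $G = Q$ and the quotient $H' \in S$ (Eichler simple, so $\EE(H') = H'$) shows $N_{\max}(Q) \le \ker(Q \twoheadrightarrow H')$, so every map $Q \twoheadrightarrow H'$ factors through $\EE(Q)$; minimality then forces $\EE(Q) = Q \notin S$. Second, in the inductive step the claim ``if every proper quotient of $G$ is $S$-Eichler then $G \in \MNEC(S)$'' requires choosing $H \in S$ a quotient of $G$ with $m_{\H}(H)$ maximal: any intermediate proper non-Eichler cover $G''$ of $H$ would be $S$-Eichler, and then $\EE(G'') \in S$ is a quotient of $G$ with $m_{\H}(\EE(G'')) = m_{\H}(G'') > m_{\H}(H)$, contradicting maximality. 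Both gaps fill immediately within your framework.
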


Here $\MNEC(S)$ denotes the class of \textit{minimal non-Eichler covers} of the set $S$ (see \cref{ss:MNEC-groups} for a definition).
We use this to establish the following, which is our main group theoretic result.

\begin{thmx} \label{thm:main-group-theory}
Let $G$ be a finite group. Then the following are equivalent:
\begin{clist}{(i)}
\item 
$G$ has an Eichler quotient $H$ of the form:
\[ C_1, \, Q_8, \, Q_{12}, \, Q_{16}, \, Q_{20}, \, \widetilde{T}, \, \widetilde{O}, \, \widetilde{I} \quad \text{or} \quad \widetilde{T}^n \times \widetilde{I}^m \text{ for $n, m \ge 0$}\]
\item 
$G$ has no quotient $H$ of the form:
\begin{clist}{(a)}
\item
$Q_{4n}$ for $n \ge 6$
\item 
$Q_8 \rtimes \widetilde{T}^n$ for $n \ge 1$
\item
$Q_8 \times C_2$, \, $Q_{12} \times C_2$, \, $Q_{16} \times C_2$, \, $Q_{20} \times C_2$, \, $\widetilde{T} \times C_2$, \, $\widetilde{O} \times C_2$, \, $\widetilde{I} \times C_2$, \, $G_{(32,14)}$, $G_{(36,7)}$, $G_{(64,14)}$, $G_{(96,66)}$, $G_{(100,7)}$, $G_{(384, 18129)}$, $G_{(1152,155476)}$.
\end{clist}
\end{clist}
\end{thmx}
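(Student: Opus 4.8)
The plan is to deduce this from the Fundamental Lemma (\cref{lemma:fundamental-lemma}) together with an explicit identification of the class $\MNEC(S)$. Write $S$ for the class of groups appearing in \eqref{eq:BPGs+TxI}, so that condition (i) is by definition the statement that $G$ is $S$-Eichler. The first task is to verify the two hypotheses of \cref{lemma:fundamental-lemma}: that $S$ consists of Eichler simple groups, which is already recorded in \cref{ss:group-theory} (the members of $S$ are $C_1$, binary polyhedral groups, and the groups $\wt T^n \times \wt I^m$), and that $S$ is closed under quotients. For the latter one checks that every Eichler simple quotient of a group in $S$ again lies in $S$. When the group in $S$ is $C_1$ or one of $Q_8, Q_{12}, Q_{16}, Q_{20}, \wt T, \wt O, \wt I$ this is an inspection of its (short) list of normal subgroups: the nontrivial proper quotients that arise — dihedral groups, $C_4$, $A_4$, $S_3$, $S_4$, $A_5$, and so on — all satisfy the Eichler condition and hence fail to be Eichler simple, since any Eichler group $K \neq C_1$ has $K \twoheadrightarrow C_1$ as a proper Eichler quotient. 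When the group in $S$ is $\wt T^n \times \wt I^m$ one argues by induction on $n+m$: every nontrivial normal subgroup contains a central subgroup of order $2$, and passing to the quotient by such a subgroup one checks, by tracking $m_{\H}$ through the map, that the result either fails to be Eichler simple or has all of its Eichler simple quotients of the form $\wt T^{n'} \times \wt I^{m'}$.

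With the hypotheses in place, \cref{lemma:fundamental-lemma} gives that (i) holds if and only if $G$ has no quotient in $\MNEC(S)$. The theorem therefore reduces to two assertions about the explicit list $L$ appearing in (ii): (A) $\MNEC(S) \subseteq L$, and (B) no group in $L$ is $S$-Eichler. Indeed, (A) shows that a quotient of $G$ in $\MNEC(S)$ yields one in $L$, while (B) together with \cref{lemma:fundamental-lemma} applied to a group in $L$ shows that a quotient of $G$ in $L$ yields one in $\MNEC(S)$; hence "$G$ has a quotient in $\MNEC(S)$'' and "$G$ has a quotient in $L$'' are equivalent, which is exactly the negation of (i) $\Leftrightarrow$ (ii). Note that $L$ need not equal $\MNEC(S)$ on the nose: for instance $Q_{72} \twoheadrightarrow Q_{24}$, so some of the groups $Q_{4n}$ with $n \ge 6$ listed in (ii)(a) are redundant, but this does not affect the equivalence.

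For (B) one computes $m_{\H}$ in the relevant cases. For $Q_{4n}$ with $n \ge 6$ one observes that its only quotients lying in $S$ are $C_1$ and at most the groups $Q_{4m}$ with $2 \le m \le 5$ (a surjection onto any $\wt T$, $\wt O$ or $\wt I$ would force a $C_3$-quotient of $Q_{4n}^{\mathrm{ab}}$), and $m_{\H}(Q_{4n})$ is too large to match $m_{\H}$ of any of these; similarly for $Q_8 \rtimes \wt T^n$ and for each of the finitely many remaining groups. For (A) one classifies the minimal non-$S$-Eichler groups — which are precisely the members of $\MNEC(S)$ in view of the definition in \cref{ss:MNEC-groups}, using that $m_{\H}$ is monotone under quotients and $C_1 \in S$. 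The quaternionic family drops out of the quotient lattice of $Q_{4n}$; the family $Q_8 \rtimes \wt T^n$ is handled by determining which parts of $Q_8$ and of $\wt T^n$ may be killed; and the remaining cases are pinned down by bounding the order of any further minimal non-$S$-Eichler group, using the structural constraints forced by the shape of the Wedderburn decomposition and by the requirement that all proper quotients be $S$-Eichler, followed by a finite computer search over small groups up to that bound, producing exactly the fifteen sporadic groups $G_{(32,14)}, \dots, G_{(1152,155476)}$.

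The main obstacle is part (A), and within it the completeness of the list: controlling the order of a hypothetical additional minimal non-$S$-Eichler group requires a genuinely careful analysis of how $m_{\H}$ (equivalently, the quaternionic blocks of the Wedderburn decomposition) behaves under quotient maps, in order to reduce to a finite search. By contrast, the verification of the hypotheses of the Fundamental Lemma and of part (B), and the check that each listed group does lie in $\MNEC(S)$, are comparatively routine once $m_{\H}$ has been computed in the cases at hand.
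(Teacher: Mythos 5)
Your overall scaffold — reduce via the Fundamental Lemma (\cref{lemma:fundamental-lemma}) to pinning down $\MNEC(S)$, using the (A)+(B) reformulation, which is logically sound and close in spirit to the paper's use of $\MNEC_{B_1}(S)$ with $B_1 = \{Q_{4n}: n \ge 6\}$ — matches the paper's strategy in \cref{s:results}. The genuine gap is in the enumeration step (A). You propose to account for the $Q_{4n}$ and $Q_8 \rtimes \wt T^n$ families by ad hoc arguments and to capture the remaining sporadic groups by ``bounding the order of any further minimal non-$S$-Eichler group'' followed by a finite computer search. But $S$ is infinite, so verifying $\MNEC(S) \subseteq L$ forces you to control $\MNEC(\wt T^n \times \wt I^m)$ for all $n,m$ at once, and no uniform order bound exists: $Q_8 \rtimes \wt T^n$ already has unbounded order, and nothing in your proposal excludes another unbounded family arising as an MNEC of $\wt T^n \times \wt I^m$ for large $n+m$. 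This is precisely what the paper's \cref{thm:MNEC-calculation-main} establishes, and it is proved by hand in \cref{s:groups-exceptional} using the pullback description of MNECs (\cref{prop:groups2}), the vanishing $H^2(\wt T^n \times \wt I^m;\F_2)=0$ (\cref{prop:H^2=0}), the determination of $\wh{\Quot}(\wt T^n \times \wt I^m,A)$ for $A \in \{C_3,A_4,A_5\}$ (\cref{prop:homs-TxI}), and the non-surjection results \cref{prop:T^n->>Q_8:T^k}, \cref{lemma:Quot(Q_8:T^k)} and \cref{prop:Q_8:T^n->>Q_8:T^m}. The machine computation (\cref{alg:MNEC(G)-original}, Appendix \ref{s:tables}) only handles $\MNEC_{B_1}$ of the seven small binary polyhedral groups; the infinite-family piece cannot be delegated to a bounded search.

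Two smaller points. Your closure-under-quotients argument for $\wt T^n \times \wt I^m$ — induction on $n+m$ via ``every nontrivial normal subgroup contains a central $C_2$,'' then ``tracking $m_{\H}$'' — is not carried out, and the quotient of $\wt T^n \times \wt I^m$ by a diagonal central $C_2$ is not again of that form, so the inductive step is unavailable as stated; the paper's own closure argument also relies on \cref{thm:MNEC-calculation-main} and \cref{prop:T^n->>Q_8:T^k}, i.e.\ on the very ingredient you are missing. And in (B), the justification that $Q_{4n} \nsurj \wt T, \wt O, \wt I$ via ``a $C_3$-quotient of $Q_{4n}^{\mathrm{ab}}$'' works only for $\wt T$, since $\wt O^{\mathrm{ab}} \cong C_2$ and $\wt I^{\mathrm{ab}} = C_1$; for those one should argue instead via the $S_4$ and $A_5$ quotients together with the fact that every quotient of $Q_{4n}$ has a cyclic normal subgroup of index at most two. (Also, (ii)(c) lists fourteen groups, not fifteen.)
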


The semidirect product $Q_8 \rtimes \widetilde{T}^n$ is defined in \cref{s:groups-exceptional}.
The proof is based on applying the Fundamental Lemma to the list of groups $S$ in \cref{eq:BPGs+TxI}.
We show that the groups in $S$ are all Eichler simple and that $S$ is closed under quotients. It remains to show that $\MNEC(S)$ consists of the groups in (ii) (a)-(c).
The calculation is broken into two parts. Firstly, we compute $\MNEC(H)$ for $H \in \{C_1, Q_8, Q_{12}, Q_{16}, Q_{20}, \widetilde{T}, \widetilde{O}, \widetilde{I}\}$ by means of an algorithm (Algorithm \ref{alg:MNEC(G)-original}) which we implement in GAP \cite{GAP4} and Magma \cite{magma} (\cref{s:MNEC-algorithms}).
Secondly, we calculate $\MNEC(\wt T^n \times \wt I^m)$ using a range of methods such as classifying extensions using group cohomology (\cref{s:groups-exceptional}).

\subsection{Consequences for the classification of finite groups with PC}
\label{ss:discussion}

An immediate consequence of Theorems \ref{thm:main-cancellation} and \ref{thm:main-group-theory} is that we obtain a potential strategy to completely classify the finite groups $G$ for which $\Z G$ has $\PC$.
In particular, by \cref{thm:main-cancellation}, the groups in \cref{thm:main-group-theory} (i) all have $\PC$. Hence, if the groups listed in \cref{thm:main-group-theory} (ii) (a)-(c) all failed PC, then the finite groups with PC would be precisely those which satisfy the hypotheses in \cref{thm:main-group-theory}.

Previously, PC was known to fail for $Q_{4n}$ for $n \ge 6$ and $Q_8 \times C_2$ by Swan \cite{Sw83} and for $Q_{12} \times C_2$, $Q_{16} \times C_2$, $Q_{20} \times C_2$, $G_{(36,7)}$ and $G_{(100,7)}$ by Chen \cite{Ch86}.
In March 2022, we sent the list of groups (a)-(c) to Bley-Hofmann-Johnston with the hope that the eight remaining groups in (c) and at least the smallest group $Q_8 \rtimes \wt T$ in (b) would fail PC. 
They showed the following \cite{BHJ24}:

\begin{thm}[Bley-Hofmann-Johnston] \label{thm:BHJ-part-1} 
$\SFC$ fails for $G_{(32,14)}$, $G_{(64,14)}$, $\wt T \times C_2^2$, $\widetilde{O} \times C_2$ and $\widetilde{I} \times C_2^2$.
$\SFC$ holds for $\wt T \times C_2$, $G_{(96,66)}$, $Q_8 \rtimes \wt T$, $\wt I \times C_2$, as well as $G_{(192,183)}$, $\wt T \times Q_{12}$, $\wt T \times Q_{20}$.
\end{thm}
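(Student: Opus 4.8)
Since the statement concerns eleven explicit finite groups, the plan is computational: for each $G$ one must decide whether $\Z G$ has stably free cancellation, and I would reduce this to the (non-)triviality of an explicit finite abelian group and then compute that group. First reduce to rank one. As $\Z G$ is module-finite over $\Z$, it has Bass stable rank at most $2$, so every stably free $\Z G$-module of rank $\ge 2$ is free; hence $\Z G$ has $\SFC$ if and only if every stably free module $P$ with $P \oplus \Z G \cong \Z G^2$ is free. Such a $P$ is locally free of rank $1$ and is classified, following Fr\"ohlich and Swan, by a class in an idele-theoretic double coset space built from $\Z G$ and $A = \Q G$, the stably free ones being exactly those mapping to the trivial class in $\widetilde{K}_0(\Z G)$. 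Writing $\Sigma$ for the set of Wedderburn components of $A$ isomorphic to $\H$ — the ``exceptional'' components, where the Eichler condition and strong approximation for $\mathrm{SL}_1$ fail — one obtains a finite abelian group $\mathrm{St}(\Z G)$, the group of stably free modules, realised as the cokernel of the map from the image of $\Z G^\times$ into a suitable restricted product of local unit groups (built from the $K_1(\Z_p G)$ for $p \mid |G|$) subject to the constraint imposed by $\Sigma$ at the archimedean place. Then $\Z G$ has $\SFC$ if and only if $\mathrm{St}(\Z G) = 0$, and every ingredient — the Wedderburn decomposition, the relevant primes, the local $K_1$'s, the reduced norms, and a generating set for $\Z G^\times$ — is algorithmically accessible.

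\textbf{The five failures.} For $G_{(32,14)}$, $G_{(64,14)}$, $\wt O \times C_2$, $\wt I \times C_2$ and $\wt T \times C_2^2$ I would exhibit an explicit stably free, non-free module together with a verifiable certificate: search for a unimodular row $(a,b) \in (\Z G)^2$ whose associated rank-$1$ projective $P$ is stably free (i.e. $[P] = 0$ in $\widetilde{K}_0(\Z G)$), and certify $P \not\cong \Z G$ by computing a nonzero image of the class of $(a,b)$ in $\mathrm{St}(\Z G)$ — equivalently, by showing that $(a,b)$ does not extend to a matrix in $\GL_2(\Z G)$, via an invariant in $K_1$. This half is the easier one: a single certified element suffices, and one needs neither a full description of $\mathrm{St}(\Z G)$ nor of the unit group; Swan's and Chen's constructions for the neighbouring groups $Q_{4n}$ and $Q_{4n} \times C_2$ indicate where to look.

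\textbf{The six successes, and the main obstacle.} For $\wt T \times C_2$, $G_{(96,66)}$, $Q_8 \rtimes \wt T$, $G_{(192,183)}$, $\wt T \times Q_{12}$ and $\wt T \times Q_{20}$ one must prove $\mathrm{St}(\Z G) = 0$, i.e. that the image of $\Z G^\times$ surjects onto the finite local target. This requires (a) pinning down $\Sigma$ and the reduced-norm maps; (b) identifying the primes $p$ at which $\Z_p G$ is non-maximal and computing each $K_1(\Z_p G)$, e.g. via Oliver's integral logarithm; and, the crux, (c) computing enough of $\Z G^\times$ — a generating set modulo a subgroup of known image, using algorithms for unit groups of (non-commutative, non-maximal) orders — together with its reduced-norm image, and verifying surjectivity. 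Step (c) is the principal obstacle: $\Z G^\times$ is infinite and genuinely hard to present, and the groups here have order up to $480$. Some cases may admit a structural shortcut — a group that is an Eichler cover of a smaller group already known to have $\SFC$ inherits it by an $\SFC$-lifting argument — but the innermost cases require the full computation. Finally, one should check that the two halves dovetail with \cref{thm:main-group-theory}: once the groups in \cref{thm:main-group-theory}(ii) are shown to fail $\SFC$, the finite groups with $\SFC$ should be exactly the $S$-Eichler ones, so the failures and successes above must be consistent with that classification.
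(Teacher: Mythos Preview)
The paper does not prove \cref{thm:BHJ-part-1}. It is stated as a result of Bley--Hofmann--Johnston and cited from \cite{BHJ24}; the only information the paper gives about the proof is the single sentence ``The proof made use of computer calculations which become harder for larger groups.'' So there is no in-paper proof to compare your proposal against.

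That said, your outline is broadly consistent with what one expects from \cite{BHJ24}: a reduction to rank one, an idelic/double-coset description of the locally free class set, and then explicit machine computation of (enough of) $\Z G^\times$ and the relevant local data to decide whether the stably free locus is a singleton. Your identification of step (c) --- computing a generating set for $\Z G^\times$ for orders of size up to $480$ --- as the bottleneck is correct and is exactly why the paper remarks that the computations ``become harder for larger groups'' and why $G_{(384,18129)}$, $G_{(1152,155476)}$ and $Q_8 \rtimes \wt T^n$ for $n \ge 2$ remain open.

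One point of confusion in your final paragraph: you write that ``once the groups in \cref{thm:main-group-theory}(ii) are shown to fail $\SFC$, the finite groups with $\SFC$ should be exactly the $S$-Eichler ones.'' But the content of \cref{thm:BHJ-part-1} is precisely that this does \emph{not} happen: $\wt T \times C_2$, $G_{(96,66)}$ and $Q_8 \rtimes \wt T$ lie in the list (ii)(b)--(c) yet \emph{have} $\SFC$. This is why the paper then proves \cref{thm:TxC2} and \cref{thm:TxC2-group-theory} and why the classification remains incomplete. So the ``dovetailing'' you describe is not a consistency check to be performed --- it is known to fail, and that failure is the reason for the residual hypotheses in \cref{thm:main-classification}.
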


This determines SFC for all groups listed in \cref{thm:main-group-theory} (ii) except  $G_{(384, 18129)}$, $G_{(1152,155476)}$ and $Q_8 \rtimes \wt T^n$ for any $n \ge 2$. The proof made use of computer calculations which become harder for larger groups.
We will now deduce \cref{thm:main-classification} from Theorems \ref{thm:main-cancellation}, \ref{thm:main-group-theory} and \ref{thm:BHJ-part-1}.

\begin{proof}[Proof of \cref{thm:main-classification}]
Suppose $G \nsurj \wt T$, $\wt O$ or $\wt I$. If $G$ has an Eichler quotient in \cref{eq:BPGs+TxI}, then \cref{thm:main-cancellation} implies $G$ has $\PC$. If not, then \cref{thm:main-group-theory} implies $G$ has a quotient $H$ which is one of the groups in \cref{thm:main-group-theory} (ii) (a)-(c).
The groups $Q_8 \rtimes \wt T^n$ for $n \ge 1$, $\wt T \times C_2$, $\wt O \times C_2$, $\wt I \times C_2$, $G_{(96,66)}$, $G_{(384, 18129)}$, $G_{(1152,155476)}$ all have quotients in $\wt T$, $\wt O$ or $\wt I$ (see \cref{fig:G_2}).
By \cite{Sw83}, \cite{Ch86} and \cref{thm:BHJ-part-1}, $\SFC$ fails for all remaining groups $H$ and so fails for $G$. The result follows.
\end{proof}

We conclude by discussing the finite groups for which PC (resp. SFC) remains open.
If $G$ has an Eichler quotient which is one the groups in \cref{eq:BPGs+TxI}, then $\Z G$ has PC. If not, $G$ has a quotient which is one of the groups in \cref{thm:main-group-theory} (ii). By results of Swan, Chen and Bley-Hofmann-Johnston, $G$ fails SFC if it has a quotient which is one of the groups in:
\begin{clist}{\hspace{3.2mm}(a)}
\item[(a)\hspace{1mm}]
$Q_{4n}$ for $n \ge 6$
\item[(c)$'$]
$Q_8 \times C_2$, $Q_{12} \times C_2$, $Q_{16} \times C_2$, $Q_{20} \times C_2$, $\widetilde{O} \times C_2$, $G_{(32,14)}$, $G_{(36,7)}$, $G_{(64,14)}$, $G_{(100,7)}$.
\end{clist}
This leaves open the case where $G$ has a quotient of the form $\widetilde{T} \times C_2$, $\wt I \times C_2$, $G_{(96,66)}$, $G_{(384, 18129)}$, $G_{(1152,155476)}$ or $Q_8 \rtimes \wt T^n$ for $n \ge 1$, but no quotient of the form (a) or (c)$'$.

Whilst $\wt T \times C_2$, $\wt I \times C_2$, $G_{(96,66)}$ and $Q_8 \rtimes \wt T$ all have SFC (by \cref{thm:BHJ-part-1}), the possibility remains that they fail PC.
However, using \cref{thm:BHJ-part-1}, we will show:

\begin{thm} \label{thm:TxC2}
$\wt T \times C_2$ has $\PC$. In particular, there exists a finite group with PC which does not satisfy the hypotheses in \cref{thm:main-group-theory}.
\end{thm}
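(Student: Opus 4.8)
The plan is to upgrade the Bley-Hofmann-Johnston result that $\wt T \times C_2$ has $\SFC$ (\cref{thm:BHJ-part-1}) to full $\PC$ by showing in addition that every finitely generated projective $\Z[\wt T \times C_2]$-module is stably free, i.e. that the reduced projective class group $\wt{K}_0(\Z[\wt T \times C_2])$ is trivial. This suffices: if $P,Q$ are finitely generated projective with $[P]=[Q]$ in $K_0(\Z[\wt T\times C_2])$, then, both being stably free of the same rank, $P \oplus \Z[\wt T\times C_2]^{k} \cong Q \oplus \Z[\wt T\times C_2]^{k}$ for some $k \ge 0$, and applying $\SFC$ (cancellation of $\Z[\wt T\times C_2]$) $k$ times gives $P \cong Q$. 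So $\wt K_0 = 0$ together with $\SFC$ formally implies $\PC$.

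To compute $\wt K_0(\Z[\wt T\times C_2])$ I would use the Milnor square $\Z[\wt T \times C_2] \cong \Z[\wt T] \times_{\F_2[\wt T]} \Z[\wt T]$, which arises from $\Z[C_2] \cong \Z \times_{\F_2} \Z$ and is the square of \cref{lemma:G-H-Square} applied to the quotient $\wt T \times C_2 \twoheadrightarrow \wt T$ (note this quotient is not an Eichler quotient, since $m_{\H}(\wt T\times C_2)=2\neq 1 = m_{\H}(\wt T)$, so \cref{thm:main-group-rings} does not apply directly). The Mayer-Vietoris sequence of this square, together with the vanishing $\wt K_0(\Z[\wt T]) = 0$ (the binary tetrahedral group has trivial projective class group, see \cite{Sw83}), identifies $\wt K_0(\Z[\wt T\times C_2])$ with the cokernel of the natural map $\rho\colon K_1(\Z[\wt T]) \oplus K_1(\Z[\wt T]) \to K_1(\F_2[\wt T])$, $([\alpha],[\beta]) \mapsto [\bar\alpha]\,[\bar\beta]^{-1}$. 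So it is enough to show $\rho$ is surjective, equivalently that the reduction map $K_1(\Z[\wt T]) \to K_1(\F_2[\wt T])$ is surjective; and since $\Z[\wt T]^\times \to K_1(\Z[\wt T])$ is surjective (\cite{MOV83}, cf.\ \cite{Ni18}, which is condition (i) of \cref{thm:main-group-rings} for $\wt T$), this is equivalent to surjectivity of $\Z[\wt T]^\times \to K_1(\F_2[\wt T])$.

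This last surjectivity is the main obstacle. The approach is to analyse $\F_2[\wt T]$ directly: since $O_2(\wt T) = Q_8$ acts trivially on every simple module in characteristic $2$, the simple $\F_2[\wt T]$-modules are inflated from $\wt T^{\mathrm{ab}} \cong C_3$, so the Jacobson radical $J$ of $\F_2[\wt T]$ satisfies $\F_2[\wt T]/J \cong \F_2 \times \F_4$, and hence $K_1(\F_2[\wt T])$ is an extension of $C_3$ by a quotient of the (small, explicitly computable) $2$-group $1+J$. An element of order $3$ in $\wt T$ maps onto a generator of the $C_3$-quotient, so what remains is to hit the part of $K_1$ coming from $1+J$; for this I would produce enough units of $\Z[\wt T]$ reducing into $1+J$ — the trivial units $\pm g$ together with bicyclic units $1 + (g-1)\,h\,(1 + g + \cdots + g^{m-1})$ are the natural candidates — and verify, using the description of $K_1$ of a semilocal ring as its unit group modulo the Bass relations $(1+xy)(1+yx)^{-1}$, that their reductions generate. (Alternatively, one may simply invoke a direct machine computation that $\wt K_0(\Z[\wt T \times C_2]) = 0$, in the spirit of the computations of \cite{BHJ24}.) Granting the surjectivity, $\wt K_0(\Z[\wt T\times C_2]) = 0$, so by \cref{thm:BHJ-part-1} every finitely generated projective $\Z[\wt T\times C_2]$-module is in fact free, and $\PC$ holds trivially. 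The final assertion of the statement is then immediate: $\wt T \times C_2$ occurs in the list of \cref{thm:main-group-theory}(ii)(c), and so is a finite group with $\PC$ failing the hypotheses of \cref{thm:main-group-theory}.
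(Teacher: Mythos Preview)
Your approach has a genuine error: the claim that $\wt K_0(\Z[\wt T]) = 0$ is false. In fact $C(\Z[\wt T]) \cong \Z/2$, generated by the class of the Swan module $(N,3)$; this is precisely what the paper uses (citing \cite[Theorems I, III \& VI]{Sw83}) when it writes $\Cls(\Z[\wt T]) = \{\Z[\wt T],(N,3)\}$. Since the quotient $\wt T \times C_2 \twoheadrightarrow \wt T$ induces a surjection $C(\Z[\wt T \times C_2]) \twoheadrightarrow C(\Z[\wt T])$ (\cref{thm:LF-COR}), the group $\wt K_0(\Z[\wt T \times C_2])$ is also nontrivial. So your proposed conclusion that every projective is stably free is simply not true, and no amount of unit-lifting into $K_1(\F_2[\wt T])$ can rescue the argument.

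The paper's proof is genuinely different and does not attempt to compute $\wt K_0$. It uses the same Milnor square $\Z[\wt T \times C_2] \cong \Z[\wt T] \times_{\F_2[\wt T]} \Z[\wt T]$, but instead argues that the fibres $\Cls_{\mathcal{R}}^{-1}(P_1,P_2)$ are in bijection with $\Cls_{\mathcal{R}}^{-1}(\Z[\wt T],\Z[\wt T])$ for \emph{every} pair $(P_1,P_2)$. The key input is that the nontrivial class in $\Cls(\Z[\wt T])$ is represented by the centrally-generated ideal $(N,3)$, so \cref{lemma:lifting-two-sided-ideals} applies: tensoring with a suitable locally free bimodule moves any fibre bijectively onto the free fibre. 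This shows $\PC \Leftrightarrow \SFC$ for $\wt T \times C_2$, and then \cref{thm:BHJ-part-1} finishes. The point is that one must handle the nontrivial projective classes directly rather than pretend they do not exist.
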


This demonstrates that the classification of finite groups with PC suggested by \cref{thm:main-group-theory} is not complete.
We can attempt to remedy this via the following, which is an analogue of \cref{thm:main-group-theory}.

\begin{thm} \label{thm:TxC2-group-theory}
Let $G$ be a finite group such that $G \twoheadrightarrow \wt T \times C_2$. Then the following are equivalent:
\begin{clist}{(i)}	
\item
	$G$ has an Eichler quotient $\wt T \times C_2$
\item	
	$G$ has no quotient of the form 
	$Q_{4n}$ for $n \ge 6$,
	$\wt T \times C_2^2$, $\wt T \times Q_{12}$, $(Q_8 \rtimes \wt T) \times C_2$, $\wt T \times Q_{20}$, $\wt T \times \wt O$ or $\wt T^2 \times C_2$.
\end{clist}	
\end{thm}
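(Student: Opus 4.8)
The plan is to derive this as an application of the Fundamental Lemma (\cref{lemma:fundamental-lemma}), in exactly the same spirit as the proof of \cref{thm:main-group-theory}. Set $S = \{C_1, \wt T, \wt T \times C_2\}$. One checks that these are precisely the Eichler simple quotients of $\wt T \times C_2$: the remaining quotients, namely $C_2$, $C_3$, $C_6$, $A_4$ and $A_4 \times C_2$, each have a proper quotient with the same value of $m_\H$ (all of these are $0$), so none is Eichler simple, and hence $S$ is closed under quotients. Each of the three groups in $S$ is itself Eichler simple: this is a finite computation of $m_\H$ on the proper quotients of $\wt T$ and of $\wt T \times C_2$, using $m_\H(\wt T)=1$ and $m_\H(\wt T\times C_2)=2$. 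Thus $S$ satisfies the hypotheses of \cref{lemma:fundamental-lemma}.

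The first substantive step is a reduction of condition (i) to an $S$-Eichler statement. Since $\wt T \times C_2 \in S$, (i) trivially implies that $G$ is $S$-Eichler. Conversely, $m_\H$ is monotone under quotients, so the hypothesis $G \twoheadrightarrow \wt T \times C_2$ forces $m_\H(G) \ge m_\H(\wt T \times C_2) = 2$; as the members of $S$ have $m_\H$ equal to $0$, $1$ and $2$ respectively, the only element of $S$ of which $G$ can be an Eichler cover is $\wt T \times C_2$ itself, which gives (i). So under the standing hypothesis, (i) is equivalent to the statement that $G$ is $S$-Eichler, and \cref{lemma:fundamental-lemma} rewrites this as: $G$ has no quotient in $\MNEC(S)$.

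It remains to check that, for $G$ with $G \twoheadrightarrow \wt T \times C_2$, having a quotient in $\MNEC(S)$ is the same as having a quotient in the list $L$ appearing in (ii). One direction is quick: a short calculation shows every group in $L$ has $m_\H > 2$ — for $Q_{4n}$ with $n \ge 6$ from the standard formula for $m_\H(Q_{4n})$, and for the other seven entries by direct computation — so none of them is $S$-Eichler, and \cref{lemma:fundamental-lemma} then hands each of them, and hence $G$, a quotient in $\MNEC(S)$. For the converse I would first compute $\MNEC(S)$ by the two-part method of \cref{thm:main-group-theory}: building on the computations of $\MNEC(C_1)$ and $\MNEC(\wt T)$ already carried out there, one runs the algorithm of \cref{s:MNEC-algorithms} to obtain the small minimal non-Eichler covers, and classifies the infinite families (the relevant $Q_{4n}$, together with covers of $\wt T \times C_2$ obtained by adjoining extra $\wt T$ or $C_2$ factors) by the cohomological arguments of \cref{s:groups-exceptional}. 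Given a quotient $M \in \MNEC(S)$ of $G$, I would then apply Goursat's lemma to the two surjections onto $\wt T \times C_2$ and $M$: $G$ surjects onto the fibre product of $\wt T \times C_2$ and $M$ over a common quotient $Q$, and a case analysis over the (few) possibilities for $M$ and $Q$ shows that this fibre product, or one of its quotients, lies in $L$. For instance, when $M$ has no nontrivial quotient in common with $\wt T \times C_2$ one obtains $G \twoheadrightarrow (\wt T \times C_2) \times M$, which surjects onto $\wt T \times C_2^2$, $\wt T \times Q_{12}$, $\wt T \times Q_{20}$, $\wt T^2 \times C_2$ or $\wt T \times \wt O$ according to $M$.

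The main obstacle is this last case analysis together with the completeness of the $\MNEC(S)$ computation: one must be confident that the algorithmic search has found every minimal non-Eichler cover and correctly separated off the infinite families, and then verify — for each such cover $M$ and each admissible common quotient $Q$ with $\wt T \times C_2$ — that the resulting fibre product never escapes $L$. As in \cref{thm:main-group-theory}, the delicate points are the bookkeeping of the semidirect product $Q_8 \rtimes \wt T$ and its $C_2$-extension, and keeping track of the interaction between the $\wt T$ direction and the $C_2$ direction, in particular deciding when an $\MNEC(S)$-quotient already supplies the $C_2$ needed for membership in $L$ and when it must be adjoined.
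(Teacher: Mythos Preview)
Your approach via the Fundamental Lemma with $S = \{C_1, \wt T, \wt T \times C_2\}$ is correct in outline, but it is a substantial detour compared to the paper's proof. The paper does not use the Fundamental Lemma at all here: it invokes \cref{prop:groups1} directly for the single group $H = \wt T \times C_2$. Since $G \twoheadrightarrow \wt T \times C_2$ is already assumed, \cref{prop:groups1} gives immediately that (i) holds if and only if $G$ has no quotient in $\MNEC(\wt T \times C_2)$. The paper then just reads off $\MNEC_{B_1}(\wt T \times C_2)$ from the computer calculations recorded in \cref{fig:G_2} and Appendix~\ref{s:tables}: it is exactly the six groups $\wt T \times C_2^2$, $\wt T \times Q_{12}$, $(Q_8 \rtimes \wt T) \times C_2$, $\wt T \times Q_{20}$, $\wt T \times \wt O$, $\wt T^2 \times C_2$, with the $Q_{4n}$ clause accounting for the members of $\MNEC(\wt T \times C_2)$ that were filtered out by $B_1$. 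That is the entire proof.

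By contrast, your set $\MNEC(S)$ contains many groups (such as $Q_8$, $Q_{12}$, $Q_{16}$, $Q_{20}$, $\wt O$, $\wt I$, $Q_8 \rtimes \wt T$, $\wt T^2$, \ldots) that do not themselves surject onto $\wt T \times C_2$, which is precisely why you then need the Goursat fibre-product case analysis to recombine each of them with the hypothesis $G \twoheadrightarrow \wt T \times C_2$ and land back in $L$. All of that work is exactly what \cref{prop:groups1} already encapsulates: by computing $\MNEC$ of the single group $\wt T \times C_2$ rather than of the three-element set $S$, every minimal non-Eichler cover produced already has $\wt T \times C_2$ as a quotient, so no fibre-product bookkeeping is required. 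In short, you are using the set version (\cref{lemma:fundamental-lemma}) where the single-group version (\cref{prop:groups1}) suffices, and then manually undoing the extra generality.
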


If we could show that $\wt T \times C_2$ had PC lifting, then we would have completed the classification of finite groups with PC provided the following groups all fail PC: $G_{(96,66)}$, $\wt I \times C_2$, $G_{(384, 18129)}$, $G_{(1152,155476)}$, $\wt T \times Q_{12}$, $(Q_8 \rtimes \wt T) \times C_2$, $\wt T \times Q_{20}$, $\wt T \times \wt O$, $\wt T^2 \times C_2$ and $Q_8 \rtimes \wt T^n$ for $n \ge 1$.
This gives an iterative process whereby, if one of these groups has PC, we could prove a result such as \cref{thm:TxC2-group-theory} above and replace the group with several others.
Provided that PC can be determined for the infinite family $Q_8 \rtimes \wt T^n$, and all groups have PC lifting, this has the potential to reduce the complete classification of the finite groups with PC to finite computation.
At present, we know that $\wt T \times C_2$ has $\SFC$ lifting (\cref{prop:TxC2-lifting}) but PC lifting remains unknown (\cref{question:PC-lifting-TxC2}). By \cref{thm:BHJ-part-1}, $\wt T \times Q_{12}$ and $\wt T \times Q_{20}$ have SFC but the possibility remains that they fail PC.

In \cref{s:results}, we establish a number of additional partial classifications. Using Theorem \ref{thm:main-group-theory}, we study groups with $|G|$ bounded and $m_{\H}(G)$ bounded respectively (\cref{ss:classification-partial}). We prove \cref{thm:main-prescribed}, which involves establishing an analogue of Theorem \ref{thm:main-group-theory} for groups which have a $C_2^2$ quotient (\cref{ss:classification-prescribed-quotient}). 
Finally, we reflect on the applications of projective cancellation to topology given in \cite{Ni19,Ni20b} in the case where $G$ has periodic cohomology (\cref{ss:periodic-cohomology}). Our results lead to new and simpler proofs of the results obtained in those articles.

\subsection*{Organisation of the paper} The paper will be structured into three parts.
In \cref{p:PC-lifting}, we develop the theory of cancellation for locally free $\Z G$-modules and establish Theorems \ref{thm:main-group-rings} and \ref{thm:main-cancellation}.

In \cref{p:Group-theory}, we introduce and study the class of Eichler simple groups $\EE$. We prove the Fundamental Lemma (\cref{lemma:fundamental-lemma}).
We give algorithms for computing $\MNEC(H)$ for a group $H$.

In \cref{p:Computations}, the main computations are carried out. We calculate $\MNEC(\wt T^n \times \wt I^m)$ (without a computer program). The results of our computer calculations are presented in a table in Appendix \ref{s:tables}. 
Using both parts, we complete the proof of Theorem \ref{thm:main-group-theory}.

\subsection*{Acknowledgements}

Work on this article started while the author was a PhD student at University College London and Theorems \ref{thm:main-group-rings} and \ref{thm:main-cancellation} appeared in the author's PhD thesis (see \cite[Theorems A, B]{Ni21b}).
I would like to thank my PhD supervisor F. E. A. Johnson for his guidance and many interesting conversations. 
I would like to thank Werner Bley, Henri Johnston and Tommy Hofmann for many interesting conversations and for taking up the project (in parallel) of determining SFC for the groups in \cref{thm:main-group-theory} (a)-(c).
I would additionally like to thank Derek Holt for advice on group theoretic computations, and Alex Bartel, Henri Johnston and Mark Powell for helpful comments on the manuscript.
Lastly, I would like to thank Andy Thomas, for his assistance with using the Maths NextGen Compute Cluster, where much of the computations were run.
This work was supported by EPSRC grant EP/N509577/1, the Heilbronn Institute for Mathematical Research, and a Rankin-Sneddon Research Fellowship from the University of Glasgow.

\renewcommand{\depth}{2}
\addtocontents{toc}{\protect\setcounter{tocdepth}{\depth}}

\setcounter{tocdepth}{2}
\tableofcontents


\part{Cancellation and the relative Eichler condition}
\label{p:PC-lifting}


The goal of this part will be to establish Theorems \ref{thm:main-group-rings} and \ref{thm:main-cancellation} from the introduction.
Our approach will be to prove a general relative cancellation for $\Z$-orders in finite-dimensional semisimple $\Q$-algebras (\cref{thm:canc-general}). 
In \cref{s:prelim-LF-modules}, we develop the theory of locally free modules over orders. Much of this is taken from Swan \cite{Sw80,Sw83} and Fr\"{o}hlich \cite{Fr73}, but we offer slight variations on these results throughout (particularly 
\cref{prop:LF=>mod-n-free}, 
\cref{cor:lf-bimodule}, \cref{lemma:coset=K_R} and \cref{thm:coset=K_R}) which we believe have not previously appeared in this form. 
In \cref{s:main-cancellation-thm}, we then establish \cref{thm:canc-general}, our main cancellation theorem for orders. In 
\cref{s:specialisation-to-ZG}, we specialise to the case $\l=\Z G$ and prove Theorems \ref{thm:main-group-rings} and \ref{thm:main-cancellation}.

If $M$ is an $R$-module and $f:R \to S$ is a ring homomorphism, we will write the $S$-module which is image of $M$ under extension of scalars as $S \otimes M$, $S \otimes_R M$ or $f_\#(M)$.

\section{Locally free modules and fibre squares} \label{s:prelim-LF-modules}

We will now give a brief summary of the theory of locally free modules over orders in finite-dimensional semisimple $\Q$-algebras. 
Throughout this section, let $A$ be a finite-dimensional semisimple $\Q$-algebra and let $\Lambda$ be a $\Z$-order in $A$, i.e. a subring of $A$ which is finitely generated as an abelian group under addition and is such that $\Q \cdot \Lambda = A$. For example, we can take $\Lambda = \Z G$ and $A = \Q G$ for a finite group $G$.

\subsection{Preliminaries on locally free modules and fibre squares} \label{ss:LF-modules}

For a prime $p$, let $\Z_p$ denote the $p$-adic integers and let $\Lambda_p = \Z_p \otimes_\Z \Lambda$. We say a $\Lambda$-module $M$ is \textit{locally projective} if $M_p = \Z_p \otimes_\Z M$ is a projective $\Lambda_p$-module for all primes $p$. 
The following is well-known.

\begin{prop}[\hspace{-1mm}\text{\cite[Lemma 2.1]{Sw80}}] \label{prop:proj=locally-proj}
Let $M$ be a $\Lambda$-module. Then $M$ is projective if and only if $M$ is locally projective.
\end{prop}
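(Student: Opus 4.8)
\emph{Proof proposal.} The forward implication is immediate: extension of scalars along the flat ring map $\Z \to \Z_p$ (equivalently along $\Lambda \to \Lambda_p$) sends projective modules to projective modules, so if $M$ is projective then each $M_p = \Z_p \otimes_\Z M$ is projective over $\Lambda_p$. So the content is the converse, and my plan is as follows. Since $\Lambda$ is Noetherian (it is finitely generated as an abelian group) and $M$ is finitely generated, choose a short exact sequence $0 \to K \to F \to M \to 0$ with $F = \Lambda^n$ free and $K$ finitely generated; then $K$ is finitely presented over $\Lambda$, and as a subgroup of $F \cong \Z^{rn}$ (where $r = \rk_\Z \Lambda$) it is a finitely generated free abelian group. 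Now $M$ is projective if and only if this sequence splits, i.e.\ if and only if $\id_K$ lies in the image of the restriction map
\[
\phi \colon \Hom_\Lambda(F, K) \longrightarrow \Hom_\Lambda(K, K), \qquad f \mapsto f|_K ,
\]
and since $\coker(\phi) = \Ext^1_\Lambda(M, K)$ (apply $\Hom_\Lambda(-,K)$ to the sequence and use $\Ext^1_\Lambda(F,K) = 0$), it suffices to prove that $\phi$ is surjective.

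The key point is that $\coker(\phi)$ is a finitely generated abelian group — indeed $\Hom_\Lambda(K,K)$ embeds in the finitely generated group $\Hom_\Z(K,K)$ — and that the formation of $\phi$ commutes with $-\otimes_\Z \Z_p$. For the latter: $\Hom_\Lambda(F,K) = K^n$, so $\Hom_\Lambda(F,K)\otimes_\Z\Z_p \cong K_p^n \cong \Hom_{\Lambda_p}(F_p,K_p)$; and applying $\Hom_\Lambda(-,K)\otimes_\Z\Z_p$ to a finite presentation $\Lambda^b \to \Lambda^a \to K \to 0$ and using flatness of $\Z_p$ (together with the identification just made) gives $\Hom_\Lambda(K,K)\otimes_\Z\Z_p \cong \Hom_{\Lambda_p}(K_p,K_p)$. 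Taking cokernels of the compatible maps then yields $\coker(\phi)\otimes_\Z\Z_p \cong \coker(\phi_p)$, where $\phi_p\colon \Hom_{\Lambda_p}(F_p,K_p)\to\Hom_{\Lambda_p}(K_p,K_p)$ is the analogous restriction map.

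To finish, I would observe that for each prime $p$ the sequence $0 \to K_p \to F_p \to M_p \to 0$ splits because $M_p$ is $\Lambda_p$-projective, so $\phi_p$ is surjective and $\coker(\phi_p) = 0$. Hence $\coker(\phi)$ is a finitely generated abelian group with $\coker(\phi) \otimes_\Z \Z_p = 0$ for every prime $p$, which forces $\coker(\phi) = 0$. Therefore $\phi$ is surjective, the sequence $0 \to K \to F \to M \to 0$ splits, $M$ is a direct summand of $F$, and so $M$ is projective.

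The step I expect to be the main obstacle — really the only non-formal point — is the flat base-change statement $\Hom_\Lambda(K,K)\otimes_\Z\Z_p \cong \Hom_{\Lambda_p}(K_p,K_p)$, since it is essential that $K$ (not just the free module $F$) behave well under $p$-adic completion; this is exactly where the finite presentation of $K$ over the Noetherian ring $\Lambda$ is used. Everything else (the $\Ext$ interpretation of $\coker(\phi)$, splitting criteria, and the fact that a finitely generated abelian group vanishing after every $-\otimes_\Z\Z_p$ is zero) is routine.
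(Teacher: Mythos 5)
The paper does not prove this statement itself but cites Swan \cite[Lemma 2.1]{Sw80}; your argument is correct and is essentially the standard one behind that citation, namely that $\coker(\phi)=\Ext^1_\Lambda(M,K)$ is a finitely generated abelian group whose formation commutes with $-\otimes_\Z\Z_p$ and which vanishes after each such base change, hence vanishes. All the supporting steps (Noetherianness of $\Lambda$, finite presentation of $K$, the flat base-change of $\Hom$ along $\Z\to\Z_p$, and the final ``f.g.\ abelian group killed by every $-\otimes_\Z\Z_p$ is zero'') are filled in correctly.
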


Similarly, we say that $M$ is \textit{locally free (of rank $n$)} if there exists $n \ge 1$ for which $M_p$ is a free $\Lambda_p$-module of rank $n$ for all $p$ prime.
In the special case where $A = \Q G$ and $\Lambda = \Z G$ for a finite group $G$, we have the following refinement of Proposition \ref{prop:proj=locally-proj}.

\begin{prop}[\hspace{-1mm}\text{\cite[p156]{Sw80}}] \label{prop:free=locally-free}
Let $M$ be a $\Z G$-module. Then $M$ is projective if and only if $M$ is locally free.
\end{prop}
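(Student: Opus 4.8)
The plan is to combine Proposition \ref{prop:proj=locally-proj} with the special structure of $\Q G$ — specifically, that every simple component of $\Q G$ has a well-defined reduced rank and that projective $\Z G$-modules have no "rank jumps" at different rational primes. Since Proposition \ref{prop:proj=locally-proj} already gives that $M$ is projective if and only if $M_p$ is projective over $\Z_p G$ for all $p$, it suffices to show that, for $M$ projective, each localization $M_p$ is in fact \emph{free} over $\Z_p G$, and of a rank $n$ independent of $p$. The first step is to recall that $\Z_p G$ is a semiperfect ring (it is a $\Z_p$-order, module-finite over the complete local ring $\Z_p$), so finitely generated projective $\Z_p G$-modules decompose uniquely as sums of indecomposable projectives $P_1^{a_1} \oplus \cdots \oplus P_k^{a_k}$ corresponding to the blocks/PIMs of $\Z_p G$. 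The key input that forces freeness is that for $\Z_p G$ the indecomposable projectives, after tensoring with $\Q_p$, exhaust the simple $\Q_p G$-modules with the \emph{regular} multiplicities — equivalently, the Cartan-type argument that $\Z_p G$ acting on itself already realizes every indecomposable projective, and rationally $\Q_p G$ is a product of matrix rings over fields (no division algebra obstruction survives $p$-adically in the relevant sense because one only needs equality of classes in $K_0$ after a suitable reduced-rank computation).

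Concretely, I would argue as follows. Let $M$ be a finitely generated projective $\Z G$-module. By Proposition \ref{prop:proj=locally-proj}, each $M_p$ is projective over $\Z_p G$. Rationally, $\Q \otimes_\Z M$ is a projective (hence free, since $\Q G$ is semisimple) $\Q G$-module? — no: here one must be careful, $\Q G$ is semisimple but not simple, so $\Q \otimes M$ need not be free. Instead, the right invariant is: for each simple factor $A_i$ of $\Q G$ with $A_i = M_{n_i}(D_i)$, the multiplicity of $A_i$ in $\Q \otimes M$ is some integer $r_i$, and the statement "$M$ is locally free of rank $n$" is equivalent to $r_i = n \cdot \dim_{\Q}(A_i)/(\text{something})$ — i.e. $\Q \otimes M \cong (\Q G)^n$. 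So the real content is: \emph{projective $\Z G$-modules have $\Q \otimes M$ free over $\Q G$}. This is where the hypothesis $\Lambda = \Z G$ (as opposed to a general order) is used: it follows from Swan's theorem that projective $\Z G$-modules are locally free, which in turn rests on the fact that $\Z_p G$ has no nontrivial idempotents obstructing freeness — precisely, a projective $\Z_p G$-module $P$ with $\Q_p \otimes P \cong (\Q_p G)^n$ must itself be free, because the Cartan matrix considerations for the group ring $\Z_p G$ show the map $K_0(\Z_p G) \to K_0(\Q_p G)$ is injective and the image of $[\Z_p G]$ generates the relevant sublattice. Then globally, the Eichler–Jacobinski / Roiter type argument (or simply: a projective $\Z G$-module is determined up to stable isomorphism by its rational type and its ideal class, and the rational type of a projective is always the regular type) shows $\Q \otimes M$ is $\Q G$-free.

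So the step-by-step outline is: \textbf{(1)} reduce, via Proposition \ref{prop:proj=locally-proj}, to showing each $M_p$ is $\Z_p G$-free of a $p$-independent rank; \textbf{(2)} show $\Q \otimes_\Z M$ is free as a $\Q G$-module — this is the heart and uses that $M$ is projective over the \emph{group} ring (via Swan's original argument: project $M$ to a surjection from $(\Z G)^k$, and use that the kernel is again projective together with a counting/induction on rank, or cite \cite{Sw60-II}); \textbf{(3)} given that $\Q_p \otimes M_p \cong (\Q_p G)^n$, deduce $M_p \cong (\Z_p G)^n$ using that $\Z_p G$ is semiperfect and the localization map on $K_0$ is injective with the right image; \textbf{(4)} conversely, locally free trivially implies locally projective, hence projective by Proposition \ref{prop:proj=locally-proj}. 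The main obstacle is step \textbf{(2)}: showing the rational span of a projective $\Z G$-module is $\Q G$-free rather than merely a projective $\Q G$-module with possibly unequal multiplicities across simple components. This is genuinely special to $\Z G$ and is exactly the content of Swan's 1960 result; in a write-up I would cite \cite{Sw60-II} (or \cite{Sw80}, \text{\cite[p156]{Sw80}}) for this rather than reprove it, since the proposition is explicitly attributed there.
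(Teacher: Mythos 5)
Your bottom line matches the paper exactly: the paper states this proposition with a citation to Swan and no proof, and you end by saying you would cite \cite{Sw60-II} or \cite{Sw80} rather than reprove it. If that were your whole answer there would be nothing to say. But the sketch you give of the underlying argument contains one outright error and a couple of confusions that would derail any attempt to actually carry it out.

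The error: you assert that "rationally $\Q_p G$ is a product of matrix rings over fields (no division algebra obstruction survives $p$-adically in the relevant sense)." This is false when $p \mid |G|$. For example, one simple component of $\Q_2 Q_8$ is the quaternion algebra over $\Q_2$, which is a division algebra since $2$ is ramified in it; more generally $\Q_p G$ is split only when $p \nmid |G|$. The parenthetical about reduced ranks does not rescue this, because $K_0(\Q_p G)$ genuinely sees the division-algebra structure through the dimensions of the simple modules, which is exactly where the "rank jump" issue you worry about lives.

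The confusions: you say the key input is that "$\Z_p G$ has no nontrivial idempotents obstructing freeness," but for $p \nmid |G|$ the ring $\Z_p G$ decomposes as a nontrivial product (e.g.\ $\Z_3 C_2 \cong \Z_3 \times \Z_3$), so it has plenty of central idempotents; it is precisely the \emph{global} projectivity over $\Z G$, not any local absence of idempotents, that rules out modules like $\Z_3 \times 0$. The Cartan matrix whose invertibility enters is that of $\F_p G$ (Brauer's theorem), not of $\Z_p G$ as written, and it is used to show that a projective $\Z_p G$-module is determined by its rational class, not directly to force freeness. Finally, your step (2) is labelled "the heart" and then handled by citing \cite{Sw60-II}; but step (2) essentially \emph{is} Swan's theorem, so this is circular as a proof. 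Swan's actual argument for the rational-freeness step goes through Artin's induction theorem, reducing to cyclic groups, and is quite different from the local $K_0$ sketch you give.
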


Define the \textit{locally free class group} $C(\Lambda)$ to be the set of equivalence classes of locally free $\Lambda$-modules up to the relation $P \sim Q$ if $P \oplus \Lambda^i \cong Q \oplus \Lambda^j$ for some $i,j \ge 0$. By abuse of notation, we write $[P]$ to denote both the class $[P] \in C(\Lambda)$ and, where convenient, the set of isomorphism classes of locally free modules $P_0$ where  $[P_0]=[P]$. 

We also define the \textit{class set} $\Cls(\Lambda)$ as the collection of isomorphism classes of rank one locally free $\Lambda$-modules, which is a finite set by the Jordan-Zassenhaus theorem \cite[Section 24]{CR81}. This is often written as $\LF_1(\l)$ (see \cite{Sw83}). 
This comes with the stable class map
\[ [ \,\cdot \,]_{\Lambda} : \Cls(\Lambda) \to C(\Lambda), \quad P \mapsto [P].\] 
This map is always surjective due to the following.
 This was proven by A. Fr\"{o}hlich in \cite{Fr75} using idelic methods, generalising the case $\Lambda = \Z G$ first obtained by Swan \cite[Theorem A]{Sw60-II}. However, it is worth noting that the first part follows already from the cancellation theorems of Bass and Serre \cite[Section 2]{Sw80}.

\begin{prop}[\hspace{-1mm}\text{\cite[p115]{Fr75}}] \label{prop:LF-rank1}
Let $M$ is a locally free $\Lambda$-module. Then there exists a left ideal $I \subseteq \l$ which is a rank one locally free $\l$-module and such that $M \cong I \oplus \l^i$ for some $i \ge 0$. 
\end{prop}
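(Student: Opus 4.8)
The plan is to induct on the locally free rank $n$ of $M$, splitting off free direct summands one at a time until only a rank one module remains, and then to realise that rank one locally free module as a left ideal of $\Lambda$.

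\emph{Base case ($n=1$).} Here $M$ is a rank one locally free $\Lambda$-module; in particular $M$ is projective, hence $\Z$-torsion-free, and $M_p \cong \Lambda_p$ for every prime $p$. Set $V = \Q \otimes_\Z M$, a left $A$-module. Fixing any prime $p$, the isomorphism $M_p \cong \Lambda_p$ yields $\Q_p \otimes_\Q V \cong \Q_p \otimes_\Q A$ as $(\Q_p \otimes_\Q A)$-modules. Since for each primitive central idempotent $e$ of $A$ the quantities $\dim_\Q eV$ and $\dim_\Q eA$ are unchanged under $\Q_p \otimes_\Q -$, this forces $\dim_\Q eV = \dim_\Q eA$ for all such $e$, and as $A$ is semisimple we conclude $V \cong A$ as left $A$-modules. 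The composite $M \hookrightarrow V \xrightarrow{\sim} A$ then realises $M$ as a full $\Lambda$-sublattice of $A$. Because $M$ is finitely generated over $\Z$ and $A = \Q \cdot \Lambda$, some positive integer $N$ satisfies $NM \subseteq \Lambda$; as $N$ is a central unit of $A$, multiplication by $N$ is an isomorphism $M \xrightarrow{\sim} NM$ of left $\Lambda$-modules, and $I := NM$ is a left ideal of $\Lambda$ which is locally free of rank one. This is the case $i = 0$.

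\emph{Inductive step ($n \geq 2$).} By \cref{prop:proj=locally-proj}, $M$ is projective. The order $\Lambda$ is module-finite over $\Z$, which has Krull dimension one, so by Bass's stable range theorem the stable rank of $\Lambda$ is at most $2$; by the cancellation and unimodular element theorems of Bass and Serre in this setting (see \cite[Section 2]{Sw80}), a projective $\Lambda$-module of rank $\geq 2$ has a free direct summand, i.e.\ $M \cong \Lambda \oplus M'$ for a projective $\Lambda$-module $M'$. To see $M'$ is locally free of rank $n-1$, complete at a prime $p$: the ring $\Lambda_p$ is semiperfect, so the Krull--Schmidt theorem holds for its finitely generated modules, and cancelling the $\Lambda_p$ summand from $\Lambda_p \oplus M'_p \cong M_p \cong \Lambda_p^{\,n}$ gives $M'_p \cong \Lambda_p^{\,n-1}$. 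Applying the inductive hypothesis to $M'$ yields $M' \cong I \oplus \Lambda^{\,n-2}$ with $I$ a rank one locally free left ideal, and hence $M \cong I \oplus \Lambda^{\,n-1}$, completing the induction.

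The only non-formal ingredient is the Bass--Serre splitting used in the inductive step; this is classical and is precisely what the remark following the statement alludes to, so the work genuinely specific to our situation is confined to the bookkeeping above — the local--global comparison over the semisimple algebra $A$ together with clearing denominators by a central integer in the base case, and the Krull--Schmidt rank count after $p$-completion in the inductive step. (Alternatively one could start from a decomposition of $M$ into a direct sum of $n$ rank one locally free left ideals, via Roiter's lemma, and then collapse all but one of them using a Steinitz-type cancellation over $\Lambda$; the stable-range argument is the more economical route given what is already in place.)
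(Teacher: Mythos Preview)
Your argument is correct. The paper does not actually prove this proposition: it cites Fr\"{o}hlich's idelic proof as the primary reference and remarks that ``the first part follows already from the cancellation theorems of Bass and Serre \cite[Section 2]{Sw80}.'' Your write-up is precisely an explicit version of that Bass--Serre route, together with the standard embedding of a rank one locally free lattice as a left ideal (realise $\Q\otimes M\cong A$ and clear denominators). So you have supplied what the paper only gestures at; there is nothing to correct.

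One minor comment: your base-case computation that $\Q\otimes M\cong A$ is in fact \cref{prop:LF=>mod-n-free} in the paper (proved there via Noether--Deuring), so in context you could simply quote that rather than rederive it via the idempotent dimension count. Either way the logic is sound.
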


We say that $\Lambda$ has \textit{locally free cancellation} (LFC) if $P \oplus \Lambda \cong Q \oplus \Lambda$ implies $P \cong Q$ for all locally free $\Lambda$-modules $P$ and $Q$. By \cref{prop:LF-rank1}, we have that $\Lambda$ has LFC if and only if $[\,\cdot\,]_{\Lambda}$ is bijective, i.e $|\Cls(\l)| = |C(\Lambda)|$. When $\l=\Z G$, this coincides with projective cancellation (PC).

More generally, we say that a class $[P] \in C(\Lambda)$ has cancellation if $P_1 \oplus \Lambda \cong P_2 \oplus \Lambda$ implies $P_1 \cong P_2$ for all $P_1, P_2 \in [P]$.
We say that $\Lambda$ has \textit{stably free cancellation (SFC)} when $[\l]$ has cancellation, i.e. when every stably free $\l$-module is free.
We write $\Cls^{[P]}(\Lambda) = [\,\cdot\,]_{\Lambda}^{-1}([P])$ and $\SF(\Lambda) = \Cls^{[\Lambda]}(\Lambda)$ so that, by \cref{prop:LF-rank1}, $[P] \in C(\l)$ has cancellation if and only if $|\Cls^{[P]}(\l)| = 1$.

Recall that $[P] \in C(\Lambda)$ can be represented as a graded tree with vertices the isomorphism classes of non-zero modules $P_0 \in [P]$, edges between each $P_0 \in [P]$ and $P_0 \oplus \Lambda \in [P]$ and with grading from the rank of each locally free $\Lambda$-module. 
By \cref{prop:LF-rank1}, this takes the following simple form where the set of minimal vertices corresponds to $\Cls^{[P]} (\Lambda)$ (see \cref{figure:fork}).

\begin{figure}[h] \vspace{-2mm}  \center
\begin{tikzpicture}
\draw[fill=black] (0,0) circle (2pt);
\draw[fill=black] (1,0) circle (2pt);
\draw[fill=black] (2,0) circle (2pt);
\draw[fill=black] (3,0) circle (2pt);
\draw[fill=black] (4,0) circle (2pt);
\draw[fill=black] (2,1) circle (2pt);
\draw[fill=black] (2,2) circle (2pt);
\draw[fill=black] (2,3) circle (2pt);
\node (a) at (2,3.6) {$\vdots$};
\draw[thick] (0,0) -- (2,1) (1,0) -- (2,1) (2,0) -- (2,1) (3,0) -- (2,1) (4,0) -- (2,1) -- (2,2) -- (2,3);
\end{tikzpicture}
\caption{Tree structure for $[P] \in C(\Lambda)$}
\label{figure:fork}
\end{figure}
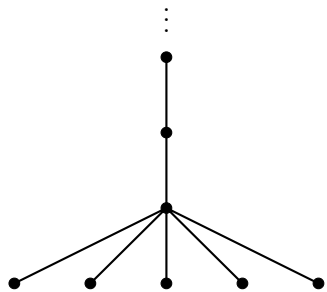

The following is a consequence of a general cancellation theorem of Jacobinski. The proof depends on deep results of Eichler on strong approximation \cite{Ei37}.

\begin{thm}[\hspace{-1mm}\text{\cite[Theorem 4.1]{Ja68}}] \label{thm:jacobinski}
Suppose $\l$ satisfies the Eichler condition. Then $\l$ has LFC. In particular, $\Cls^{[P]}(\l) = \{ P\}$ for all $P \in \Cls(\l)$.
\end{thm}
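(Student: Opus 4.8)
The statement asserts both that $[\,\cdot\,]_\l\colon\Cls(\l)\to C(\l)$ is a bijection and that every fibre is a singleton, and these are equivalent, so the plan is to prove that $[\,\cdot\,]_\l$ is injective (it is already surjective, as noted before \cref{prop:LF-rank1}). The first step is to reduce the cancellation statement to this injectivity. Given locally free $\l$-modules $P$, $Q$ with $P\oplus\l\cong Q\oplus\l$, \cref{prop:LF-rank1} provides rank one locally free ideals $I,J\subseteq\l$ with $P\cong I\oplus\l^i$ and $Q\cong J\oplus\l^j$; tensoring $I\oplus\l^{i+1}\cong J\oplus\l^{j+1}$ with $\Q$ and comparing $A$-module structures gives $i=j$, hence $[I]=[J]$ in $C(\l)$, and injectivity of $[\,\cdot\,]_\l$ then yields $I\cong J$ and $P\cong Q$. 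The same injectivity gives $\Cls^{[P]}(\l)=[\,\cdot\,]_\l^{-1}([P])=\{P\}$, so both assertions follow at once.

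The second step is the idelic description of the two sides. Writing $\wh{\l}=\prod_p\l_p$ and $\wh{A}$ for the finite adele ring of $A$, a rank one locally free ideal $I\subseteq\l$ is a full $\l$-lattice with $I_p=\l_p x_p$ for suitable $x_p\in A_p^\times$, and $I\mapsto(x_p)_p$ identifies $\Cls(\l)$ with the double coset space $\wh{\l}^{\,\times}\backslash\wh{A}^{\,\times}/A^\times$, isomorphisms of left $\l$-modules corresponding to right translation by $A^\times$. Under this identification the stable class map becomes the quotient map onto $C(\l)$, and the reduced norm $\operatorname{nr}\colon\wh{A}^{\,\times}\to J(Z(A))$ descends to compatible maps from $\Cls(\l)$ and from $C(\l)$ into the ray class group $\operatorname{nr}(\wh{\l}^{\,\times})\backslash J(Z(A))/\operatorname{nr}(A^\times)$.

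The third step is where the Eichler condition enters. By the Hasse--Schilling--Maass norm theorem the reduced norm of each simple component of $A$ has image the subgroup of elements of $Z(A)^\times$ positive at the ramified real places, and by Eichler's strong approximation theorem the norm-one group $\SL_1$ of each simple component of $A$ is dense in its finite-adelic analogue — this last statement holds precisely because no simple component is a totally definite quaternion algebra. Combining these, the reduced norm induces bijections $\Cls(\l)\xrightarrow{\ \sim\ }\operatorname{nr}(\wh{\l}^{\,\times})\backslash J(Z(A))/\operatorname{nr}(A^\times)$ and $C(\l)\xrightarrow{\ \sim\ }\operatorname{nr}(\wh{\l}^{\,\times})\backslash J(Z(A))/\operatorname{nr}(A^\times)$, compatible with $[\,\cdot\,]_\l$; hence $[\,\cdot\,]_\l$ is a bijection and $\l$ has $\mathrm{LFC}$. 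This is essentially Jacobinski's theorem \cite{Ja68}; a detailed treatment built on Eichler's work can be found in Swan \cite{Sw80,Sw83} or Curtis--Reiner \cite{CR81}.

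The main obstacle is the strong approximation input in the third step: showing that $\SL_1$ of a simple $\Q$-algebra is dense in its finite adeles — equivalently, that the class number of $\l$ equals a ray class number — is the deep ingredient, and it is exactly here that totally definite quaternion algebras must be excluded, so the Eichler condition is indispensable. Everything else is formal bookkeeping with lattices, ideles and reduced norms.
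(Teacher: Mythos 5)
Your sketch correctly reproduces the classical idelic proof via reduced norms, the Hasse--Schilling--Maass norm theorem and Eichler's strong approximation, which is exactly the content of the cited \cite[Theorem 4.1]{Ja68}; the paper does not prove this theorem itself but cites it, noting only that the argument depends on Eichler's work on strong approximation. The remark after \cref{thm:eichler-quotient} records an alternative derivation of \cref{thm:jacobinski} from \cref{thm:coset=K_R} together with Swan's \cref{thm:eichler-quotient}, but this is a repackaging (through Milnor squares and a $K_1$-coset count) rather than a genuinely different argument, since the strong-approximation input is still carried inside Swan's result.
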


We will now observe that locally free $\Lambda$-modules cannot be detected on $A$ or on any finite ring quotients of $\Lambda$. For example, if $\l = \Z G$, then $M \in \Cls(\Z G)$ has $\Q \otimes M \cong \Q G$ and $\F_p \otimes M \cong \F_p G$. More generally, we have the following.

\begin{prop} \label{prop:LF=>mod-n-free}
Let $M$ be a locally free $\Lambda$-module. 
Then $\Q \otimes M$ is a free $A$-module and, if $f : \Lambda \to \bar{\Lambda}$ is a surjective ring homomorphism for a finite ring $\bar{\Lambda}$, then $\bar{\Lambda} \otimes M$ is a free $\bar{\Lambda}$-module.
\end{prop}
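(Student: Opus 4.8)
The plan is to reduce everything to the defining property $M_p \cong \Lambda_p^n$ for all primes $p$, combined with the observation that a finitely generated module over a semisimple $\Q$-algebra is detected, Wedderburn factor by factor, by its $\Q$-dimension.

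For the first assertion, I would first record the elementary fact: for finitely generated $A$-modules $V$ and $W$, one has $V \cong W$ as soon as $\Q_p \otimes_\Q V \cong \Q_p \otimes_\Q W$ as $(\Q_p \otimes_\Q A)$-modules for a single prime $p$. Indeed, writing $A = \prod_i A_i$ with $A_i = e_i A$ simple, such an isomorphism forces $\dim_{\Q_p} e_i(\Q_p \otimes V) = \dim_{\Q_p} e_i(\Q_p \otimes W)$, hence $\dim_\Q e_i V = \dim_\Q e_i W$ since dimension is unchanged by extension of scalars to $\Q_p$; over the simple algebra $A_i$ a finitely generated module is determined up to isomorphism by its $\Q$-dimension, so $e_i V \cong e_i W$, and summing the $e_i$-components gives $V \cong W$. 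I would then apply this with $V = \Q \otimes M$ and $W = A^n$: picking any prime $p$ and using $M_p \cong \Lambda_p^n$ gives $\Q_p \otimes_\Q (\Q \otimes_\Z M) \cong \Q_p \otimes_{\Z_p} M_p \cong \Q_p \otimes_{\Z_p} \Lambda_p^n \cong (\Q_p \otimes_\Q A)^n \cong \Q_p \otimes_\Q A^n$, so $\Q \otimes M \cong A^n$ is free of rank $n$.

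For the second assertion, the extra ingredient is that the finite ring $\bar\Lambda$ need not be an algebra over a single $\Z_p$. I would use the canonical decomposition of a finite ring into its $p$-primary parts, $\bar\Lambda \cong \bigoplus_{p} \bar\Lambda_{(p)}$ (a finite direct sum over the primes dividing $|\bar\Lambda|$, each $\bar\Lambda_{(p)}$ a two-sided ideal which is a ring with identity killed by some power $p^{k_p}$), and note that this is at the same time a decomposition of left $\bar\Lambda$-modules compatible with the $\Lambda$-action coming from $f$. Since $p^{k_p}$ acts as zero on $\bar\Lambda_{(p)}$, the composite $\Lambda \to \bar\Lambda \to \bar\Lambda_{(p)}$ kills $p^{k_p}\Lambda$ and hence factors through $\Lambda/p^{k_p}\Lambda = \Lambda_p/p^{k_p}\Lambda_p$, in particular through $\Lambda_p$; therefore $\bar\Lambda_{(p)} \otimes_\Lambda M \cong \bar\Lambda_{(p)} \otimes_{\Lambda_p} (\Lambda_p \otimes_\Lambda M) = \bar\Lambda_{(p)} \otimes_{\Lambda_p} M_p \cong \bar\Lambda_{(p)} \otimes_{\Lambda_p} \Lambda_p^n \cong \bar\Lambda_{(p)}^n$. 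Tensoring $\bar\Lambda \cong \bigoplus_p \bar\Lambda_{(p)}$ with $M$ over $\Lambda$ (a finite direct sum, so it commutes with $\otimes_\Lambda M$) then gives $\bar\Lambda \otimes_\Lambda M \cong \bigoplus_p \bar\Lambda_{(p)}^n \cong \bar\Lambda^n$, free of rank $n$.

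I do not expect a genuine obstacle: both halves are driven entirely by $M_p \cong \Lambda_p^n$. The only points needing care are formal, namely keeping straight the base rings ($\Z$, $\Z_p$, $\Q$, $\Q_p$, $\Lambda$, $\Lambda_p$, $\bar\Lambda$) over which the various tensor products are taken, and checking that the Wedderburn idempotents in the first part and the $p$-primary ring decomposition in the second part are compatible with the relevant bimodule structures. If one preferred to avoid the $p$-primary decomposition, an alternative for the second part would be to observe that a finite ring $\bar\Lambda$ is a quotient of $\Lambda/N\Lambda$ for some $N\ge 1$ and then argue one prime $p \mid N$ at a time; the route via $\bigoplus_p \bar\Lambda_{(p)}$ seems cleanest.
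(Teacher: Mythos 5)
Your argument is correct, and it takes a genuinely different route from the paper's. For the rational part, the paper simply cites the Noether--Deuring theorem from the literature, whereas you reprove the (easier, semisimple) case directly by decomposing $A$ into Wedderburn factors, observing that a finitely generated module over a simple algebra is classified by its $\Q$-dimension, and pulling the numerical identity $\dim_{\Q_p} e_i(\Q_p\otimes V) = \dim_\Q e_iV$ back from a single local computation; that is a clean self-contained alternative. For the finite-ring part, the contrast is more substantial. The paper first reduces to rank one via \cref{prop:LF-rank1}, then shows $M/p \cong \Lambda/p$, \emph{lifts a specific generator} through the powers $p^k$ by a Hensel/Nakayama argument, glues these generators with the Chinese Remainder Theorem to get $M/n \cong \Lambda/n$, and finally handles arbitrary finite $\bar\Lambda=\Lambda/I$ by noting the quotient map factors through $\Lambda/n$ for $n$ generating $I\cap\Z$. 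You sidestep all of this: you decompose $\bar\Lambda$ into its $p$-primary ring factors $\bar\Lambda_{(p)}$ (each a two-sided ideal direct summand with its own identity), observe that the map $\Lambda\to\bar\Lambda_{(p)}$ factors through $\Lambda_p$, and then simply apply transitivity of base change to $M_p\cong\Lambda_p^n$. This eliminates both the reduction to rank one and the explicit choice of generators; you never touch the Hensel step because you argue entirely at the level of tensor functors rather than elements. The paper's route does have the modest advantage of producing explicit free generators of $M/n$, which can be handy in computations, but as a proof of the stated proposition your version is shorter and arguably more transparent. One small bookkeeping point worth stating explicitly when you write this up: the identification $\Lambda_p\otimes_\Lambda M\cong\Z_p\otimes_\Z M$ that you use implicitly in the chain $\bar\Lambda_{(p)}\otimes_\Lambda M\cong\bar\Lambda_{(p)}\otimes_{\Lambda_p}M_p$ is the usual $(\Lambda_p,\Lambda)$-bimodule isomorphism $\Z_p\otimes_\Z\Lambda\cong\Lambda_p$ followed by associativity of tensor; it is exactly this step that replaces the paper's Hensel lifting.
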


The result over $\Q$ is a consequence of the Noether-Deuring theorem and can be found in \cite[p.~407]{FRU74}. 
The result over finite rings is known \cite[Remark 4.9]{RU74} though we were not able to locate a proof in the literature except the case $\l = \Z G$ \cite[Theorem 7.1]{Sw60-II}, so we include one below.

\begin{proof}
We will start by considering the case $\bar{\l}=\Z/n\Z$.
First note that $\Lambda/n \cong (\Z/n\Z) \otimes_\Z \Lambda$. In particular, if $M$ is a $\Lambda$-module, then $M/n \cong (\Z/n\Z) \otimes_\Z M$ can be viewed as a $\Lambda/n$-module.

By Proposition \ref{prop:LF-rank1}, it suffices to consider the case where $M$ is locally free of rank one. By Proposition \ref{prop:proj=locally-proj}, we have that $\Z_p \otimes_\Z M \cong \Lambda_p$ for all $p$ prime. Since $\Z \hookrightarrow \Z_p$ induces an isomorphism $\Z/p\Z \to \Z_p/p\Z_p$, we have that
\begin{align*} M /p &\cong (\Z_p/p\Z_p) \otimes_\Z M \cong (\Z_p/p\Z_p) \otimes_{\Z_p} (\Z_p \otimes_\Z M)  \\ 
&\cong (\Z_p/p\Z_p) \otimes_{\Z_p} \Lambda_p \cong (\Z_p/p\Z_p) \otimes_\Z \Lambda \cong  \Lambda / p. \end{align*}
Let $a_p \in M$ be such that $[a_p] \in M/p$ maps to $1 \in \Lambda / p$ under this isomorphism. As in the proof of Hensel's lemma, we can check that $\Lambda/(p^k) \to M/(p^k)$, $1 \mapsto [a_p]$ is an isomorphism for all $k \ge 1$

If $n = p_1^{k_1} \cdots p_r^{k_r}$ is a factorisation into distinct primes, then the Chinese remainder theorem implies that there exists $\alpha_i \in \Z$ such that $\alpha_i \equiv 1$ mod $p_i$ and $\alpha_i \equiv 0$ mod $p_j$ for $i \ne j$. By the argument above, there exists $a_i \in M$ such that the map $1 \mapsto [a_i]$ gives an isomorphism $\Lambda/p^{k_i} \to M/p^{k_i}$. If $a = \sum_{i=1}^r \alpha_i a_i$, then $\Lambda/n \to M/n$, $1 \mapsto a$ is an isomorphism, as required.

Now let $\bar{\l}$ be an arbitrary finite ring.
Note that $\bar{\Lambda} = \Lambda/I$ for a two-sided ideal $I \subseteq \Lambda$ and, since $\Lambda$ is a $\Z$-order in $A$, we have that $\Z \subseteq \Lambda$. Since $\bar{\Lambda}$ is finite, $I \subseteq \Lambda$ must have finite index and so $I \cap \Z = (n) \subseteq \Z$ for some $n \ge 1$. In particular, this implies that $n \Lambda \subseteq I$ and so there is a composition $\Lambda \to \Lambda/n \to \Lambda/I$.
Hence, if $M$ is a locally free $\Lambda$-module, then $\bar{\Lambda} \otimes M \cong \bar{\Lambda} \otimes (M/n)$. Since $M/n$ is a free $\Lambda/n$-module by the case $\bar{\l}=\Z/n\Z$ above, $\bar{\Lambda} \otimes M$ must be a free $\bar{\Lambda}$-module.
\end{proof}

Let $\l$, $\l_1$ and $\Lambda_2$ be $\Z$-orders in finite-dimensional semisimple $\Q$-algebras $A$, $A_1$, $A_2$ respectively, let $\widebar{\l}$ be a finite ring and suppose there is a Milnor square:
\[
\mathcal{R} = 
\begin{tikzcd}
  \l \arrow[r, "i_2"] \arrow[d, "i_1"] & \l_2 \arrow[d,"j_2"] \\
  \l_1 \arrow[r,"j_1"] & \widebar{\l} 
\end{tikzcd}
\]
Since $\widebar{\l}$ is a finite ring, we have that $\Q \otimes \widebar{\l} = 0$. Since $\Q$ is a flat module, tensoring the above diagram with $\Q$ gives another pullback diagram which implies that the map
\[ \Q \otimes (i_1,i_2): \Q \otimes \l \to (\Q \otimes \l_1) \times (\Q \otimes \l_2)\]
is an isomorphism, i.e. $A \cong A_1 \times A_2$.

For a ring $R$, recall that
$K_1(R)=\GL(R)^{\text{ab}}$
where $\GL(R) = \bigcup_n \GL_n(R)$ with respect to the inclusions $\GL_n(R) \hookrightarrow \GL_{n+1}(R)$.
The following is a consequence of the Mayer-Vietoris sequence for $\mathcal{R}$ combined with \cref{prop:LF=>mod-n-free}. This is proven in \cite[6.2]{RU74}. 

\begin{prop} \label{prop:MV-special}
If $\mathcal{R}$ is as above, then there is an exact sequence
\[ K_1(\l) \xrightarrow[]{} K_1(\l_1) \times K_1(\l_2) \xrightarrow[]{} K_1(\widebar{\l}) \xrightarrow[]{\partial} C(\l) \xrightarrow[]{} C(\l_1) \times C(\l_2) \xrightarrow[]{} 0 \]	
where $\partial$ is the snake map and all other maps are functorial.	
\end{prop}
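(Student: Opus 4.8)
The statement to prove is \cref{prop:MV-special}: for a Milnor square $\mathcal{R}$ of $\Z$-orders with $\widebar{\l}$ finite, there is an exact sequence
$$K_1(\l) \to K_1(\l_1) \times K_1(\l_2) \to K_1(\widebar{\l}) \xrightarrow{\partial} C(\l) \to C(\l_1) \times C(\l_2) \to 0.$$

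The plan is to derive this from the classical Milnor--Mayer--Vietoris exact sequence in algebraic K-theory, which applies to any Milnor square and reads
$$K_1(\l) \to K_1(\l_1) \times K_1(\l_2) \to K_1(\widebar{\l}) \xrightarrow{\partial} \wt K_0(\l) \to \wt K_0(\l_1) \times \wt K_0(\l_2),$$
where the last map need not be surjective in general. (Here $\wt K_0$ denotes reduced $K_0$, or one can work with $K_0$ directly and adjust by the rank/free parts.) The task is therefore twofold: (1) replace the $\wt K_0$ terms by the locally free class groups $C(-)$, and (2) establish surjectivity of the final map $C(\l) \to C(\l_1) \times C(\l_2)$.

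\emph{Step 1: passage from $\wt K_0$ to $C(\l)$.} Recall that for a $\Z$-order $\Gamma$ in a semisimple $\Q$-algebra $B$, the class group $C(\Gamma)$ is the kernel of the rank/extension map $\wt K_0(\Gamma) \to \wt K_0(B)$ induced by $\Q \otimes -$, and equivalently is the subgroup generated by $[P] - [\Gamma^{(\rk P)}]$ for $P$ locally free. The key point is that the boundary map $\partial$ in the Milnor sequence lands in $C(\l)$: given a unit $u \in K_1(\widebar{\l})$ represented by $\overline{u} \in \GL_n(\widebar{\l})$, the module $\partial(u)$ is obtained by patching $\l_1^n$ and $\l_2^n$ along $\overline{u}$, and by \cref{prop:LF=>mod-n-free} (applied with $\bar\Lambda$ the finite quotient $\widebar\l$) this patched module is locally free — indeed its image in $A \cong A_1 \times A_2$ is free of rank $n$, and its reduction mod $\widebar\l$ is free, which via the local-global description of locally free modules over orders (using \cref{prop:proj=locally-proj}) forces local freeness. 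Hence $\image(\partial) \subseteq C(\l)$, and dually the sequence restricts: the three K-theory maps into $K_1(\widebar\l)$ are unchanged, and the tail $\partial : K_1(\widebar\l) \to \wt K_0(\l) \to \cdots$ factors as $K_1(\widebar\l) \xrightarrow{\partial} C(\l) \hookrightarrow \wt K_0(\l)$ with $C(\l) \to C(\l_1) \times C(\l_2)$ the restriction of $\wt K_0(\l) \to \wt K_0(\l_1) \times \wt K_0(\l_2)$. Exactness at $C(\l)$ is inherited from exactness at $\wt K_0(\l)$ in the Milnor sequence together with the fact that $C(\Gamma) = \ker(\wt K_0(\Gamma) \to \wt K_0(B))$ and that $A \cong A_1 \times A_2$ (established in the excerpt just before the statement), so an element of $C(\l)$ dies in $C(\l_1) \times C(\l_2)$ iff it dies in $\wt K_0(\l_1) \times \wt K_0(\l_2)$.

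\emph{Step 2: surjectivity of $C(\l) \to C(\l_1) \times C(\l_2)$.} This is the one place the general Milnor sequence does not immediately suffice, and I expect it to be the main obstacle. The strategy is to take a locally free $\l_1$-module $P_1$ and a locally free $\l_2$-module $P_2$, and by \cref{prop:LF-rank1} reduce to the case where each $P_i$ is a rank-one locally free left ideal $I_i \subseteq \l_i$, after stabilising so that $P_1$, $P_2$ have the same rank $n$. One then forms the fibre product $P := I_1 \times_{\widebar\l} I_2$ over the Milnor square, where the maps $I_i \otimes_{\l_i} \widebar\l \to \widebar\l^{(\text{free})}$ are chosen using \cref{prop:LF=>mod-n-free} (each $I_i \otimes \widebar\l$ is $\widebar\l$-free). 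By Milnor patching for projective modules over a Milnor square, $P$ is a projective $\l$-module with $\l_k \otimes_\l P \cong I_k$ for $k=1,2$; moreover $P$ is locally free since it is locally free at every prime (at primes not dividing $|\widebar\l|$ the square is "trivial" and $\l_p \cong (\l_1)_p \times (\l_2)_p$ makes $P_p$ free; at primes dividing $|\widebar\l|$ one uses that the local Milnor square still patches free modules to free modules, again invoking \cref{prop:LF=>mod-n-free} locally). Then $[P] \in C(\l)$ maps to $([I_1],[I_2])$, giving surjectivity. The care needed here is in the rank-matching/stabilisation bookkeeping and in verifying local freeness of the patched module at the bad primes — this is where I anticipate the bulk of the technical work, and where one must be careful that "projective patched over a Milnor square" upgrades to "locally free" using precisely the conclusion of \cref{prop:LF=>mod-n-free}. (The excerpt flags that this proposition is exactly the input that makes the Mayer--Vietoris sequence take this clean form, and cites \cite[6.2]{RU74}, so one could alternatively simply quote that reference and only sketch the verification; I would include the patching argument for completeness but lean on \cite{RU74} for the exactness assertions at the K-theory terms, which are formal.)

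Finally, exactness at $K_1(\widebar\l)$ and at $K_1(\l_1) \times K_1(\l_2)$ is identical to the corresponding assertions in the standard Milnor K-theory sequence for a Milnor square and requires no modification; functoriality of all maps except $\partial$ is automatic from functoriality of $K_1$ and of extension of scalars on class groups. Assembling Steps 1 and 2 with these formal facts yields the asserted six-term exact sequence.
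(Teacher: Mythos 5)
Your overall plan mirrors the paper's, which simply observes that the statement follows from Milnor's Mayer--Vietoris sequence for $\mathcal{R}$ combined with \cref{prop:LF=>mod-n-free}, and cites \cite[6.2]{RU74}. You propose exactly this route, correctly identify the two non-formal ingredients (restricting $\wt K_0$ to the class groups $C(-)$, and surjectivity of $C(\l) \to C(\l_1)\times C(\l_2)$), and you even flag the option of simply quoting the reference, which is what the paper does.

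There is, however, a real gap in the justification of your Step 1. You claim the patched module $P_u$ is locally free because it is projective, $\Q\otimes P_u$ is $A$-free, and $\widebar\l\otimes P_u$ is $\widebar\l$-free, attributing this to \cref{prop:proj=locally-proj} and a ``local-global description''. That inference is not valid for a general $\Z$-order: \cref{prop:proj=locally-proj} gives only local projectivity, and ``projective with free rational hull'' does not force ``locally free''. For example, with $\Gamma = \begin{pmatrix}\Z & \Z \\ p\Z & \Z\end{pmatrix} \subset M_2(\Q)$ and $P = (\Gamma e_{11})^{\oplus 2}$, the module $P$ is projective and $\Q_p\otimes P_p \cong M_2(\Q_p)$ is free, yet $P_p\not\cong\Gamma_p$ since $\Gamma_p e_{11}\not\cong\Gamma_p e_{22}$ by Krull--Schmidt. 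Appending ``$\widebar\l\otimes P_u$ is free'' does not by itself repair the implication, and \cref{prop:LF=>mod-n-free} cannot be the tool either: it goes from ``locally free'' to ``free reductions'', which is the opposite direction from what you need. What actually makes the conclusion true is a localization of the Milnor square itself. At primes $p\nmid|\widebar\l|$ one has $(\widebar\l)_p=0$, so $\l_p\cong(\l_1)_p\times(\l_2)_p$ and $(P_u)_p\cong\l_p^n$ trivially. At primes $p\mid|\widebar\l|$, $(P_u)_p$ is the Milnor patch of $(\l_1)_p^n$ and $(\l_2)_p^n$ along some $u_p\in\GL_n((\widebar\l)_p)$, and one uses that a $\Z_p$-order surjecting onto a finite quotient has surjective map on units (lift through $p$-power quotients via Nakayama, then through the finite quotient, where surjections of finite rings induce surjections on unit groups); this extends to $\GL_n$ over these semilocal rings and shows the gluing datum $u_p$ is trivial in the relevant double coset, so $(P_u)_p\cong\l_p^n$. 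The same local analysis is what your Step 2 parenthetical needs at the bad primes. The genuine role of \cref{prop:LF=>mod-n-free} in this argument is to guarantee that $\widebar\l\otimes P_i$ is $\widebar\l$-free so that a gluing isomorphism can be chosen at all; it does not deliver local freeness of the patch.
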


We will now give general conditions under which we can relate cancellation over two orders $\l_1$ and $\l_2$ when there is a map $f: \l_1 \to \l_2$.
The following was shown by Swan and generalises an earlier result of Fr\"{o}hlich \cite[VIII]{Fr75}.

\begin{thm}[\hspace{-1mm}\text{\cite[Theorem A10]{Sw83}}] \label{thm:LF-COR}
Let $f: \l_1 \to \l_2$ be a map of $\Z$-orders in a semisimple $\Q$-algebra $A$ such that the induced map $f_* : \Q \otimes_\Z \l_1 \twoheadrightarrow \Q \otimes_\Z \l_2$ is surjective.
Then the diagram
\[
\begin{tikzcd}
	\Cls(\l_1) \ar[r,"f_\#"] \ar[d,"\text{$[\,\cdot \,]_{\l_1}$}"] & \Cls(\l_2) \ar[d,"\text{$[\,\cdot \,]_{\l_2}$}"] \\
	C(\l_1) \ar[r,"f_\#"] &  C(\l_2)
\end{tikzcd}
\]
is a weak pullback square with all maps surjective.
\end{thm}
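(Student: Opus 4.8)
The plan is to unpack the term ``weak pullback square with all maps surjective'' into three concrete assertions and prove each in turn. Writing $P$ for the claimed pullback of $\Cls(\l_2) \to C(\l_2) \leftarrow C(\l_1)$, the comparison map $\Phi: \Cls(\l_1) \to P$, $M \mapsto (f_\#(M), [M]_{\l_1})$ is well-defined by functoriality, and we must show: (a) $f_\#: C(\l_1) \to C(\l_2)$ is surjective; (b) $f_\#: \Cls(\l_1) \to \Cls(\l_2)$ is surjective; (c) $\Phi$ is surjective, i.e. if $N \in \Cls(\l_2)$ and $x \in C(\l_1)$ satisfy $f_\#(x) = [N]_{\l_2}$ in $C(\l_2)$, then there is $M \in \Cls(\l_1)$ with $f_\#(M) \cong N$ and $[M]_{\l_1} = x$. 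Note that the vertical maps are both surjective by \cref{prop:LF-rank1}, so (b) in fact follows from (a) together with (c), but it is cleaner to treat surjectivity of $f_\#$ on class groups first.

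For (a), the key observation is that the hypothesis $f_*: A_1 := \Q\otimes\l_1 \twoheadrightarrow A_2 := \Q\otimes\l_2$ is surjective forces $A_1 \cong A_2 \times A'$ as $\Q$-algebras (a quotient of a semisimple algebra is a direct factor), so $A_2$ is a block of $A_1$ and the image $f(\l_1) \subseteq \l_2$ is a suborder. One reduces to the case $f$ injective by factoring through $f(\l_1)$; for an inclusion of orders $\Gamma \subseteq \l_2$ in the same semisimple algebra, locally $\Gamma_p = (\l_2)_p$ for all but finitely many $p$, and surjectivity of the induced map on locally free class groups is then a standard consequence of localisation together with \cref{prop:proj=locally-proj}: given a locally free $\l_2$-module, pull it back to $\Gamma$ at the bad primes using a common refinement, which is possible because $K_1$ of the relevant $p$-adic orders surjects onto the idele-class data. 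Here I would simply cite the idelic description of $C(\Gamma)$ and $C(\l_2)$ as in Fr\"ohlich \cite{Fr75} and observe that the natural map between the idele class groups is surjective. For (b), given $N \in \Cls(\l_2)$, use (a) to find $M_0 \in \Cls(\l_1)$ with $[M_0]_{\l_1}$ mapping to $[N]_{\l_2}$; then $f_\#(M_0)$ and $N$ lie in the same class $[N]_{\l_2} \in C(\l_2)$, so they are stably isomorphic: $f_\#(M_0) \oplus \l_2^{\,k} \cong N \oplus \l_2^{\,k}$. The point is then to realise the stabilisation already over $\l_1$: by \cref{prop:LF-rank1} applied to $M_0 \oplus \l_1^{\,k}$ we may replace $M_0$ by a rank-one representative $M$ of the same stable class with $f_\#(M) \oplus \l_2^{\,k} \cong f_\#(M_0)\oplus \l_2^{\,k} \cong N \oplus \l_2^{\,k}$, and since $f_\#(M)$ and $N$ are both rank-one locally free $\l_2$-modules, strong approximation / the Eichler-type cancellation available in rank one over the class set (more precisely, the fact that two rank-one locally free modules in the same stable class differ by an element of $\SF$-type that is absorbed by one free summand — this is exactly the ``fork'' picture of \cref{figure:fork}) gives $f_\#(M) \cong N$. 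This last absorption step is the technical heart and is where Swan's argument genuinely uses that we are comparing rank-one modules.

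For (c), the weak-pullback statement: suppose $N \in \Cls(\l_2)$ and $M_1 \in \Cls(\l_1)$ with $f_\#(M_1)$ and $N$ in the same class of $C(\l_2)$. By the argument just given for (b) there is $M \in \Cls(\l_1)$ with $[M]_{\l_1} = [M_1]_{\l_1}$ and $f_\#(M) \cong N$; this $M$ maps to $(N, [M_1]_{\l_1}) \in P$, proving $\Phi$ surjective. I expect the main obstacle to be exactly the rank-one ``absorption'' step in (b): making precise why, once two rank-one locally free $\l_2$-modules become isomorphic after adding $\l_2^{\,k}$, one can descend $k$ to $1$ — i.e. that in the class set one never needs more than a single free stabilisation. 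This is the content of Jacobinski's/Swan's cancellation machinery for ideals (\cref{prop:LF-rank1} plus the structure of the tree in \cref{figure:fork}), and in the write-up I would either invoke \cref{prop:LF-rank1} directly to see that every element of a stable class is $I \oplus \l^i$ for a rank-one ideal $I$ — so the minimal vertices of the tree for $f_\#(M_0)\oplus\l_1^{k}$ over $\l_1$ already contain a rank-one module mapping onto $N$ — or cite the relevant cancellation result. Everything else is formal manipulation with extension of scalars, flatness of $\Q$, and the functoriality of $[\,\cdot\,]_\l$ and $f_\#$.
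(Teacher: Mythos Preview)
The paper does not supply a proof of this theorem; it is quoted as \cite[Theorem~A10]{Sw83} with the remark that it generalises a result of Fr\"ohlich. So there is no ``paper's own proof'' to compare against, and the relevant question is whether your outline is a correct proof.

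It is not: steps (b) and (c) share a genuine gap. You reduce to the situation where $f_\#(M)$ and $N$ are two rank-one locally free $\l_2$-modules lying in the same stable class, and then claim that ``strong approximation / Eichler-type cancellation available in rank one'' forces $f_\#(M)\cong N$. But no Eichler hypothesis is in force here, and the fork picture you invoke says precisely the opposite: a single stable class can have many rank-one representatives (the prongs of the fork). So two rank-one modules in the same class of $C(\l_2)$ need \emph{not} be isomorphic, and there is nothing in your argument that singles out the specific target $N$ among those prongs. Varying $M$ over the rank-one representatives of $[M_1]_{\l_1}$ produces rank-one modules $f_\#(M)$ in the stable class $[N]_{\l_2}$, but you have given no mechanism to guarantee that one of them lands on $N$ rather than on some other prong.

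Swan's actual proof in \cite{Sw83} proceeds via the idelic (adelic) description of $\Cls(\l)$ and $C(\l)$ as double-coset spaces, where the surjectivity of $f_\#$ on $\Cls$ and the weak-pullback property become transparent: one works directly with ideles and lifts a representative of $N$ through the surjection $J(A_1)\twoheadrightarrow J(A_2)$, bypassing any cancellation argument. An alternative, in the spirit of \cref{subsection:fibre-squares}, is to use that $A_1\cong A_2\times A'$ gives a Milnor square and to construct a preimage of $N$ by gluing $N$ with a free module over the complementary order; this again avoids the cancellation step entirely. Either route shows that the surjectivity onto $\Cls(\l_2)$ is a \emph{construction}, not a cancellation statement, and that is the idea your outline is missing.
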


In particular, if $P_1 \in \Cls(\l_1)$ and $P_2 = f_\#(P_1) \in \Cls(\l_2)$, then this implies that the map
\[ f_\# : \Cls^{[P_1]}(\l_1) \to \Cls^{[P_2]}(\l_2)\]
is surjective. Hence, if $[P_1]$ has cancellation, then $[P_2]$ has cancellation.

Let $G$ be a finite group with quotient $H$. The situation of \cref{thm:LF-COR} arises when $\l_1 = \Z G$, $\l_2 = \Z H$ and $f : \Z G \to \Z H$ is induced by the quotient map and is itself surjective.
This implies that the properties PC (i.e. LFC) and SFC are closed under quotients of groups.

\subsection{Central Picard groups} \label{subsection:picard}

We will now consider the question of when a locally free $\l$-module can be represented by a two-sided ideal $I \subseteq \l$ and so has the additional structure of a bimodule. This was first considered by Fr\"{o}hlich \cite{Fr73} and Fr\"{o}hlich-Reiner-Ullom \cite{FRU74}.

Recall that, for a ring $R$, an $(R,R)$-bimodule $M$ is \textit{invertible} if there exists an $(R,R)$-bimodule $N$ and bimodules isomorphisms
\[ f: M \otimes_R N \to R, \quad g: N \otimes_R M \to R\]
such that the following diagrams commute:
\[
\begin{tikzcd}
	M \otimes_R N \otimes_R M \ar[r,"f \otimes \id"] \ar[d,"\id \otimes g"] & R \otimes_R M \ar[d] \\
	M \otimes_R R \ar[r] & M
\end{tikzcd}
\quad \quad
\begin{tikzcd}
	N \otimes_R M \otimes_R N \ar[r,"g \otimes \id"] \ar[d,"\id \otimes f"] & R \otimes_R N \ar[d] \\
	N \otimes_R R \ar[r] & N
\end{tikzcd}
\]
The \textit{central Picard group} $\Picent(R)$ is the group of $(R,R)$-bimodule isomorphism classes of $(R,R)$-bimodules $M$ for which $x m = m x$ for all $m \in M$ and central elements $x \in Z(R)$. 

In the special case where $R = \l$ is an order in a semisimple $\Q$-algebra, the central Picard group has the following basic properties. 

Firstly, let $I(\l)$ denote the multiplicative group of two-sided ideals $I \subseteq \l$ which are invertible in the sense that there exists a fractional two-sided ideal $J \subseteq \Q \cdot \l$ for which $I \cdot J = \l$. If $I \in I(\l)$, then it follows that $I$ is invertible as a $(\l,\l)$-bimodule and $x m = m x$ for all $m \in I$, $x \in Z(\l)$. The following can be found in \cite[Corollary 55.18]{CR87}.

\begin{thm} \label{thm:bimodules-are-ideals}
There is an isomorphism of abelian groups:
\[ \Picent(\l) \cong I(\l)/\{\l a : a \in (\Q \cdot C)^\times\}\]
where $C = Z(\l)$ is the centre of $\l$.
\end{thm}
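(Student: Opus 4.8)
The plan is to build an explicit surjective homomorphism of abelian groups $\Phi \colon I(\l) \to \Picent(\l)$ and to compute its kernel, which will yield the stated isomorphism. First I would set $\Phi(I) = [I]$, the class of a (fractional) two-sided ideal $I \in I(\l)$ regarded as an $(\l,\l)$-bimodule. To see this lands in $\Picent(\l)$: every central $x \in C = Z(\l)$ acts centrally on $I$ since $I \subseteq A$ and $x$ commutes with all of $\l$, hence with $A$; and $I$ is invertible as a bimodule because, for the two-sided inverse $J$ of $I$ in $I(\l)$, the multiplication maps $I \otimes_\l J \to \l$ and $J \otimes_\l I \to \l$ are bimodule isomorphisms — surjective by the definition of the product ideal, and injective because after applying the exact functor $\Q \otimes_\Z (-)$ they become the canonical isomorphism $A \otimes_A A \cong A$, so they are surjections of $\Z$-lattices of equal rank — with the coherence diagrams commuting by associativity of multiplication in $A$. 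The same multiplication-map argument applied to $I_1 \otimes_\l I_2 \to I_1 I_2$ shows $\Phi$ is a homomorphism.

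For surjectivity I would use that $\Picent(A)$ is trivial for the semisimple $\Q$-algebra $A$: since $A$ is Azumaya over its centre $Z(A)$, one has $\Picent(A) \cong \Pic(Z(A))$, and $Z(A)$ is a finite product of number fields, each with trivial Picard group. Given $[M] \in \Picent(\l)$, the bimodule $\Q \otimes_\Z M$ is then a central invertible $(A,A)$-bimodule (tensor the defining isomorphisms of $M$ with the flat $\Z$-module $\Q$; note $Z(A) = \Q \cdot C$ acts centrally), hence isomorphic to $A$ as a bimodule. Since $M$ is a $\l$-lattice, fixing such an isomorphism embeds $M$ as a full $\Z$-lattice $M' \subseteq A$ stable under left and right multiplication by $\l$, i.e.\ a fractional two-sided $\l$-ideal; embedding the inverse bimodule compatibly (after rescaling by a central unit of $A$, which does not change the resulting ideal class) arranges the product ideal $M'N'$ to equal $\l$, so $M' \in I(\l)$ with $\Phi(M') = [M]$. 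An integral representative is then obtained by replacing $M'$ with $nM'$ for a suitable $n \in \Z$.

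To compute the kernel: if $\Phi(I) = 0$, choose an $(\l,\l)$-bimodule isomorphism $\phi \colon \l \to I$ and put $a = \phi(1)$. Left $\l$-linearity forces $\phi(\lambda) = \lambda a$ and right $\l$-linearity forces $\phi(\lambda) = a\lambda$ for all $\lambda \in \l$, so $a$ commutes with $\l$ and hence with $A = \Q\l$, giving $a \in Z(A) = \Q \cdot C$; then $I = \phi(\l) = \l a$, and bijectivity of $\phi$ means multiplication by $a$ is an automorphism of $A$, so $a \in (\Q \cdot C)^\times$. Conversely, $\l a \in I(\l)$ (with inverse $\l a^{-1}$) and $\lambda \mapsto \lambda a$ is a bimodule isomorphism $\l \xrightarrow{\sim} \l a$, so $\l a \in \ker \Phi$. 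Hence $\ker \Phi = \{\l a : a \in (\Q \cdot C)^\times\}$ and $\Phi$ descends to the desired isomorphism $I(\l)/\{\l a : a \in (\Q \cdot C)^\times\} \xrightarrow{\sim} \Picent(\l)$.

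I expect the surjectivity step to be the main obstacle. The construction of $\Phi$, the homomorphism property, and the kernel computation are essentially formal manipulations with multiplication maps and with $\Q \otimes_\Z (-)$, whereas surjectivity rests on the fact that every central invertible bimodule over a semisimple $\Q$-algebra is free of rank one (the triviality of $\Picent(A)$) and on carefully realising an abstract bimodule class over $\l$ as a genuine invertible fractional two-sided ideal, including the central-unit bookkeeping needed to arrange $M'N' = \l$.
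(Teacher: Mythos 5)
The paper does not supply a proof of this theorem: it is stated with the citation to Curtis--Reiner \cite[Corollary 55.18]{CR87}, so there is no internal argument for your attempt to be compared against. Your blind proof follows what is essentially the standard route to this result (build the map $I(\l)\to\Picent(\l)$, check it is a surjective homomorphism, identify the kernel), and the overall structure and the kernel computation are correct. Two steps deserve more care, though neither is fatal.

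First, in arguing that the multiplication map $\mu\colon I\otimes_\l J\to\l$ is injective, you pass to $\Q\otimes_\Z(-)$ and conclude that $\mu$ is a surjection of $\Z$-lattices of the same rank. That only forces injectivity if you already know $I\otimes_\l J$ is torsion-free; a priori it is just a finitely generated abelian group with free quotient $\l$, so it could have a torsion summand killed by $\Q\otimes$. The gap is easily closed: since $JI=\l$, write $1=\sum c_j d_j$ with $c_j\in J$, $d_j\in I$; the dual-basis lemma then shows $I$ is projective as a left $\l$-module, so $I\otimes_\l J$ embeds in $J^n$ and is indeed a $\Z$-lattice. Alternatively, the trace section $\l\to I\otimes_\l J$, $\lambda\mapsto\sum\lambda a_i\otimes b_i$ (where $1=\sum a_ib_i$, $a_i\in I$, $b_i\in J$) is a two-sided inverse of $\mu$ and gives injectivity directly, with no lattice argument needed.

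Second, the surjectivity step is the one you yourself flag as the main obstacle, and it is indeed the place your write-up is most compressed. After choosing bimodule isomorphisms $\theta\colon\Q\otimes M\to A$ and $\eta\colon\Q\otimes N\to A$ (with $N$ the inverse of $M$) and setting $M'=\theta(M)$, $N''=\eta(N)$, you should say explicitly why $M'N''$ is a central unit times $\l$: rationalising the given bimodule isomorphism $f\colon M\otimes_\l N\to\l$ through $\theta,\eta$ yields a central bimodule automorphism of $A$, hence multiplication by some $u\in Z(A)^\times=(\Q\cdot C)^\times$, and chasing elements gives $uM'N''=\l$ and, via the coherence diagram, $(uN'')M'=\l$ as well, so $M'\in I(\l)$ with $\Phi(M')=[M]$. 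With those two points filled in, the argument is sound, and your observation that one may scale by $n\in\Z\subset C$ to obtain an integral representative reconciles the fractional-ideal viewpoint with the paper's phrasing of $I(\l)$ in terms of ideals $I\subseteq\l$.
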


In particular, every $M \in \Picent(\l)$ is bimodule isomorphic to an invertible two-sided ideal. If $I$, $J \in I(\l)$, then $I \cong J$ as $(\l,\l)$-bimodules if and only if there exists $a \in (\Q \cdot C)^\times$ such that $I = J a$.

The following is a consequence of combining this with \cite[Proposition 55.29]{CR87}.

\begin{prop} \label{prop:two-sided=>invertible}
Let $I \subseteq \l$ be a two-sided ideal such that $I \in \Cls(\l)$. Then there exists $J \subseteq \l$ two-sided with $J \in \Cls(\l)$ such that $I \otimes_{\l} J \cong \l \cong J \otimes_{\l} I$ as $(\l,\l)$-bimodules. In particular, $I \in I(\l)$.
\end{prop}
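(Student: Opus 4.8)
The plan is to deduce that the two-sided ideal $I$ is invertible as a two-sided ideal, and then to produce $J$ by inverting the class of $I$ inside the central Picard group and clearing denominators. First I would observe that, since $I \in \Cls(\l)$, the ideal $I$ is a full two-sided $\l$-ideal which is locally free of rank one as a \emph{left} module, so by \cref{prop:proj=locally-proj} we have $I_p \cong \l_p$ as left $\l_p$-modules for every prime $p$. Writing $I_p = \l_p x_p$ with $x_p$ a unit of $A_p = \Q_p \otimes A$ (it is left-invertible since $\l_p x_p$ spans $A_p$, hence invertible in the Artinian ring $A_p$), and multiplying the defining inclusion on the right by $x_p^{-1}$, one checks that the left order $\{a \in A_p : a I_p \subseteq I_p\}$ equals $\l_p$. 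Since forming left orders commutes with localisation and $\l = A \cap \bigcap_p \l_p$, the left order of $I$ in $A$ is $\l$ itself. A full, left-projective two-sided $\l$-ideal with left order $\l$ is invertible, which is \cite[Proposition 55.29]{CR87}; hence $I \in I(\l)$.

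Next I would use \cref{thm:bimodules-are-ideals}, which gives $\Picent(\l) \cong I(\l)/\{\l a : a \in (\Q \cdot C)^\times\}$ with $C = Z(\l)$. In particular $I$ represents a class in the \emph{group} $\Picent(\l)$, so there is an inverse class, realised by some $J_0 \in I(\l)$ with $I J_0 = J_0 I = \l$. The multiplication maps then give $(\l,\l)$-bimodule homomorphisms $I \otimes_\l J_0 \to I J_0 = \l$ and $J_0 \otimes_\l I \to J_0 I = \l$, which are isomorphisms precisely because $I$ and $J_0$ are mutually inverse fractional ideals. Moreover $J_0$, being an invertible two-sided ideal, is an invertible $(\l,\l)$-bimodule, hence finitely generated projective as a left $\l$-module, and it is full; so $J_0$ is locally free of rank one on the left and therefore represents a class in $\Cls(\l)$. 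Finally, since $J_0$ is a $\l$-lattice in $A$, there is a positive integer $n$ with $n J_0 \subseteq \l$; I would set $J := n J_0$, which is again a two-sided ideal (as $n \in C$) and now lies inside $\l$. Multiplication by $n$ is a $(\l,\l)$-bimodule isomorphism $J_0 \xrightarrow{\sim} J$ (injective since $n$ is a non-zero-divisor in $A$ and $J_0$ is torsion-free), so $J \cong J_0 \in \Cls(\l)$, and tensoring yields $I \otimes_\l J \cong I \otimes_\l J_0 \cong \l$ and $J \otimes_\l I \cong J_0 \otimes_\l I \cong \l$ as $(\l,\l)$-bimodules. Together with $I \in I(\l)$ from the first step, this is the assertion.

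I expect the only non-formal point to be the first step, the passage from one-sided local freeness to two-sided invertibility of $I$: this is where the structure theory of orders is used, via the computation of the left order of $I$ together with \cite[Proposition 55.29]{CR87}. The remaining steps — taking the inverse class in $\Picent(\l)$ and rescaling a fractional ideal into $\l$ — are routine bookkeeping which do not affect the bimodule isomorphism type.
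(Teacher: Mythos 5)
Your argument is the paper's argument unpacked: the paper presents the proposition as an immediate consequence of Theorem~\ref{thm:bimodules-are-ideals} together with \cite[Proposition 55.29]{CR87}, and both your passage from left local freeness to two-sided invertibility (via the left-order computation) and your inversion-then-rescale step are exactly what that combination encodes. The only point worth tightening is the sentence claiming that $J_0$ is ``finitely generated projective as a left $\Lambda$-module, and it is full; so $J_0$ is locally free of rank one.'' For a general $\Z$-order this inference is not automatic --- projectivity does not by itself give local freeness (this is precisely the gap between \cref{prop:proj=locally-proj} and the $\Z G$-specific \cref{prop:free=locally-free}). The correct justification is to apply \cite[Proposition 55.29]{CR87} once more to $J_0 \in I(\Lambda)$, or to argue directly as you did for $I$: from $J_{0,p} I_p = \Lambda_p$ and $I_p = \Lambda_p x_p$ one gets $J_{0,p} = \Lambda_p x_p^{-1}$, which is free of rank one as a left $\Lambda_p$-module. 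With that repair the proof is complete and matches the paper's intended route.
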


In particular this shows that, if $I \subseteq \l$ be a two-sided ideal such that $I \in \Cls(\l)$, then  $I$ induces a bijection $I \otimes_{\l} - : \Cls(\l) \to \Cls(\l)$.
We can therefore consider an even stronger notion of local freeness than for left modules.
We say that a $(\l,\l)$-bimodule $M$ is \textit{locally free as a bimodule} if there exists $i \ge 1$ such that, for all $p$ prime, $M_p \cong \l_p^i$ are isomorphic as $(\l_p,\l_p)$-bimodules. 
We will now need the following two closely related results. 

\begin{prop}[\hspace{-1mm}\text{\cite[Proposition 55.16]{CR87}}] \label{prop:picent-centre}
Let $R$ be a commutative Noetherian local ring and let $\l$ be a commutative finitely generated $R$-algebra. Then $\Picent(\l) = 1$.
\end{prop}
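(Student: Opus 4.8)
The plan is to deduce the statement from the classical fact that a commutative semilocal ring has trivial Picard group, once one notes that for a commutative ring $\Picent$ is nothing but the ordinary Picard group.

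First I would make that reduction precise. Since $\l$ is commutative we have $Z(\l)=\l$, so a central $(\l,\l)$-bimodule is just a $\l$-module $M$ whose two module structures coincide, equipped with its symmetric bimodule structure, and bimodule maps are ordinary $\l$-module maps. Under this dictionary $M$ is invertible in the sense of the commuting-triangle diagrams above precisely when $M$ is a finitely generated projective $\l$-module of rank one, with inverse $\Hom_\l(M,\l)$. Hence $\Picent(\l)\cong\Pic(\l)$, the group of isomorphism classes of rank-one projective $\l$-modules under $\otimes_\l$, and it is enough to show $\Pic(\l)=1$.

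Next I would show $\l$ is semilocal. I read the hypothesis as including that $\l$ is module-finite over $R$, as it is in all the intended applications — and this is genuinely needed, since without it the conclusion fails (e.g.\ for the coordinate ring of an elliptic curve with a point removed over a field). Granting it, every maximal ideal $\mathfrak{M}\subset\l$ lies over the maximal ideal $\mathfrak{m}_R$ of $R$: the field $\l/\mathfrak{M}$ is integral over $R/(\mathfrak{M}\cap R)$, which forces $R/(\mathfrak{M}\cap R)$ to be a field, so $\mathfrak{M}\cap R=\mathfrak{m}_R$. Thus the maximal ideals of $\l$ correspond to those of the fibre $\l/\mathfrak{m}_R\l$, a finite-dimensional $R/\mathfrak{m}_R$-algebra and hence an Artinian ring, which has only finitely many. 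So $\l$ is semilocal; set $J=\operatorname{rad}(\l)$, the intersection of its maximal ideals, so that $\l/J$ is a finite product of fields.

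Finally I would run the standard argument that $\Pic$ of a commutative semilocal ring vanishes. Given an invertible $\l$-module $P$ — finitely generated projective of rank one — the quotient $P/JP$ is a rank-one, hence free, $\l/J$-module; lifting a generator to $x\in P$ and applying Nakayama's lemma to the finitely generated module $P/\l x$ shows that $1\mapsto x$ gives a surjection $\l\twoheadrightarrow P$, which splits because $P$ is projective, so $\l\cong P\oplus Q$. Localising at each maximal ideal of $\l$ shows $Q$ has rank zero everywhere, hence $Q=0$ and $P\cong\l$; thus $\Pic(\l)=1$ and, by the first step, $\Picent(\l)=1$. I expect the only point requiring real care to be establishing that $\l$ is semilocal — in particular pinning down the finiteness hypothesis on $\l$ over $R$, which is essential; everything after that is a routine Nakayama-and-rank-count argument, and $\Picent(\l)=\Pic(\l)$ is immediate from commutativity.
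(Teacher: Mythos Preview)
Your argument is correct, and you have also correctly identified the one genuine subtlety: the phrase ``finitely generated $R$-algebra'' must be read as \emph{module-finite} over $R$, not merely finitely generated as an algebra, or the statement fails (your elliptic-curve example is apt). With that reading, the three steps --- $\Picent(\l)=\Pic(\l)$ from commutativity, semilocality of $\l$ via integrality and Artinianness of the fibre, and $\Pic=1$ for semilocal rings via Nakayama --- are all standard and correctly executed.

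As for comparison: the paper does not give its own proof of this proposition; it simply cites \cite[Proposition 55.16]{CR87}. Your argument is essentially the classical one and is in line with what Curtis--Reiner do, so there is no substantive difference in approach to discuss.
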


In particular, if $\l$ is an $\Z$-order in a finite-dimensional semisimple $\Q$-algebra $A$ and $C = Z(\l)$, then $C_p$ is a commutative finitely generated $\Z_p$-algebra and $C_{(p)}$ is a commutative finitely generated $\Z_{(p)}$-algebra. Since $\Z_p$ and $\Z_{(p)}$ are both Noetherian, this implies that $\Picent(C_p) = 1$ and $\Picent(C_{(p)})=1$.

The following was shown by Fr\"{o}hlich (see also \cite[Theorem 55.25]{CR87}). 
Note that, since $\tau' \circ \tau$ factors $\Picent(C_p)$, the fact that $\tau' \circ \tau = 0$ follows from $\Picent(C_p)=1$.

\begin{thm}[\hspace{-1mm}\text{\cite[Theorem 6]{Fr73}}] \label{thm:picent-SES}
For all but finitely many primes $p$, we have $\Picent(\l_p)=1$  and there is an exact sequence
\[ 1 \to \Picent(C) \xrightarrow[]{\tau} \Picent(\l) \xrightarrow[]{\tau'} \prod_{p} \Picent(\l_p) \to 1\]
where $C = Z(\l)$ is the centre of $\l$ and $\tau(M) = \l \otimes_C M$ for $M \in \Picent(C)$.
\end{thm}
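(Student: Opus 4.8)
The plan is to pass, via \cref{thm:bimodules-are-ideals}, from the three central Picard groups to the corresponding groups of invertible ideals, and then argue by localisation. Write $C = Z(\l)$, an order in the commutative algebra $Z(A)$, and observe that $C_p := \Z_p \otimes_\Z C$ is the centre of $\l_p$ (localisation commutes with the formation of centres here, as $\l$ is a finitely generated free $\Z$-module). By \cref{thm:bimodules-are-ideals}, $\Picent(\l) \cong I(\l)/\{\l a : a \in Z(A)^\times\}$ and likewise $\Picent(\l_p) \cong I(\l_p)/\{\l_p a : a \in Z(A_p)^\times\}$, while the same result applied to the commutative ring $C$ identifies $\Picent(C)$ with the ideal class group $I(C)/\{Ca : a \in Z(A)^\times\}$. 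Under these identifications $\tau$ becomes the extension-of-ideals homomorphism $[\mathfrak a] \mapsto [\l\mathfrak a]$ (which agrees with $[\mathfrak a]\mapsto[\l\otimes_C\mathfrak a]$, since $\mathfrak a$ is locally free over $C$ and so the natural surjection $\l\otimes_C\mathfrak a\to\l\mathfrak a$ is a local, hence global, isomorphism), and $\tau'$ becomes $[I] \mapsto ([I_p])_p$; this is well defined into the product because $I \supseteq n\l$ for some nonzero $n \in \Z$ forces $I_p = \l_p$, hence $[I_p] = 1$, for all $p \nmid n$.

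First I would settle the finiteness assertion. For all but finitely many $p$, the order $\l_p$ is maximal (as $\l$ sits in a maximal order $\Gamma$ with $\Gamma/\l$ finite) and $A \otimes_\Q \Q_p$ is a product of matrix algebras over finite extensions of $\Q_p$ (each simple component of $A$ being ramified at only finitely many primes); for such $p$ one has $\l_p \cong \prod_i M_{n_i}(\mathcal O_i)$ with each $\mathcal O_i$ the valuation ring of a local field, so $\Picent(\l_p) \cong \prod_i \Picent(\mathcal O_i) = 1$ by Morita invariance of $\Picent$ (see \cite[\S55]{CR87}) and the triviality of the Picard group of a complete discrete valuation ring, which is a special case of \cref{prop:picent-centre}. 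The same application of \cref{prop:picent-centre}, now to $C_p$ itself, gives $\Picent(C_p) = 1$ for \emph{every} $p$; this is what forces $\tau' \circ \tau = 0$, as noted before the theorem.

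It then remains to verify the three exactness statements in ideal-theoretic terms. For injectivity of $\tau$: if $\l\mathfrak a = \l a$ with $a \in Z(A)^\times$, set $\mathfrak b = \mathfrak a a^{-1}$, so $\l\mathfrak b = \l$ and hence also $\l\mathfrak b^{-1} = \l$; then $\mathfrak b = C\mathfrak b \subseteq \l\mathfrak b = \l$ and $\mathfrak b \subseteq Z(A)$, so $\mathfrak b \subseteq Z(A) \cap \l = C$, and symmetrically $\mathfrak b^{-1} \subseteq C$, which forces $\mathfrak b = C$ and $\mathfrak a = Ca$ principal. For exactness at $\Picent(\l)$, one inclusion is the vanishing $\tau'\circ\tau = 0$ just recorded; conversely, if $I \in I(\l)$ has $I_p = \l_p a_p$ with $a_p \in Z(A_p)^\times$ for every $p$ (with $a_p$ a unit, so $I_p = \l_p$, for almost all $p$), then the fractional $C_p$-ideals $\mathfrak a_p := C_p a_p$ form a coherent family and glue to an invertible fractional $C$-ideal $\mathfrak a$ with $\mathfrak a_p = C_p a_p$ for all $p$; consequently $(\l\mathfrak a)_p = \l_p a_p = I_p$ for all $p$, whence $\l\mathfrak a = I$ by the local-global correspondence for $\l$-lattices, and $[I] = \tau([\mathfrak a])$. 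Finally, for surjectivity of $\tau'$: given a family $([I_p])_p$ with each $I_p \subseteq \l_p$ an invertible two-sided ideal and $I_p = \l_p$ for almost all $p$, the intersection $I := A \cap \bigcap_p I_p$ is an invertible two-sided $\l$-ideal — its inverse being assembled in the same fashion from the $I_p^{-1}$ — with the prescribed localisations, so $\tau'([I]) = ([I_p])_p$.

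I expect the main obstacle to be the local-global bookkeeping underlying the last two steps: that coherent families of local (two-sided, fractional) ideals glue to global ones, that a global ideal is recovered from its family of localisations, and that invertibility is preserved throughout. These are standard facts from the theory of lattices over $\Z$-orders \cite[\S4]{CR81}, but marshalling them carefully — together with the input from the theory of maximal orders used for the vanishing $\Picent(\l_p) = 1$ at almost all $p$ — is where the genuine work lies; everything else reduces to the elementary ideal manipulations above.
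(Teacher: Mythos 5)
The paper does not prove this statement: it is quoted as a black-box citation to Fr\"ohlich \cite[Theorem 6]{Fr73} (with a pointer to \cite[Theorem 55.25]{CR87}), and the only commentary offered is the one-line remark that $\tau'\circ\tau = 0$ follows from $\Picent(C_p)=1$. So there is no paper proof to compare against; I can only assess your argument on its own terms.

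Your proposal is correct, and it is the standard ideal-theoretic proof. The translation via \cref{thm:bimodules-are-ideals} to $I(\l)/\{\l a\}$, the reduction of $\tau$ to extension of ideals (with the local freeness of $\mathfrak a$ over $C$ making $\l\otimes_C\mathfrak a\to\l\mathfrak a$ an isomorphism), the finiteness via maximality of $\l_p$ and splitness of $A_p$ at almost all $p$, the vanishing $\tau'\circ\tau=0$ from $\Picent(C_p)=1$, and the three exactness verifications by local--global gluing are all the right moves and all go through. Two small points worth flagging. First, you invoke a version of \cref{thm:bimodules-are-ideals} for the completed order $\l_p$; the paper states that result only for $\Z$-orders in semisimple $\Q$-algebras, so you are implicitly appealing to the $\Z_p$-analogue (\cite[Theorem 55.19]{CR87}), which is fine but should be said. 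Second, your injectivity argument uses that $\mathfrak b$ and $\mathfrak b^{-1}$ are genuine inverse fractional $C$-ideals; this is automatic here because $C$ is an order in a commutative semisimple $\Q$-algebra (so $\Picent(C)=\Pic(C)$ is represented by invertible fractional ideals), but it is worth noting since the paper never spells out that description of $\Picent$ in the commutative case. Beyond these, the ``local--global bookkeeping'' you rightly identify as the remaining work (coherent families of local lattices glue, a lattice is recovered from its localisations, invertibility is local) is exactly \cite[\S 4--5]{CR81} and poses no real obstacle.
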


This leads to the following three equivalent characterisations of locally free bimodules. This is presumably well-known, though we were not able to locate an equivalent statement in the literature.

\begin{corollary} \label{cor:lf-bimodule}
Let $I \subseteq \l$ be a two-sided ideal such that $I \in \Cls(\l)$. 
The following are equivalent:
\begin{clist}{(i)}
\item $I$ is generated by central elements
\item $I$ is locally free as a bimodule, i.e. for all $p$ prime, $I_p \cong \l_p$ are isomorphic as $(\l_p,\l_p)$-bimodules. 
\item For all $p$ prime, $I_{(p)} \cong \l_{(p)}$ are isomorphic as $(\l_{(p)},\l_{(p)})$-bimodules. 
\end{clist}
\end{corollary}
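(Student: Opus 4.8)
The plan is to prove the cycle of implications $(ii) \Rightarrow (i) \Rightarrow (iii) \Rightarrow (ii)$ — or, more naturally, to show that all three conditions are equivalent to the statement that the class of $I$ in $\Picent(\l)$ lies in the image of $\tau: \Picent(C) \to \Picent(\l)$, where $C = Z(\l)$. By \cref{prop:two-sided=>invertible}, the hypothesis $I \in \Cls(\l)$ together with $I$ two-sided guarantees $I \in I(\l)$, so $I$ defines a class $[I] \in \Picent(\l)$ via \cref{thm:bimodules-are-ideals} — note $xm = mx$ for $m \in I$, $x \in Z(\l)$ automatically since $I$ is a two-sided ideal of $\l$, so this class is genuinely in the \emph{central} Picard group. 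The key input is the exact sequence of \cref{thm:picent-SES}:
\[
1 \to \Picent(C) \xrightarrow{\tau} \Picent(\l) \xrightarrow{\tau'} \prod_p \Picent(\l_p) \to 1.
\]

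First I would handle $(i) \Leftrightarrow$ ``$[I] \in \image(\tau)$''. If $I$ is generated by central elements, then $I = \l \cdot (I \cap C)$ and $I \cap C$ is an invertible ideal of $C$ (using that $I$ is invertible in $\l$ and descending to the centre), so $[I] = \tau([I \cap C])$. Conversely, if $[I] = \tau([J])$ for an invertible ideal $J \subseteq C$, then $I \cong \l \otimes_C J$ as bimodules, and by \cref{thm:bimodules-are-ideals} this means $I = (\l J) a$ for some $a \in (\Q \cdot C)^\times$; rescaling, $I$ is bimodule isomorphic — hence, being an honest ideal in the same genus, equal up to the $(\Q\cdot C)^\times$-action to — an ideal generated by central elements, and one argues that within its class one may choose the representative $I$ itself to be centrally generated (this is where a little care is needed: the bijection of \cref{thm:bimodules-are-ideals} is on ideals modulo the central rescaling, and $I$ and $\l J$ are literally the same ideal up to such a rescaling, which preserves being centrally generated).

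Next, $(ii) \Leftrightarrow$ ``$\tau'([I]) = 1$'' is immediate from the definition of $\tau'$ as the product of the localisation maps $[I] \mapsto ([I_p])_p$, and similarly $(iii)$ is the statement that the image of $[I]$ in each $\Picent(\l_{(p)})$ is trivial. By exactness of the sequence in \cref{thm:picent-SES}, $\tau'([I]) = 1$ if and only if $[I] \in \image(\tau)$, which gives $(i) \Leftrightarrow (ii)$. For $(iii)$, I would use that $\Picent(C_{(p)}) = 1$ (noted after \cref{prop:picent-centre}), so the localisation $\Picent(\l_{(p)})$ compares with $\Picent(\l_p)$ via the same Fr\"ohlich sequence applied to $\l_{(p)}$, and the map $\prod_p \Picent(\l_p) \to \Picent(\l_{(p)})$ is the $p$-th projection up to the canonical identification $\Picent(\l_{(p)}) \cong \Picent(\l_p)$ (both detect the same thing since $\l_{(p)}$ and $\l_p$ have isomorphic idele-theoretic local data at $p$, and are free away from $p$); hence $(iii)$ is equivalent to all local classes vanishing, i.e. to $(ii)$.

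The main obstacle I anticipate is the bookkeeping in $(i) \Leftrightarrow (ii)$: translating cleanly between ``generated by central elements'', the descent $I \mapsto I \cap C$, and membership in $\image(\tau)$, while being careful that \cref{thm:bimodules-are-ideals} identifies $\Picent(\l)$ with ideals \emph{modulo} the central rescaling $\{\l a : a \in (\Q\cdot C)^\times\}$ — so one must check that ``centrally generated'' is a well-defined property on these cosets (it is, since rescaling by $a \in (\Q \cdot C)^\times$ sends $\l(I\cap C)$ to $\l(I \cap C)a = \l((I \cap C)a)$, still centrally generated) and that $I$ can always be taken as its own representative. The equivalence with $(iii)$ is then a formal consequence of $\Picent$ of a local ring of $C$ being trivial, so no essential new difficulty arises there.
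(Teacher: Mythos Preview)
Your plan is correct and follows the same overall strategy as the paper: identify (i) with $[I] \in \image(\tau)$ and (ii) with $\tau'([I]) = 1$, then invoke exactness of the sequence in \cref{thm:picent-SES}. The paper's treatment of (i)$\Leftrightarrow$(ii) is in fact terser than yours --- it simply cites \cref{thm:bimodules-are-ideals} and \cref{thm:picent-SES} --- so your more careful bookkeeping around ``centrally generated'' is a genuine elaboration of what the paper leaves implicit.

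The one point of divergence is how (iii) is linked in. You aim to prove (ii)$\Leftrightarrow$(iii) directly by asserting $\Picent(\l_{(p)}) \cong \Picent(\l_p)$, justified via ``the same Fr\"ohlich sequence applied to $\l_{(p)}$''. That works, but \cref{thm:picent-SES} as stated is for $\Z$-orders, so you are implicitly invoking a version over the DVR $\Z_{(p)}$ that the paper does not record. The paper sidesteps this: it observes (iii)$\Rightarrow$(ii) trivially by tensoring up along $\Z_{(p)} \hookrightarrow \Z_p$, and then proves (i)$\Rightarrow$(iii) by the commutative square
\[
\begin{tikzcd}
  \Picent(C) \arrow[r, "\tau"] \arrow[d] & \Picent(\l) \arrow[d] \\
  \Picent(C_{(p)}) \arrow[r] & \Picent(\l_{(p)})
\end{tikzcd}
\]
together with $\Picent(C_{(p)}) = 1$ from \cref{prop:picent-centre}. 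This route uses only what is already on the page and is slightly cleaner; your route is also valid but carries one extra unstated dependency.
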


\begin{proof}
By \cref{prop:two-sided=>invertible}, $I$ represents a class $[I] \in \Picent(\l)$. The equivalence of (i) and (ii) now follows from \cref{thm:bimodules-are-ideals} and \cref{thm:picent-SES}.

Now, (iii) implies (ii) since $\l_p = \Z_p \otimes_{\Z} \l \cong \Z_p \otimes_{\Z_{(p)}} \l_{(p)}$. 
In order to show that (i) implies (iii), suppose that $I$ is generated by central elements and, for $p$ prime, let $\tau'' : \Picent(\l) \to \Picent(\l_{(p)})$ be the induced map. Then there is a commutative diagram:
\[
\begin{tikzcd}
  \Picent(C) \arrow[r, "\tau"] \arrow[d] & \Picent(\l) \arrow[d,"\tau''"] \\
  \Picent(C_{(p)}) \arrow[r] & \Picent(\l_{(p)}) 
\end{tikzcd}
\]
where all maps are the induced maps.
By \cref{prop:picent-centre}, we have that $\Picent(C_{(p)}) = 1$ and so $\tau'' \circ \tau = 0$. Since $I$ is generated by central elements, $[I] \in \IM(\tau)$ and so 
\[ [I_{(p)}] = \tau''(I) = 0 \in \Picent(\l_{(p)})\] 
which implies that $I_{(p)} \cong \l_{(p)}$ as $(\l_{(p)},\l_{(p)})$-bimodules.
\end{proof}

\subsection{Cancellation over fibre squares} \label{subsection:fibre-squares}

For each splitting $A \cong A_1 \times A_2$ of $\Q$-algebras, let $\l_1$, $\l_2$ be the projections onto $A_1$, $A_2$ which are $\Z$-orders in $A_1$ and $A_2$ respectively. If $\l_1 = \l/I_1$, $\l_2 = \l/I_2$ and $\widebar{\l} = \l/(I_1 + I_2)$ then, by \cite[Example 42.3]{CR87}, there is a pullback diagram
\[
\mathcal{R} = 
\begin{tikzcd}
  \l \arrow[r, "i_2"] \arrow[d, "i_1"] & \l_2 \arrow[d,"j_2"] \\
  \l_1 \arrow[r,"j_1"] & \widebar{\l}
\end{tikzcd}
\]
since $I_1 \cap I_2 = \{0\}$. Since $\Q \otimes (i_1,i_2)$ induces the isomorphism $A \to A_1 \times A_2$, we must have that $\Q \otimes \widebar{\l} = 0$ which implies that $\widebar{\l}$ is a finite ring.
We will write $\mathcal{R} = \mathcal{R}(\l,A_1,A_2)$ to denote the diagram induced by the splitting $A \cong A_1 \times A_2$. Here $A_1$, $A_2$ are assumed to be subrings of $A$.

Consider the maps
\[ \Cls_{\mathcal{R}} : \Cls(\l) \to \Cls(\l_1) \times \Cls(\l_2), \quad C_{\mathcal{R}} : C(\l) \to C(\l_1) \times C(\l_2)\] which are both induced by the extension of scalars maps $((i_1)_\#,(i_2)_\#)$.

\begin{lemma} \label{lemma:Sw-Fr}
Let $P \in \Cls(\l)$ and let $P_k = (i_k)_{\#}(P) \in \Cls(\l_k)$ for $k=1,2$. Then $\Cls_{\mathcal{R}}$ restricts to a surjection
\[ \Cls_{\mathcal{R}} : \Cls^{[P]}(\l) \twoheadrightarrow \Cls^{[P_1]}(\l_1) \times \Cls^{[P_2]}(\l_2)\]
and $[\,\cdot\,]_\l$ restricts to a surjection
\[ [\,\cdot\,]_\l : \Cls_{\mathcal{R}}^{-1}(P_1,P_2) \twoheadrightarrow C_{\mathcal{R}}^{-1}([P_1],[P_2]).\]
\end{lemma}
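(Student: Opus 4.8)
The plan is to deduce this from Milnor patching for the Milnor square $\mathcal{R} = \mathcal{R}(\l,A_1,A_2)$ together with the Mayer--Vietoris sequence of \cref{prop:MV-special}; in this generality the statement is essentially due to Swan \cite{Sw80} and Fr\"ohlich \cite{Fr73}. \emph{Step 1 (recollections on patching).} Since both $j_1$ and $j_2$ are surjective, Milnor patching applies to $\mathcal{R}$: up to isomorphism every projective $\l$-module is a pullback $M_1 \times_h M_2$ of projective $\l_k$-modules $M_k$ along a $\widebar{\l}$-isomorphism $h \colon \widebar{\l}\otimes_{\l_1} M_1 \to \widebar{\l}\otimes_{\l_2} M_2$, and then $(i_k)_\#(M_1\times_h M_2)\cong M_k$. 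If $M_1\in\Cls(\l_1)$ and $M_2\in\Cls(\l_2)$, then $M_1\times_h M_2$ is projective over $\l$ and, tensoring the pullback square with $\Q$ (which kills $\widebar{\l}$), one gets $\Q\otimes_\Z(M_1\times_h M_2)\cong (\Q\otimes_\Z M_1)\times(\Q\otimes_\Z M_2)\cong A_1\times A_2=A$; since a projective $\l$-module with this property is locally free of rank one (locally, $\l_p$ is semilocal and the projective $\l_p$-module $(M_1\times_h M_2)_p$ with rationalisation $A_p$ is forced to be $\l_p$; cf. \cite[§2]{Sw80}), we get $M_1\times_h M_2\in\Cls(\l)$. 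Finally, recall the defining property of the snake map $\partial\colon K_1(\widebar{\l})\to C(\l)$ of \cref{prop:MV-special}: regluing translates the stable class, i.e. for $\bar u\in\widebar{\l}^\times=\GL_1(\widebar{\l})$ one has $[M_1\times_{\bar u h} M_2]=[M_1\times_h M_2]+\partial([\bar u])$ in $C(\l)$; and because $\widebar{\l}$ is a finite, hence semilocal, ring, $\GL_1(\widebar{\l})\to K_1(\widebar{\l})$ is surjective, so $\partial([\bar u])$ runs over all of $\image(\partial)$ as $\bar u$ varies.

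\emph{Step 2 (the key claim).} Fix $Q_1\in\Cls(\l_1)$ and $Q_2\in\Cls(\l_2)$ and let $\mathcal{Q}$ denote the set of $Q\in\Cls(\l)$ with $(i_1)_\# Q\cong Q_1$ and $(i_2)_\# Q\cong Q_2$. I claim $\mathcal{Q}\ne\emptyset$ and that $[\,\cdot\,]_{\l}$ maps $\mathcal{Q}$ onto $C_{\mathcal{R}}^{-1}([Q_1],[Q_2])$. For nonemptiness: by \cref{prop:LF=>mod-n-free} both $\widebar{\l}\otimes_{\l_1}Q_1$ and $\widebar{\l}\otimes_{\l_2}Q_2$ are free over $\widebar{\l}$, of rank one (compare localisations at each prime), so a gluing $h_0$ exists and $Q_1\times_{h_0}Q_2\in\mathcal{Q}$ by Step 1. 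Every $Q\in\mathcal{Q}$ satisfies $C_{\mathcal{R}}([Q])=([Q_1],[Q_2])$; conversely, regluing $Q_1\times_{h_0}Q_2$ by all $\bar u\in\GL_1(\widebar{\l})$ yields elements of $\mathcal{Q}$ whose classes fill the coset $[Q_1\times_{h_0}Q_2]+\image(\partial)$, and by exactness of the sequence of \cref{prop:MV-special} at $C(\l)$ (where the outgoing map is exactly $C_{\mathcal{R}}$) this coset equals $C_{\mathcal{R}}^{-1}([Q_1],[Q_2])$. This proves the claim.

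\emph{Step 3 (conclusion).} Both surjections follow at once. Given $(Q_1,Q_2)\in\Cls^{[P_1]}(\l_1)\times\Cls^{[P_2]}(\l_2)$, we have $[Q_k]=[P_k]$, hence $[P]\in C_{\mathcal{R}}^{-1}([Q_1],[Q_2])$; Step 2 provides $Q\in\mathcal{Q}$ with $[Q]=[P]$, i.e. $Q\in\Cls^{[P]}(\l)$ with $\Cls_{\mathcal{R}}(Q)=(Q_1,Q_2)$, which is the first surjection. For the second, $\Cls_{\mathcal{R}}^{-1}(P_1,P_2)$ is by definition the set $\mathcal{Q}$ built from $Q_1=P_1$, $Q_2=P_2$, and Step 2 says $[\,\cdot\,]_{\l}$ maps it onto $C_{\mathcal{R}}^{-1}([P_1],[P_2])$. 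I expect the only substantive work to lie in Step 1 — the precise form of Milnor patching for locally free modules over orders and the identification of $\partial$ with regluing (both standard, cf. \cite{Sw80,CR87}); the one place genuinely needing the hypotheses is that rank-one gluing data carries only $\GL_1(\widebar{\l})$ worth of freedom, so finiteness of $\widebar{\l}$ is exactly what guarantees these realise all of $\image(\partial)$.
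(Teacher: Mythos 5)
Your proof is correct, but it follows a genuinely different route from the paper. The paper's proof is a one-liner: it applies Theorem \ref{thm:LF-COR} (Swan's weak-pullback comparison theorem, \cite[Theorem A10]{Sw83}) to the map $(i_1,i_2)\colon \l \to \l_1 \times \l_2$ of $\Z$-orders in $A$, and reads off both surjections from the resulting weak pullback square. You instead re-derive the statement from scratch via Milnor patching and the Mayer--Vietoris sequence of \cref{prop:MV-special}: you build elements of $\Cls_\mathcal{R}^{-1}(Q_1,Q_2)$ by gluing rank-one modules, and observe that regluing by units of $\widebar\l$ sweeps out the fibre $C_\mathcal{R}^{-1}([Q_1],[Q_2])$, using exactness at $C(\l)$ together with surjectivity of $\GL_1(\widebar\l)\to K_1(\widebar\l)$ for the finite (hence semilocal) ring $\widebar\l$. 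This is essentially Swan's proof of Theorem A10 unrolled for this particular pullback; the paper's argument is shorter and defers the mechanics to a citation, whereas yours is self-contained and makes visible exactly the structure the paper isolates later in Propositions \ref{prop:milnor-square} and \ref{prop:K_1-square}, which are the bijective refinements of your Step~2. The one point worth expanding is the Step~1 claim that a projective $\l$-module with rationalisation isomorphic to $A$ is automatically locally free of rank one: this rests on Krull--Schmidt over the complete semilocal ring $\l_p$ together with injectivity of $K_0(\l_p)\to K_0(A_p)$, which is standard (and your citation to \cite[Section 2]{Sw80} is apt) but is compressed as written.
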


\begin{proof}
Since $(i,j) : \l \to \l_1 \times \l_2$ is a map of $\Z$-orders in $A$ such that $\Q \otimes (i_1,i_2)$ is an isomorphism, \cref{thm:LF-COR} implies that the diagram 
\[
\begin{tikzcd}
	\Cls(\l) \ar[r,"\Cls_{\mathcal{R}}"] \ar[d,"\text{$[\,\cdot \,]_\l$}"] & \Cls(\l_1) \times \Cls(\l_2) \ar[d,"\text{$[\,\cdot \,]_{\l_1} \times [\,\cdot \,]_{\l_2}$}"] \\
	C(\l) \ar[r,"C_{\mathcal{R}}"] & C(\l_1) \times C(\l_2)
\end{tikzcd}
\]
is a weak pullback in that $\Cls(\l)$ maps onto the pullback of the lower right corner. Hence the fibres of $[\,\cdot\,]_\l$ and $\Cls_{\mathcal{R}}$ map onto the fibres of $[\,\cdot \,]_{\l_1} \times [\,\cdot \,]_{\l_2}$ and $C_{\mathcal{R}}$ respectively, as required.
\end{proof}

In order to determine when $[\,\cdot\,]$ is bijective, i.e. when $\l$ has LFC, it is therefore useful to give the following explicit forms for the fibres of $\Cls_{\mathcal{R}}$ and $C_{\mathcal{R}}$ respectively.

For a $\l$-module $M$, let $\End_{\l}(M)$ be the ring of $\l$-module homomorphisms $f: M \to M$ and let $\Aut_{\l}(M) = \End_{\l}(M)^\times$ be the subset of $\l$-module isomorphisms $f: M \to M$. We will write $\End(M)$ and $\Aut(M)$ when $\l$ is clear from the context.
Since $j_1$ and $j_2$ are both surjective in our construction above, $\mathcal{R}$ is a Milnor square. If $P_i \in \Cls(\l_i)$, then we also have that $(j_k)_\#(P_k)$ is free by \cref{prop:LF=>mod-n-free} (ii). The following are now a consequence of \cite[Theorems 2.1-2.3]{Mi71} and \cref{prop:MV-special} respectively.

\begin{prop} \label{prop:milnor-square}
For $k=1,2$, let $P_k \in \Cls(\l_k)$. Then there is an isomorphism of $\widebar{\l}$-modules $\varphi_k : (j_k)_\#(P_k) \to \widebar{\l}$ and, for any such $\varphi_k$, there is a one-to-one correspondence
\[ \Aut(P_1) \backslash \widebar{\l}^\times \hspace{-1mm} \slash \Aut(P_2) \leftrightarrow \Cls_{\mathcal{R}}^{-1}(P_1,P_2) \]
where the maps $\Aut(P_k) \to \widebar{\l}^\times$ are defined via the map $\varphi_k(1 \otimes -) : P_k \to \widebar{\l}$.
\end{prop}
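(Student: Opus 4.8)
The plan is to apply Milnor patching \cite[Theorems 2.1--2.3]{Mi71} to the Milnor square $\mathcal{R}$, which we have already arranged to have surjective bottom maps $j_1, j_2$. First I would recall the general Milnor construction: given modules $M_1$ over $\l_1$, $M_2$ over $\l_2$, and an isomorphism $h$ of $\widebar{\l}$-modules $(j_1)_\#(M_1) \to (j_2)_\#(M_2)$, one forms the pullback module $M(M_1, M_2, h) = \{(m_1,m_2) : h(\bar{m}_1) = \bar{m}_2\}$ over $\l$, and every locally free $\l$-module arises this way. In our situation we fix $M_k = P_k \in \Cls(\l_k)$. By \cref{prop:LF=>mod-n-free}(ii), since $j_k : \l_k \to \widebar{\l}$ is a surjection onto a finite ring, $(j_k)_\#(P_k)$ is a free $\widebar{\l}$-module; because $P_k$ has rank one, it is free of rank one, so we may choose the asserted isomorphism $\varphi_k : (j_k)_\#(P_k) \to \widebar{\l}$.

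The next step is to identify which patchings $M(P_1, P_2, h)$ lie in the fibre $\Cls_{\mathcal{R}}^{-1}(P_1, P_2)$, i.e. have $(i_k)_\#(M) \cong P_k$. By the standard theory of patching, $(i_k)_\#$ of the patched module recovers $P_k$ (up to isomorphism), so the fibre consists precisely of the classes of the modules $M(P_1, P_2, h)$ as $h$ ranges over all $\widebar{\l}$-module isomorphisms $(j_1)_\#(P_1) \to (j_2)_\#(P_2)$. Using the fixed trivialisations $\varphi_1, \varphi_2$, every such $h$ can be written as $h = \varphi_2^{-1} \circ (u \cdot) \circ \varphi_1$ for a unique unit $u \in \widebar{\l}^\times$, giving a surjection $\widebar{\l}^\times \to \Cls_{\mathcal{R}}^{-1}(P_1,P_2)$. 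It then remains to compute when two units $u, u'$ give isomorphic patched $\l$-modules. A $\l$-module isomorphism $M(P_1,P_2,h) \to M(P_1,P_2,h')$ is, by the pullback property, the same as a pair of isomorphisms $\alpha_k : P_k \to P_k$ compatible over $\widebar{\l}$, i.e. satisfying $h' \circ \overline{\alpha_1} = \overline{\alpha_2} \circ h$. Translating through $\varphi_1, \varphi_2$: if $\beta_k \in \widebar{\l}^\times$ denotes the image of $\alpha_k$ under $\varphi_k(1 \otimes -)$ (more precisely, the unit by which $\overline{\alpha_k}$ acts after conjugating by $\varphi_k$), this compatibility becomes $u' = \beta_2 \, u \, \beta_1^{-1}$. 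Hence $u \sim u'$ exactly when they lie in the same double coset, giving the bijection $\Aut(P_1) \backslash \widebar{\l}^\times / \Aut(P_2) \leftrightarrow \Cls_{\mathcal{R}}^{-1}(P_1,P_2)$.

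The main point requiring care — and the step I expect to be the chief obstacle — is verifying that the map $\Aut(P_k) \to \widebar{\l}^\times$ is the one described via $\varphi_k(1 \otimes -)$, and that the double-coset relation is exactly the right one (no over- or under-counting). Concretely one must check that reduction mod $\widebar{\l}$ of $\Aut_{\l_k}(P_k)$ lands in $\Aut_{\widebar{\l}}(\widebar{\l}) = \widebar{\l}^\times$, that this reduction is a group homomorphism, and that an abstract $\l$-isomorphism of patched modules is determined precisely by such a compatible pair $(\alpha_1,\alpha_2)$ — this is where one invokes the full strength of \cite[Theorems 2.1--2.3]{Mi71}, and where the surjectivity of $j_1, j_2$ (making $\mathcal{R}$ a genuine Milnor square) is essential so that patching data can be lifted. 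The exactness statement from \cref{prop:MV-special} is then invoked only to double-check consistency with the Mayer--Vietoris description of $C(\l)$; the substantive content is entirely the Milnor patching bookkeeping.
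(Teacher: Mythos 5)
Your proposal is correct and follows essentially the same route as the paper: note that $\mathcal{R}$ is a Milnor square with $j_1, j_2$ surjective, use \cref{prop:LF=>mod-n-free}(ii) to get rank-one freeness of $(j_k)_\#(P_k)$, and then unpack the patching/double-coset correspondence from \cite[Theorems 2.1--2.3]{Mi71}. The only stray remark is the appeal to \cref{prop:MV-special}, which plays no role here — it is used for \cref{prop:K_1-square}, not for this proposition.
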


\begin{prop} \label{prop:K_1-square}
For $k=1,2$, let $P_k \in \Cls(\l_k)$. Then there is a one-to-one correspondence
\[ \frac{K_1(\widebar{\l})}{K_1(\l_1) \times K_1(\l_2)} \leftrightarrow C_{\mathcal{R}}^{-1}([P_1],[P_2]).\]
\end{prop}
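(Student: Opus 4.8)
The plan is to read the correspondence directly off the Mayer--Vietoris exact sequence of \cref{prop:MV-special}. First I would recall that $C(\l)$, $C(\l_1)$, $C(\l_2)$ are abelian groups under direct sum (with identity the class of the free module), that $C_{\mathcal{R}} = ((i_1)_\#,(i_2)_\#)$ is a group homomorphism, and that it fits into the exact sequence
\[ K_1(\l) \to K_1(\l_1)\times K_1(\l_2) \to K_1(\widebar{\l}) \xrightarrow{\ \partial\ } C(\l) \xrightarrow{\ C_{\mathcal{R}}\ } C(\l_1)\times C(\l_2) \to 0 \]
supplied by \cref{prop:MV-special}, where $\tfrac{K_1(\widebar{\l})}{K_1(\l_1)\times K_1(\l_2)}$ denotes the cokernel of the map $K_1(\l_1)\times K_1(\l_2)\to K_1(\widebar{\l})$. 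Since the last map in the sequence is surjective, the fibre $C_{\mathcal{R}}^{-1}([P_1],[P_2])$ is non-empty, so I may fix a class $[Q_0]\in C_{\mathcal{R}}^{-1}([P_1],[P_2])$.

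Next I would argue that this fibre is a coset of $\ker(C_{\mathcal{R}})$. Since $C_{\mathcal{R}}$ is a homomorphism, $C_{\mathcal{R}}^{-1}([P_1],[P_2]) = [Q_0] + \ker(C_{\mathcal{R}})$, so $x \mapsto [Q_0] + x$ is a bijection $\ker(C_{\mathcal{R}}) \to C_{\mathcal{R}}^{-1}([P_1],[P_2])$. By exactness at $C(\l)$ we have $\ker(C_{\mathcal{R}}) = \image(\partial)$, and by exactness at $K_1(\widebar{\l})$ the kernel of $\partial$ is the image of $K_1(\l_1)\times K_1(\l_2)\to K_1(\widebar{\l})$. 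Hence $\partial$ descends to an injective homomorphism $\bar\partial : \tfrac{K_1(\widebar{\l})}{K_1(\l_1)\times K_1(\l_2)} \hookrightarrow C(\l)$ whose image is $\image(\partial) = \ker(C_{\mathcal{R}})$, i.e.\ $\bar\partial$ is a bijection onto $\ker(C_{\mathcal{R}})$. Composing the two bijections gives the asserted one-to-one correspondence, explicitly $u \mapsto [Q_0] + \partial(u)$.

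The argument is entirely formal once \cref{prop:MV-special} is available, so I do not anticipate a genuine obstacle; the two points that need to be in place — $C(\l)$ being an honest abelian group (so that "coset" makes sense) and $C_{\mathcal{R}}$ being surjective (so that the base class $[Q_0]$ exists) — are both part of \cref{prop:MV-special}. Finally I would remark that the correspondence as built depends on the choice of $[Q_0]$, but in the case used later $P_1 = (i_1)_\#(P)$ and $P_2 = (i_2)_\#(P)$ for a common $P \in \Cls(\l)$ (cf.\ \cref{lemma:Sw-Fr}), and then $[Q_0] = [P]$ gives a canonical base point compatible with \cref{prop:milnor-square} under the stable class map $[\,\cdot\,]_\l$.
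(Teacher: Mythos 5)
Your proof is correct and follows the same route the paper takes: the paper derives \cref{prop:K_1-square} by citing the Mayer--Vietoris exact sequence of \cref{prop:MV-special}, and you simply spell out the diagram chase (fibre as a coset of $\ker(C_{\mathcal{R}})$, then identify $\ker(C_{\mathcal{R}}) = \image(\partial)$ with the cokernel of $K_1(\l_1)\times K_1(\l_2) \to K_1(\widebar{\l})$ via exactness). Your closing observation about base-point dependence and the choice $[Q_0]=[P]$ is a useful refinement, matching how the bijection is implicitly normalised when compared with \cref{prop:milnor-square} via the map $\Psi_{P_1,P_2}$.
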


For $i = 1, 2$, Morita equivalence gives us maps
\[\psi_i: \Aut(P_i) = \End(P_i)^\times \to K_1(\End(P_i)) \cong K_1(\l_i).\]
By \cite[Corollary A17]{Sw83}, these maps fit into a commutative diagram
\[
\begin{tikzcd}
\Aut(P_i) \ar[r] \ar[d,"h_{\#}"] & K_1(\End(P_i)) \ar[r,"\cong"] \ar[dr,swap,"K_1(h_{\#})"] & K_1(\l_i) \ar[d,"K_1(h)"] \\
\widebar{\l}^\times \ar[rr] & & K_1(\widebar{\l})
\end{tikzcd}
\]
for $h= j_1$ or $j_2$ respectively, and so define a map
\[  \Psi_{P_1,P_2}: \Aut(P_1) \backslash\, \widebar{\l}^\times \slash \Aut(P_2) \twoheadrightarrow \frac{K_1(\widebar{\l})}{K_1(\l_1) \times K_1(\l_2)} \]
which can be shown to coincide with the map induced by $[\,\cdot\,]$ under the equivalences given in Propositions \ref{prop:milnor-square} and \ref{prop:K_1-square}.
Hence, by the discussion earlier, we have that $\l$ has LFC if and only if $\Psi_{P_1,P_2}$ is a bijection for all $P_1$, $P_2$ such that $P_1 = (i_1)_\#(P)$ and $P_2 = (i_2)_\#(P)$ for some $P \in \Cls(\l)$. 

Now consider the constant $K_{\mathcal{R}} = |\frac{K_1(\widebar{\l})}{K_1(\l_1) \times K_1(\l_2)}|$ associated to $\mathcal{R}$. It follows from Lemma \ref{lemma:Sw-Fr} that $[\,\cdot\,]_\l$ is surjective and so $|\Aut(P_1)\backslash \widebar{\l}^\times \slash \Aut(P_2)| \ge K_{\mathcal{R}}$.

\begin{lemma} \label{lemma:coset=K_R}
Let $P_1 \in \Cls(\l_1)$ and $P_2 \in \Cls(\l_2)$ and suppose that
\[ |\Aut(P_1)\backslash \, \widebar{\l}^\times \slash \Aut(P_2)| = K_{\mathcal{R}}.\]
Then $| \Cls_{\mathcal{R}}^{-1}(P_1,P_2) \cap \Cls^{[\widetilde{P}]}(\l)|=1$ for all $[\widetilde{P}] \in C_{\mathcal{R}}^{-1}([P_1],[P_2])$.
\end{lemma}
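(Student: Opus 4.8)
The statement to prove is Lemma~\ref{lemma:coset=K_R}: assuming the double coset $\Aut(P_1)\backslash\,\widebar{\l}^\times\slash\Aut(P_2)$ has exactly $K_{\mathcal{R}}$ elements, we want $|\Cls_{\mathcal{R}}^{-1}(P_1,P_2)\cap\Cls^{[\widetilde P]}(\l)|=1$ for every $[\widetilde P]\in C_{\mathcal{R}}^{-1}([P_1],[P_2])$. The approach is to combine the two correspondences set up just before the statement. By Proposition~\ref{prop:milnor-square} we have a bijection $\Aut(P_1)\backslash\widebar{\l}^\times\slash\Aut(P_2)\leftrightarrow\Cls_{\mathcal{R}}^{-1}(P_1,P_2)$, and by Proposition~\ref{prop:K_1-square} a bijection $K_1(\widebar\l)/(K_1(\l_1)\times K_1(\l_2))\leftrightarrow C_{\mathcal{R}}^{-1}([P_1],[P_2])$; the map $\Psi_{P_1,P_2}$ from the former to the latter was shown (via the commutative diagram with the Morita maps $\psi_i$ and \cite[Corollary A17]{Sw83}) to coincide with the map induced by $[\,\cdot\,]_\l$. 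So under these identifications, $\Cls_{\mathcal{R}}^{-1}(P_1,P_2)\cap\Cls^{[\widetilde P]}(\l)$ corresponds precisely to the fibre $\Psi_{P_1,P_2}^{-1}(c)$, where $c\in K_1(\widebar\l)/(K_1(\l_1)\times K_1(\l_2))$ is the class corresponding to $[\widetilde P]$.

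First I would note that $\Psi_{P_1,P_2}$ is surjective (this is stated as part of its definition, and also follows from Lemma~\ref{lemma:Sw-Fr} since $[\,\cdot\,]_\l$ restricts to a surjection $\Cls_{\mathcal{R}}^{-1}(P_1,P_2)\twoheadrightarrow C_{\mathcal{R}}^{-1}([P_1],[P_2])$). Thus $\Psi_{P_1,P_2}$ is a surjection from a set of cardinality $K_{\mathcal{R}}$ (by hypothesis) onto a set of cardinality $K_{\mathcal{R}}=|K_1(\widebar\l)/(K_1(\l_1)\times K_1(\l_2))|$ (by definition of $K_{\mathcal{R}}$, using that both the source, $\Cls_{\mathcal{R}}^{-1}([P_1],[P_2])$ — wait, rather the target is $C_{\mathcal{R}}^{-1}([P_1],[P_2])$, which is in bijection with $K_1(\widebar\l)/(K_1(\l_1)\times K_1(\l_2))$ by Proposition~\ref{prop:K_1-square}, hence has cardinality $K_{\mathcal{R}}$). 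A surjection between finite sets of equal cardinality is a bijection, so every fibre $\Psi_{P_1,P_2}^{-1}(c)$ is a singleton. Translating back, $|\Cls_{\mathcal{R}}^{-1}(P_1,P_2)\cap\Cls^{[\widetilde P]}(\l)|=1$ for each $[\widetilde P]\in C_{\mathcal{R}}^{-1}([P_1],[P_2])$, which is the claim.

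The only genuine content is bookkeeping: making sure the three sets $\Aut(P_1)\backslash\widebar\l^\times\slash\Aut(P_2)$, $\Cls_{\mathcal{R}}^{-1}(P_1,P_2)$, and $C_{\mathcal{R}}^{-1}([P_1],[P_2])$ are all finite, and that the identification of $\Psi_{P_1,P_2}$ with the map induced by $[\,\cdot\,]_\l$ is exactly the compatibility recorded in the paragraph preceding the lemma — so that "fibre over $[\widetilde P]$" on one side matches "intersection with $\Cls^{[\widetilde P]}(\l)$" on the other. Finiteness of $\Cls(\l)$ and hence of all these sets follows from the Jordan–Zassenhaus theorem (already cited), and $K_{\mathcal{R}}$ is finite because $\widebar\l$ is a finite ring so $K_1(\widebar\l)$ is finite.

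**Expected main obstacle.** There is essentially no hard step here — this is a counting argument once the two Milnor-square correspondences and their compatibility are in hand. The one point that requires care is confirming that $\Psi_{P_1,P_2}$ really is well-defined on double cosets and really does land in the quotient by $K_1(\l_1)\times K_1(\l_2)$ (both images $K_1(\l_1)$ and $K_1(\l_2)$ must map into the subgroup being quotiented), which is exactly what the commutative diagram involving $K_1(h)$ for $h=j_1,j_2$ guarantees; I would cite that diagram rather than reprove it. So the proof should be three or four lines: invoke surjectivity of $\Psi_{P_1,P_2}$, compare cardinalities using the hypothesis and the definition of $K_{\mathcal{R}}$, conclude bijectivity, and read off the fibre statement.
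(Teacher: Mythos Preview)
Your proposal is correct and follows essentially the same argument as the paper: both use Propositions~\ref{prop:milnor-square} and~\ref{prop:K_1-square} to identify the two sets with the double coset space and the $K_1$ quotient respectively, invoke the surjectivity from Lemma~\ref{lemma:Sw-Fr}, and conclude that a surjection between finite sets of equal cardinality $K_{\mathcal{R}}$ is a bijection, so each fibre is a singleton. The paper phrases this directly in terms of $[\,\cdot\,]_\l$ rather than the auxiliary map $\Psi_{P_1,P_2}$, but since these have already been identified this is only a cosmetic difference.
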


\begin{proof}
By Propositions \ref{prop:milnor-square} and \ref{prop:K_1-square}, we have that 
\[ |\Aut(P_1)\backslash \, \widebar{\l}^\times \slash \Aut(P_2)| = |\Cls_{\mathcal{R}}^{-1}(P_1,P_2)|, \quad K_{\mathcal{R}} = |C_{\mathcal{R}}^{-1}([P_1],[P_2])|\]
and so $|\Cls_{\mathcal{R}}^{-1}(P_1,P_2)| = |C_{\mathcal{R}}^{-1}([P_1],[P_2])|$ by our hypothesis. By Lemma \ref{lemma:Sw-Fr}, this implies that the map
\[ [\,\cdot\,]_\l : \Cls_{\mathcal{R}}^{-1}(P_1,P_2) \to C_{\mathcal{R}}^{-1}([P_1],[P_2])\]
is a bijection. The result follows since $[\,\cdot\,]_\l^{-1}([\widetilde{P}]) = \Cls_{\mathcal{R}}^{-1}(P_1,P_2) \cap \Cls^{[\widetilde{P}]}(\l)$.
\end{proof}

We are now ready to prove the following, which is the main result of this section.

\begin{thm} \label{thm:coset=K_R}
Let $\mathcal{R}$ be as above, let $P \in \Cls(\l)$ and let $P_k = (i_k)_{\#}(P) \in \Cls(\l_k)$ for $k=1,2$. 
Suppose that $|\Cls_{\mathcal{R}}^{-1}(\widetilde{P}_1,\widetilde{P}_2)| = K_{\mathcal{R}}$ for all $\widetilde{P}_1 \in \Cls^{[P_1]}(\l_1)$ and $\widetilde{P}_2 \in \Cls^{[P_2]}(\l_2)$.
Then $\Cls_{\mathcal{R}}$ induces a bijection
\[ \Cls^{[P]}(\l) \cong \Cls^{[P_1]}(\l_1) \times \Cls^{[P_2]}(\l_2).\]	
\end{thm}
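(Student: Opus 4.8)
The plan is to combine the surjectivity statements from \cref{lemma:Sw-Fr} with the cardinality input from \cref{lemma:coset=K_R} to upgrade the weak pullback to an honest bijection on the relevant fibres. First I would recall from \cref{lemma:Sw-Fr} that $\Cls_{\mathcal{R}}$ restricts to a surjection $\Cls^{[P]}(\l) \twoheadrightarrow \Cls^{[P_1]}(\l_1) \times \Cls^{[P_2]}(\l_2)$, so it only remains to prove injectivity. Since a surjection of sets is a bijection as soon as the fibres over every point are singletons, the task reduces to showing that $\Cls_{\mathcal{R}}^{-1}(\widetilde{P}_1,\widetilde{P}_2) \cap \Cls^{[P]}(\l)$ has exactly one element for each pair $(\widetilde{P}_1,\widetilde{P}_2) \in \Cls^{[P_1]}(\l_1) \times \Cls^{[P_2]}(\l_2)$.

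Next I would fix such a pair $(\widetilde{P}_1,\widetilde{P}_2)$. The hypothesis $|\Cls_{\mathcal{R}}^{-1}(\widetilde{P}_1,\widetilde{P}_2)| = K_{\mathcal{R}}$ together with \cref{prop:milnor-square} (identifying this fibre with $\Aut(\widetilde{P}_1)\backslash \widebar{\l}^\times \slash \Aut(\widetilde{P}_2)$) puts us exactly in the situation of \cref{lemma:coset=K_R}, applied with $P_1$ replaced by $\widetilde{P}_1$ and $P_2$ by $\widetilde{P}_2$ — note this is legitimate since $\widetilde{P}_k \in \Cls(\l_k)$ and the constant $K_{\mathcal{R}}$ depends only on $\mathcal{R}$. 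That lemma then tells us that $|\Cls_{\mathcal{R}}^{-1}(\widetilde{P}_1,\widetilde{P}_2) \cap \Cls^{[\widetilde{Q}]}(\l)| = 1$ for every $[\widetilde{Q}] \in C_{\mathcal{R}}^{-1}([\widetilde{P}_1],[\widetilde{P}_2]) = C_{\mathcal{R}}^{-1}([P_1],[P_2])$, where the last equality holds because $[\widetilde{P}_k] = [P_k]$ in $C(\l_k)$ by definition of $\Cls^{[P_k]}(\l_k)$.

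Finally I would assemble these pieces. Every element of $\Cls^{[P]}(\l)$ lying in the fibre $\Cls_{\mathcal{R}}^{-1}(\widetilde{P}_1,\widetilde{P}_2)$ has stable class $[\,\cdot\,]_\l$-image mapping to $([P_1],[P_2])$ under $C_{\mathcal{R}}$, hence lies in $\Cls_{\mathcal{R}}^{-1}(\widetilde{P}_1,\widetilde{P}_2) \cap \Cls^{[\widetilde{Q}]}(\l)$ for some $[\widetilde{Q}] \in C_{\mathcal{R}}^{-1}([P_1],[P_2])$ — but a priori different elements could have different $[\widetilde{Q}]$. To rule this out I need that all of them actually have $[\widetilde{Q}] = [P]$. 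This is where one uses that the target we care about is $\Cls^{[P]}(\l)$ rather than all of $\Cls(\l)$: an element of $\Cls^{[P]}(\l)$ by definition has $[\,\cdot\,]_\l$-image equal to $[P]$, so the only relevant $[\widetilde{Q}]$ is $[P]$ itself, and $[P] \in C_{\mathcal{R}}^{-1}([P_1],[P_2])$ automatically since $C_{\mathcal{R}}([P]) = ([P_1],[P_2])$. Thus $\Cls_{\mathcal{R}}^{-1}(\widetilde{P}_1,\widetilde{P}_2) \cap \Cls^{[P]}(\l) = \Cls_{\mathcal{R}}^{-1}(\widetilde{P}_1,\widetilde{P}_2) \cap \Cls^{[P]}(\l)$ has cardinality $1$ by \cref{lemma:coset=K_R}, giving injectivity and hence the desired bijection. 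I expect the main subtlety to be precisely this bookkeeping of stable classes — making sure that restricting everything to the $[P]$-component is consistent and that \cref{lemma:coset=K_R}'s hypothesis is genuinely supplied by the assumption on $|\Cls_{\mathcal{R}}^{-1}(\widetilde{P}_1,\widetilde{P}_2)|$ for the shifted pair $(\widetilde{P}_1,\widetilde{P}_2)$ rather than just for $(P_1,P_2)$ — while the structural weak-pullback input is entirely provided by \cref{lemma:Sw-Fr}.
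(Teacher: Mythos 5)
Your proof is correct and takes essentially the same route as the paper's: use \cref{lemma:Sw-Fr} to obtain the surjection with fibres $\Cls_{\mathcal{R}}^{-1}(\widetilde{P}_1,\widetilde{P}_2) \cap \Cls^{[P]}(\l)$, then apply \cref{lemma:coset=K_R} with $(P_1,P_2)$ replaced by $(\widetilde{P}_1,\widetilde{P}_2)$ and $[\widetilde{P}]=[P]$ (using \cref{prop:milnor-square} to translate the hypothesis into double-coset form and noting $[P]\in C_{\mathcal{R}}^{-1}([P_1],[P_2])$ tautologically) to see each fibre is a singleton. The only blemish is a trivial typo in your last display where you write the same set on both sides of an equality; the intended content is clear and correct.
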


Note that, subject to the hypothesis, this implies that $[P] \in C(\l)$ has cancellation if and only if $[P_1] \in C(\l_1)$ and $[P_2] \in C(\l_2)$ cancellation.

\begin{proof}
Recall that, by Lemma \ref{lemma:Sw-Fr}, there is a surjection
\[ \Cls_{\mathcal{R}} \mid_{\Cls^{[P]}(\l)} : \Cls^{[P]}(\l) \to \Cls^{[P_1]}(\l_1) \times \Cls^{[P_2]}(\l_2)\]
which has fibres $(\Cls_{\mathcal{R}} \mid_{\Cls^{[P]}(\l)})^{-1}(\widetilde{P}_1,\widetilde{P}_2) = \Cls_{\mathcal{R}}^{-1}(\widetilde{P}_1,\widetilde{P}_2) \cap \Cls^{[P]}(\l)$. By Lemma \ref{lemma:coset=K_R}, we have that $|\Cls_{\mathcal{R}}^{-1}(\widetilde{P}_1,\widetilde{P}_2) \cap \Cls^{[P]}(\l)|=1$ for all $\widetilde{P}_1 \in \Cls^{[P_1]}(\l_1)$ and $\widetilde{P}_2 \in \Cls^{[P_2]}(\l_2)$ and this implies that $\Cls_{\mathcal{R}} \mid_{\Cls^{[P]}(\l)}$ is a bijection.
\end{proof}

\section{Main cancellation theorem for orders in semisimple $\Q$-algebras} \label{s:main-cancellation-thm}

As before, let $\l$ be a $\Z$-order in a finite-dimensional semisimple $\Q$-algebra $A$, and let $\mathcal{R} = \mathcal{R}(\l,A_1,A_2)$ denote the fibre square corresponding to a splitting $A \cong A_1 \times A_2$ of $\Q$-algebras.

The aim of this section will be to prove the following which is our main cancellation theorem for orders in semisimple $\Q$-algebras.
For a ring $R$, the map $R^\times \to K_1(R)$ is taken to be the composition $R^\times = \GL_1(R) \hookrightarrow \GL(R) \twoheadrightarrow K_1(R)$.

\begin{thm} \label{thm:canc-general}
Let $P \in \Cls(\Lambda)$ and let $P_1 = (i_1)_\#(P) \in \Cls(\Lambda_1)$. Suppose the following conditions are satisfied by $\mathcal{R}$:
\begin{clist}{(i)}
\item $\l_2$ satisfies the Eichler condition
\item The map $\Lambda_1^\times \to K_1(\Lambda_1)$ is surjective
\item Every $\wt P_1 \in \Cls^{[P_1]}(\Lambda_1)$ is represented by a two-sided ideal $I \subseteq \l_1$ which is generated by central elements.
\end{clist}
Then the map $(i_1)_\# : \Cls^{[P]}(\Lambda) \to \Cls^{[P_1]}(\Lambda_1)$ is a bijection. In particular, $[P] \in C(\l)$ has cancellation if and only if $[P_1] \in C(\l_1)$ has cancellation.
\end{thm}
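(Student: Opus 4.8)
The plan is to deduce this from \cref{thm:coset=K_R}, using condition (i) to collapse the $\l_2$-factor and conditions (ii) and (iii) to control the $\l_1$-factor. Write $P_2 = (i_2)_\#(P) \in \Cls(\l_2)$. By (i) and \cref{thm:jacobinski}, $\l_2$ has locally free cancellation, so $\Cls^{[P_2]}(\l_2) = \{P_2\}$ is a single point. Consequently, once the hypothesis of \cref{thm:coset=K_R} is checked, its conclusion becomes a bijection $\Cls_{\mathcal{R}}\colon \Cls^{[P]}(\l) \xrightarrow{\sim} \Cls^{[P_1]}(\l_1) \times \{P_2\}$; composing with the projection onto the first factor identifies this with $(i_1)_\#$, which gives the first assertion, and the ``in particular'' then follows from the fact that a class $[Q]$ has cancellation if and only if $|\Cls^{[Q]}| = 1$. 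So the whole content is to verify the hypothesis of \cref{thm:coset=K_R}: for every $\wt P_1 \in \Cls^{[P_1]}(\l_1)$ one must show $|\Cls_{\mathcal{R}}^{-1}(\wt P_1, P_2)| = K_{\mathcal{R}}$.

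By Propositions \ref{prop:milnor-square} and \ref{prop:K_1-square} this is equivalent to showing that
\[ \Psi_{\wt P_1, P_2}\colon \Aut(\wt P_1)\,\backslash\,\widebar{\l}^\times\,/\,\Aut(P_2) \longrightarrow K_1(\widebar{\l})\big/\bigl(\image K_1(\l_1)\cdot\image K_1(\l_2)\bigr) \]
is a bijection; it is already surjective (as observed before \cref{lemma:coset=K_R}), so the issue is injectivity. Via the commutative diagram relating $\Psi_{\wt P_1,P_2}$ to the Morita maps, this reduces to pinning down the subgroups $U_1 = \image(\Aut(\wt P_1) \to \widebar{\l}^\times)$ and $U_2 = \image(\Aut(P_2) \to \widebar{\l}^\times)$, together with the normal subgroup $W = \ker(\widebar{\l}^\times \to K_1(\widebar{\l}))$ (the latter map being onto since the finite ring $\widebar{\l}$ is semilocal).

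On the $\l_1$-side I would argue as follows. By \cref{prop:two-sided=>invertible} and condition (iii), $\wt P_1$ is left isomorphic to an invertible two-sided ideal $I \subseteq \l_1$ generated by central elements, and for any invertible two-sided ideal one has $\End_{\l_1}(I) \cong \l_1^{\mathrm{op}}$ (the right order of $I$ is $\l_1$). Being generated by central elements, $I$ is locally free as a bimodule by \cref{cor:lf-bimodule}, so $(j_1)_\#(I) \cong \widebar{\l}$ as $(\widebar{\l},\widebar{\l})$-bimodules; choosing $\varphi_1$ to be a bimodule isomorphism, the map $\Aut(\wt P_1) \to \widebar{\l}^\times$ of \cref{prop:milnor-square} is identified with $j_1|_{\l_1^\times}$, i.e.\ $U_1 = j_1(\l_1^\times)$. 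Condition (ii) --- surjectivity of $\l_1^\times \to K_1(\l_1)$, equivalently of $(\l_1^{\mathrm{op}})^\times \to K_1(\l_1^{\mathrm{op}})$ --- then yields $\image\bigl(U_1 \hookrightarrow \widebar{\l}^\times \to K_1(\widebar{\l})\bigr) = \image\bigl(K_1(\l_1) \to K_1(\widebar{\l})\bigr)$.

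The $\l_2$-side is the crux, and the one place where (i) is used for more than \cref{thm:jacobinski}. I expect that strong approximation --- Eichler's theorem, the same input behind Jacobinski's theorem, here applied to the Eichler order $\End_{\l_2}(P_2)$ and its relevant finite quotient --- shows that $U_2$ is \emph{exactly the full preimage} of $\image(K_1(\l_2) \to K_1(\widebar{\l}))$ under $\widebar{\l}^\times \to K_1(\widebar{\l})$; in particular $U_2$ is normal in $\widebar{\l}^\times$ and contains $W$. Granting this, and combining with the $\l_1$-side, $U_1 U_2$ equals the full preimage $H$ of $\image K_1(\l_1)\cdot\image K_1(\l_2)$: the inclusion $\subseteq$ is immediate, and for $\supseteq$ one takes $u \in H$, writes $[u] = [\alpha]\cdot b$ with $\alpha \in U_1$ and $b \in \image K_1(\l_2)$, and notes that $\alpha^{-1}u$ lies in the preimage of $\image K_1(\l_2)$, hence in $U_2$. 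Since $U_2$ is normal, every double coset satisfies $U_1 g U_2 = U_1 U_2 g = Hg$, so $\Psi_{\wt P_1,P_2}$ is just the canonical bijection $\widebar{\l}^\times/H \cong K_1(\widebar{\l})/(\image K_1(\l_1)\cdot\image K_1(\l_2))$; in particular $|\Cls_{\mathcal{R}}^{-1}(\wt P_1, P_2)| = [\widebar{\l}^\times : H] = K_{\mathcal{R}}$, and \cref{thm:coset=K_R} applies. The main obstacle is precisely the emphasised claim about $U_2$: promoting ``$[u]$ lies in the image of $K_1(\l_2)$'' to ``$u$ is realised by an automorphism of $P_2$'' needs a genuine strong-approximation argument, and it is exactly here that the Eichler condition is indispensable (strong approximation fails for $\SL_1$ of a totally definite quaternion algebra).
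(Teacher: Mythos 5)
Your strategy agrees with the paper's at the top level: reduce to \cref{thm:coset=K_R} using Jacobinski's theorem (\cref{thm:jacobinski}) on the $\l_2$ factor, then verify that $|\Cls_{\mathcal{R}}^{-1}(\wt P_1, P_2)| = K_{\mathcal{R}}$ for every $\wt P_1 \in \Cls^{[P_1]}(\l_1)$. But you verify this by a genuinely different route. You compute the double coset subgroups $U_1$ and $U_2$ explicitly: by \cref{cor:lf-bimodule}, a two-sided ideal $I_1$ generated by central elements is locally free as a bimodule, so $(j_1)_\#(I_1) \cong \widebar{\l}$ as bimodules; taking $\varphi_1$ to be a bimodule isomorphism and identifying $\Aut(I_1) \cong \l_1^\times$ via the right order gives $U_1 = j_1(\l_1^\times)$, and then (ii) identifies its image in $K_1(\widebar{\l})$. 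The paper keeps the double coset structure implicit: \cref{lemma:eichler} reduces to $P_2 = \l_2$, \cref{lemma:unit-rep-double-cosets} handles the base case $(\l_1,\l_2)$, and \cref{lemma:lifting-two-sided-ideals} constructs a two-sided ideal $I \subseteq \l$ lifting $I_1$ whose tensor action bijects $\Cls_{\mathcal{R}}^{-1}(\l_1,\l_2)$ onto $\Cls_{\mathcal{R}}^{-1}(\wt P_1,\l_2)$. Your route skips the embedding machinery (\cref{prop:embed-ideal}, \cref{lemma:embed-ideal-1}, \cref{lemma:embed-ideal-2}) underlying \cref{lemma:lifting-two-sided-ideals}, so it is shorter; what the paper's route buys is that \cref{lemma:lifting-two-sided-ideals} is a reusable construction (it reappears, for instance, in the proof of \cref{thm:TxC2}).

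The step you flag as the ``main obstacle'' is in fact already available. Your claim about $U_2$ --- that it equals the full preimage of $\image(K_1(\l_2) \to K_1(\widebar{\l}))$ under the (surjective) map $\widebar{\l}^\times \to K_1(\widebar{\l})$, and in particular is normal --- is precisely \cref{thm:eichler-quotient} applied to $\l_2$ and $P_2$; that result holds because $\l_2$ satisfies the Eichler condition by (i), and it is imported from Swan's Theorems A17 and A18, which is exactly the strong-approximation input you anticipate. Citing \cref{thm:eichler-quotient} closes your proof; the remaining manipulations (normality of $U_2$, the equality $U_1 U_2 = H$ for $H$ the preimage of $\image K_1(\l_1)\cdot\image K_1(\l_2)$, and the collapse of double cosets to left cosets of $H$) are correct as you have written them.
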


By \cref{cor:lf-bimodule}, a two-sided ideal $I \subseteq \l_1$ is locally free as a $(\l_1,\l_1)$-bimodule if and only if it is generated by central elements. In particular, hypothesis (iii) is satisfied if $I$ is generated by central elements.
The proof will be given in \cref{subsection:proof} and will depend on  Lemmas \ref{lemma:eichler}, \ref{lemma:unit-rep-double-cosets} and \ref{lemma:lifting-two-sided-ideals} which roughly correspond to the three conditions in \cref{thm:canc-general}.
The first lemma is due to Swan, the second is due to the author \cite{Ni18} and the third has not been previously observed.

\subsection{The Eichler condition} \label{subsection:eichler}

The following is a consequence of \cite[A17 and A18]{Sw83}.

\begin{thm} \label{thm:eichler-quotient}
Let $\l$ be a $\Z$-order in a semisimple $\Q$-algebra, let $P \in \Cls(\l)$, let $\bar{\l}$ be a finite ring and let $f: \l \twoheadrightarrow \widebar{\l}$ be a surjective ring homomorphism. Then $f_\#(\Aut(P)) \le \widebar{\l}^\times$ is a normal subgroup and the map $\Aut(P) \to K_1(\widebar{\l})$ induces an isomorphism
\[ \widebar{\l}^\times \slash \Aut(P) \cong K_1(\widebar{\l}) \slash K_1(\l).\]
\end{thm}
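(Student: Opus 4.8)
The plan is to derive the statement from \cite[Corollary A17]{Sw83} and \cite[A18]{Sw83}, after first making sense of the homomorphism $\Aut(P)\to\widebar\l^\times$ whose image is $f_\#(\Aut(P))$. Since $P\in\Cls(\l)$ is locally free of rank one and $\widebar\l$ is finite, \cref{prop:LF=>mod-n-free} applied to $P$ gives an isomorphism $\varphi\colon f_\#(P)=\widebar\l\otimes_\l P\xrightarrow{\sim}\widebar\l$ of $\widebar\l$-modules (note $f_\#(P)$ is free of rank one, because $P$ is locally free of rank one). Extension of scalars along $f$ then produces a group homomorphism $\theta_\varphi\colon\Aut_\l(P)\to\widebar\l^\times$, $\alpha\mapsto\varphi\circ(\id\otimes\alpha)\circ\varphi^{-1}$, with image $f_\#(\Aut(P))$. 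A different choice of $\varphi$ conjugates $\theta_\varphi$ by a fixed unit of $\widebar\l$, so $f_\#(\Aut(P))$ is well-defined up to conjugacy; once it is exhibited as the kernel of a homomorphism out of $\widebar\l^\times$, it will automatically be normal and honestly independent of $\varphi$.

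Next I would compare with $K_1$. As in the discussion preceding \cref{lemma:coset=K_R}, $P$ is a progenerator of $\l$-modules, Morita equivalence identifies $K_1(\End_\l(P))\cong K_1(\l)$, and the canonical map $\Aut_\l(P)=\End_\l(P)^\times\to K_1(\End_\l(P))$ thereby becomes a homomorphism $\rho\colon\Aut_\l(P)\to K_1(\l)$. The content of \cite[Corollary A17]{Sw83} is that the square with top edge $\rho$, left edge $\theta_\varphi$, bottom edge the canonical map $\widebar\l^\times\to K_1(\widebar\l)$ and right edge $K_1(f)$ commutes (this is the square used to define $\Psi_{P_1,P_2}$ earlier in the section, applied with $h=f$). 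Consequently the composite $\widebar\l^\times\to K_1(\widebar\l)\to K_1(\widebar\l)/\IM(K_1(f))$ annihilates $f_\#(\Aut(P))$, and since $\widebar\l$ is finite, hence semilocal, the map $\widebar\l^\times\to K_1(\widebar\l)$ is surjective. This yields a well-defined surjection
\[ \Phi\colon \widebar\l^\times/f_\#(\Aut(P))\twoheadrightarrow K_1(\widebar\l)/\IM(K_1(f)). \]

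Finally I would prove $\Phi$ is injective, which I expect to be the \emph{main obstacle}. Concretely, one must show that any $u\in\widebar\l^\times$ whose class in $K_1(\widebar\l)$ lies in $\IM(K_1(f))$ already lies in $f_\#(\Aut(P))$: writing $[u]=K_1(f)(x)$ and representing $x$ by a matrix in $\GL_n(\l)$, one uses the relations that vanish in $K_1$ together with the surjectivity of $f$ onto the semilocal ring $\widebar\l$ to bring the class of $u$ first into the stable, and then into the unstable, automorphism group of $P$. This destabilisation step is precisely \cite[A18]{Sw83}; it is where the finiteness (semilocality) of $\widebar\l$ and the progenerator structure of $P$ are genuinely used, and it is the reason the hypothesis requires $\widebar\l$ finite rather than merely $\Q\otimes\widebar\l=0$. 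Granting it, $\Phi$ is an isomorphism, so $f_\#(\Aut(P))=\ker\bigl(\widebar\l^\times\to K_1(\widebar\l)/\IM(K_1(f))\bigr)$ is a normal subgroup of $\widebar\l^\times$, and $\Phi$ is the claimed isomorphism $\widebar\l^\times/\Aut(P)\cong K_1(\widebar\l)/K_1(\l)$.
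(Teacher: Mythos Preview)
Your approach is the same as the paper's: the paper does not give a self-contained argument but simply records that the statement ``is a consequence of \cite[A17 and A18]{Sw83}'', and you have correctly unpacked what that means --- A17 supplies the commutative square linking $\Aut(P)\to\widebar\l^\times$ with $K_1(\l)\to K_1(\widebar\l)$, and A18 supplies the destabilisation step giving injectivity of $\Phi$.

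There is, however, one point where your explanation is off. You attribute the applicability of A18 to ``the finiteness (semilocality) of $\widebar\l$ and the progenerator structure of $P$'', and say this is ``the reason the hypothesis requires $\widebar\l$ finite''. But Swan's A18 requires the \emph{Eichler condition on $\l$}; semilocality of $\widebar\l$ alone is not enough. Indeed, the remark immediately following the theorem notes that it implies Jacobinski's theorem (\cref{thm:jacobinski}), which is false without the Eichler condition --- so the theorem cannot hold in the generality its statement literally suggests. The paper places the result in a subsection titled ``The Eichler condition'' and only ever invokes it (in \cref{lemma:eichler}) under the explicit hypothesis that $\l_2$ satisfies the Eichler condition; the omission of this hypothesis from the theorem statement itself appears to be an oversight. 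Your write-up should make explicit that the Eichler condition on $\l$ is being used when you appeal to A18.
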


\begin{remark} 
By \cref{thm:coset=K_R}, it can be shown that this implies \cref{thm:jacobinski}.
\end{remark}

We can apply this to the case where $\mathcal{R} = \mathcal{R}(\l,A_1,A_2)$ for a splitting of $\Q$-algebras $A \cong A_1 \times A_2$. 

\begin{lemma} \label{lemma:eichler}
Let $\mathcal{R}$ be as above.
If $\l_2$ satisfies the Eichler condition then, for all $P_k \in \Cls(\l_k)$ for $k=1,2$, there is a bijection 
$\Cls_{\mathcal{R}}^{-1}(P_1,P_2) \cong \Cls_{\mathcal{R}}^{-1}(P_1,\l_2)$.
\end{lemma}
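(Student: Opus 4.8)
The plan is to turn both sides into double-coset spaces via \cref{prop:milnor-square}, and then use \cref{thm:eichler-quotient}, applied to the order $\l_2$, to show that the two relevant right-acting subgroups of $\widebar\l^{\times}$ coincide. First, $\mathcal{R}$ is a Milnor square since $j_1$ and $j_2$ are surjective, and $(j_k)_{\#}(P_k) \cong \widebar\l$ by \cref{prop:LF=>mod-n-free}(ii), so \cref{prop:milnor-square} applies. Fixing an isomorphism $\varphi_1 : (j_1)_{\#}(P_1) \to \widebar\l$ once and for all, and isomorphisms $\varphi_2 : (j_2)_{\#}(P_2) \to \widebar\l$ and $\varphi_2' = \id : (j_2)_{\#}(\l_2) = \widebar\l \to \widebar\l$, we obtain one-to-one correspondences
\[ \Cls_{\mathcal{R}}^{-1}(P_1,P_2) \;\leftrightarrow\; \Aut(P_1)\,\backslash\, \widebar\l^{\times} \,\slash\, \Aut(P_2), \qquad \Cls_{\mathcal{R}}^{-1}(P_1,\l_2) \;\leftrightarrow\; \Aut(P_1)\,\backslash\, \widebar\l^{\times} \,\slash\, \Aut(\l_2), \]
where the right actions are through the maps $\varphi_2(1 \otimes -) : P_2 \to \widebar\l$ and $\varphi_2'(1 \otimes -) = j_2 : \l_2 \to \widebar\l$ respectively, and the left action (through $\varphi_1(1\otimes-)$) is the same in both. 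So it suffices to prove that the image of $\Aut(P_2)$ in $\widebar\l^{\times}$ equals the image $j_2(\l_2^{\times})$ of $\Aut_{\l_2}(\l_2)\cong\l_2^{\times}$; the two double-coset spaces are then literally equal, which yields the bijection.

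To prove that equality I would apply \cref{thm:eichler-quotient} with the order $\l_2$, the surjection $f = j_2 : \l_2 \twoheadrightarrow \widebar\l$, and the module $P_2$ — this is the step where the Eichler condition on $\l_2$ is used. It shows that $(j_2)_{\#}(\Aut(P_2))$ is a normal subgroup of $\widebar\l^{\times}$ and that the canonical composite $\widebar\l^{\times} \to K_1(\widebar\l) \to K_1(\widebar\l)/K_1(\l_2)$ induces an isomorphism $\widebar\l^{\times}/(j_2)_{\#}(\Aut(P_2)) \cong K_1(\widebar\l)/K_1(\l_2)$, so that $(j_2)_{\#}(\Aut(P_2))$ is precisely the kernel of this composite. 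Running the same argument with $P_2$ replaced by the free rank-one module $\l_2$ shows that $j_2(\l_2^{\times}) = (j_2)_{\#}(\Aut_{\l_2}(\l_2))$ is the kernel of the \emph{same} canonical composite. Hence $(j_2)_{\#}(\Aut(P_2)) = j_2(\l_2^{\times})$ as subsets of $\widebar\l^{\times}$, as required. One also checks, from the definitions and the commuting square relating $\Aut(P_2) \to \widebar\l^{\times}$ to the Morita isomorphism $K_1(\End_{\l_2}(P_2)) \cong K_1(\l_2)$, that the subgroup $\varphi_2(1\otimes-)(\Aut(P_2)) \le \widebar\l^{\times}$ used in \cref{prop:milnor-square} is independent of the choice of $\varphi_2$ and agrees with $(j_2)_{\#}(\Aut(P_2))$ as it appears in \cref{thm:eichler-quotient}.

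The main obstacle is the bookkeeping in the second paragraph: one must ensure that both invocations of \cref{thm:eichler-quotient} produce the kernel of \emph{one and the same} canonical map $\widebar\l^{\times} \to K_1(\widebar\l)/K_1(\l_2)$, so that the two normal subgroups are genuinely equal as subsets of $\widebar\l^{\times}$ rather than merely having abstractly isomorphic quotients, and that the Morita identifications and the maps $\varphi_k(1\otimes-)$ are tracked consistently between \cref{prop:milnor-square} and \cref{thm:eichler-quotient}. Once the equality $(j_2)_{\#}(\Aut(P_2)) = j_2(\l_2^{\times})$ is in hand, the remaining passage back through the correspondences of \cref{prop:milnor-square} is purely formal.
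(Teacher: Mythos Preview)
Your proof is correct and follows essentially the same approach as the paper: reduce both fibres to double-coset spaces via \cref{prop:milnor-square}, then use \cref{thm:eichler-quotient} twice (for $P_2$ and for $\l_2$) to conclude that the images of $\Aut(P_2)$ and $\l_2^\times$ in $\widebar\l^\times$ coincide. The paper's proof is terser and simply writes the chain of isomorphisms $\widebar\l^\times/\Aut(P_2)\cong K_1(\widebar\l)/K_1(\l_2)\cong \widebar\l^\times/\l_2^\times$ before passing to the double cosets, whereas you make explicit the point that both subgroups are the kernel of the \emph{same} canonical map $\widebar\l^\times\to K_1(\widebar\l)/K_1(\l_2)$ --- a detail the paper leaves implicit.
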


\begin{proof}
By \cref{thm:eichler-quotient}, there are isomorphisms $\widebar{\l}^\times \slash \Aut(P_2) \cong K_1(\widebar{\l}) \slash K_1(\l_2) \cong \widebar{\l}^\times \slash \l^\times$. This implies that there is a bijection
\[ \Aut(P_1) \backslash \, \widebar{\l}^\times \slash \Aut(P_2) \cong \Aut(P_1) \backslash \, \widebar{\l}^\times \slash \l_2^\times \]
which is equivalent to $\Cls_{\mathcal{R}}^{-1}(P_1,P_2) \cong \Cls_{\mathcal{R}}^{-1}(P_1,\l_2)$ by \cref{prop:milnor-square}.
\end{proof}

\subsection{Unit representation for $K_1$} \label{subsection:unit-rep}

The following observation was first made in \cite{Ni18}.

\begin{lemma} \label{lemma:unit-rep-double-cosets}
Let $\mathcal{R}$ be as above and suppose:
\begin{clist}{(i)}
\item $\l_2$ satisfies the Eichler condition
\item The map $\Lambda_1^\times \to K_1(\Lambda_1)$ is surjective.
\end{clist}
Then $|\l_1^\times \backslash \, \widebar{\l}^\times \slash \l_2^\times| = K_{\mathcal{R}}$.
\end{lemma}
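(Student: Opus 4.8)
The plan is to chain together the two equivalences that were just proved using \cref{thm:eichler-quotient}, one for each of the quotient maps $j_1, j_2 : \l_i \to \widebar{\l}$. Recall that $K_{\mathcal{R}} = |K_1(\widebar{\l})/(K_1(\l_1) \times K_1(\l_2))|$, where the subgroup is the image of $K_1(\l_1) \times K_1(\l_2)$ under $(K_1(j_1), K_1(j_2))$; by \cref{prop:K_1-square} this is the number of classes in a fibre of $C_{\mathcal{R}}$, which is finite by \cref{prop:MV-special}. We want to show the double coset set $\l_1^\times \backslash \widebar{\l}^\times / \l_2^\times$ has exactly $K_{\mathcal{R}}$ elements, where $\l_i^\times$ acts via the reduction maps $(j_i)_\# : \l_i^\times \to \widebar{\l}^\times$.

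First I would handle the right factor using the Eichler condition on $\l_2$. Applying \cref{thm:eichler-quotient} to $f = j_2 : \l_2 \twoheadrightarrow \widebar{\l}$, the image $(j_2)_\#(\l_2^\times)$ is a normal subgroup of $\widebar{\l}^\times$ and $\widebar{\l}^\times / \l_2^\times \cong K_1(\widebar{\l})/K_1(\l_2)$. In particular this quotient is abelian, so the image of $\l_2^\times$ in $\widebar{\l}^\times$ contains the commutator subgroup; this is the crucial point that lets us pass to $K_1$. Since $(j_2)_\#(\l_2^\times)$ is normal, the double coset set collapses to an ordinary quotient: $\l_1^\times \backslash \widebar{\l}^\times / \l_2^\times \cong (\l_1^\times \cdot \l_2^\times) \backslash \widebar{\l}^\times$, i.e. the quotient of $\widebar{\l}^\times$ by the (normal) subgroup generated by the images of both unit groups. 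Because $\widebar{\l}^\times / \l_2^\times$ is abelian, this quotient is naturally a quotient of the abelian group $\widebar{\l}^\times / \l_2^\times \cong K_1(\widebar{\l})/K_1(\l_2)$, namely by the image of $\l_1^\times$.

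Now I bring in condition (ii). Under the identification $\widebar{\l}^\times / \l_2^\times \cong K_1(\widebar{\l})/K_1(\l_2)$, the image of an element $u \in \l_1^\times$ is exactly the class of $K_1(j_1)(\overline{u})$ where $\overline{u} \in K_1(\l_1)$ is the image of $u$ under $\l_1^\times \to K_1(\l_1)$ (this compatibility is built into \cref{thm:eichler-quotient}, via the map $\Aut(P) \to K_1(\widebar\l)$ applied with $P = \l_1$). Since $\l_1^\times \to K_1(\l_1)$ is surjective by (ii), the image of $\l_1^\times$ in $K_1(\widebar\l)/K_1(\l_2)$ equals the image of all of $K_1(\l_1)$. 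Therefore
\[ \l_1^\times \backslash \widebar{\l}^\times / \l_2^\times \;\cong\; \frac{K_1(\widebar{\l})/K_1(\l_2)}{\text{image of } K_1(\l_1)} \;\cong\; \frac{K_1(\widebar{\l})}{K_1(\l_1)\cdot K_1(\l_2)}, \]
which has cardinality $K_{\mathcal{R}}$ by definition.

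The main obstacle is purely bookkeeping around \cref{thm:eichler-quotient}: one has to be careful that the isomorphism $\widebar\l^\times/\Aut(P) \cong K_1(\widebar\l)/K_1(\l)$ is compatible with the natural maps $\l^\times \to \widebar\l^\times$ and $\l^\times \to K_1(\l) \to K_1(\widebar\l)$, so that "image of $\l_1^\times$ in the quotient" really does match "image of $K_1(\l_1)$" once (ii) is invoked. This is exactly the commuting square between $\Aut(P)$, $K_1(\End P) \cong K_1(\l)$ and $\widebar\l^\times$, $K_1(\widebar\l)$ recorded earlier (following \cite[Corollary A17]{Sw83}), applied with $P=\l_1$, $h=j_1$, together with the analogous square for $\l_2$, $j_2$. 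Once that compatibility is pinned down, the normality statement from \cref{thm:eichler-quotient} does all the real work and the rest is a diagram chase.
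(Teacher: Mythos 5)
Your proof is correct and takes essentially the same route as the paper's: apply \cref{thm:eichler-quotient} to $j_2 \colon \l_2 \twoheadrightarrow \widebar\l$ (using the Eichler condition on $\l_2$) to identify $\widebar\l^\times/\l_2^\times$ with the abelian group $K_1(\widebar\l)/K_1(\l_2)$, then invoke (ii) to equate the image of $\l_1^\times$ with the image of $K_1(\l_1)$. The paper compresses the normality/abelian bookkeeping into a single commutative square, but the content and reliance on exactly the same two inputs are identical.
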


\begin{proof}
Since $m_{\H}(\l_2)=0$, \cref{thm:eichler-quotient} implies that the map $\widebar{\l}^\times \to K_1(\widebar{\l})$ induces an isomorphism $\widebar{\l}^\times \slash \l_2^\times \cong K_1(\widebar{\l}) \slash K_1(\l_2)$.
The relevant maps fit into a commutative diagram
\[
\begin{tikzcd}
	\l_1^\times \ar[r] \ar[d] & \widebar{\l}^\times / \l_2^\times \ar[d,"\cong"] \\
	K_1(\l_1) \ar[r] & \displaystyle \frac{K_1(\widebar{\l})}{K_1(\l_2)}
\end{tikzcd}
\]
and so $\IM(\l_1^\times \to \widebar{\l}^\times / \l_2^\times) = \IM(K_1(\l_1) \to K_1(\widebar{\l})/K_1(\l_2))$ since the map $\l_1^\times \to K_1(\l_1)$ is surjective. Hence we have $|\l_1^\times\backslash \, \widebar{\l}^\times \slash \l_2^\times| = K_{\mathcal{R}}$.
\end{proof}

\subsection{Two-sided ideals over orders in semisimple $\Q$-algebras} \label{section:two-sided}

The main result of this section is as follows. This gives a method of constructing locally free two-sided ideals over $\l$ from locally free two-sided ideals over the projections $\l_k$ subject to certain conditions.

\begin{lemma} \label{lemma:lifting-two-sided-ideals}
Let $\mathcal{R}$ be as above and, for $k=1,2$, suppose $I_k \subseteq \Lambda_k$ is a two-sided ideal such that $I_k \in \Cls(\l_k)$ and which is generated by central elements.
Then there exists a two-sided ideal $I \subseteq \Lambda$ with $I \in \Cls(\l)$ such that
\begin{clist}{(i)}
\item For $k=1,2$, there is a $(\Lambda_k,\Lambda)$-bimodule isomorphism
\[(i_k)_\#(I) \cong (I_k)_{i_k}\]
\item For all $P_k \in \Cls(\l_k)$ for $k=1,2$, there is a bijection 
\[I \otimes_{\l} - : \Cls_{\mathcal{R}}^{-1}(P_1,P_2) \to \Cls_{\mathcal{R}}^{-1}(I_1 \otimes_{\l_1} P_1, I_2 \otimes_{\l_2} P_2).\]
\end{clist}
\end{lemma}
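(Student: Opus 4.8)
The plan is to build $I$ by Milnor patching over the fibre square $\mathcal{R}$ and then to convert the resulting bimodule into a genuine two-sided ideal using the central Picard group. So first I would manufacture gluing data over $\bar\l$. Since $I_k$ is generated by central elements, one sees that $(\ker j_k)I_k = I_k(\ker j_k)$, so the module $(j_k)_\#(I_k) = \bar\l\otimes_{\l_k}I_k = I_k/(\ker j_k)I_k$ is naturally an $(\bar\l,\bar\l)$-bimodule, which is free of rank one on the left by \cref{prop:LF=>mod-n-free}. The crucial point is that it is trivial as a bimodule: by \cref{cor:lf-bimodule}, $I_k$ is locally free as a bimodule, so (as in the proof of that corollary, via \cref{thm:picent-SES}) $[I_k]\in\Picent(\l_k)$ lies in the image of $\tau_k\colon\Picent(Z(\l_k))\to\Picent(\l_k)$; pushing this class forward along $j_k$ and using $j_k(Z(\l_k))\subseteq Z(\bar\l)$ shows that $[(j_k)_\#(I_k)]\in\Picent(\bar\l)$ is the image of a class in $\Pic(Z(\bar\l))$, and the latter group is trivial because $Z(\bar\l)$ is a finite commutative ring, hence a finite product of Artinian local rings to which \cref{prop:picent-centre} applies. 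I therefore fix bimodule isomorphisms $\theta_k\colon(j_k)_\#(I_k)\xrightarrow{\ \sim\ }\bar\l$.

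Next I would set $\tilde I := I_1\times_{\bar\l}I_2$, the pullback of $I_1$ and $I_2$ over $\bar\l$ along the gluing isomorphism $\theta_2^{-1}\theta_1$. Since $I_k$ is projective over $\l_k$ and $\mathcal{R}$ is a Milnor square with $(j_k)_\#(I_k)$ free, \cite{Mi71} (cf. \cref{prop:milnor-square}) shows $\tilde I$ is a rank one locally free $\l$-module, so $\tilde I\in\Cls(\l)$, and that $(i_k)_\#(\tilde I)\cong (I_k)_{i_k}$ as $(\l_k,\l)$-bimodules — here the bimodule structure on $\tilde I$ is the one it inherits as an $\l$-submodule of $I_1\oplus I_2$. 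Because $i_k\colon\l\to\l_k$ is surjective we have $i_k(Z(\l))\subseteq Z(\l_k)$, and it follows directly that $\tilde I$ is central-commutative, i.e. $xm = mx$ for all $x\in Z(\l)$, $m\in\tilde I$.

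It remains to realise $[\tilde I]$ by a two-sided ideal. By \cref{prop:two-sided=>invertible} there are two-sided ideals $I_k'\in\Cls(\l_k)$ inverse to $I_k$, and $[I_k'] = -[I_k]$ again lies in the image of $\tau_k$, so by \cref{cor:lf-bimodule} $I_k'$ is likewise generated by central elements; patching $I_1', I_2'$ over $\bar\l$ with the gluing data induced by the $\theta_k$ and by the pairings $I_k\otimes_{\l_k}I_k'\cong\l_k$, and using that the equivalence between projective $\l$-modules and patching triples is compatible with $\otimes_\l$, one obtains $\tilde I\otimes_\l\tilde I'\cong\l\cong\tilde I'\otimes_\l\tilde I$ together with the coherence needed to conclude that $\tilde I$ is invertible as a bimodule. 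Thus $[\tilde I]\in\Picent(\l)$, and \cref{thm:bimodules-are-ideals} provides an invertible two-sided ideal $I\subseteq\l$ with $I\cong\tilde I$ as bimodules; then $I\in\Cls(\l)$ and $(i_k)_\#(I)\cong(i_k)_\#(\tilde I)\cong (I_k)_{i_k}$, which is (i), and the same reasoning gives $(i_k)_\#(I^{-1})\cong I_k^{-1}$. For (ii), invertibility of $I$ makes $I\otimes_\l-$ a self-bijection of $\Cls(\l)$ with inverse $I^{-1}\otimes_\l-$; and for $P\in\Cls(\l)$ one computes $(i_k)_\#(I\otimes_\l P)\cong(\l_k\otimes_\l I)\otimes_\l P\cong (I_k)_{i_k}\otimes_\l P\cong I_k\otimes_{\l_k}(i_k)_\#(P)$, so this self-bijection carries $\Cls_{\mathcal{R}}^{-1}(P_1,P_2)$ onto $\Cls_{\mathcal{R}}^{-1}(I_1\otimes_{\l_1}P_1,\,I_2\otimes_{\l_2}P_2)$, with inverse $I^{-1}\otimes_\l-$; this is (ii).

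The step I expect to be the main obstacle is exactly this realisation: showing that the patched object $\tilde I$, which a priori is only a locally free rank one left $\l$-module carrying a bimodule structure, is genuinely an invertible bimodule, so that \cref{thm:bimodules-are-ideals} can convert it into an honest two-sided ideal. This is where the hypothesis that the $I_k$ are generated by central elements does the real work — both in producing the gluing isomorphisms $\theta_k$ over $\bar\l$ and in forcing $\tilde I$ to be central-commutative — and where one must marry Milnor patching with the exact sequences for the central Picard groups developed in \cref{subsection:picard}.
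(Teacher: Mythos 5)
Your overall architecture (Milnor patching with gluing isomorphisms, then realise the result by a two-sided ideal) is the right one, and it is genuinely different from the paper's route. The paper first invokes a separate embedding lemma (\cref{prop:embed-ideal}) to replace each $I_k$ by a bimodule-isomorphic ideal whose intersection with $\Z$ is coprime to $|\bar\Lambda|$; this produces \emph{explicit} gluing maps $\varphi_k\colon\bar\Lambda\otimes_{\Lambda_k}I_k\to\bar\Lambda$, $x\otimes y\mapsto xj_k(y)$, whose concrete form makes the pullback literally equal $M=\{(x_1,x_2):j_1(x_1)=j_2(x_2)\}$; this is visibly a $(\Lambda,\Lambda)$-bimodule and one simply scales ($I:=kM\subseteq\Lambda$) to get an ideal. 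You instead produce abstract bimodule isomorphisms $\theta_k$ via Picard-group considerations and then try to recognise $\tilde I$ as a class in $\Picent(\Lambda)$ so that \cref{thm:bimodules-are-ideals} applies.

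There are two places where your write-up asserts nontrivial facts without proof, and the second is a real gap. First, the claim that pushing the class of $I_k$ forward along $j_k$ inside central Picard groups gives $[(j_k)_\#(I_k)]\in\Picent(\bar\Lambda)$ and that this lands in the image of $\Pic(Z(\bar\Lambda))$ requires a functoriality statement for $\Picent$ under the non-flat surjection $j_k$ and a compatibility with the maps $\tau$ which is not in the paper and which you do not justify; this can be repaired (and in fact is essentially the content of \cref{lemma:embed-ideal-1}, proved directly from local freeness as a bimodule), but as written it is hand-waved. Second, and more seriously, your proof that $\tilde I$ is an \emph{invertible} bimodule rests entirely on the unsubstantiated sentence that ``the equivalence between projective $\Lambda$-modules and patching triples is compatible with $\otimes_\Lambda$''. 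That compatibility is not obvious and you give no argument or reference; without it, you cannot conclude $\tilde I\in\Picent(\Lambda)$ and hence cannot invoke \cref{thm:bimodules-are-ideals}. The simplest fix is to drop the Picent route at this final step altogether: $\tilde I$ is a finitely generated $\Z$-lattice inside $\Lambda_1\times\Lambda_2\subseteq\Q\cdot\Lambda$ that is a $(\Lambda,\Lambda)$-bimodule, so $k\tilde I\subseteq\Lambda$ for some $k\neq0$ is already a two-sided ideal in $\Cls(\Lambda)$ bimodule-isomorphic to $\tilde I$, and \cref{prop:two-sided=>invertible} then supplies invertibility for part (ii). This is exactly the paper's ending, and adopting it would make your proof complete. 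You should also note (as the paper does, with an explicit check) that Milnor's isomorphism $(i_k)_\#(\tilde I)\cong(I_k)_{i_k}$ is a priori only a left-module isomorphism, and that it is also right $\Lambda$-linear must be verified from the formula.
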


\begin{remark}
(i) This actually holds under the weaker hypothesis that $(I_k)_{(p)} \cong (\l_k)_{(p)}$ are isomorphic as $((\l_k)_{(p)},(\l_k)_{(p)})$-bimodules for all primes $p \mid |\widebar{\l}|$. However, since we cannot currently see an application for this in the $\Z G$ case, we will restrict to the centrally generated case for simplicity.

(ii) This can be viewed as a generalisation of a construction of Gustafson-Roggenkamp \cite[p376-377]{GR88} which applies to the case $I_k = \l_k$ for $k=1,2$. Whilst we could similarly consider different $I$ corresponding to units $\alpha \in Z(\widebar{\l})^\times$, we will avoid this for simplicity.
\end{remark}

We will begin by proving the following embedding result, which can be viewed as a generalisation of \cite[Theorem A]{Sw60-II} to bimodules.

\begin{prop} \label{prop:embed-ideal}
Let $I \subseteq \l$ be a two-sided ideal generated by central elements such that $I \in \Cls(\l)$. Then, for all $n \ne 0$, there exists a two-sided ideal $J \subseteq \l$ such that $I \cong J$ as $(\l,\l)$-bimodules and $J \cap \Z$ is coprime to $(n)$.
\end{prop}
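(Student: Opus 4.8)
The plan is to realise a bimodule isomorphic copy of $I$ inside $\l$ whose intersection with $\Z$ avoids the finitely many prime divisors of $n$, by working one prime at a time and then patching. First I would observe that, since $I \in \Cls(\l)$ is a two-sided ideal, $I \in I(\l)$ by \cref{prop:two-sided=>invertible}, and since it is generated by central elements it represents a class $[I] \in \Picent(\l)$ lying in the image of $\tau : \Picent(C) \to \Picent(\l)$, where $C = Z(\l)$; equivalently $I = \l \otimes_C \mathfrak{a}$ for an invertible ideal $\mathfrak{a}$ of $C$. By \cref{thm:bimodules-are-ideals} (applied to the commutative order $C$, or directly to $\l$), the bimodule isomorphism class of $I$ is unchanged under $I \rightsquigarrow I\beta$ for $\beta \in (\Q \cdot C)^\times$, so it suffices to find a central element $\beta$ such that $J := I\beta \subseteq \l$ is an integral two-sided ideal with $J \cap \Z$ coprime to $n$.

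The key step is a local argument. For each prime $p \mid n$, \cref{cor:lf-bimodule} (the equivalence of (i) and (ii)) gives $I_p \cong \l_p$ as $(\l_p,\l_p)$-bimodules, so there is a central unit-type generator: more precisely $I_p = \l_p \cdot c_p$ for some $c_p \in Z(\l_p)^\times$ (a generator of the free rank-one bimodule that is central, using that $I_p$ is generated by central elements). I would then use weak approximation in the semisimple $\Q$-algebra $\Q \cdot C = Z(A)$ — or, equivalently, the fact that $C$ is a $\Z$-order in $Z(A)$ and the $C_p$ for $p \mid n$ are finitely many semilocal factors — to choose a single $\beta \in (\Q \cdot C)^\times$ which is a $p$-adic unit times the appropriate adjustment at each $p \mid n$ (so that $(I\beta)_p = \l_p$), and which at all other primes $q$ satisfies $(I\beta)_q \subseteq \l_q$, i.e. $\beta \in$ (an appropriate fractional ideal) locally. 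Concretely: $I = \l \otimes_C \mathfrak a$ with $\mathfrak a \subseteq C$; I want $\beta \in (\Q\cdot C)^\times$ with $\mathfrak a \beta \subseteq C$ and $\mathfrak a_p \beta_p = C_p$ for all $p \mid n$. Such $\beta$ exists by the approximation theorem for Dedekind-like orders (or by reducing to the integral closure of $C$ and using the CRT / strong approximation for ideals, exactly as in Swan's embedding argument \cite[Theorem A]{Sw60-II}). Then $J := \l \otimes_C (\mathfrak a \beta) = I\beta \subseteq \l$ is a two-sided ideal, bimodule-isomorphic to $I$, and $J_p = \l_p$ for all $p \mid n$, whence $J \cap \Z$ is not divisible by any $p \mid n$, i.e. is coprime to $(n)$.

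The remaining points are routine: that $J \cong I$ as $(\l,\l)$-bimodules follows from \cref{thm:bimodules-are-ideals} since they differ by the central element $\beta \in (\Q \cdot C)^\times$; that $J \in \Cls(\l)$ is automatic as $J_p \cong \l_p$ as left modules for every prime (equal to $\l_p$ for $p \mid n$, and isomorphic to $I_p \cong \l_p$ for $p \nmid n$ since $\beta_p$ is then a unit on the relevant ideal). I expect the main obstacle to be the approximation step: one must choose a \emph{single} global central $\beta$ that simultaneously trivialises $I$ at all $p \mid n$ while staying integral at every other prime. This is where the commutativity of $C = Z(\l)$ is essential — it lets us pass to the integral closure $\tilde C$ of $C$ in $Z(A)$, which is a product of Dedekind domains, use the Chinese Remainder Theorem / independence of valuations to produce $\beta$ with the prescribed valuations at the finitely many bad primes and nonnegative valuations elsewhere, and then descend back to $C$ (the difference between $C$ and $\tilde C$ is supported on finitely many primes, which can be absorbed by enlarging $n$ harmlessly, or handled by noting $C_p = \tilde C_p$ for all but finitely many $p$ and treating the exceptional ones directly). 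This is precisely the bimodule analogue of Swan's classical argument, and the proof should follow his structure closely.
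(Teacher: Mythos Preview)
Your approach is correct in outline and genuinely different from the paper's, though both ultimately produce $J = I\beta$ for some $\beta \in (\Q\cdot C)^\times$. You pass through $\Picent(C)$ to write $I \cong \l\mathfrak a$ for an invertible ideal $\mathfrak a \subseteq C$, and then seek $\beta$ by approximation in the commutative order $C$. The paper instead works directly with $I$: it uses \cref{cor:lf-bimodule} to get bimodule isomorphisms $I_{(p)} \cong \l_{(p)}$ for each $p\mid n$, then applies CRT over $\Z/n$ to produce a single $a \in Z(\l)\cap I$ with $\l/n \to I/n$, $1\mapsto [a]$ a bimodule isomorphism (this is \cref{lemma:embed-ideal-1}). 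A Nakayama-type argument (\cref{lemma:embed-ideal-2}) then shows $I/\l a$ is finite of order $m$ coprime to $n$, so $mI \subseteq \l a$, and one takes $J = \psi(mI)$ where $\psi:\l a\to\l$ is division by $a$. In effect the paper's $\beta$ is the explicit element $ma^{-1}$, constructed without ever naming $\mathfrak a$ or invoking an abstract approximation theorem.

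Your route is more conceptual but the approximation step is underspecified, and the proposed detour through the integral closure $\tilde C$ is both unnecessary and awkward (descending integrality from $\tilde C$ back to $C$ at the primes where $C_p\ne\tilde C_p$ is not automatic). The cleaner justification stays entirely in $C$: you want $\beta\in\mathfrak a^{-1}$ generating the free rank-one module $(\mathfrak a^{-1})_p$ for each $p\mid n$, which by Nakayama reduces to hitting a generator in each finite semisimple quotient $(\mathfrak a^{-1})_p/\mathrm{rad}(C_p)(\mathfrak a^{-1})_p$; surjectivity of $\mathfrak a^{-1}\twoheadrightarrow \prod_{p\mid n}(\mathfrak a^{-1})/p^{v_p(n)}(\mathfrak a^{-1})$ (ordinary CRT over $\Z$) then gives such a $\beta$. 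This is essentially what the paper does, but carried out inside $I$ itself rather than in an auxiliary $\mathfrak a$; the paper's version is more elementary and self-contained, while yours makes the role of $\Picent(C)$ explicit.
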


Note that, by \cref{cor:lf-bimodule}, the hypothesis is satisfied whenever $I$ is locally free as a bimodule.
In order to prove this, we will need the following two lemmas.

\begin{lemma} \label{lemma:embed-ideal-1}
Let $n \ne 0$ be an integer and let $I \subseteq \l$ be a two-sided ideal such that $I \in \Cls(\l)$ and, for all $p \mid n$ prime, there is a $(\l_{(p)},\l_{(p)})$-bimodule isomorphism $I_{(p)} \cong \l_{(p)}$. Then there is a $(\l/n,\l/n)$-bimodule isomorphism $f: \l/n \to I/n$, $1 \mapsto [a]$ for some $a \in Z(\l) \cap I$.
\end{lemma}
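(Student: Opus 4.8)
The plan is to follow the same strategy as the proof of \cref{prop:LF=>mod-n-free}: reduce to a statement about a single element, verify it locally, and glue with the Chinese remainder theorem. Concretely, I would first reduce the lemma to the following claim: there exists $a \in Z(\l) \cap I$ with $\l a + nI = I$. Indeed, given such an $a$, the assignment $\bar x \mapsto \overline{xa}$ is a well-defined map $f : \l/n \to I/n$ (well-defined since $n\l a \subseteq nI$, as $a \in I$), it is $\Z/n$-linear and left $\l$-linear by construction, and it is right $\l$-linear precisely because $a$ is central in $\l$ (so that $\bar x\,\bar y \mapsto \overline{xya} = \overline{xay}$). Its image is $(\l a + nI)/nI = I/nI$, hence $f$ is surjective; since $\l$ and $I$ are both free $\Z$-modules of rank $\dim_\Q A$ (the latter because $I \in \Cls(\l)$, so $I_p \cong \l_p$ for all $p$), the finite sets $\l/n$ and $I/n$ have the same cardinality $n^{\dim_\Q A}$, so $f$ is a bijection. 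Thus $f$ is the required $(\l/n,\l/n)$-bimodule isomorphism with $f(1) = [a]$ and $a \in Z(\l) \cap I$.

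Next, to produce such an $a$, I would construct suitable central local generators. Fix a prime $p \mid n$. By hypothesis there is a $(\l_{(p)},\l_{(p)})$-bimodule isomorphism $\psi_p : \l_{(p)} \xrightarrow{\sim} I_{(p)}$; set $c_p := \psi_p(1) \in I_{(p)}$. Then $x c_p = \psi_p(x) = c_p x$ for all $x \in \l_{(p)}$, so $c_p \in Z(\l_{(p)})$, and $\l_{(p)} c_p = \psi_p(\l_{(p)}) = I_{(p)}$. Localization commutes with passage to the centre --- apply exactness of $(-)_{(p)}$ to the commutator map $\l \to \Hom_\Z(\l,\l)$, $z \mapsto (x \mapsto zx - xz)$, to get $Z(\l)_{(p)} = Z(\l_{(p)})$ --- and with intersections of submodules, so $c_p \in Z(\l_{(p)}) \cap I_{(p)} = (Z(\l) \cap I)_{(p)}$. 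Clearing denominators, $c_p = d_p/s_p$ for some $d_p \in Z(\l) \cap I$ and $s_p \in \Z$ with $p \nmid s_p$; since $s_p$ is a unit in $\Z_{(p)}$, this yields $\l_{(p)} d_p = \l_{(p)} c_p = I_{(p)}$.

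Then I would glue. Write $n = p_1^{k_1} \cdots p_r^{k_r}$ and choose, by the Chinese remainder theorem, integers $\alpha_i$ with $\alpha_i \equiv 1 \pmod{p_i^{k_i}}$ and $\alpha_i \equiv 0 \pmod{p_j^{k_j}}$ for $j \ne i$, and put $a := \sum_{i=1}^{r} \alpha_i d_{p_i} \in Z(\l) \cap I$. For each $i$ one checks $a - d_{p_i} \in p_i^{k_i} I$, so $\l_{(p_i)} a + p_i^{k_i} I_{(p_i)}$ contains $d_{p_i}$ and hence contains $\l_{(p_i)} d_{p_i} = I_{(p_i)}$; since $n I_{(p_i)} = p_i^{k_i} I_{(p_i)}$ this gives $(\l a + nI)_{(p_i)} = I_{(p_i)}$. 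For any prime $q \nmid n$ we have $n \in \Z_{(q)}^\times$, so trivially $(\l a + nI)_{(q)} = I_{(q)}$. Equality of the $\Z$-submodules $\l a + nI \subseteq I$ may be checked after localizing at every prime, so $\l a + nI = I$, and the reduction step above completes the proof.

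The only delicate point --- and the step I expect to be the main obstacle --- is arranging for the local generators of $I_{(p)}$ to be \emph{central}; this is exactly where the bimodule hypothesis on $I$ is used, as opposed to mere local freeness of $I$ as a left module (which is already implied by $I \in \Cls(\l)$). Working with the localizations $\l_{(p)}$, where $Z(\l_{(p)}) = Z(\l)_{(p)}$, rather than with the reductions $\l/p^k$, where the centre need not descend well, is what makes the passage from these central local generators to a single global element $a \in Z(\l)$ routine.
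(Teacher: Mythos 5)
Your proof is correct, and it attacks the lemma the same way the paper does: use the bimodule isomorphism $\l_{(p)}\cong I_{(p)}$ to get a \emph{central} local generator $a_p\in Z(\l)\cap I$ for each $p\mid n$, glue them with the Chinese Remainder Theorem into a single $a=\sum\alpha_i\,a_{p_i}$, and conclude that $1\mapsto[a]$ is the desired bimodule isomorphism $\l/n\to I/n$. The difference is only in how the verification is organized. The paper reduces modulo $p$ first, observes that $1\mapsto[a_p]$ gives a bimodule isomorphism $\l/p\to I/p$, lifts this (``it is straightforward to check'') to $\l/p^i\to I/p^i$, and then combines primes by tensoring the CRT decomposition $\Z/n\cong\prod\Z/p_i^{n_i}$ with $\l$ and $I$. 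You instead bypass the $p\to p^i$ lifting: you reduce the whole lemma to the single ideal identity $\l a + nI = I$, verify it after localizing at each prime (where it follows from $\l_{(p_i)}d_{p_i}=I_{(p_i)}$ and $a\equiv d_{p_i}\pmod{p_i^{k_i}I}$), and then get bijectivity by a rank count $|\l/n|=|I/n|=n^{\dim_\Q A}$. This is a clean way to absorb the Nakayama/Hensel step. You are also more careful than the paper at the denominator-clearing step, making explicit that the scalar $s_p$ can be chosen coprime to $p$ and that $Z(\l)_{(p)}=Z(\l_{(p)})$ (via localizing the commutator map), both of which the paper's proof uses implicitly. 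Net effect: same key ideas (central local generators from the bimodule hypothesis plus CRT), with your version slightly more self-contained on the two small points the paper leaves to the reader.
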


\begin{proof}
For each $p \mid n$ prime, consider the bimodule isomorphism $f: \l_{(p)} \to I_{(p)}$, $1 \mapsto [a_p]$ for some $a_p \in I_{(p)}$. There exists $m \ne 0$ such that $m a_p \in I \subseteq I_{(p)}$ and $f' : \l_{(p)} \to I_{(p)}$, $1 \mapsto [ma_p]$ is still a bimodule isomorphism, and so we can assume that $a_p \in I \subseteq I_{(p)}$.
Since $f$ is a bimodule isomorphism, we have that $a_p \in Z(\l_{(p)})$ and so $a_p \in Z(\l) \cap I$.

Now, $f$ induces a bimodule isomorphism $\l_{(p)}/p \cong I_{(p)}/p$. Since $\Z_{(p)}/p \cong \Z/p$, there are bimodule isomorphisms $\l/p \cong \l_{(p)}/p$ and $I/p \cong I_{(p)}/p$ and so there exists a bimodule isomorphism $f_p : \l/p \to I/p$, $1 \mapsto [a_p]$. It is straightforward to check that the map $\widetilde{f}_{p^i} : \l/p^i \to I/p^i$, $1 \mapsto [a_p]$ is also a bimodule isomorphism for all $i \ge 1$.

In general, suppose $n = p_1^{n_1} \cdots p_k^{n_k}$ for distinct primes $p_i$, and integers $n_i \ge 1$ and $k \ge 1$. By the Chinese remainder theorem, $\Z/n \cong \Z/p_1^{n_1} \times \cdots \times \Z/p_k^{n_k}$. By tensoring with $\l$ or $I$, we see that there are bimodule isomorphisms $\l/n \cong \l/p_1^{n_1} \times \cdots \times \l/p_k^{n_k}$ and $I/n \cong I/p_1^{n_1} \times \cdots \times I/p_k^{n_k}$. Hence, by the bimodule isomorphism constructed above, there is a bimodule isomorphism $f: \l/n \to I/n$, $1 \mapsto [a]$ for some $a \in \Z \cdot \langle a_{p_1}, \cdots, a_{p_k}\rangle \subseteq Z(\l) \cap I$.
\end{proof}

\begin{lemma} \label{lemma:embed-ideal-2}
Let $n \ne 0$ be an integer, let $I \subseteq \l$ be a two-sided ideal such that $I \in \Cls(\l)$, and let $f : \l/n \to I/n$, $1 \mapsto [a]$ be a $(\l/n,\l/n)$-bimodule isomorphism for some $a \in Z(\l) \cap I$. Then $\l a \cong \l$ as a $(\l,\l)$-bimodule and there exists $m \ne 0$ such that $m I \subseteq \l a$ and $(n,m)=1$.
\end{lemma}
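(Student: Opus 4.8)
The plan is to study the left-multiplication map $\rho_a\colon \l \to \l a$, $x \mapsto xa$. Since $a \in Z(\l)$, the set $\l a = a\l$ is a two-sided ideal of $\l$; it is contained in $I$ because $a \in I$ and $I$ is a left ideal; and $\rho_a$ is a surjective homomorphism of $(\l,\l)$-bimodules. The hypothesis is precisely that the induced map $f\colon \l/n \to I/n$, $[x] \mapsto [xa]$, is a bimodule isomorphism, where $\l/n = \l/n\l$ and $I/n = I/nI$ are the reductions modulo $n$. Everything then reduces to showing that $\rho_a$ is injective --- equivalently, that $a$ is a non-zero-divisor in $\l$, equivalently a unit in $A$ --- together with a short bookkeeping argument for its cokernel. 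We may assume $|n| \ge 2$, the case $|n| = 1$ being vacuous.

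For injectivity, put $K = \ker \rho_a$. Since $K = \l \cap \{x \in A : xa = 0\}$, the quotient $\l/K \cong \l a$ is torsion-free, so $K$ is a pure subgroup of $\l$; in particular $K \cap n\l = nK$, so the reduction $K \to \l/n$ has kernel $nK$. Because the composite $K \hookrightarrow \l \twoheadrightarrow \l/n \xrightarrow{f} I/n$ sends $x$ to $[xa] = 0$ and $f$ is injective, we obtain $K \subseteq n\l$. A short descent now finishes it: if $x \in K$ and $x = ny$ with $y \in \l$, then $n(ya) = xa = 0$ in the torsion-free abelian group $A$, so $ya = 0$ and $y \in K$; hence $K = nK = n^2 K = \cdots$, and as $K$ is a subgroup of the free abelian group $\l$ with $|n| \ge 2$, this forces $K \subseteq \bigcap_{k \ge 1} n^k \l = 0$. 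Thus $\rho_a\colon \l \xrightarrow{\sim} \l a$ is a $(\l,\l)$-bimodule isomorphism, which is the first assertion; in particular $a \in A^\times$.

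For the second assertion, surjectivity of $f$ says exactly that $\l a + nI = I$, so the finitely generated abelian group $C = I/\l a$ satisfies $nC = C$. Since $a \in A^\times$ we have $\Q\cdot\l a = Aa = A = \Q\cdot I$, so $C$ is finite, and a finite abelian group $C$ with $nC = C$ has order (indeed exponent) coprime to $n$. Taking $m = |C| \ne 0$ then gives $mI \subseteq \l a$ and $(m,n) = 1$, as required. The only genuine content is the injectivity step --- turning the mod-$n$ isomorphism into the statement that $a$ is an honest non-zero-divisor --- and the purity-plus-descent argument is the crux; the remaining points are formal, the main ones to watch being that $\l a$ really is two-sided (centrality of $a$) and that $f$ is the reduction $[x] \mapsto [xa]$.
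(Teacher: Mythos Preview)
Your proof is correct and follows the same strategy as the paper: show that $x \mapsto xa$ gives a bimodule isomorphism $\l \to \l a$, then use surjectivity of $f$ to get $I = \l a + nI$ and conclude that $I/\l a$ is a finite group of order coprime to $n$. Your purity-plus-descent argument for injectivity is in fact more careful than the paper's, which simply asserts that $\varphi$ is ``injective since $f$ is a bijection'' and leaves the passage from $K \subseteq n\l$ to $K = 0$ to the reader. One small quibble: the case $|n|=1$ is not literally vacuous (the hypothesis on $f$ becomes empty while the conclusion does not), so it would be cleaner to note that the lemma is only applied with $|n|\ge 2$, or that the statement should be read that way.
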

	
\begin{proof}
Since $a \in Z(\l)$, $\l a$ is a bimodule and there is a map of bimodules $\varphi : \l \to \l a$, $x \mapsto xa$. To see that $\varphi$ is a bimodule isomorphism, note that it is clearly surjective and is injective since $f$ is a bijection.

Since $f$ is an isomorphism, we have $I = \l a + n I$ as ideals in $\l$ and so there is an equality of finitely generated abelian groups $I/\l a = n \cdot I/\l a$. Hence, as an abelian group, $I/\l a$ is finite of order $m$ where $(n,m)=1$. Since $m \cdot I/\l a =0$, we have that $m I \subseteq \l a$.
\end{proof}	
	
\begin{proof}[Proof of \cref{prop:embed-ideal}]
Let $I \subseteq \l$ be a two-sided ideal such that $I \in \Cls(\l)$ and which is generated by central elements. By \cref{cor:lf-bimodule}, this implies that $\l_{(p)} \cong I_{(p)}$ are isomorphic as bimodules for all $p$. By \cref{lemma:embed-ideal-1}, there is a $(\l/n,\l/n)$-bimodule isomorphism $f: \l/n \to I/n$, $1 \mapsto [a]$ for some $a \in Z(\l) \cap I$. 
By \cref{lemma:embed-ideal-2}, this implies that there is a $(\l,\l)$-bimodule isomorphism $\psi : \l a \to \l$, $x a \mapsto x$ and there exists $m \ne 0$ with $(n,m)=1$ and $m I \subseteq \l a$. 
Let 
\[ J = \psi(m I) \subseteq \psi(\l a) = \l,\] 
which is a two-sided ideal since $\psi$ is a map of bimodules. 
Finally, note that the map $I \to J$, $x \mapsto \psi(mx)$ is a $(\l,\l)$-bimodule isomorphism, and $m = \psi(ma) \in \psi(mI) = J$ implies that $J \cap \Z = (m_0)$ where $m_0 \mid m$ and so $(n,m_0)=1$.
\end{proof}

We will need the following lemma. In the statement of \cref{lemma:lifting-two-sided-ideals}, this shows (i) implies (ii).

\begin{lemma} \label{lemma:two-sided-induces-bijection}
Let $\mathcal{R}$ be as above and suppose $I \subseteq \l$, $I_k \subseteq \l_k$ are two-sided ideals such that $I \in \Cls(\l)$, $I_k \in \Cls(\l_k)$ and $(i_k)_\#(I) \cong (I_k)_{i_k}$ are isomorphic as $(\l_k,\l)$-bimodules for $k = 1,2$. Then, for all $P_k \in \Cls(\l_k)$ for $k=1,2$, there is a bijection 
\[I \otimes_{\l} - : \Cls_{\mathcal{R}}^{-1}(P_1,P_2) \to \Cls_{\mathcal{R}}^{-1}(I_1 \otimes_{\l_1} P_1, I_2 \otimes_{\l_2} P_2).\]
\end{lemma}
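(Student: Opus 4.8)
The strategy is to show that tensoring with $I$ is compatible with the Milnor square decomposition, i.e.\ that there is a commutative square
\[
\begin{tikzcd}
\Cls(\l) \ar[r,"I \otimes_{\l} -"] \ar[d,"\Cls_{\mathcal{R}}"'] & \Cls(\l) \ar[d,"\Cls_{\mathcal{R}}"] \\
\Cls(\l_1) \times \Cls(\l_2) \ar[r,"(I_1 \otimes_{\l_1} -)\,\times\,(I_2 \otimes_{\l_2} -)"'] & \Cls(\l_1) \times \Cls(\l_2)
\end{tikzcd}
\]
and then to conclude by a routine diagram chase. By \cref{prop:two-sided=>invertible}, tensoring with $I$ (resp.\ with $I_k$) is a well-defined self-bijection of $\Cls(\l)$ (resp.\ of $\Cls(\l_k)$); in particular $I \otimes_{\l} P \in \Cls(\l)$ for $P \in \Cls(\l)$, and both horizontal arrows above are bijections. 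Granting commutativity, if $x \in \Cls_{\mathcal{R}}^{-1}(P_1,P_2)$ then $\Cls_{\mathcal{R}}(I \otimes_{\l} x) = (I_1 \otimes_{\l_1} P_1,\, I_2 \otimes_{\l_2} P_2)$, so $I \otimes_{\l} -$ sends the fibre over $(P_1,P_2)$ into the fibre over $(I_1 \otimes_{\l_1} P_1,\, I_2 \otimes_{\l_2} P_2)$; conversely every class $y$ in the latter fibre is uniquely of the form $I \otimes_{\l} x$, and $x$ necessarily lies over $(P_1,P_2)$ since the bottom arrow is injective. Hence $I \otimes_{\l} -$ restricts to the asserted bijection (the case of empty fibres being handled automatically).

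It remains to verify commutativity, which amounts to producing, for each $P \in \Cls(\l)$ and $k=1,2$, a $\l_k$-module isomorphism $(i_k)_\#(I \otimes_{\l} P) \cong I_k \otimes_{\l_k} (i_k)_\#(P)$ natural in $P$. This is a base-change computation: by associativity of the tensor product,
\[
(i_k)_\#(I \otimes_{\l} P) = \l_k \otimes_{\l} I \otimes_{\l} P \cong \bigl(\l_k \otimes_{\l} I\bigr) \otimes_{\l} P = (i_k)_\#(I) \otimes_{\l} P,
\]
where $(i_k)_\#(I) = \l_k \otimes_{\l} I$ is regarded via its $(\l_k,\l)$-bimodule structure (left $\l_k$-action from $\l_k$, right $\l$-action from $I$). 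By hypothesis $(i_k)_\#(I) \cong (I_k)_{i_k}$ as $(\l_k,\l)$-bimodules, so this is isomorphic to $(I_k)_{i_k} \otimes_{\l} P$; and the standard extension-of-scalars isomorphism through a tensor product gives $(I_k)_{i_k} \otimes_{\l} P \cong I_k \otimes_{\l_k} (\l_k \otimes_{\l} P) = I_k \otimes_{\l_k} (i_k)_\#(P)$. Each step is functorial in $P$ (being built from associativity of $\otimes$ and the fixed bimodule isomorphism of the hypothesis), so the composite isomorphism is natural in $P$, which is exactly what makes the square above commute on isomorphism classes.

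The only real care needed is the bookkeeping of left/right module structures: the hypothesis must be used as an isomorphism of $(\l_k,\l)$-bimodules — with the right $\l$-actions on $I_k$ and on $\l_k$ restricted along $i_k$ — rather than merely of left $\l_k$-modules, since it is precisely the right $\l$-module structure that allows one to commute $(i_k)_\#(I)$ past $-\otimes_{\l} P$. Beyond this, and beyond \cref{prop:two-sided=>invertible}, the argument is purely formal, so I expect no substantial obstacle.
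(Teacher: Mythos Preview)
The proposal is correct and takes essentially the same approach as the paper: both invoke \cref{prop:two-sided=>invertible} to get that $I \otimes_{\l} -$ is a bijection on $\Cls(\l)$, carry out the identical base-change computation $(i_k)_\#(I \otimes_{\l} P) \cong I_k \otimes_{\l_k} (i_k)_\#(P)$ using the $(\l_k,\l)$-bimodule hypothesis, and conclude the restriction to fibres is a bijection. Your phrasing via a commutative square and diagram chase is slightly more abstract than the paper's, which instead explicitly uses the inverse bimodule $J$ to show the reverse inclusion $(J \otimes_{\l} -)(\Cls_{\mathcal{R}}^{-1}(I_1 \otimes_{\l_1} P_1, I_2 \otimes_{\l_2} P_2)) \subseteq \Cls_{\mathcal{R}}^{-1}(P_1,P_2)$, but the content is the same.
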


\begin{proof}
By \cref{prop:two-sided=>invertible}, there exists a two-sided ideal $J \subseteq \l$ such that $J \in \Cls(\l)$ and 
$I \otimes_{\l} J \cong \l \cong J \otimes_{\l} I$
as $(\l,\l)$-bimodules. In particular, $I$ is invertible as a bimodule and determines a bijection
$I \otimes_{\l} - : \Cls(\l) \to \Cls(\l)$
with inverse $J \otimes_{\l} -$.

Now suppose $P \in \Cls_{\mathcal{R}}^{-1}(P_1,P_2)$, i.e. that $(i_k)_\#(P) \cong P_k$ are isomorphic as left $\l_k$-modules for $k=1,2$. Then:
\begin{align*} (i_k)_\#(I \otimes_{\l} P) &= \l_k \otimes_{\l} (I \otimes_{\l} P) \cong (I_k)_{i_k} \otimes_{\l} P \\ 
&\cong (I_k \otimes_{\l_k} \l_k) \otimes_{\l} P \cong I_k \otimes_{\l_k} P_k \end{align*}
and so 
\[ (I \otimes_{\l} - )(\Cls_{\mathcal{R}}^{-1}(P_1,P_2)) \subseteq \Cls_{\mathcal{R}}^{-1}(I_1 \otimes_{\l_1} P_1, I_2 \otimes_{\l_2} P_2).\] 

Similarly, we can show that 
\[ (J \otimes_{\l} -)(\Cls_{\mathcal{R}}^{-1}(I_1 \otimes_{\l_1} P_1, I_2 \otimes_{\l_2} P_2)) \subseteq \Cls_{\mathcal{R}}^{-1}(P_1,P_2).\] 
Hence $I \otimes_{\l} -$ restricts to the required bijection.	
\end{proof}

Finally, we will now use \cref{prop:embed-ideal} to complete the proof of \cref{lemma:lifting-two-sided-ideals}.

\begin{proof}[Proof of \cref{lemma:lifting-two-sided-ideals}]
By \cref{lemma:two-sided-induces-bijection}, it suffices to prove part (i) only. 
Let $k=1$ or $2$. By \cref{prop:embed-ideal} we can assume, by replacing $I_k$ with a bimodule isomorphic two-sided ideal, that $I_k \cap \Z$ is coprime to $|\widebar{\l}|$. Let $n \in I_k \cap \Z$ be such that $n \ne 0$ and let $m \in \Z$ be such that $nm \equiv 1$ mod $|\widebar{\l}|$, which exists since $(|\widebar{\l}|,n)=1$. Consider the left $\widebar{\l}$-module homomorphisms
\[\psi_k : \widebar{\l} \to \widebar{\l} \otimes_{\l_k} I_k, \quad 1 \mapsto m \otimes n, \quad \quad \varphi_k : \widebar{\l} \otimes_{\l_k} I_k \to \widebar{\l}, \quad x \otimes y \mapsto x j_k(y)\]
where $x \in \widebar{\l}$ and $y \in I_k \subseteq \l_k$. Note that 
\[ \varphi_k(\psi_k(1)) = m j_k(n) = mn = 1 \in \widebar{\l}\] 
and 
\[ \psi_k(\varphi_k(x \otimes y)) = (xj_k(y) m) \otimes n = xm \otimes yn = xmn \otimes y = x \otimes y.\] 
This shows that $\psi_k$ and $\varphi_k$ are mutual inverses and so are both bijections.

Now let 
\[M = \{ (x_1,x_2) \in I_1 \times I_2 : \varphi_1(1 \otimes x_1) = \varphi_2(1 \otimes x_2)\} \subseteq \l_1 \times \l_2,\] 
which is a left $\l$-module under the action $\lambda \cdot (x_1,x_2) = (i_1(\lambda) x_1, i_2(\lambda)x_2)$ for $\lambda \in \l$. This coincides with the standard pullback construction for projective module over a Milnor square $\mathcal{R}$ \cite{Mi71}. 
However, for the $\varphi_k$ chosen above, we further have that 
\[ M = \{ (x_1,x_2) \in I_1 \times I_2 : j_1(x_1) = j_2(x_2)\}\] 
and so $M$ is a $(\l,\l)$-bimodule with action 
\[ \lambda \cdot (x_1,x_2) \cdot \mu = (i_1(\lambda) \cdot x_1 \cdot i_1(\mu), i_2(\lambda) \cdot x_2 \cdot i_2(\mu))\] 
for $\lambda$, $\mu \in \l$.

Note that $M \subseteq \l_1 \times \l_2 \subseteq \Q \cdot (\l_1 \times \l_2) = \Q \cdot \l$ and so there exists $k \in \Z$ with $k \ne 0$ for which $k M \subseteq \l$. Hence $I = k M$ is a two-sided ideal in $\l$ which is bimodule isomorphic to $M$. Now note that $M \in \Cls(\l)$ as a left $\l$-module by \cite[Lemma 4.4]{RU74}, and so $I \in \Cls(\l)$. Finally note that, by the proof of \cite[Theorem 2.3]{Mi71}, the map $f : (i_k)_\#(M) \to (I_k)_{i_k}$ which sends $\lambda_k \otimes (x_1,x_2) \mapsto \lambda_k \cdot x_k$ for $\lambda_k \in \l_k$ is a left $\l_k$-module isomorphism. This is also a right $\l$-module isomorphism since 
\begin{align*} f((\lambda_k \otimes (x_1,x_2)) \cdot \lambda) &= f(\lambda_k \otimes (x_1 \cdot i_1(\lambda) ,x_2 \cdot i_2(\lambda))) \\
&= \lambda_k \cdot x_k \cdot i_k(\lambda) = f(\lambda_k \otimes (x_1,x_2)) \cdot i_k(\lambda)\end{align*}
and so $(i_k)_\#(I) \cong (i_k)_\#(M) \cong (I_k)_{i_k}$ are bimodule isomorphic, as required.
\end{proof}

\subsection{Proof of \cref{thm:canc-general}} \label{subsection:proof}

By \cref{thm:coset=K_R} and \cref{lemma:eichler}, it suffices to show that 
\[ |\Cls_{\mathcal{R}}^{-1}(\widetilde{P}_1,P_2)| = K_{\mathcal{R}}\] 
for all $\widetilde{P}_1$. By \cref{lemma:eichler} (ii), there is a bijection $\Cls_{\mathcal{R}}^{-1}(\widetilde{P}_1,P_2) \cong \Cls_{\mathcal{R}}^{-1}(\widetilde{P}_1,\l_2)$ and, by \cref{lemma:unit-rep-double-cosets}, we have that $|\Cls_{\mathcal{R}}^{-1}(\l_1,\l_2)| = K_{\mathcal{R}}$. Hence it suffices to show that, for all $\widetilde{P}_1$, 
there is a bijection $\Cls_{\mathcal{R}}^{-1}(\widetilde{P}_1,\l_2) \cong \Cls_{\mathcal{R}}^{-1}(\l_1,\l_2)$.

By assumption, there exists a two-sided ideal $I_1 \subseteq \l_1$ such that $I_1 \cong \wt P_1$ as left $\l_1$-modules and such that $(I_1)_p \cong (\l_1)_p$ are isomorphic as bimodules for all primes $p \mid |\widebar{\l}|$. By \cref{lemma:lifting-two-sided-ideals}, there exists a two-sided ideal $I \subseteq \l$ such that $I \in \Cls(\l)$ and $(i_1)_\#(I) \cong (I_1)_{i_1}$ as $(\l_1,\l)$-bimodules and $(i_2)_\#(I) \cong (\l_2)_{i_2}$ as $(\l_2,\l)$-bimodules. By \cref{lemma:two-sided-induces-bijection}, this induces a bijection
\[ I \otimes_{\l} - : \Cls_{\mathcal{R}}^{-1}(\l_1,\l_2) \to \Cls_{\mathcal{R}}^{-1}(I_1,\l_2),\]
and so there a bijections $\Cls_{\mathcal{R}}^{-1}(\wt P_1,\l_2) \cong \Cls_{\mathcal{R}}^{-1}(\l_1,\l_2)$, as required.

\section{Application to integral group rings}
\label{s:specialisation-to-ZG}

The aim of this section will be to specialise \cref{thm:canc-general} to the case of integral group rings $\Z G$. 
We will begin by deducing \cref{thm:main-group-rings} from \cref{thm:canc-general}. We will then prove \cref{thm:main-cancellation} by combining this with an additional cancellation theorem of R. G. Swan which is given in \cref{thm:exc-swan}. 

\subsection{Proof of \cref{thm:main-group-rings}}

We will begin by establishing the following. This was noted in \cite[Section 2]{Ni18} and follows directly from \cite[Example 42.3]{CR87}.

\begin{lemma} \label{lemma:G-H-Square}
Let $G$ be a finite group with a quotient $H = G/N$. Then there is a Milnor square
\[
\mathcal{R}_{G,H} = 
\begin{tikzcd}
  \Z G \arrow[r] \arrow[d] & \l \arrow[d] \\
  \Z H \arrow[r] & (\Z/n \Z)[H]
\end{tikzcd}
\]
where $\l = \Z G/ \Sigma_N$, $\Sigma_N = \sum_{g \in N} g \in \Z G$ and $n = |N|$. Furthermore, $(G,H)$ satisfies the relative Eichler condition if and only if $\l$ satisfies the Eichler condition.
\end{lemma}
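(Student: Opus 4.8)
The statement has two parts: producing the Milnor square $\mathcal{R}_{G,H}$, and characterising the relative Eichler condition for $(G,H)$ in terms of the Eichler condition for $\l = \Z G/\Sigma_N$. For the first part, the plan is to invoke \cite[Example 42.3]{CR87} directly. Set $N = \ker(G \twoheadrightarrow H)$, $n = |N|$, and $\Sigma_N = \sum_{g \in N} g \in \Z G$. The element $\Sigma_N$ is central (it is fixed under conjugation by $G$ since $N \trianglelefteq G$), and $\Sigma_N^2 = n \Sigma_N$, so $\Sigma_N$ generates a two-sided ideal. The augmentation $\Z G \to \Z H$ induced by $g \mapsto gN$ has kernel the two-sided ideal $I_H$ generated by $\{g - 1 : g \in N\}$; the ideal $(\Sigma_N)$ is a complementary ideal in the sense needed, with $I_H \cap (\Sigma_N) = 0$ and $I_H + (\Sigma_N)$ of finite index. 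One checks $\Z G / (\Sigma_N) =: \l$, that $\Z G/(I_H + (\Sigma_N)) \cong (\Z/n\Z)[H]$ (because modulo $I_H$ the element $\Sigma_N$ becomes $n \cdot \Sigma_{\{1\}}$-type, i.e. reduces to $n$ times the identity-supported part, forcing reduction mod $n$ on $\Z H$), and that the resulting square is a pullback with the two maps to $(\Z/n\Z)[H]$ surjective — hence a Milnor square. I would present this by citing \cite[Example 42.3]{CR87} for the pullback property and only verifying the identifications of the corner rings.

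For the second part, the key is to compute the Wedderburn decomposition of $\Q \otimes \l = \Q G / (\Sigma_N)$ and compare it with that of $\Q G$ and $\Q H$. Tensoring the pullback square with $\Q$ (flat) gives $\Q G \cong \Q \l \times \Q H$ as $\Q$-algebras, since $\Q \otimes (\Z/n\Z)[H] = 0$. Therefore the simple factors of $\Q G$ split as the disjoint union of the simple factors of $\Q\l$ and those of $\Q H$; the latter correspond exactly to the irreducible $\Q$-representations of $G$ that factor through $H$. Tensoring further with $\R$ gives $\R G \cong \R\l \times \R H$, so $m_\H(G) = m_\H(\l) + m_\H(H)$, where $m_\H(\l)$ denotes the number of copies of $\H = M_1(\H)$ in the Wedderburn decomposition of $\R\l$. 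Now $(G,H)$ satisfies the relative Eichler condition iff $m_\H(G) = m_\H(H)$ iff $m_\H(\l) = 0$ iff $\l$ satisfies the Eichler condition. This is the crux of the argument and it is essentially immediate once the $\Q$-algebra splitting is in hand.

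The main (minor) obstacle is bookkeeping in the identification $\Z G/(I_H + (\Sigma_N)) \cong (\Z/n\Z)[H]$: one must check that the image of $\Sigma_N$ in $\Z H$ under the augmentation is exactly $n$ times a unit-like element, or more precisely that adjoining the relation "$\Sigma_N = 0$" to $\Z H$ is the same as reducing coefficients mod $n$. This follows because $\Sigma_N \mapsto n \cdot 1_{\Z H}$ is false in general — rather $\Sigma_N$ maps to $|N| \cdot e$ where $e$ is the image, and $e = 1_{\Z H}$ only after noting that every $g \in N$ maps to $1 \in H$; so $\Sigma_N \mapsto n \cdot 1_{\Z H}$ after all, and killing it imposes $n \equiv 0$. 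I would spell this one line out carefully and leave the rest to the cited reference. No deep input is needed beyond \cite[Example 42.3]{CR87}, flatness of $\Q$ over $\Z$, and the Noether–Deuring-type fact that $\R \otimes_\Q (-)$ preserves the product decomposition — all already available in the excerpt's ambient toolkit.
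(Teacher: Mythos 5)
Your proposal is correct and follows the same route the paper indicates: invoke \cite[Example 42.3]{CR87} for the Milnor square and use the $\Q$-algebra splitting $\Q G \cong \Q H \times \Q\Lambda$ (then tensor with $\R$) for the Eichler characterisation; the paper itself gives no argument beyond citing \cite[Section 2]{Ni18} and \cite[Example 42.3]{CR87}.

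One small remark on exposition: the penultimate paragraph reads oddly. You write ``$\Sigma_N \mapsto n\cdot 1_{\Z H}$ is false in general'' and then conclude ``so $\Sigma_N \mapsto n\cdot 1_{\Z H}$ after all,'' which is self-contradictory as phrased. The fact is simply true and immediate: the quotient map $\Z G\to\Z H$ sends every $g\in N$ to $1_H$, hence sends $\Sigma_N=\sum_{g\in N}g$ to $n\cdot 1_H$, so $\Z H/(\text{image of }\Sigma_N)=\Z H/n\Z H=(\Z/n\Z)[H]$. No hedging is needed. It would also be worth verifying $I_H\cap(\Sigma_N)=0$, which you assert without proof: if $y\Sigma_N\in I_H$ then applying the augmentation $\Z G\to\Z H$ gives $n\,\varepsilon(y)=0$, so $\varepsilon(y)=0$, i.e.\ $y\in I_H$; and since $(g-1)\Sigma_N=0$ for $g\in N$ and $\Sigma_N$ is central, $I_H\Sigma_N=0$, whence $y\Sigma_N=0$. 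This also confirms the ideal $(\Sigma_N)$ is precisely the kernel of $\Z G\to\Q G/\Sigma_N\Q G$, so $\Lambda$ is the relevant order, which is used implicitly.
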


The statement of \cref{thm:main-group-rings} now follows immediately from \cref{thm:canc-general}. In particular, the condition that $\l_2$ satisfies the Eichler condition is fulfilled since $H$ is an Eichler quotient of $G$.

\subsection{Proof of \cref{thm:main-cancellation} for quaternionic quotients}
\label{subsection:Q4n=two-sided}

We will now prove the following.

\begin{thm} \label{prop:canc-1}
Let $G$ be a finite group which has an Eichler quotient $H$ of the form
\[ Q_8, \, Q_{12}, \, Q_{16}, \, Q_{20}.\]
Then $\Z G$ has projective cancellation.	 
\end{thm}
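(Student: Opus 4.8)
The plan is to deduce this from \cref{thm:main-group-rings}. By \cref{lemma:G-H-Square}, the hypothesis that $H$ is an Eichler quotient of $G$ places us exactly in its setting: there is a Milnor square $\mathcal{R}_{G,H}$ whose complementary order $\Z G/\Sigma_N$ satisfies the Eichler condition, so \cref{thm:main-group-rings} (equivalently \cref{thm:canc-general} applied to $\mathcal{R}_{G,H}$) will apply as soon as its two hypotheses on $H$ are checked. It therefore suffices to prove three facts for each $H \in \{Q_8, Q_{12}, Q_{16}, Q_{20}\}$: (a) $\Z H$ has $\PC$; (b) the map $\Z H^\times \to K_1(\Z H)$ is surjective; and (c) every finitely generated projective $\Z H$-module is left isomorphic to a two-sided ideal of $\Z H$ generated by central elements.

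Facts (a) and (b) are already known. For (a): the groups $Q_{4n}$ with $n = 2,3,4,5$ are binary polyhedral, and Swan's classification of projective modules over binary polyhedral groups \cite{Sw83} shows that $\Z Q_{4n}$ has $\PC$ for $n \le 5$ (failure begins only at $n = 6$). For (b): by Magurn-Oliver-Vaserstein \cite{MOV83} (see also \cite{Ni18}), the natural map $\Z Q_{4n}^\times \to K_1(\Z Q_{4n})$ is surjective for $n = 2,3,4,5$, so $K_1$ of these group rings is represented by units.

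The real content is (c), which we record separately as \cref{thm:q8-20=two-sided}. Since $\Z H$ has $\PC$ by (a), \cref{prop:LF-rank1} together with bijectivity of the stable class map $[\,\cdot\,]_{\Z H}$ reduces the task to producing, for every class in $C(\Z H)$, a representative that is a two-sided ideal $I \subseteq \Z H$ generated by central elements; by \cref{cor:lf-bimodule} this is the same as requiring $I$ to be locally free as a $(\Z H,\Z H)$-bimodule. When every projective $\Z H$-module is stably free --- hence, by (a), free --- this is automatic, the class $[\Z H]$ being represented by the unit ideal $\Z H = \Z H \cdot 1$. Otherwise one argues group by group, realising each nontrivial class by an explicit invertible two-sided ideal generated by a central element --- necessarily supported on the totally definite quaternion-algebra part of $\Q Q_{4n}$ --- and transporting it back to $\Z Q_{4n}$ through a fibre-square decomposition of the integral group ring. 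I expect (c) to be the main obstacle: a generic locally free ideal is only a \emph{left} ideal, and forcing it to be simultaneously two-sided and centrally generated amounts to killing the central Picard contribution at the primes dividing $|H|$, which must be handled by hand using the explicit arithmetic of these small quaternion orders. Granting (a)--(c), \cref{thm:main-group-rings} applies to any Eichler cover $G$ of $H$ and yields that $\Z G$ has $\PC$.
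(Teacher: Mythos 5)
Your reduction is the same as the paper's: apply \cref{thm:main-group-rings} (via \cref{lemma:G-H-Square}), which requires (a) $\Z H$ has PC, (b) $\Z H^\times \to K_1(\Z H)$ is surjective, and (c) every class in $\Cls(\Z H)$ is represented by a two-sided ideal generated by central elements. Your citations for (a) and (b) are correct and match the paper (Swan \cite{Sw83} and Magurn--Oliver--Vaserstein \cite{MOV83}). But (c) --- the paper's \cref{thm:q8-20=two-sided} --- is where all the actual work lies, and you explicitly defer it (``I expect (c) to be the main obstacle \dots must be handled by hand''). As written, the proof has a genuine gap: you have not produced the centrally generated ideals, only described that one would need to.

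Concretely, what is missing: since $|\Cls(\Z Q_{4n})| = 2$ for $2 \le n \le 5$, there is exactly one nontrivial class to realise for each group, and the realisations differ in kind. For $Q_8$ and $Q_{16}$ the nontrivial class is the Swan module $(N,3)$, where $N$ is the group norm; since both $N$ and $3$ lie in $Z(\Z H)$, the ideal is visibly centrally generated and (c) is immediate. For $Q_{12}$ and $Q_{20}$, however, the Swan modules are all trivial, so one must reach for a different family. The paper uses the Beyl--Waller ideals $P_{a,b} = (a+by,\, 1+x) \subseteq \Z Q_{4n}$, whose given generators are not central, and the substance of the argument (\cref{lemma:alpha=central} and \cref{lemma:Pab-centrally-generated}) is a hands-on computation showing $P_{a,b} = (\alpha_{s,t},\, r)$ for an explicit central element $\alpha_{s,t} = z_s + t\,\wt N y$. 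This calculation --- not a Picard-group or fibre-square abstraction --- is what actually kills the local obstruction you correctly identify. Your sketch points at the right phenomenon (``killing the central Picard contribution at the primes dividing $|H|$'') but offers no mechanism for doing so, and in particular doesn't distinguish the two cases, where the $Q_8/Q_{16}$ case is essentially free and the $Q_{12}/Q_{20}$ case requires new ideals. The strategy is sound; the proof of (c) needs to be supplied.
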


We will show that conditions (i), (ii) of \cref{thm:main-group-rings} are satisfied for $H = Q_8$, $Q_{12}$, $Q_{16}$ or $Q_{20}$. In particular, for each of these groups, we will show:
\begin{clist}{(i)}
\item The map $\Z H^\times \to K_1(\Z H)$ is surjective
\item Every finitely generated projective $\Z H$-module is left isomorphic to an ideal in $\Z H$ which is generated by central elements.
\end{clist}

By \cite[Theorems 7.15-7.18]{MOV83} and the fact that $|C(\Z[\zeta_3])| = |C(\Z[\zeta_5])| = 1$, we have that $K_1(\Z H)$ is represented by units for $H \in\{Q_8,Q_{12},Q_{16},Q_{20}\}$. Thus $\mathcal{R}_{G,H}$ satisfies conditions (i). 

\begin{remark}
It also follows from \cite[Theorems 7.15-7.18]{MOV83} that condition (i) fails for $H=Q_{116}$.
\end{remark}

We say that a locally free module $P \in \Cls(\l)$ is \textit{represented by a two-sided ideal $I \subseteq \l$} if $P \cong I$ as left $\l$-modules.
It remains to prove the following, i.e. that $\mathcal{R}_{G,H}$ satisfies condition (ii).

\begin{prop} \label{thm:q8-20=two-sided}
For $2 \le n \le 5$, every $P \in \Cls(\Z Q_{4n})$ is represented by a two-sided ideal $I \subseteq \Z Q_{4n}$ which is generated by central elements.
\end{prop}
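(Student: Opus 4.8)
The plan is to verify condition (ii) of \cref{thm:main-group-rings} for each of the four groups $Q_8$, $Q_{12}$, $Q_{16}$, $Q_{20}$ by an explicit, group-by-group analysis. By \cref{prop:LF-rank1}, every projective $\Z Q_{4n}$-module is left-isomorphic to a rank-one locally free ideal $I \subseteq \Z Q_{4n}$ direct sum a free module, so it suffices to show that every class in $\Cls(\Z Q_{4n})$ can be represented by a two-sided ideal that is generated by central elements (equivalently, by \cref{cor:lf-bimodule}, a two-sided ideal that is locally free as a bimodule). Since $\Z Q_{4n}$ is known to have PC (i.e. LFC) for $2 \le n \le 5$ — this is part of Swan's classification, or can be derived from the Eichler-type results together with the computation of the relevant class groups — the set $\Cls(\Z Q_{4n})$ is in bijection with $C(\Z Q_{4n})$, so the number of classes to handle is finite and small (in each of these cases $C(\Z Q_{4n})$ is in fact trivial or very small). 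So the real content is: for each nontrivial stable class, exhibit a concrete central-generated two-sided ideal representing it; for the trivial class, $\Z Q_{4n}$ itself works.

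First I would set up the arithmetic of $\Z Q_{4n}$ via its Wedderburn/pullback decomposition: $\Q Q_{4n}$ splits as a product of copies of $\Q$, cyclotomic fields $\Q(\zeta_d)$ for $d \mid 2n$, and a quaternion factor (a totally definite quaternion algebra over $\Q(\zeta_n + \zeta_n^{-1})$), and correspondingly $\Z Q_{4n}$ sits inside a product of a maximal order $\Gamma$ via a conductor square. The centre $C = Z(\Z Q_{4n})$ is a product of orders in these centre fields. Two-sided ideals of $\Z Q_{4n}$ generated by central elements are then, up to bimodule isomorphism, controlled by $\Picent(C)$ and the local data at the conductor primes, by \cref{thm:bimodules-are-ideals} and \cref{thm:picent-SES}; concretely, such ideals arise from central elements $c$ with prescribed valuations at the finitely many primes dividing the conductor. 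The strategy is: identify a prime $p$ (typically $p = 2$, and $p = 3$ or $5$ for $Q_{12}$, $Q_{20}$) at which the locally free class group is detected, and then produce a central element of $\Z Q_{4n}$ — built from the idempotents/sums of group elements that exist in the quaternion and cyclotomic blocks — whose associated two-sided ideal has the right image in $C(\Z Q_{4n})$. For $Q_8$ and $Q_{16}$ one expects $C(\Z Q_{4n})$ to be trivial so that only $I = \Z Q_{4n}$ is needed; for $Q_{12}$ and $Q_{20}$ (where $C(\Z[\zeta_3])$, $C(\Z[\zeta_5])$ are trivial but the quaternion blocks can still contribute) one checks that $C(\Z Q_{4n})$ is again trivial or generated by an explicitly exhibited central ideal.

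The main obstacle will be the explicit construction of the required central two-sided ideals in the cases where $C(\Z Q_{4n}) \ne 0$: one must produce an actual element $c \in Z(\Z Q_{4n})$, or a finite set of such elements generating an ideal $I$, for which $I$ is locally free of rank one over $\Z Q_{4n}$ (not just over $C$) and whose stable class exhausts $C(\Z Q_{4n})$. This requires understanding how a central ideal of $C$ extends to $\Z Q_{4n}$ — i.e. computing $\Z Q_{4n} \otimes_C I_C$ and checking it is locally free — which is where the quaternion block makes things delicate, since ramification of the quaternion algebra can obstruct local freeness of the extended ideal. A clean way to handle this is to use the conductor-square description of $\Z Q_{4n}$ and the fact that the maximal order $\Gamma$ is Morita-trivial locally, so that a central ideal becomes visibly locally free away from the conductor and one only has to match local data at the (few) conductor primes; the computation there is finite and, for $n \le 5$, short enough to carry out by hand. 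If $C(\Z Q_{4n}) = 0$ for all four groups (which the triviality of $C(\Z[\zeta_3])$, $C(\Z[\zeta_5])$ and the known structure of $C(\Z Q_{2^k})$ suggests for $Q_8$, $Q_{12}$, $Q_{16}$, $Q_{20}$), then the proposition is immediate with $I = \Z Q_{4n}$, and the "obstacle" evaporates — but the writeup should still record this class-group computation, since that is precisely what makes condition (ii) hold.
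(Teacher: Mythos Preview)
Your proposal has a genuine factual gap that undermines the whole strategy. You suggest, and ultimately lean on, the hope that $C(\Z Q_{4n}) = 0$ for $n=2,3,4,5$, writing that the triviality of $C(\Z[\zeta_3])$, $C(\Z[\zeta_5])$ and ``the known structure of $C(\Z Q_{2^k})$'' point this way. In fact $|C(\Z Q_{4n})| = 2$ for each of $n=2,3,4,5$ (this is Swan \cite[Theorem III]{Sw83}), so since these groups have LFC we have $|\Cls(\Z Q_{4n})| = 2$ as well. The ``obstacle'' you describe does \emph{not} evaporate: in every one of the four cases you must actually produce a nontrivial centrally generated two-sided ideal, and your proposal contains no mechanism for doing so beyond the vague ``central elements built from idempotents/sums of group elements''.

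The paper's proof is concrete and splits into two genuinely different constructions. For $Q_8$ and $Q_{16}$ the nontrivial class is represented by the Swan module $(N,3)$, which is visibly centrally generated and is nontrivial in $C(\Z Q_{2^k})$ by \cite[Theorem VI]{Sw83}. For $Q_{12}$ and $Q_{20}$ Swan modules do not suffice; instead the paper uses the Beyl--Waller ideals $P_{a,b} = (a+by,\,1+x)$, shows via an explicit Mayer--Vietoris calculation (\cref{lemma:BW}) that $P_{1,2}$ and $P_{1,4}$ are non-free, and then --- the real work --- rewrites $P_{a,b}$ as $(\alpha_{s,t},r)$ for an explicitly defined central element $\alpha_{s,t} = z_s + t\cdot\widetilde{N}y$ (\cref{lemma:alpha=central}, \cref{lemma:Pab-centrally-generated}). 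None of this is visible from the Picent/conductor-square formalism you outline; the content is in the explicit group-ring manipulations.
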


We begin by discussing two families of two-sided ideals which will suffice to represent all projective modules $P \in \Cls(\Z Q_{4n})$ in the case $2 \le n \le 5$. Note that, by \cite[Theorem III]{Sw83}, we have that $|\Cls(\Z Q_{4n})| = 2$ for $2 \le n \le 5$.

\subsubsection{Swan modules}

Let $G$ be a finite group, let $N = \sum_{g \in G} g$ denote the group norm and let $r \in \Z$ with $(r,|G|)=1$. The two-sided ideal $(N,r) \subseteq \Z G$ is projective as a left $\Z G$-module and is known as a \textit{Swan module}. If $r \equiv s$ mod $|G|$, then $(N,r) \cong (N,s)$ \cite[Lemma 6.1]{Sw60-I} and so we often write $r \in (\Z/|G|)^\times$. Note that $N, r \in Z(\Z G)$ and so $(N,r)$ is generated by central elements.

By \cite[Theorem VI]{Sw83}, we have that $[(N,3)] \ne 0 \in C(\Z Q_{2^n})$ for $n \ge 3$ where $(N,3)$ is a Swan module. Since $|\Cls(\Z Q_{2^n})| = 2$ for $n =3, 4$, this implies that:
\[ \Cls(\Z Q_8) = \{ \Z Q_8, (N,3)\}, \quad \Cls(\Z Q_{16}) = \{\Z Q_{16}, (N,3)\}.\]
This implies \cref{thm:q8-20=two-sided} for the groups $Q_8$ and $Q_{16}$.

\subsubsection{Two-sided ideals of Beyl and Waller} 
In order to prove \cref{thm:q8-20=two-sided} for the groups $Q_{12}$ and $Q_{20}$, we will now consider a family of projective two-sided ideals in $\Z Q_{4n}$ which were first introduced by R. Beyl and N. Waller in \cite{BW05}.

For $n \ge 2$, define $P_{a,b} = (a+by,1+x) \subseteq \Z Q_{4n}$ for $a, b \in \Z$ such that $(a^2+b^2,2n)=1$ if $n$ is odd and $(a^2-b^2,2n)=1$ if $n$ is even. It follows from \cite[Proposition 2.1]{BW05} that $P_{a,b}$ is a two-sided ideal and is projective as a left $\Z Q_{4n}$-module. For $\alpha \in \Z Q_{4n}$, let $(\alpha) \subseteq \Z Q_{4n}$ denote the two-sided ideal generated by $\alpha$. If $n$ is odd, then there is a Milnor square
\[ \mathcal{R} = 
\begin{tikzcd}
	\Z Q_{4n}/(x^n+1) \ar[r,"i_2"] \ar[d,"i_1"] & \Z[\zeta_{2n},j] \ar[d,"j_2"]\\
	\Z[j] \ar[r,"j_1"] & (\Z/n)[j]
\end{tikzcd}
\quad
\begin{tikzcd}
	x,y \ar[r,mapsto] \ar[d,mapsto] & \zeta_{2n},j \ar[d,mapsto]\\
-1,j \ar[r,mapsto] & -1,j \end{tikzcd}
\]
where $\Z Q_{4n}/(x+1) \cong \Z[j]$ and $\Z Q_{4n} / (x^{n-1}-x^{n-2}+\cdots -1) \cong \Z[\zeta_{2n},j] \subseteq \H_{\R}$. If $n=p$ is an odd prime then, by Propositions \ref{prop:milnor-square} and \ref{prop:K_1-square}, we have the following commutative diagram
\[
\begin{tikzcd}
\Cls_{\mathcal{R}}^{-1}(\Z[\zeta_{2p},j],\Z[j]) \ar[d, leftrightarrow] \ar[r,"\text{$[\,\cdot \,]$}"] & C_{\mathcal{R}}^{-1}([\Z[\zeta_{2p},j]],[\Z[j]]) \ar[d,leftrightarrow] \\
\displaystyle \frac{\F_p[j]^\times}{\Z[j]^\times \times \Z[\zeta_p,j]^\times}
\ar[r,"\varphi"] & 
\displaystyle \frac{K_1(\F_p[j])}{K_1(\Z[j]) \times K_1(\Z[\zeta_{2p},j])}
\end{tikzcd}
\]
where the vertical maps are one-to-one correspondences and $\varphi$ is induced by  $\F_p[j]^\times \to K_1(\F_p[j])$.
It follows from \cite[Proposition 2.2]{BW05} that $P_{a,b} \in \Cls_{\mathcal{R}}^{-1}(\Z[\zeta_{2n},j],\Z[j])$ with corresponding element $[a+bj] \in \frac{\F_p[j]^\times}{\Z[j]^\times \times \Z[\zeta_p,j]^\times}$. 

This allows us to deduce the following, which is an extension of \cite[Theorem 3.11]{BW08} in the case where $n=p$ is an odd prime. 

\begin{lemma} \label{lemma:BW}
Let $p$ be an odd prime with $|C(\Z[\zeta_p])|$ odd and let $P_{a,b} = (a+by,1+x) \subseteq \Z Q_{4p}$ for $(a^2+b^2,4p)=1$. Then:
\begin{clist}{(i)}
\item $P_{a,b}$ is free if and only if $p \mid a$ or $p \mid b$
\item $P_{a,b}$ is stably free if and only if $a^2+b^2$ is a square mod $p$.
\end{clist}
\end{lemma}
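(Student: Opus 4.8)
The plan is to reduce both statements to explicit computations in the finite ring $\F_p[j]$, using the commutative square displayed just above. The first step is to replace $\Z Q_{4p}$ by $\Lambda := \Z Q_{4p}/(x^p+1)$. Applying \cref{lemma:G-H-Square} to the quotient $Q_{4p} \twoheadrightarrow D_{2p}$ (with $N = \langle x^p\rangle$) expresses $\Z Q_{4p}$ as a Milnor square $\Z D_{2p} \times_{\F_2 D_{2p}} \Lambda$, and one checks directly that $P_{a,b}$ has free image over $\Z D_{2p}$: since $1+x \in P_{a,b}$ and the image of $P_{a,b}$ in $\Z D_{2p}/(1+x) \cong \Z[C_2]$ is the unit ideal (because $a+b$ and $a-b$ are coprime, $a^2+b^2$ being odd), the image of $P_{a,b}$ in $\Z D_{2p}$ is all of $\Z D_{2p}$. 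Since $\Z D_{2p}$ satisfies the Eichler condition, the cancellation theory of this section (\cref{thm:canc-general}, \cref{thm:coset=K_R}) applied to this square, together with the hypothesis that $|C(\Z[\zeta_p])|$ is odd to secure the auxiliary conditions on $\Lambda$, shows that $P_{a,b}$ is free, respectively stably free, over $\Z Q_{4p}$ if and only if its image $\overline{P}_{a,b}$ over $\Lambda$ is free, respectively stably free, over $\Lambda$.

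By \cite[Proposition 2.2]{BW05}, $\overline{P}_{a,b}$ lies in $\Cls_{\mathcal{R}}^{-1}(\Z[\zeta_{2p},j],\Z[j])$ and corresponds, under \cref{prop:milnor-square}, to the double coset of $a+bj$ in $\F_p[j]^\times$. Hence $\overline{P}_{a,b}$ is free over $\Lambda$ precisely when $a+bj$ lies in the subgroup $\IM(\Z[j]^\times)\cdot\IM(\Z[\zeta_{2p},j]^\times)$ of $\F_p[j]^\times$; and since $C(\Z[j])=0$ forces $[\overline{P}_{a,b}]_\Lambda$ into the image of $K_1(\F_p[j])$ in $C(\Lambda)$, \cref{prop:K_1-square} shows $\overline{P}_{a,b}$ is stably free over $\Lambda$ precisely when $a+bj$ lies in $\IM(K_1(\Z[j]))\cdot\IM(K_1(\Z[\zeta_{2p},j]))$ inside $K_1(\F_p[j]) = \F_p[j]^\times$. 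Everything thus reduces to identifying these two subgroups. Note that $\Z[\zeta_{2p}] = \Z[\zeta_p]$, that the map $\Z[\zeta_{2p},j]\to\F_p[j]$ sends $\zeta_{2p}\mapsto -1$, and that it factors through reduction modulo the prime $\mathfrak{p} = (1-\zeta_p)$ above $p$. For (i): the cyclotomic units $\tfrac{1-\zeta_p^a}{1-\zeta_p}\in\Z[\zeta_{2p}]^\times$ reduce mod $\mathfrak{p}$ to $a\in\F_p^\times$, and so account for all of $\F_p^\times$; together with $j\in\Z[j]^\times\mapsto j$ and the structural identity $\Z[\zeta_{2p},j]^\times = \Z[\zeta_p]^\times\cdot\langle j\rangle$ (cf.\ \cite{BW05,BW08}), the first subgroup is exactly $\F_p^\times\sqcup\F_p^\times j$, so $a+bj$ lies in it if and only if $\overline{a}=0$ or $\overline{b}=0$ in $\F_p$, i.e.\ $p\mid a$ or $p\mid b$.

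For (ii): the quaternion algebra $\Q(\zeta_{2p})\oplus\Q(\zeta_{2p})j$ is ramified at every infinite place, so the reduced norm carries $K_1(\Z[\zeta_{2p},j])$ into the totally positive units of the maximal real subring $\Z[\zeta_p]^+$; oddness of $h_p^+ = |C(\Z[\zeta_p]^+)|$, a divisor of $|C(\Z[\zeta_p])|$, pins down the signatures of these units and forces them to reduce mod $\mathfrak{p}$ into $(\F_p^\times)^2$, whence, by compatibility of the reduced norm with $N_{\F_p[j]/\F_p}$, the second subgroup lies in $N_{\F_p[j]/\F_p}^{-1}((\F_p^\times)^2)$. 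A comparison of orders — using Swan's computation of $|C(\Z Q_{4p})|$, again for $|C(\Z[\zeta_p])|$ odd — upgrades this to an equality. Since $N_{\F_p[j]/\F_p}(a+bj) = a^2+b^2$, this says $\overline{P}_{a,b}$ is stably free if and only if $a^2+b^2$ is a square mod $p$.

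The main obstacle is the last step: the exact determination of the images of $\Z[\zeta_{2p},j]^\times$ and of $K_1(\Z[\zeta_{2p},j])$ under reduction at $p$. Both the structure $\Z[\zeta_{2p},j]^\times = \Z[\zeta_p]^\times\langle j\rangle$ of the unit group of this definite quaternion order, and the precise size of the image of $K_1$, are delicate; it is here that oddness of $|C(\Z[\zeta_p])|$ is indispensable, as it makes the relevant $2$-primary invariants (the signature cokernel of the units of $\Z[\zeta_p]^+$ and the cokernel of the reduced norm on $K_1$) trivial, which is what yields the clean conditions in (i) and (ii).
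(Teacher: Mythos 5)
Your proposal follows the same route as the paper's proof: reduce to the Milnor square $\mathcal R$ over $\Lambda = \Z Q_{4p}/(x^p+1)$, identify $\bar P_{a,b}$ with the double coset of $a+bj$ via \cite[Proposition 2.2]{BW05} and \cref{prop:milnor-square}, and then compute the images of $\Z[\zeta_p,j]^\times$ and $K_1(\Z[\zeta_p,j])$ in $\F_p[j]^\times$. The difference is that the paper simply cites \cite[Lemmas 7.5, 7.6]{MOV83} for these images (and leaves the passage between $\Z Q_{4p}$- and $\Lambda$-modules implicit, delegating it to \cite{BW05,BW08}), whereas you attempt to make both steps explicit. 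That is a reasonable ambition, but a few of your intermediate steps do not hold as written.

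In the first paragraph, $\Z D_{2p}/(1+x)$ is not $\Z[C_2]$: in $D_{2p}$ the image of $x$ has odd order $p$, so $1+x$ divides $x^p+1=2$ modulo $x^p-1$, and the quotient is the finite ring $\F_2[C_2]$. Likewise $a+b$ and $a-b$ need not be coprime (e.g.\ $p=7$, $a=3$, $b=6$). The conclusion you want — that the image of $P_{a,b}$ in $\Z D_{2p}$ is the unit ideal — is nevertheless true, by noting that $a^2+b^2$ odd forces exactly one of $a,b$ to be even, so $a+b\bar y$ maps to a unit in $\F_2[C_2]$; you should replace the stated reasoning by this. More substantively, invoking \cref{thm:canc-general} to conclude ``$P_{a,b}$ free (resp.\ stably free) over $\Z Q_{4p}$ iff $\bar P_{a,b}$ free (resp.\ stably free) over $\Lambda$'' is not justified as stated: that theorem requires hypotheses (ii) and (iii) for $\Lambda_1 = \Lambda$ (surjectivity of $\Lambda^\times \to K_1(\Lambda)$ and representability by centrally generated two-sided ideals), and these do not follow merely from $|C(\Z[\zeta_p])|$ odd; in fact verifying them is essentially what the surrounding subsection of the paper is about, so the appeal is close to circular.

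The more serious gap is in part (ii). You bound the image of $K_1(\Z[\zeta_p,j])$ inside $N_{\F_p[j]/\F_p}^{-1}((\F_p^\times)^2)$ by the reduced-norm and $h_p^+$-odd argument — that inclusion is fine and is indeed roughly how \cite[Lemma 7.6]{MOV83} is proved. But you then ``upgrade to an equality'' by ``a comparison of orders using Swan's computation of $|C(\Z Q_{4p})|$.'' This is circular: the order of $C(\Z Q_{4p})$ (equivalently, of $C(\Lambda)$) is computed from precisely the double-coset count whose size you are trying to determine, so it cannot be used as an independent input. You need either the reverse inclusion directly (as in \cite[Lemma 7.6]{MOV83}, where the kernel of the reduced norm is shown to already map onto $\Ker(N_{\F_p[j]/\F_p})$) or a genuinely independent computation of $|C(\Z Q_{4p})|$; as written this step does not close. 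Part (i) is essentially the paper's argument (the identity $\Z[\zeta_p,j]^\times = \Z[\zeta_p]^\times\langle j\rangle$ together with cyclotomic units surjecting onto $\F_p^\times$) and is fine.
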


\begin{proof}
By \cite[Lemma 7.5]{MOV83}, we have that $\Z[\zeta_p,j]^\times = \langle \Z[\zeta_p]^\times,j \rangle$. Furthermore, the map $\Z[\zeta_p]^\times \to \F_p[j]^\times$ sends $\zeta_p$ to $1$ and so has image $\F_p^\times$ since units of any length are achievable. This implies that:
\[ \frac{\F_p[j]^\times}{\Z[j]^\times \times \Z[\zeta_p,j]^\times} \cong \F_p[j]^\times/ \F_p^\times \cdot \langle j \rangle \]
and so $P_{a,b}$ is free if and only if $[a+bj] = 1 \in \F_p[j]^\times/ \F_p^\times \cdot \langle j \rangle$.

Since $\Z[j]$ is a Euclidean Domain, we have $K_1(\Z[j])=\Z[j]^\times = \{ \pm 1, \pm j \}$ and, since $\F_p[j]$ is a finite and hence semilocal ring, we have $K_1(\F_p[j]) \cong \F_p[j]^\times$.
It follows from \cite[Lemmas 7.5/7.6]{MOV83} that, if $|C(\Z[\zeta_p])|$ odd, then $\IM(K_1(\Z[\zeta_p,j])) = \langle \F_p^\times, \Ker(N)) \rangle$ where $N : \F_p[j]^\times \to \F_p^\times$, $x+yj \mapsto x^2+y^2$ is the norm on $\F_p[j]$. In particular, there is an isomorphism:
\[ N : \frac{K_1(\F_p[j])}{K_1(\Z[j]) \times K_1(\Z[\zeta_{2p},j])} \cong \frac{\F_p[j]^\times}{\langle \Z[j]^\times,  \F_p^\times, \Ker(N) \rangle} \to  \F_p^\times/ N(\F_p^\times) \cong  \F_p^\times/(\F_p^\times)^2. \] 

Hence the map $\varphi$ coincides by the map
\[ N : \F_p[j]^\times/ \F_p^\times \cdot \langle j \rangle \to \F_p^\times/(\F_p^\times)^2 \]
which is induced by $N : \F_p[j]^\times \to \F_p^\times$, $x+yj \mapsto x^2+y^2$. In particular, $P_{a,b}$ is stably free if and only if $[a^2+b^2] = 1 \in \F_p^\times (\F_p^\times)^2$. The result follows by evaluating these conditions.
\end{proof}

If $p=3$ or $5$ then, as noted above, we have that $|C(\Z[\zeta_p])| = 1$. In the case $p=3$, we have $(1^2+2^2,12)=1$ and $3 \nmid 1,2$ and, in the case $p =5$, we have $(1^2+4^2,20)=1$ and $5 \nmid 1,4$. Since $|\Cls(\Z Q_{4p})| = 2$ for $p=3,5$, we have:
\[\Cls(\Z Q_{12}) = \{\Z Q_{12},P_{1,2}\}, \quad \Cls(\Z Q_{20}) = \{\Z Q_{20},P_{1,4}\}.\]

We will now show that the $P_{a,b}$ are generated by central elements. Our strategy will be to introduce a new family a two-sided ideals which are generated by central elements and show that the $P_{a,b}$ can be expressed in this form.

If $s \in \Z$ is odd, then define
\[ z_s = (-1)^{\frac{s-1}{2}}(x^{-\frac{s-1}{2}} - x^{-\frac{s-3}{2}} + \cdots + x^{\frac{s-1}{2}}), \quad \wt N = 1 - x + x^2- \cdots - x^{2n-1}.\] 
If $t \in \Z$, then define $\alpha_{s,t} = z_s + t \cdot \wt N y \in \Z Q_{4n}$.

\begin{lemma} \label{lemma:alpha=central}
Let $n \ge 2$ and let $r,s,t \in \Z$ where $(s,2n) = (r,2n)=1$. Then $(\alpha_{s,t}, r) \subseteq \Z Q_{4n}$ is projective as a $\Z Q_{4n}$-module and $\alpha_{s,t} \in Z(\Z Q_{4n})$.
\end{lemma}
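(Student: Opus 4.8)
The statement splits into two independent claims — that $\alpha_{s,t}$ is central and that $(\alpha_{s,t},r)$ is projective — and I would prove them in that order. For centrality, the plan is to work with the presentation $Q_{4n} = \langle x,y \mid x^{2n}=1,\ y^2 = x^n,\ yxy^{-1}=x^{-1}\rangle$ and the $\Z$-module decomposition $\Z Q_{4n} = \Z[\langle x\rangle] \oplus \Z[\langle x\rangle]\,y$, where $\Z[\langle x\rangle]\cong\Z[x]/(x^{2n}-1)$ is commutative and conjugation by $y$ restricts to the automorphism $\sigma$ of $\Z[\langle x\rangle]$ with $\sigma(x)=x^{-1}$. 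Checking commutation with the generators $x$ and $y$ yields the criteria: $\beta\in\Z[\langle x\rangle]$ is central in $\Z Q_{4n}$ iff $\sigma(\beta)=\beta$, and $\gamma y$ with $\gamma\in\Z[\langle x\rangle]$ is central iff $\sigma(\gamma)=\gamma$ and $(x^2-1)\gamma=0$ in $\Z[x]/(x^{2n}-1)$. It then remains to verify these for $z_s$ and $\wt N$: one rewrites $z_s = \sum_{|k|\le(s-1)/2}(-1)^k x^k$, which is palindromic and hence $\sigma$-invariant (noting this persists after reducing exponents mod $2n$, since $m\mapsto -m$ preserves $(-1)^m$ and negates residues), and one uses the telescoping identity $(1+x)\wt N = 1 - x^{2n} = 0$ in $\Z[x]/(x^{2n}-1)$, which gives both $\sigma(\wt N)=\wt N$ and $(x^2-1)\wt N = 0$. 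Hence $z_s$ and $\wt N y$ are central, and so is $\alpha_{s,t} = z_s + t\,\wt N y$.

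For projectivity, the starting point is that $2n$ and $|Q_{4n}| = 4n$ have the same set of prime divisors, so the hypothesis $(r,2n)=1$ already forces $(r,|Q_{4n}|)=1$. Since $\alpha_{s,t}$ and $r$ are central, the two-sided ideal $(\alpha_{s,t},r)$ coincides with the left ideal $\Z Q_{4n}\,\alpha_{s,t}+\Z Q_{4n}\,r$, so by \cref{prop:proj=locally-proj} it suffices to show the localization $(\alpha_{s,t},r)_p$ is a projective $\Z_p Q_{4n}$-module for every prime $p$. If $p\mid |Q_{4n}|$ then $r$ is a unit in $\Z_p$, hence $(\alpha_{s,t},r)_p = \Z_p Q_{4n}$ is free of rank one; if $p\nmid|Q_{4n}|$ then $\Z_p Q_{4n}$ is a maximal $\Z_p$-order in $\Q_p Q_{4n}$ and therefore hereditary (see e.g.\ \cite{CR81}), so every left ideal — in particular $(\alpha_{s,t},r)_p$ — is projective. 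Thus $(\alpha_{s,t},r)$ is locally projective, hence projective over $\Z Q_{4n}$.

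I expect the only delicate part to be the centrality computation: pinning down the correct condition for elements of the twisted summand $\Z[\langle x\rangle]\,y$ to be central, and then confirming the palindrome symmetry of $z_s$ (including the wraparound case $s>2n$) and the annihilation identity $(x^2-1)\wt N=0$. The projectivity half is essentially automatic once one invokes that $\Z_p G$ is a maximal, hence hereditary, order whenever $p\nmid |G|$.
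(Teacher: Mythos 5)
Your argument is correct and reaches the same conclusion as the paper, but is considerably more explicit; the paper disposes of both claims in two lines, asserting without detail that $z_s$ and $\wt N$ are central, that $\wt N x = \wt N x^{-1}$, and then invoking \cite[Proposition 7.1]{Sw60-II} for projectivity. Your centrality criterion for elements of the twisted summand $\Z[\langle x\rangle]y$ --- namely $\sigma(\gamma)=\gamma$ \emph{and} $(x^2-1)\gamma=0$ --- is exactly what the paper is implicitly checking, and your palindrome argument for $z_s$ and telescoping identity $(1+x)\wt N = 1 - x^{2n} = 0$ are the right computations. For projectivity you take a genuinely different route: rather than cite Swan's Proposition 7.1, you re-derive it here from the local characterization of projectivity (\cref{prop:proj=locally-proj}), using that $r$ is a unit in $\Z_p$ for $p \mid 4n$ and that $\Z_p Q_{4n}$ is a maximal (hence hereditary) $\Z_p$-order for $p \nmid 4n$. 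This is valid and self-contained, whereas the paper's citation is shorter; both buy the same thing.

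One small inexactness worth flagging: you claim the telescoping identity $(1+x)\wt N = 0$ ``gives both $\sigma(\wt N)=\wt N$ and $(x^2-1)\wt N=0$.'' The second implication is immediate, but the first does not follow directly from $(1+x)\wt N = 0$ alone: applying $\sigma$ only tells you that $\sigma(\wt N)$ also lies in $\mathrm{Ann}(1+x)$, and one must additionally observe that $\mathrm{Ann}(1+x) = \Z\wt N$ is rank one and $\sigma$ fixes the coefficient of $x^0$, or simply note that $\wt N = \sum_{i=0}^{2n-1}(-1)^i x^i$ is itself palindromic in the same sense as $z_s$ (since $(-1)^{2n-i}=(-1)^i$). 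Either fix is trivial, so this does not affect the correctness of the proof.
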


\begin{proof}
It is easy to see that $z_s$, $\wt N \in Z(\Z Q_{4n})$ and $\wt N x = \wt N x^{-1}$. Hence we have $y \alpha_{s,t} = \alpha_{s,t} y$ and 
\[ x \alpha_{s,t} = z_s x +t(\wt N x) y = z_s x + t(\wt N x^{-1}) y = \alpha_{s,t} x,\] 
which implies that $\alpha_{s,t} \in Z(\Z Q_{4n})$.
Since $r \in (\alpha_{s,t}, r)$ and $(r,4n)=1$, we have that $(\alpha_{s,t}, r)$ is a projective $\Z Q_{4n}$-module by \cite[Proposition 7.1]{Sw60-II}.
\end{proof}

\begin{lemma} \label{lemma:Pab-centrally-generated}
Let $n \ge 2$ and let $a, b \in \Z$ be such that $(a^2-(-1)^nb^2,2n)=1$.
\begin{clist}{(i)}
\item If $r = (a^2-(-1)^nb^2)/\gcd(a,b)$, then there exists $a_0, b_0 \in \Z$ such that $a \equiv a_0$ $\text{\normalfont mod } r$, $b \equiv b_0$ $\text{\normalfont mod } r$, $(a_0,2n)=1$ and $2n \mid b_0$
\item $P_{a,b} = (\alpha_{s,t},r) \subseteq \Z Q_{4n}$ where $s = a_0$ and $t = b_0/2n$.
\end{clist}	
In particular, $P_{a,b}$ is generated by central elements.
\end{lemma}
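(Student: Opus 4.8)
For (i) the argument is purely arithmetic. Since $\gcd(a,b)$ divides $a^2-(-1)^nb^2$, the integer $r$ divides $a^2-(-1)^nb^2$, so $\gcd(r,2n)$ divides $\gcd(a^2-(-1)^nb^2,2n)=1$; because $2\mid 2n$ this also forces $r$ odd, hence $\gcd(r,4n)=1$. I would then apply the Chinese remainder theorem to produce $a_0$ with $a_0\equiv a\pmod r$ and $a_0\equiv 1\pmod{2n}$, and $b_0$ with $b_0\equiv b\pmod r$ and $b_0\equiv 0\pmod{2n}$, and set $s=a_0$, $t=b_0/2n$. Note $\gcd(s,2n)=\gcd(r,2n)=1$, so \cref{lemma:alpha=central} applies to $(\alpha_{s,t},r)$; in particular $\alpha_{s,t}\in Z(\Z Q_{4n})$, and since $r\in\Z\subseteq Z(\Z Q_{4n})$ as well, the final assertion of the lemma will follow once (ii) is established.

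For (ii) the plan is to prove the equality of left ideals $P_{a,b}=(\alpha_{s,t},r)$ by the two inclusions. First I record the behaviour modulo $(1+x)$: substituting $x\mapsto-1$ gives $z_s\equiv s$ and $\wt N\equiv 2n$, so $\alpha_{s,t}=z_s+t\wt N y\equiv s+2nt\,y=a_0+b_0y\pmod{(1+x)}$. Next I show $r\in P_{a,b}$: writing $a=g\alpha$, $b=g\beta$ with $g=\gcd(a,b)$, a direct computation using $yx=x^{-1}y$ gives $(\alpha-\beta y)(a+by)=g(\alpha^2-\beta^2x^n)$; since $x+1$ divides $x^n-(-1)^n$ in $\Z[x]$, this differs from $g(\alpha^2-(-1)^n\beta^2)=r$ by an element of $\Z Q_{4n}(1+x)\subseteq P_{a,b}$, whence $r\in P_{a,b}$. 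Consequently $P_{a,b}=(a+by,1+x,r)$; replacing $a+by$ first by $a_0+b_0y$ (legitimate modulo $r$) and then by $\alpha_{s,t}$ (legitimate modulo $(1+x)$) gives $P_{a,b}=(\alpha_{s,t},1+x,r)$. In particular $(\alpha_{s,t},r)\subseteq P_{a,b}$, and both sides are finite-index subgroups of $\Z Q_{4n}$ ($P_{a,b}$ since it lies in $\Cls(\Z Q_{4n})$ by Beyl--Waller, and $(\alpha_{s,t},r)$ since it contains $r\Z Q_{4n}$), so it is enough to prove the two ideals have equal index in $\Z Q_{4n}$.

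Since $P_{a,b}$ is the preimage of $(a+bj)$ under $\Z Q_{4n}\twoheadrightarrow\Z Q_{4n}/(1+x)=\Z[j]$ (with $j=\bar y$, $j^2=(-1)^n$), one has $[\Z Q_{4n}:P_{a,b}]=\lvert a^2-(-1)^nb^2\rvert$. For the other ideal, $\Z Q_{4n}/(\alpha_{s,t},r)\cong(\Z/r)[Q_{4n}]/(\overline{\alpha_{s,t}})$, and here I would use that $(\Z/r)[Q_{4n}]$ is semisimple (as $\gcd(r,|Q_{4n}|)=1$) and decomposes as a ring product $(\Z[j]/r)\times B$, where $B$ is the product of the remaining Wedderburn blocks. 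The key observation is that $\wt N$ is supported on the $\Z[j]$-block: on the one-dimensional representations with $x\mapsto 1$ and on the two-dimensional representations $x\mapsto\mathrm{diag}(\omega,\omega^{-1})$ ($\omega$ a primitive $d$-th root of unity, $d\mid 2n$, $d>2$) one computes $\wt N\mapsto\sum_k(-\omega)^k=(1-\omega^{2n})/(1+\omega)=0$, whereas $\wt N\mapsto 2n$ on the $x\mapsto-1$ block. Hence $\alpha_{s,t}$ maps to $z_s$ in $B$, and $z_s$ is a unit there: on the one-dimensional representations in $B$ one has $z_s\mapsto\pm1$, and on the two-dimensional ones $z_s\mapsto(-1)^{(s-1)/2}\omega^{-(s-1)/2}(1+\omega^s)/(1+\omega)$, which is a unit because $\gcd(s,2n)=1$ makes $\omega^s$ another primitive $d$-th root of unity, so $\omega,\omega^s\ne-1$ and $1+\omega,1+\omega^s\ne 0$ even in $\ol{\F}_p$ for each $p\mid r$ (as $p\nmid 2n$, hence $p\nmid d$). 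Therefore $\overline{\alpha_{s,t}}$ is a unit in $B$, so $(\Z/r)[Q_{4n}]/(\overline{\alpha_{s,t}})\cong(\Z[j]/r)/(\overline{a_0+b_0j})=\Z[j]/(r,a_0+b_0j)=\Z[j]/(a+bj)$, using $a_0+b_0j\equiv a+bj\pmod r$ and the fact that $r=(a^2-(-1)^nb^2)/\gcd(a,b)\in(a+bj)$. Thus $[\Z Q_{4n}:(\alpha_{s,t},r)]=\lvert a^2-(-1)^nb^2\rvert=[\Z Q_{4n}:P_{a,b}]$, giving $(\alpha_{s,t},r)=P_{a,b}$ and completing the proof.

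I expect the main obstacle to be the analysis in the last paragraph: pinning down the Wedderburn components of $\Z Q_{4n}$, checking that $\wt N$ is supported on the $\Z[j]$-block, and — the genuinely delicate step — verifying that the values of $z_s$ on the two-dimensional representations are units modulo every prime dividing $r$, which is exactly where the condition $\gcd(s,2n)=1$ (equivalently $\gcd(a_0,2n)=1$, coming from (i)) is essential. Everything else is formal manipulation inside $\Z Q_{4n}$, though some care is needed to treat $n$ even and $n$ odd uniformly (the only real difference being the sign $(-1)^n$ and whether $\Z[j]$ is $\Z\times\Z$ or $\Z[i]$).
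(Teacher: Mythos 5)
Your part (i) is essentially the paper's: both use the coprimality $(r,2n)=1$ and a CRT-type construction of $a_0,b_0$ (the paper writes the solution out explicitly, you invoke CRT, but the content is identical).

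Part (ii) genuinely diverges after the common first steps. You and the paper both establish $P_{a,b}=(\alpha_{s,t},1+x,r)$, and your derivation of $r\in P_{a,b}$ is actually a bit more careful than the paper's: the paper writes $r=\tfrac{1}{d}(a-by)(a+by)$, but the product equals $\tfrac{1}{d}(a^2-b^2x^n)$, which agrees with $r$ only modulo $(1+x)\Z Q_{4n}$; you correctly flag that $1+x$ divides $x^n-(-1)^n$ and reduce modulo the ideal, which is what the paper means. The real difference is in discharging the generator $1+x$. The paper constructs an explicit quasi-inverse $\bar z_s$ with $z_s\bar z_s\in 1+\Z Q_{4n}\wt N$, then uses $(1+x)\wt N=0$ to conclude $(1+x)\bar z_s\alpha_{s,t}=1+x$, hence $1+x\in(\alpha_{s,t})$ directly. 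You instead compare indices: $(\alpha_{s,t},r)\subseteq P_{a,b}$ with equal finite index, computed by splitting $(\Z/r)[Q_{4n}]$ into the $\Z[j]/r$ block (where $\wt N$ lives and $\alpha_{s,t}\mapsto a_0+b_0j$) and the complementary semisimple piece $B$ (where $\alpha_{s,t}$ reduces to $z_s$ and is a unit because $\gcd(s,2n)=1$ keeps $1+\omega$ and $1+\omega^s$ nonzero modulo every $p\mid r$). Your representation-theoretic route is sound — the coprimality $(r,4n)=1$ is exactly what makes $(\Z/r)[Q_{4n}]$ split into blocks and what makes $z_s$ invertible off the $j$-block — but it requires more machinery (Wedderburn decomposition over $\Z/r$, checking unit-hood on each component, lifting from residue fields) than the paper's one-line multiplication by $1+x$. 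What your approach buys is conceptual clarity about \emph{why} the identity holds: $\alpha_{s,t}$ is engineered to look like $a+bj$ on the $\Z[j]$-block and like a unit everywhere else. What the paper's approach buys is brevity and self-containment, avoiding any appeal to the block structure of the group algebra.
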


\begin{proof}
Since $(r,2n)=1$, there exists $x, y \in \Z$ such that $rx+2ny=1$. Then $a_0 = a+rx(1-a)$ and $b_0 = 2nyb$ have the required properties.

Now recall that $P_{a,b} = (a+by,1+x)$. If $d = \gcd(a,b)$, then $\frac{1}{d}(a-by) \in \Z Q_{4n}$ and so 
\[ r = \frac{1}{d}(a-by) \cdot (a+by) \in P_{a,b}.\] 
In particular, since $a \equiv a_0$ $\text{\normalfont mod } r$ and $b \equiv b_0$ $\text{mod } r$, this implies that 
\[P_{a,b} = (a+by,1+x,r) = (a_0+b_0y,1+x,r).\]

Let $s = a_0$ and $t = b_0/2n$. If $e : \Z Q_{4n} \to \Z Q_{4n}$ is the function which evaluates at $x = -1$, then
\[ e(\alpha_{s,t}) = e(z_s) + t e(\wt N) y = s+ t(2n)y = a_0+b_0y.\]
Since $\Z Q_{4n} (1+x) = (1+x) \Z Q_{4n}$, this implies that $a_0 + b_0 y + \Z Q_{4n} (1+x) = \alpha_{s,t} + \Z Q_{4n} (1+x)$ and so $P_{a,b} = (\alpha_{s,t},1+x,r)$.

Since $s=a_0$ has $(s,2n)=1$, we can let $\ell \ge 1$ be such that $\ell s \equiv 1$ $\text{mod } 2n$. Similarly to the proof of \cite[Lemma 1.3]{BW08}, we now define 
\[ \bar{z}_s = (-x)^{\frac{s-1}{2}}\sum_{i=0}^{\ell -1} (-x^s)^i\] 
so that $z_s \bar{z}_s \in 1+ \Z Q_{4n}\wt N$. Since $\bar{z}_s \alpha_{s,t} = \bar{z}_s z_s +  \wt N \bar{z}_s ty$ and $\wt N \in Z(\Z Q_{4n})$, it follows that $\bar{z}_s \alpha_{s,t} \in 1+ \Z Q_{4n}\wt N$. Since $\wt N \in Z(\Z Q_{4n})$. We can now left-multiply both sides by $1+x$ to get that $(1+x)\bar{z}_s \alpha_{s,t} = 1+x$, where the $\wt N$ term vanishes since $(1+x) \wt N = 0$. Hence $1+x \in (\alpha_{s,t})$ and so $P_{a,b} = (\alpha_{s,t},r)$. By \cref{lemma:alpha=central}, this implies that $P_{a,b}$ is generated by central elements.
\end{proof}

By \cref{lemma:Pab-centrally-generated}, this implies that $P_{1,2} \subseteq \Z Q_{12}$ and $P_{1,4} \subseteq \Z Q_{20}$ are generated by central elements. This completes the proof of \cref{thm:q8-20=two-sided}, and hence completes the proof of \cref{prop:canc-1}.

\begin{remark}
This argument can also be used to prove \cref{thm:main-cancellation} in the case $H = \widetilde{T}$.
\end{remark}

\subsection{Proof of \cref{thm:main-cancellation} for exceptional quotients}
\label{subsection:cancellation-Swan} \label{subsection:proof-of-A}

The following generalises \cite[Corollary 13.5, Theorem 13.7]{Sw83} which corresponds to the case $G=H$.

\begin{thm} \label{prop:canc-2}
Let $G$ be a finite group which has an Eichler quotient $H$ of the form
\[ \widetilde{O}, \quad \widetilde{T}^n \times \widetilde{I}^m \text{ for $n, m \ge 0$}.\]
Then $\Z G$ has projective cancellation.	
\end{thm}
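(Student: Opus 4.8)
The plan is to deduce \cref{prop:canc-2} from the cancellation theorem of Swan \cite[Theorem~13.1]{Sw83}, using the Milnor square $\mathcal{R}_{G,H}$ of \cref{lemma:G-H-Square} to push the problem from the Eichler cover $\Z G$ down to the group ring $\Z H$. Write $H = G/N$, set $n = |N|$ and $\Gamma = \Z G/\Sigma_N$, so that $\mathcal{R}_{G,H}$ exhibits $\Z G$ as the pullback of $\Z H \to (\Z/n\Z)[H] \leftarrow \Gamma$; in the language of \cref{subsection:fibre-squares} this is the square $\mathcal{R}(\Z G,\Q H,A_2)$ attached to the splitting $\Q G \cong \Q H \times A_2$, with $\l_1 = \Z H$, $\l_2 = \Gamma$ and $\widebar{\l} = (\Z/n\Z)[H]$. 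Since $H$ is an \emph{Eichler} quotient of $G$, the order $\l_2 = \Gamma$ satisfies the Eichler condition, by the last part of \cref{lemma:G-H-Square}. If $H = \wt T^0 \times \wt I^0 = C_1$ the hypothesis already forces $G$ itself to satisfy the Eichler condition, and then $\Z G$ has PC by \cref{thm:jacobinski}; so I may assume $H \ne C_1$.

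Next I would unpack the hypotheses of Swan's theorem for the square $\mathcal{R}_{G,H}$. It is a cancellation criterion, close in spirit to \cref{thm:coset=K_R} specialised to a Milnor square with one Eichler factor, which guarantees that $\Z G$ has locally free (equivalently, projective) cancellation provided $\l_2$ is Eichler and, for every $P \in \Cls(\Z H)$ in the relevant genus, the double cosets $\Aut_{\Z H}(P)\backslash\widebar{\l}^{\times}\slash\l_2^{\times}$ all attain the minimal size $K_{\mathcal R}$ of \cref{prop:milnor-square} and \cref{prop:K_1-square}. The Eichler condition on $\l_2$ is in hand. For the double-coset condition, the point is that Swan proved, en route to \cite[Corollary~13.5, Theorem~13.7]{Sw83}, that for $H \in \{\wt O\} \cup \{\wt T^n \times \wt I^m : n,m \ge 0\}$ and every locally free $\Z H$-module $P$, the automorphism group $\Aut_{\Z H}(P)$ is, after reduction modulo any positive integer, large enough to realise the whole of $K_1$, so that the double cosets above collapse to size $K_{\mathcal R}$, and this is a statement about $\Z H$ in isolation. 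Since $\widebar{\l} = (\Z/n\Z)[H] \cong \prod_{p \mid n}(\Z/p^{v_p(n)}\Z)[H]$, the condition for the corner attached to $G$ follows from the Chinese remainder theorem together with the $p$-local instances of Swan's statement, one for each $p \mid n$. Feeding this through \cref{thm:eichler-quotient} and the Mayer--Vietoris analysis of genus-restricted class sets in \cref{subsection:fibre-squares} yields a bijection $\Cls^{[P]}(\Z G) \cong \Cls^{[P_1]}(\Z H) \times \Cls^{[P_2]}(\Gamma)$ for each $P \in \Cls(\Z G)$; since $\Z H$ has PC and $\Gamma$ is Eichler, hence has LFC by \cref{thm:jacobinski}, both factors on the right are singletons, so $\Z G$ has PC.

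The crux, the place where the argument does more than quote Swan, is the transfer carried out in the previous paragraph. Swan's cancellation results for $\wt O$ and $\wt T^n \times \wt I^m$ were obtained for the corner ring coming from the Eichler/non-Eichler splitting of $\Z H$ itself, i.e. the degenerate case $G = H$ of $\mathcal{R}_{G,H}$, whereas here both the corner $(\Z/n\Z)[H]$ and the order $\Gamma = \Z G/\Sigma_N$ vary with the cover $G$. So the work is: to isolate from Swan's proof the precise unit-theoretic statement about $\Z H$ that does not refer to $G$; to phrase it $p$-locally so that it governs every $(\Z/n\Z)[H]$; and to check that no remaining hypothesis of Swan's theorem is sensitive to replacing $H$ by $G$ --- for $\l_2 = \Gamma$ this is only the Eichler condition, already secured, but one must also verify that the bookkeeping (the genus classes $\Cls^{[P_1]}(\Z H)$, the constant $K_{\mathcal R}$, the splitting of $\Cls$) behaves as the criterion requires. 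I expect the $p$-local unit statement for $\wt O$ and $\wt I$ to be the most delicate point, since there one cannot simply invoke surjectivity of $\Z H^{\times} \to K_1(\Z H)$ as in \cref{thm:main-group-rings}; for $\wt T$ the two-sided-ideal construction indicated in the remark after \cref{prop:canc-1} provides an independent route and a useful cross-check.
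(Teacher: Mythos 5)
Your proposal applies Swan's Theorem~13.1 to the wrong Milnor square. Swan's theorem, as stated in \cref{thm:exc-swan}, is formulated for the square $\mathcal{R}(\l, A_1 \times \cdots \times A_r, B)$ attached to the splitting of $\Q G$ into its totally definite quaternion part $A_1 \times \cdots \times A_r$ and its Eichler part $B$; its hypotheses concern the projection $\Gamma_{\Z G}$ of $\Z G$ onto the TDQ part, and in particular require $\Gamma_{\Z G}$ to be a maximal order. You instead work with the quotient square $\mathcal{R}_{G,H}$ of \cref{lemma:G-H-Square}, whose corners are $\Z H$, $\Gamma = \Z G/\Sigma_N$ and $(\Z/n\Z)[H]$, and these are different squares. (You also identify the TDQ/Eichler splitting of $\Z H$ with ``the degenerate case $G=H$ of $\mathcal{R}_{G,H}$''; in that degenerate case $\Gamma$ and the corner ring vanish, which is not the TDQ/Eichler splitting.) As a result, the double-coset criterion you attribute to Swan --- that $\Aut_{\Z H}(P)\backslash\widebar{\l}^\times/\l_2^\times$ attains the minimal size $K_{\mathcal{R}}$ --- is not what Swan's theorem says, and it is precisely the thing you would have to prove.

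The missing observation, and the crux of the paper's proof, is \cref{lemma:exc-2}: if $H$ is an Eichler quotient of $G$, the projection $\Q G \twoheadrightarrow \Q H$ induces an isomorphism $\Gamma_{\Z G} \cong \Gamma_{\Z H}$. This is what lets one apply Swan's Theorem~13.1 directly to $\Z G$ (in the TDQ/Eichler square for $\Q G$), because $\Gamma_{\Z G}$ is then a known maximal order --- $\Gamma_{\Z\widetilde{O}}$ or $\Gamma_{\Z\widetilde{T}}^n \times \Gamma_{\Z\widetilde{I}}^m$ --- and the hypotheses (i) and (ii) of \cref{thm:exc-swan} become a finite list of checks about the reduced-norm image $\nu(\Gamma^\times)$ in $\mathcal{O}_K^\times$ and about an element of order $p+1$ in $\Gamma_0^\times$, all carried out in \cref{lemma:exc-1} and the proof of \cref{prop:canc-2} independently of $G$ (hypothesis (ii) is the only one sensitive to $G$, via whether $(\widebar{\l}_i)_{(p)} = 0$, and it is shown to hold regardless). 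Your proposal contains no analogue of this reduction, and you concede in your last paragraph that the ``$p$-local unit statement'' you would need for $\widetilde{O}$ and $\widetilde{I}$ is the delicate point that remains unsupplied --- noting correctly that one cannot invoke surjectivity of $\Z H^\times \to K_1(\Z H)$ as in \cref{thm:main-group-rings}. That is exactly the gap: the route through $\mathcal{R}_{G,H}$ would amount to proving a case of \cref{question:PC-lifting} for $\widetilde{O}$ and $\widetilde{T}^n \times \widetilde{I}^m$, which the paper deliberately sidesteps by passing instead to the TDQ/Eichler square and invoking \cref{lemma:exc-2}.
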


Let $\l$ be a $\Z$-order in a semisimple separable $\Q$-algebra $A$ which is finite-dimensional over $\Q$. Then we can write $A \cong A_1 \times \cdots \times A_r \times B$, where the $A_i$ are totally definite quaternion algebras with centres $K_i$ and $B$ satisfies the Eichler condition, i.e. $m_{\H}(\R \otimes B)=0$. Let $\Gamma_\l$ be the projection of $\l$ onto $A_1 \times \cdots \times A_r$. Let $\mathcal{R} = \mathcal{R}(\l,A_1 \times \cdots \times A_r, B)$ denote the corresponding fibre square.

Suppose that $\Gamma_\l \subseteq A_1 \times \cdots \times A_r$ is a maximal order. Then $\Gamma_\l = \G_1 \times \cdots \times \G_n$ where $\G_i \subseteq A_i$ is a maximal order for $i=1, \cdots, r$, and $\widebar{\l} = \widebar{\l}_1 \times \cdots \times \widebar{\l}_r$ where $\widebar{\l}_i$ is a quotient of $\G_i$.

Recall that there is a finite extension $K/K_i$ for which $A_i \otimes K \cong M_n(K)$ where $n = [K:K_i]$. If $\varphi: A_i \otimes K \to M_n(K)$ is an isomorphism, the \textit{reduced norm} is the map
$\nu_i : A_i \to K_i$,  $\lambda \mapsto \det(\varphi(\lambda \otimes 1))$. It can be shown that $\nu$ is independent of the choice of $K$ and $\varphi$.
For an order $\Gamma_i \subseteq A_i$, this restricts to a map $\nu_i : \Gamma_i^\times \to \mathcal{O}_{K_i}^\times$.

\begin{thm}[\hspace{-1mm}\text{\cite[Theorem 13.1]{Sw83}}] \label{thm:exc-swan}
Let $\mathcal{R}$ be as above and suppose that $\Gamma_{\l}$ is a maximal order in $A_1 \times \cdots \times A_r$. For each $i=1, \cdots, r$ and maximal $\mathcal{O}_{K_i}$-order $\Gamma_i \subseteq A_i$, suppose that:
\begin{clist}{(i)}
\item $\nu_i(\Gamma_i^\times) = (\mathcal{O}_{K_i}^\times)^+$
\item There is at most one prime $p$ such that $(\widebar{\l}_i)_{(p)} = 0$ and $p$ is ramified in $A_i$. If $p$ exists, then $(\Gamma_i)_0^\times = \Ker(\nu_i : \Gamma_i^\times \to \mathcal{O}_{K_i}^\times)$ has a subgroup of order $p+1$.
\end{clist}
Then $\l$ has LFC if and only if $\Gamma_{\l}$ has LFC.	
\end{thm}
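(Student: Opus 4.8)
The plan is to reduce locally free cancellation for $\l$ to locally free cancellation for $\Gamma_\l$ by running the fibre square machinery on the splitting $A \cong A' \times B$ with $A' = A_1 \times \cdots \times A_r$. Write $\l_1 = \Gamma_\l$ and $\l_2$ for the projections of $\l$ onto $A'$ and $B$, so that $\mathcal{R} = \mathcal{R}(\l,A',B)$ has finite corner $\widebar{\l} = \widebar{\l}_1 \times \cdots \times \widebar{\l}_r$, and $\l_2$ satisfies the Eichler condition, hence has LFC by \cref{thm:jacobinski}. By \cref{thm:coset=K_R} it suffices to prove the fibre-size equality $|\Cls_{\mathcal{R}}^{-1}(\wt P_1, \wt P_2)| = K_{\mathcal{R}}$ for every $\wt P_1 \in \Cls(\l_1)$ and $\wt P_2 \in \Cls(\l_2)$: granting this, \cref{thm:coset=K_R} gives a bijection $\Cls^{[P]}(\l) \cong \Cls^{[P_1]}(\l_1)$ for each $P \in \Cls(\l)$ (using $\Cls^{[P_2]}(\l_2) = \{P_2\}$), and since $(i_1)_\#$ is surjective on class sets and class groups by \cref{thm:LF-COR}, $\l$ has LFC if and only if $\Gamma_\l$ does. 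This reduction uses no hypothesis on the $A_i$; conditions (i) and (ii) will enter only in verifying the fibre-size equality, and this route is complementary to the one in \cref{thm:canc-general}, replacing the two-sided ideal argument there by a direct computation with reduced norms.

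Next I would unwind the fibre-size equality into a statement about $K_1$. Since $\l_2$ is Eichler, \cref{lemma:eichler} gives $\Cls_{\mathcal{R}}^{-1}(\wt P_1, \wt P_2) \cong \Cls_{\mathcal{R}}^{-1}(\wt P_1, \l_2)$, which by \cref{prop:milnor-square} is in bijection with $\Aut(\wt P_1) \backslash \widebar{\l}^\times / \l_2^\times$. Applying \cref{thm:eichler-quotient} to $j_2 : \l_2 \twoheadrightarrow \widebar{\l}$ and $P = \l_2$ identifies $\widebar{\l}^\times / \l_2^\times$ with the abelian group $K_1(\widebar{\l})/\IM K_1(\l_2)$, while the Morita-compatibility square following \cref{prop:K_1-square} identifies the image of $\Aut(\wt P_1)$ there with the image in $K_1(\widebar{\l})$ of the subgroup of $K_1(\l_1) \cong K_1(\End_{\l_1}(\wt P_1))$ generated by $\Aut(\wt P_1)$. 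Since $K_{\mathcal{R}} = |K_1(\widebar{\l})/\langle \IM K_1(\l_1), \IM K_1(\l_2)\rangle|$ and $\Aut(\wt P_1) \to K_1(\widebar{\l})$ factors through $K_1(\l_1)$, the equality reduces to the assertion that the images of $\Aut(\wt P_1)$ and of $K_1(\l_1)$ in $K_1(\widebar{\l})/\IM K_1(\l_2)$ agree, for every $\wt P_1 \in \Cls(\l_1)$.

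The remaining step is this image equality, which is where the arithmetic of totally definite quaternion algebras — and the main obstacle — resides. Factoring $\l_1 = \G_1 \times \cdots \times \G_r$, $\widebar{\l} = \prod_i \widebar{\l}_i$ and $\wt P_1 = \bigoplus_i \wt P_{1,i}$ with $\wt P_{1,i}$ rank-one locally free over $\G_i$, one treats each factor $A_i$ separately; since $\End_{\G_i}(\wt P_{1,i})$ is again a maximal $\mathcal{O}_{K_i}$-order in $A_i$, the hypotheses are needed for every maximal order $\Gamma_i$, not just for $\G_i$. As $\widebar{\l}_i$ is a quotient of a maximal order it has trivial $\SK_1$, so the image of $K_1$ of any maximal order of $A_i$ in $K_1(\widebar{\l}_i)$ is realized by that order's unit group and is governed by the reduced norm $\nu_i$; because $A_i$ is totally definite $\nu_i$ lands in $(\mathcal{O}_{K_i}^\times)^+$, and hypothesis (i), $\nu_i(\Gamma_i^\times) = (\mathcal{O}_{K_i}^\times)^+$, ensures that every norm occurring in $K_1(\widebar{\l}_i)$ is attained by a genuine unit. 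The delicate case — the one I expect to be the hard part — is the local contribution at a ramified prime $p$ with $(\widebar{\l}_i)_{(p)} = 0$: there the $i$-th factor supplies no units to $\widebar{\l}$ at all, strong approximation for $\SL_1(A_i)$ is unavailable because $A_i$ is totally definite, and one must instead produce the missing local units from $\Ker(\nu_i : \G_i^\times \to \mathcal{O}_{K_i}^\times)$, which is exactly what hypothesis (ii) furnishes by demanding a subgroup of order $p+1$ there (matching the norm-one units of the residue division ring $\F_{p^2}$ of the maximal order at $p$). Assembling the factorwise conclusions over all $i$ and all such primes then yields the image equality, and hence the theorem.
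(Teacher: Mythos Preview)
The paper does not give a proof of this theorem: it is quoted from Swan \cite[Theorem 13.1]{Sw83} and invoked as a black box in the proof of \cref{prop:canc-2}. There is therefore no proof in the paper to compare your proposal against.

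That said, your first two paragraphs are a correct reduction within the paper's fibre-square framework and closely parallel the paper's own argument for \cref{thm:canc-general}: reduce via \cref{thm:coset=K_R} and \cref{lemma:eichler} to showing that $\Aut(\wt P_1)$ and $K_1(\l_1)$ have the same image in $K_1(\widebar{\l})/\IM K_1(\l_2)$ for every $\wt P_1 \in \Cls(\l_1)$. The third paragraph, however, is where the entire content of Swan's theorem lies, and there your account is only a heuristic. Your reading of condition (ii) is muddled: you say that at a ramified prime $p$ with $(\widebar{\l}_i)_{(p)} = 0$ ``the $i$-th factor supplies no units to $\widebar{\l}$ at all'' and one must ``produce the missing local units'', but if $(\widebar{\l}_i)_{(p)} = 0$ then $\widebar{\l}_i$ has no $p$-component in which any units could be missing. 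The actual role of (ii) concerns the defect in strong approximation for $\SL_1(A_i)$ and how it interacts with the comparison of the unit groups of the various maximal orders $\End_{\G_i}(\wt P_{1,i})$; making this precise requires the detailed analysis of norm-one unit groups and left ideal classes in maximal orders of totally definite quaternion algebras that occupies much of \cite[\S\S 12--13]{Sw83}. Your outline names the right ingredients (reduced norm, failure of strong approximation, the order-$(p{+}1)$ norm-one residue group) but does not supply that analysis, so as a proof it is incomplete.
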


For a finite group $G$, we can write $\Q G \cong A_1 \times \cdots \times A_r \times B$ where the $A_i$ are totally definite quaternion algebras and $B$ satisfies the Eichler condition. As above, let $\Gamma_{\Z G}$ is the projection of $\Z G$ onto $A_1 \times \cdots \times A_r$ and let $r = r(G)$ denote the value of $r$ in the decomposition of $\Q G$ above.

The following was proven in \cite[Proposition 4.11]{Sw83} and \cite[p84]{Sw83}.

\begin{lemma} \label{lemma:exc-1}
If $G = \widetilde{T}$, $\widetilde{O}$ or $\widetilde{I}$, then $r(G)=1$ and $\Gamma_{\Z G}$ is a maximal order in $A_{\Z G}$. Furthermore:
\begin{clist}{(i)}
\item $A_{\Z \widetilde{T}}$ has centre $\Z$, is ramified only at $p=2$ and $(\Gamma_{\Z \widetilde{T}})_0^\times \cong \widetilde{T}$
\item $A_{\Z \widetilde{O}}$ has centre $\Z[\sqrt{2}]$, is finitely unramified and $(\Gamma_{\Z \widetilde{O}})_0^\times \cong \widetilde{O}$
\item $A_{\Z \widetilde{I}}$ has centre $\Z[\frac{1}{2}(1+\sqrt{5})]$, is finitely unramified and $(\Gamma_{\Z \widetilde{I}})_0^\times \cong \widetilde{I}$.
\end{clist}
\end{lemma}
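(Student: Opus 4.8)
The plan is to handle $G=\widetilde{T},\widetilde{O},\widetilde{I}$ one at a time, in each case first determining the Wedderburn decomposition of $\Q G$ by character theory and then analysing the relevant maximal order together with its group of reduced-norm-one units. None of the earlier material is needed.

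First I would locate the quaternion factor. Each $G$ is a central extension $1\to C_2\to G\to \bar G\to 1$ with $\bar G=A_4, S_4, A_5$, so the irreducible characters of $G$ split into those inflated from $\bar G$ and the faithful ``spin'' characters. The inflated part contributes only matrix algebras over $\Q$, $\Q(\zeta_3)$ or $\Q(\sqrt5)$, none of which is a totally definite quaternion algebra. Among the spin characters I would single out the faithful two-dimensional character $\chi$ (coming in a Galois pair for $\widetilde{O}$ and $\widetilde{I}$): its character field $K=\Q(\chi)$ is $\Q$, $\Q(\sqrt2)$, $\Q(\sqrt5)$ respectively, as one reads off from the value of $\chi$ on an element of order $3$, $8$, $10$, and $K$ is totally real. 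The key point is that $G$ contains a generalised quaternion subgroup $Q$ --- one may take $Q_8\le\widetilde{T}$, $Q_{16}\le\widetilde{O}$, $Q_{20}\le\widetilde{I}$ --- on which $\chi$, and likewise each Galois conjugate of $\chi$, restricts to a faithful and hence quaternionic two-dimensional representation. Therefore the local Schur index of $\chi$ is $2$ at every archimedean place of $K$, so the simple factor $A_{\Z G}$ of $\Q G$ attached to $\chi$ is a totally definite quaternion division algebra with centre $K$. A last check that no other simple factor of $\Q G$ is a totally definite quaternion algebra --- the remaining spin factors all have the form $M_n(D)$ with $n\ge 2$ over a number field or a quaternion algebra, hence satisfy the Eichler condition --- then gives $r(G)=1$ and identifies the centre of $A_{\Z G}$ as stated.

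Next I would show $\Gamma_{\Z G}$ is maximal. By definition it is the image of $\Z G$ in $A_{\Z G}$, i.e. the subring generated over $\Z$ by the images $\bar g$ of the group elements under $\chi$; as $\chi$ is faithful, $g\mapsto\bar g$ is injective, and each $\bar g$ is torsion, hence has reduced norm a totally positive root of unity, i.e. reduced norm $1$, so $G\hookrightarrow(\Gamma_{\Z G})_0^\times$. For $\widetilde{T}$, the images of two order-$4$ generators of $Q_8$ are elements $i,j$ with $i^2=j^2=-1$ and $ij=-ji$, so $A_{\Z\widetilde{T}}\cong\bigl(\tfrac{-1,-1}{\Q}\bigr)$ and $\Gamma_{\Z\widetilde{T}}$ contains the Lipschitz order $\Z\langle i,j\rangle$; the image of an order-$3$ element has reduced trace $-1$ and so is not a Lipschitz quaternion, whence $\Gamma_{\Z\widetilde{T}}$ properly contains the Lipschitz order and therefore equals the Hurwitz order $\Z\langle i,j,\tfrac{1}{2}(1+i+j+k)\rangle$, which is maximal with reduced discriminant $(2)$; in particular $\bigl(\tfrac{-1,-1}{\Q}\bigr)$ is ramified exactly at $2$ and $\infty$. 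For $\widetilde{O}$ and $\widetilde{I}$ I would instead pick four of the $\bar g$ forming an $\mathcal{O}_K$-basis and compute the reduced discriminant of $\Gamma_{\Z G}$ as the Gram determinant of the reduced trace form, verifying that it is the unit ideal; an order of reduced discriminant $(1)$ is everywhere locally maximal, hence maximal, and this forces $A_{\Z G}$ to be unramified at all finite primes. (Equivalently, one recognises these orders as the $\sqrt2$-analogue of the Hurwitz order and as the icosian ring, and quotes their known discriminants.)

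Finally, for $(\Gamma_{\Z G})_0^\times\cong G$: the reduced norm form on $\Gamma_{\Z G}$ is totally positive definite over the totally real field $K$, so $(\Gamma_{\Z G})_0^\times$ is finite and a direct enumeration yields $24$, $48$, $120$ elements respectively; comparing with $|G|$ shows the inclusion $G\hookrightarrow(\Gamma_{\Z G})_0^\times$ is an equality, and the isomorphism type is read off from the conjugation action on the trace-zero quaternions, which realises the rotation group of the tetrahedron, octahedron and icosahedron (alternatively, one cites the classical classification of the finite subgroups of norm-one units of totally definite quaternion orders). I expect the middle step to be the main obstacle: the character theory of the first step and the unit count of the last are routine, but controlling the finite ramification of $A_{\Z\widetilde{O}}$ and $A_{\Z\widetilde{I}}$ --- equivalently, checking that the order spanned by the group elements is already everywhere locally maximal --- seems to require an honest, if bounded, computation with the reduced trace form, unless one imports wholesale the structure of these classical maximal orders.
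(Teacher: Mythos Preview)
Your proposal is correct and substantially more detailed than what the paper does: the paper simply cites Swan \cite[Proposition~4.11 and p.~84]{Sw83} for this lemma and gives no independent argument. So you are not reproducing the paper's proof; you are supplying one where the paper defers to the literature.

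Your outline is sound. A few remarks on precision. In the first step, your justification that no other simple factor is a totally definite quaternion algebra is slightly loose: for $\widetilde{T}$ the remaining two $2$-dimensional spin characters are complex (Frobenius--Schur indicator $0$) with character field $\Q(\zeta_3)$, so the corresponding factor has non-totally-real centre and is automatically Eichler --- not because it is $M_n(D)$ with $n\ge 2$. For $\widetilde{O}$ and $\widetilde{I}$ the higher-degree spin characters do give $M_n(D)$ with $n\ge 2$, as you say. Either way $r(G)=1$ follows. In the second step, your Hurwitz-order argument for $\widetilde{T}$ is clean; for $\widetilde{O}$ and $\widetilde{I}$ the reduced-discriminant computation you describe is exactly the standard route and is what Swan carries out (or one invokes the classical descriptions of these orders, as you note). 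The third step is routine once maximality is known.

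What the paper's route buys is brevity; what yours buys is self-containment and an explicit identification of the orders as the Hurwitz, ``$\sqrt{2}$-Hurwitz'', and icosian rings, which makes the ramification and unit statements transparent rather than black-boxed.
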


By \cite[Lemmas 13.8, 13.9]{Sw83}, we have that $\Gamma_{\Z[\widetilde{T}^n \times \widetilde{I}^m]} \cong \Gamma_{\Z \widetilde{T}}^n \times \Gamma_{\Z \widetilde{I}}^m$ for $n, m \ge 0$. The following is the key to being able to extending from proving that $\Z H$ has PC where $H = \wt O$, $\wt T^n \times \wt I^m$ for some $n,m \ge 0$, to proving that $\Z G$ has PC where $G$ has an Eichler quotient $H$.

\begin{lemma} \label{lemma:exc-2}
Let $G$ have an Eichler quotient $H$. Then the projection map $\Q G \to \Q H$ induces an isomorphism $\Gamma_{\Z G} \cong \Gamma_{\Z H}$	.
\end{lemma}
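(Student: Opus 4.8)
The plan is to analyse the ring surjection $\pi\colon \Q G \twoheadrightarrow \Q H$ induced by $G \twoheadrightarrow H$ at the level of Wedderburn components, and to use the relative Eichler condition to show that it carries the totally definite quaternionic part of $\Q G$ isomorphically onto that of $\Q H$. First I would record the elementary fact that, since $\Q G$ and $\Q H$ are semisimple, $\ker(\pi)$ is the product of a sub-collection of the simple Wedderburn factors of $\Q G$. Consequently each central primitive idempotent $\varepsilon$ of $\Q G$ is sent by $\pi$ either to $0$ or to a central primitive idempotent of $\Q H$, and when $\pi(\varepsilon)\ne 0$ the corresponding simple factor is mapped isomorphically onto a factor of $\Q H$. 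In particular the totally definite quaternion Wedderburn factors of $\Q H$ form a sub-collection of those of $\Q G$.

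Next I would bring in the hypothesis that $H$ is an Eichler quotient of $G$, i.e. $m_{\H}(G)=m_{\H}(H)$. Write $A_1,\dots,A_r$ for the totally definite quaternion factors of $\Q G$, with central primitive idempotents $\varepsilon_1,\dots,\varepsilon_r$ and centres the totally real fields $K_1,\dots,K_r$, so that $\R\otimes_\Q A_i\cong\H^{[K_i:\Q]}$ and $m_{\H}(G)=\sum_{i=1}^r[K_i:\Q]$. Since the totally definite quaternion factors of $\Q H$ are exactly those $A_i$ with $\pi(\varepsilon_i)\ne 0$, we get $m_{\H}(H)=\sum_{i\colon\pi(\varepsilon_i)\ne 0}[K_i:\Q]\le m_{\H}(G)$, with strict inequality unless $\pi(\varepsilon_i)\ne 0$ for every $i$. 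Hence $m_{\H}(G)=m_{\H}(H)$ forces none of the $\varepsilon_i$ into $\ker(\pi)$. Writing $e_G=\sum_{i=1}^r\varepsilon_i\in\Q G$ and $e_H=\sum_{i=1}^r\pi(\varepsilon_i)\in\Q H$ for the central idempotents cutting out the totally definite quaternionic parts $A_1\times\cdots\times A_r$ of $\Q G$ and of $\Q H$ (so $r(H)=r$), it follows that $\pi(e_G)=e_H$ and that $\pi$ restricts to a ring isomorphism $e_G\,\Q G\cong e_H\,\Q H$.

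Finally I would chase the square with rows the surjection $\Z G\twoheadrightarrow\Z H$ and columns the projections $\Z G\to e_G\,\Q G$ and $\Z H\to e_H\,\Q H$, whose images are $\Gamma_{\Z G}$ and $\Gamma_{\Z H}$ by definition, the bottom edge being the isomorphism $\pi|_{e_G\Q G}$; this square commutes because $\pi(e_G x)=e_H\,\pi(x)$ for $x\in\Z G$. Since $\Z G\to\Z H$ is onto, the image of $\Z G$ in $e_H\,\Q H$ computed along the two paths agrees, giving $\pi(\Gamma_{\Z G})=\Gamma_{\Z H}$; and $\pi|_{e_G\Q G}$ is injective, so this yields the desired isomorphism $\Gamma_{\Z G}\cong\Gamma_{\Z H}$.

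There is no serious obstacle here; the only point needing a little care is the bookkeeping identifying $m_{\H}(G)$ with $\sum_i[K_i:\Q]$, which is what makes precise that the relative Eichler condition is exactly the condition that every totally definite quaternionic Wedderburn component of $\Q G$ survives in $\Q H$. If one prefers, one can argue instead directly with the number $r=r(G)$ of such components, using that the simple factors of $\Q H$ form a sub-multiset of those of $\Q G$, so that $r(H)\le r(G)$ with equality iff all survive.
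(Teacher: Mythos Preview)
Your proof is correct and follows essentially the same line as the paper's: both use that the surjection $\Q G\twoheadrightarrow\Q H$ identifies the simple factors of $\Q H$ with a subcollection of those of $\Q G$, invoke the relative Eichler condition $m_{\H}(G)=m_{\H}(H)$ to conclude that none of the totally definite quaternion factors lie in the kernel, and then use the commutativity of the obvious square together with surjectivity of $\Z G\to\Z H$ to identify the projections $\Gamma_{\Z G}$ and $\Gamma_{\Z H}$. Your version is slightly more explicit (working with the central primitive idempotents $\varepsilon_i$ and the formula $m_{\H}(G)=\sum_i[K_i:\Q]$), whereas the paper simply writes $\Q G\cong\Q H\times B'$ with $B'$ Eichler and reads off $A_G\cong A_H$, but the content is the same.
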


\begin{proof}
Let $\Q G \cong A_G \times B_G$ and $\Q H \cong A_H \times B_H$ where the $A_G$, $A_H$ are products of totally definite quaternion algebras and $B_G$, $B_H$ satisfy the Eichler condition. By definition, $\G_{\Z G}$ and $\G_{\Z H}$ are the projections of $\Z G$ and $\Z H$ on to $A_G$ and $A_H$ respectively.

Since $H$ is an Eichler quotient of $G$, we have $\Q G \cong \Q H \times B'$ where $B'$ satisfies the Eichler condition. Hence $A_G \cong A_H$ and $B_G \cong B_H \times B'$, and these isomorphisms commute with the inclusion maps into $\Q G$ and $\Q H$. Consider the projection maps $\rho_G : \Q G \twoheadrightarrow A_G$ and $\rho_H : \Q G \twoheadrightarrow \Q H \twoheadrightarrow A_H$. Then $\rho_G(\Z G) \cong \rho_H(\Z G)$ since $A_G \cong A_H$ are isomorphic as subrings of $\Q G$. Since $\rho_G(\Z G) \cong \G_{\Z G}$ and $\rho_H(\Z G) \cong \G_{\Z H}$, the result follows.
\end{proof}

\begin{proof}[Proof of \cref{prop:canc-2}]
By Lemma \ref{lemma:exc-2} and the discussion above, $\Gamma_{\Z G}$ is of the form $\Gamma_{\Z \widetilde{O}}$ or $\Gamma_{\Z \widetilde{T}}^n \times \Gamma_{\Z \widetilde{I}}^m$ for some $n, m \ge 0$. In particular, $\Gamma_{\Z G}$ is a maximal order whose components are maximal orders in $A_{\Z \widetilde{T}}$, $A_{\Z \widetilde{O}}$ or $A_{\Z \widetilde{I}}$.
If $\Gamma = \Gamma_{\Z \widetilde{T}}$, $\Gamma_{\Z \widetilde{O}}$ or $\Gamma_{\Z \widetilde{I}}$, then \cite[p84]{Sw83} implies that $\Gamma$ has projective cancellation and $|C(\Gamma)|=1$. Hence $\Gamma_{\Z G}$ has projective cancellation also, since it is the product of rings with projective cancellation.
In order to show that $\Z G$ has projective cancellation, it suffices to show that the conditions (i), (ii) of Theorem \ref{thm:exc-swan} hold for maximal orders in $A_{\Z \widetilde{T}}$, $A_{\Z \widetilde{O}}$ or $A_{\Z \widetilde{I}}$.

Firstly note that, if $\Gamma = \Gamma_{\Z \widetilde{T}}$, $\Gamma_{\Z \widetilde{O}}$ or $\Gamma_{\Z \widetilde{I}}$ and $A = A_{\Z \widetilde{T}}$, $A_{\Z \widetilde{O}}$ or $A_{\Z \widetilde{I}}$ respectively, then $|C(\Gamma)|=1$ implies that every maximal order in $A$ is conjugate to $\Gamma$. In particular, it suffices to check (i), (ii) for $\Gamma$ only.
To show (i) holds, note that $((\mathcal{O}_K)^\times)^2 \subseteq \nu(\Gamma^\times) \subseteq ((\mathcal{O}_K)^\times)^+$ where $K$ is the centre of $A$. By Lemma \ref{lemma:exc-1}, we have that 
\[ K \in \{\Q, \Q(\sqrt{2}) = \Q(\zeta_{8}+\zeta_{8}^{-1}), \Q(\sqrt{5}) = \Q(\zeta_{10}+\zeta_{10}^{-1})\}.\] 
In each case, we have $C(\mathcal{O}_K)=1$ and so $(\mathcal{O}_K^\times)^+ = (\mathcal{O}_K^\times)^2$ by, for example, \cite[Corollary B24]{Sw83}. Hence $\nu(\Gamma^\times) = ((\mathcal{O}_K)^\times)^+$.
To show (ii) holds, there is nothing to check in the case $\Gamma = \Gamma_{\Z \widetilde{O}}$ or $\Gamma_{\Z \widetilde{I}}$ since $A$ is finitely unramified by Lemma \ref{lemma:exc-1}. If $\Gamma = \Gamma_{\Z \widetilde{T}}$, then $A$ is ramified only at $p=2$ and $(\Gamma_{\Z \widetilde{T}})_0^\times \cong \widetilde{T}$ contains an element of order $p+1=3$. Hence this condition is satified regardless of whether or not $(\widebar{\l}_i)_{(p)} = 0$ for $R = \Z G$.
\end{proof}

\begin{remark}
This argument can also be used to prove \cref{thm:main-cancellation} in the case $H = Q_{12}$ (see also the remark in the proof of \cite[Corollary 13.5]{Sw83}).
\end{remark}

By combining Theorems \ref{prop:canc-1} and \ref{prop:canc-2}, we have now completed the proof of \cref{thm:main-cancellation}.

\subsection{Projective cancellation for $\wt T \times C_2$} 

We will now prove the following result from the introduction. The proof will use that $\wt T \times C_2$ has SFC by Bley-Hofmann-Johnston \cite{BHJ24}. 

\begin{reptheorem}{thm:TxC2}
	$\wt T \times C_2$ has $\PC$.
\end{reptheorem}

\begin{proof}
Consider the Milnor square
\[
\mathcal{R} =
\begin{tikzcd}
	\Z[\wt T \times C_2] \ar[r] \ar[d] & \ar[d] \Z[\wt T] \\
	\Z[\wt T] \ar[r] & \F_2[\wt T]
\end{tikzcd}
\]
which is induced from \cref{lemma:G-H-Square} (i.e. it has the form $\mathcal{R}_{\wt T \times C_2,\wt T}$) and the ring isomorphism 
\[ \Z[\wt T \times C_2]/\Sigma_{C_2} \cong (\Z[\wt T])[C_2]/\Sigma_{C_2} \cong \Z[\wt T].\]

The pullback square above induces a map
\[ \Cls_{\mathcal{R}} : \Cls(\Z[\wt{T} \times C_2]) \to \Cls(\Z[\wt{T}]) \times \Cls(\Z[\wt{T}])\] 
with fibre $\Cls_{\mathcal{R}}^{-1}(P_1,P_2)$ over each $(P_1,P_2) \in \Cls(\Z[\wt{T}]) \times \Cls(\Z[\wt{T}])$.
Let 
\[ K_{\mathcal{R}} = \left|\frac{K_1(\F_2[\wt T])}{K_1(\Z[\wt T]) \times K_1(\Z[\wt T])}\right|\] 
be the constant associated to $\mathcal{R}$.
Recall from \cref{ss:LF-modules} that $\wt T \times C_2$ has PC if and only if $|\Cls_{\mathcal{R}}^{-1}(P_1,P_2)| = K_{\mathcal{R}}$ for all $P_1$, $P_2$, and $\wt T \times C_2$ has SFC if and only if $|\Cls_{\mathcal{R}}^{-1}(\Z[\wt T],\Z[\wt T])| = K_{\mathcal{R}}$.
It follows from \cite[Theorems I, III \& VI]{Sw83} that $\Cls(\Z[\wt{T}]) = \{\Z[\wt{T}], (N,3)\}$, i.e. each element is represented by an ideal generated by central elements.
By \cref{lemma:lifting-two-sided-ideals}, this implies that 
\[ \Cls_{\mathcal{R}}^{-1}(P_1,P_2) \cong \Cls_{\mathcal{R}}^{-1}(\Z[\wt T],\Z[\wt T])\]
for all $P_1$, $P_2$, and so $|\Cls_{\mathcal{R}}^{-1}(P_1,P_2)| = |\Cls_{\mathcal{R}}^{-1}(\Z[\wt T],\Z[\wt T])|$ for all $P_1$, $P_2$.
It follows that $\wt T \times C_2$ has PC if and only if it has SFC. 
Since $\wt T \times C_2$ has SFC by \cite{BHJ24}, it must have PC.
\end{proof}

We will also establish the following:

\begin{prop} \label{prop:TxC2-lifting}
$\wt T \times C_2$ has $\SFC$ lifting.		
\end{prop}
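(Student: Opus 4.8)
The plan is to reduce the statement to the single assertion that the map $\Z[\wt T \times C_2]^\times \to K_1(\Z[\wt T \times C_2])$ is surjective, and then to establish that assertion separately. For the reduction, I would apply \cref{thm:canc-general} to an arbitrary Eichler cover $G$ of $H := \wt T \times C_2$, using the Milnor square $\mathcal{R}_{G,H}$ of \cref{lemma:G-H-Square} exactly as in the deduction of \cref{thm:main-group-rings}. Take $P = \Z G$, so that $P_1 = (i_1)_\#(\Z G) = \Z H$. Condition (i) of \cref{thm:canc-general} is automatic, since $G$ being an Eichler cover of $H$ means precisely that $\l = \Z G/\Sigma_N$ satisfies the Eichler condition (\cref{lemma:G-H-Square}). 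Condition (iii) is also automatic: by Bley--Hofmann--Johnston \cite{BHJ24} (see \cref{thm:BHJ-part-1}) the ring $\Z H$ has $\SFC$, so $\Cls^{[P_1]}(\Z H) = \SF(\Z[\wt T \times C_2]) = \{\Z[\wt T \times C_2]\}$, and $\Z[\wt T \times C_2]$ is the two-sided ideal generated by the central element $1$. Hence, granting condition (ii) --- which is exactly the surjectivity of $\Z[\wt T \times C_2]^\times \to K_1(\Z[\wt T \times C_2])$ --- \cref{thm:canc-general} gives that $(i_1)_\#$ restricts to a bijection $\Cls^{[\Z G]}(\Z G) \to \SF(\Z[\wt T \times C_2]) = \{\ast\}$, so $\SF(\Z G)$ is a single point and $\Z G$ has $\SFC$. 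As $G$ was an arbitrary Eichler cover of $\wt T \times C_2$, this is exactly $\SFC$ lifting. (This is the $\SFC$ analogue of \cref{thm:main-group-rings}, where condition (ii) of that theorem is not needed because $\SF$ of $\Z[\wt T \times C_2]$ is already trivial; compare \cite[Theorem A]{Ni18}.)

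It remains to prove that $\Z[\wt T \times C_2]^\times \to K_1(\Z[\wt T \times C_2])$ is surjective. My approach would be to exploit the Milnor square $\Z[\wt T \times C_2] = \Z[\wt T] \times_{\F_2[\wt T]} \Z[\wt T]$ already used in the proof of \cref{thm:TxC2}, together with the facts, established in the $\wt T$ case of \cref{thm:main-cancellation} (cf. \cite{MOV83, Sw83}), that $\Z[\wt T]^\times \to K_1(\Z[\wt T])$ is surjective and $\SK_1(\Z[\wt T]) = 0$. One has $\Z[\wt T \times C_2]^\times = \Z[\wt T]^\times \times_{\F_2[\wt T]^\times} \Z[\wt T]^\times$, and, by \cref{lemma:exc-1}, the only totally definite quaternion Wedderburn component of $\Q[\wt T]$ is $A_{\Z\wt T}$, the remaining components being matrix algebras over $\Q$ or $\Q(\zeta_3)$ whose maximal orders have special linear groups generated by elementary matrices. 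Feeding this into the Mayer--Vietoris sequence of \cref{prop:MV-special} for the square above --- in which $K_1(\Z[\wt T \times C_2])$ sits in an extension of $\Ker\!\big(K_1(\Z[\wt T])^2 \to K_1(\F_2[\wt T])\big)$ by a quotient of $K_2(\F_2[\wt T])$ --- one should be able to represent every class of $K_1(\Z[\wt T \times C_2])$ by a compatible pair of units, i.e. by an element of $\Z[\wt T \times C_2]^\times$. Alternatively, the surjectivity can be read off directly from the criteria of \cite{MOV83} applied to $G = \wt T \times C_2$.

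The main obstacle is this surjectivity, and specifically the contribution to $K_1(\Z[\wt T \times C_2])$ of the connecting map out of $K_2(\F_2[\wt T])$ (equivalently, of $\SK_1(\Z[\wt T \times C_2])$): one must check that the part of $K_1$ not detected on $K_1(\Z[\wt T])^2$ is nonetheless hit by a unit. This is where either the explicit structure of $\Z[\wt T]$ and $\F_2[\wt T]$, or a citation to the relevant $\SK_1$/unit computations for $\wt T \times C_2$, has to be brought in. Once the surjectivity is in hand, the remainder of the argument is a routine application of the machinery of \cref{thm:canc-general}, parallel to the deduction of \cref{thm:main-group-rings} and to the proof of \cref{thm:TxC2}.
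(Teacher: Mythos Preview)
Your reduction step is correct and is exactly what the paper does: once one knows that $\Z[\wt T \times C_2]^\times \to K_1(\Z[\wt T \times C_2])$ is surjective, \cref{thm:canc-general} (equivalently \cite[Theorem~A]{Ni18}) applied to $\mathcal{R}_{G,H}$ with $P=\Z G$ gives $\SFC$ lifting, since condition (iii) is vacuous when $\SF(\Z H)=\{\Z H\}$.

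Where you diverge from the paper is in the proof of the surjectivity itself, and this is also where your argument is incomplete. Your Mayer--Vietoris approach via the square $\Z[\wt T]\times_{\F_2[\wt T]}\Z[\wt T]$ runs into exactly the obstacle you flag: controlling the contribution of $K_2(\F_2[\wt T])$ (equivalently, of $\SK_1(\Z[\wt T\times C_2])$), and you do not actually resolve it --- ``one should be able to'' is not a proof, and it is not clear that \cite{MOV83} treats $\wt T\times C_2$ directly. The paper sidesteps this entirely by using hyperelementary induction \cite[Theorem~3]{JM80}: the map $\Z G^\times\to K_1(\Z G)$ is surjective if and only if $\Z H^\times\to K_1(\Z H)$ is surjective for every hyperelementary subgroup $H\le G$. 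For $G=\wt T\times C_2$ the hyperelementary subgroups are
\[
C_1,\,C_2,\,C_3,\,C_4,\,C_6,\,C_2^2,\,C_2\times C_4,\,C_2\times C_6,\,Q_8,\,Q_8\times C_2,
\]
and each is dispatched individually: the abelian ones have $\SK_1=0$ by \cite[Theorem~14.2]{Ol88}, $Q_8$ is handled by \cite{MOV83}, and for $Q_8\times C_2$ one checks $\Wh(Q_8\times C_2)=0$ via Wall's formula $\Wh\cong\SK_1\oplus\Z^{r_\R-r_\Q}$ together with $\SK_1(\Z[Q_8\times C_2])=0$ from \cite[p.~17]{Ol88}. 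This is both cleaner and more robust than chasing the $K_2$ term through Mayer--Vietoris.
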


Before turning to the proof, we will first establish the following lemma.

\begin{lemma} \label{lemma:TxC2}
The map $\Z[\wt T \times C_2]^\times \to K_1(\Z[\wt T \times C_2])$ is surjective.
\end{lemma}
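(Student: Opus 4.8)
The plan is to reduce the claim to known computations of $K_1$ of the integral group ring of the binary tetrahedral group $\wt T$ together with $K_1$ of the factors appearing in a Wedderburn-type decomposition of $\Z[\wt T \times C_2]$. The natural starting point is the Milnor square $\mathcal{R} = \mathcal{R}_{\wt T \times C_2, \wt T}$ from \cref{lemma:G-H-Square}, namely
\[
\begin{tikzcd}
	\Z[\wt T \times C_2] \ar[r] \ar[d] & \Z[\wt T] \ar[d] \\
	\Z[\wt T] \ar[r] & \F_2[\wt T],
\end{tikzcd}
\]
using the identification $\Z[\wt T \times C_2]/\Sigma_{C_2} \cong \Z[\wt T]$. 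By the Mayer--Vietoris sequence for this square (\cref{prop:MV-special}, or rather its $K_1$-portion, which extends one term to the left), surjectivity of $\Z[\wt T \times C_2]^\times \to K_1(\Z[\wt T \times C_2])$ would follow once we know that (a) $\Z[\wt T]^\times \to K_1(\Z[\wt T])$ is surjective, (b) the relevant relative term $K_1(\F_2[\wt T])$ is hit appropriately, and (c) units of $\F_2[\wt T]$ lift to units on both sides. Concretely, the cleanest route is: first invoke the surjectivity $\Z[\wt T]^\times \to K_1(\Z[\wt T])$, which follows from the results of \cite{MOV83} (this is exactly the kind of statement verified for $Q_8, Q_{12}, Q_{16}, Q_{20}$ and remarked to also hold for $\wt T$ in the remark after \cref{subsection:Q4n=two-sided}); then show every element of $K_1(\Z[\wt T \times C_2])$ can be written, modulo the image of $\Z[\wt T \times C_2]^\times$, as coming from the relative term via the boundary-type maps; and finally handle that relative contribution by lifting units of $\F_2[\wt T]$ to units of $\Z[\wt T]$ (possible because $\Z[\wt T]^\times$ surjects onto $K_1(\Z[\wt T])$ which surjects onto $K_1(\F_2[\wt T])$, $\F_2[\wt T]$ being finite semilocal) and observing that a unit of $\Z[\wt T]$ together with $1$ on the other factor of the pullback lifts to a unit of $\Z[\wt T \times C_2]$.

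First I would set up the $K_1$-Mayer--Vietoris sequence for $\mathcal{R}$:
\[
K_1(\Z[\wt T \times C_2]) \to K_1(\Z[\wt T]) \times K_1(\Z[\wt T]) \xrightarrow{\ \phi\ } K_1(\F_2[\wt T])
\]
is exact, and moreover (since both maps $\Z[\wt T] \to \F_2[\wt T]$ are surjective ring maps with $\F_2[\wt T]$ finite) the obstruction to surjectivity of $\Z[\wt T \times C_2]^\times \to K_1(\Z[\wt T \times C_2])$ lives in a quotient controlled by $K_1(\F_2[\wt T])$ modulo the images of the two copies of $K_1(\Z[\wt T])$. Next, using the surjectivity of $\Z[\wt T]^\times \to K_1(\Z[\wt T])$ from \cite{MOV83} and the surjectivity $K_1(\Z[\wt T]) \twoheadrightarrow K_1(\F_2[\wt T])$ (standard: reduction maps on $K_1$ to finite quotients of $\Z G$ are onto, cf.\ the exactness in \cref{prop:MV-special} with the rightmost $C$-terms, or \cite[Ch.\ VI]{Sw83}), one sees that units of $\Z[\wt T \times C_2]$ already surject onto $K_1$ provided a generating set of $K_1(\Z[\wt T \times C_2])$ is exhibited by elements of the form: units pulled back from the upper-right $\Z[\wt T]$ factor, units pulled back from the lower-left $\Z[\wt T]$ factor, and ``glued'' units $(u, v) \in \Z[\wt T]^\times \times \Z[\wt T]^\times$ agreeing in $\F_2[\wt T]^\times$, each of which lifts to $\Z[\wt T \times C_2]^\times$ by the standard Milnor-patching for units.

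I expect the main obstacle to be the bookkeeping in the Mayer--Vietoris step: making precise that surjectivity of $\Z[\wt T \times C_2]^\times \to K_1(\Z[\wt T \times C_2])$ reduces exactly to the two facts ``$\Z[\wt T]^\times \twoheadrightarrow K_1(\Z[\wt T])$'' and ``the reduction $K_1(\Z[\wt T]) \to K_1(\F_2[\wt T])$ together with unit-patching covers the relative term.'' One has to be slightly careful because the relevant commutative diagram relates $\Z[\wt T \times C_2]^\times$ not directly to the Milnor patching of units but to $K_1$, and a priori a class in $K_1(\Z[\wt T \times C_2])$ need not be represented by a $1\times 1$ invertible matrix; however, the Milnor exact sequence for $K_1$ together with stability results of Bass for orders (every element of $K_1$ of an order is represented by a $2\times 2$, in fact by $GL_1$ after patching) resolves this. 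An alternative, possibly cleaner, route — which I would fall back on if the Milnor-square argument gets unwieldy — is to use the decomposition of $\Q[\wt T \times C_2] \cong \Q[\wt T] \times \Q[\wt T]$ (since $\Q[C_2] \cong \Q \times \Q$) and the corresponding fibre square $\mathcal{R}(\Z[\wt T \times C_2], \Q[\wt T], \Q[\wt T])$, for which $\widebar{\l} = \F_2[\wt T]$, and then apply exactly the same reduction; here \cref{thm:eichler-quotient} is not available because $\wt T$ fails the Eichler condition, so one genuinely needs the input $\Z[\wt T]^\times \twoheadrightarrow K_1(\Z[\wt T])$ from \cite{MOV83}. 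Either way, the content is concentrated in that one reduction together with the $\wt T$-specific $K_1$ computation, and the rest is formal diagram-chasing in the Mayer--Vietoris sequence.
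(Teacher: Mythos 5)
The paper's proof takes a completely different route: it invokes hyperelementary induction (Jacobinski--Milgram, cited as \cite{JM80}), reducing the surjectivity of $\Z[\wt T \times C_2]^\times \to K_1(\Z[\wt T \times C_2])$ to the same statement for the hyperelementary subgroups of $\wt T \times C_2$, namely $C_1, C_2, C_3, C_4, C_6, C_2^2, C_2 \times C_4, C_2 \times C_6, Q_8, Q_8 \times C_2$. The abelian ones are dispatched using Oliver's vanishing of $SK_1$, $Q_8$ by \cite{MOV83}, and $Q_8 \times C_2$ by a direct computation that $\Wh(Q_8 \times C_2)=0$ via Wall's formula. Your proposal instead attacks the group directly through the Milnor square $\mathcal{R}_{\wt T \times C_2, \wt T}$ and a patching argument, which is a genuinely different strategy.

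Unfortunately your argument has a real gap. The step "$K_1(\Z[\wt T]) \twoheadrightarrow K_1(\F_2[\wt T])$ (standard: reduction maps on $K_1$ to finite quotients of $\Z G$ are onto)" is false, and the very proposition you cite as support, \cref{prop:MV-special}, shows why: the Mayer--Vietoris sequence reads
\[
K_1(\l_1) \times K_1(\l_2) \xrightarrow{\ \phi\ } K_1(\widebar{\l}) \xrightarrow{\ \partial\ } C(\l) \to C(\l_1)\times C(\l_2) \to 0,
\]
so the cokernel of $\phi$ (and \emph{a fortiori} of the restriction to a single factor $K_1(\Z[\wt T])$) is precisely $\ker\bigl(C(\Z[\wt T\times C_2]) \to C(\Z[\wt T])^2\bigr)$, which there is no reason to expect vanishes; even for $G=C_1$, $\l = \Z$, $\widebar{\l} = \Z/p$ the reduction $K_1(\Z)=\{\pm 1\} \to (\Z/p)^\times$ is not onto. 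Separately, your Milnor-square argument implicitly requires control of $\ker\bigl(K_1(\Z[\wt T\times C_2]) \to K_1(\Z[\wt T])^2\bigr)$, but the Milnor exact sequence does not extend to the left past $K_1(\l)$ without additional hypotheses, so that kernel is unconstrained; asserting that it is represented by $1\times 1$ matrices "after patching" is circular, as it is exactly the statement you are trying to prove. As you correctly observe, $\wt T$ fails the Eichler condition, so \cref{thm:eichler-quotient} is unavailable and cannot be used to close either hole. The hyperelementary-induction route the paper uses sidesteps both problems by never working with the group $\wt T \times C_2$ itself.
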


A finite group $G$ is \textit{hyperelementary} if, for some prime $p$, it is of the form $A \rtimes B$ where $B$ is a $p$-group and $A$ is a cyclic group of order prime to $p$. 
The \textit{Whitehead group} is $\Wh(G) := K_1(\Z G)/\pm G$. By Wall \cite{Wa74}, we have $\Wh(G) \cong SK_1(\Z G) \oplus \Z^{r_\R(G)-r_\Q(G)}$ where, for $\F$ a field, $r_\F(G)$ denotes the number of indecomposable factors in the Wedderburn decomposition for $\F G$ (see also \cite[p6]{Ol88}).

\begin{proof}
By hyperelementary induction \cite[Theorem 3]{JM80}, $\Z[\wt T \times C_2]^\times \to K_1(\Z[\wt T \times C_2])$ is surjective if and only if $\Z H^\times \to K_1(\Z H)$ is surjective for all hyperelementary subgroups $H \le \wt T \times C_2$. By the tables in GroupNames \cite{groupnames}, the hyperelementary subgroups of $\wt T \times C_2$ are:
\[ \mathcal{H} = \{C_1,\,C_2,\,C_3,\,C_4,\,C_6,\,C_2^2,\,C_2 \times C_4, \,C_2 \times C_6,\, Q_8, \,Q_8 \times C_2\}. \]

The abelian groups $H \in \mathcal{H}$ each have Sylow $p$-subgroups of the form $C_{p^n}$ or $C_p \times C_{p^n}$ for some $n \ge 0$ and so $SK_1(\Z H)=0$ by \cite[Theorem 14.2]{Ol88}. Since $H$ is abelian, this implies automatically that $\Z H^\times \to K_1(\Z H)$ is surjective (see, for example, \cite[Section 1]{MOV83}).
For $H=Q_8$, the map $\Z Q_8^\times \to K_1(\Z Q_8)$ is surjective by \cite[Theorems 7.15-7.18]{MOV83}.

For $H=Q_8 \times C_2$, we claim that $\Wh(Q_8 \times C_2)=0$. We have
\[ \R[Q_8 \times C_2] \cong \R Q_8^2 \cong \R^8 \oplus \H_{\R}^2, \quad \Q[Q_8 \times C_2] \cong \Q Q_8^2 \cong \Q^8 \oplus \H_{\Q}^2\]
and so $r_{\R}(Q_8 \times C_2)=r_{\Q}(Q_8 \times C_2)=10$. 
It follows from \cite[p17]{Ol88} that $SK_1(\Z[Q_8 \times C_2])=0$.
Hence $\Wh(Q_8 \times C_2)=0$ by Wall's formula, and so $\Z[Q_8 \times C_2]^\times \to K_1(\Z[Q_8 \times C_2])$ is surjective.
\end{proof}

\begin{proof}
Note that \cref{lemma:TxC2} implies that condition (i) of \cref{thm:main-group-rings} holds. Hence $\wt T \times C_2$ has SFC lifting by \cite[Theorem A]{Ni19} (or by examining the  proof of \cref{thm:main-group-rings}).
\end{proof}

The following question remains open.

\begin{question} \label{question:PC-lifting-TxC2}
Does $\wt T \times C_2$ have $\PC$ lifting?	
\end{question}


\part{Group theoretic approach to cancellation properties}
\label{p:Group-theory}


In this part, we will develop the group theory which, combined with the results in \cref{p:PC-lifting}, will lead to our main classification results. This was discussed in \cref{ss:group-theory}. In \cref{s:MNECs}, we introduce Eichler simple groups and minimal non-Eichler covers. In \cref{s:MNEC-algorithms}, we give algorithms which will facilitate their computation.

\section{Eichler simple groups and minimal non-Eichler covers} \label{s:MNECs}

In this section, we will discuss properties of Eichler simple groups and minimal non-Eichler covers.
In \cref{ss:relative-Eichler}, we will discuss the Eichler condition.
In \cref{ss:MNEC-groups}, we introduce minimal non-Eichler covers and establish their basic properties.
In \cref{ss:MNEC-graph}, we introduce Eichler simple groups and establish the Eichler Pushout Lemma (\cref{lemma:EPL}) and the Fundamental Lemma (which is \cref{lemma:fundamental-lemma} in the introduction).
As usual, $G$ and $H$ will always denote finite groups.

\subsection{The Eichler condition}
\label{ss:relative-Eichler}

Recall from the introduction that a pair of finite groups $(G,H)$ satisfies the \textit{relative Eichler condition} if $G$ has a quotient $H$ with $m_{\H}(G)=m_{\H}(H)$. This property is independent of the choice of surjection $f : G \twoheadrightarrow H$ since it is characterised by $m_{\H}(G)$ and $m_{\H}(H)$, which depend only on $G$ and $H$.
If $G$ has a quotient $H$, then $\R H$ is a summand of $\R G$ and so $m_{\H}(G) \ge m_{\H}(H)$.
The following is well known (see, for example, \cite[Proposition 1.3]{Ni18}).

\begin{prop} \label{prop:eichler-to-group-theory}
A finite group $G$ satisfies the Eichler condition if and only if $G$ has no quotient which is a binary polyhedral group.
\end{prop}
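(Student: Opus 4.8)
The plan is to realise each Wedderburn factor of $\R G$ isomorphic to $\H$ as coming from a surjection of $G$ onto a non-abelian finite subgroup of the unit quaternions, and conversely; the classification of finite subgroups of $Sp(1)$ then does the rest. Throughout I would use the elementary fact that the only unital $\R$-subalgebras of $\H$ are $\R$, the copies of $\C$ (one for each pure imaginary direction), and $\H$ itself; in particular a non-commutative $\R$-subalgebra of $\H$ must be all of $\H$.

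For the direction ``binary polyhedral quotient $\Rightarrow$ not Eichler'', suppose $G \twoheadrightarrow H$ with $H$ binary polyhedral. By construction $H$ is a non-abelian finite subgroup of $Sp(1) \subseteq \H^\times$, so its $\R$-span is a non-commutative subalgebra of $\H$, hence equals $\H$; thus the inclusion $H \hookrightarrow \H^\times$ extends to a surjection $\R H \twoheadrightarrow \H$ of $\R$-algebras. Since $\R H$ is semisimple, its simple quotient $\H = M_1(\H)$ occurs as a Wedderburn factor, so $m_{\H}(H) \ge 1$. Because $H$ is a quotient of $G$, the semisimple algebra $\R H$ is a direct factor of $\R G$, so $m_{\H}(G) \ge m_{\H}(H) \ge 1$ and $G$ fails the Eichler condition.

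For the converse, suppose $m_{\H}(G) \ge 1$ and pick a Wedderburn projection $\pi : \R G \twoheadrightarrow \H$. Restricting to $G \subseteq (\R G)^\times$ gives a homomorphism $\rho = \pi|_G : G \to \H^\times$; set $L = \rho(G)$, a finite subgroup of $\H^\times$. Since $G$ is an $\R$-basis of $\R G$ and $\pi$ is surjective, $L$ spans $\H$ over $\R$, so the subalgebra $\R[L]$ generated by $L$ equals $\H$; in particular $L$ cannot be abelian (else $\R[L]$ would be a commutative subalgebra, hence $\R$ or a copy of $\C$). Every torsion element of $\H^\times$ has norm $1$, so $L$ is in fact a non-abelian finite subgroup of $Sp(1) \cong SU(2)$. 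By the classification of finite subgroups of $SU(2)$ --- obtained, e.g., from the double cover $SU(2) \to SO(3)$ together with the classification of finite subgroups of $SO(3)$ --- the non-abelian ones are precisely the binary dihedral groups $Q_{4n}$ with $n \ge 2$ and the three exceptional binary polyhedral groups $\wt T$, $\wt O$, $\wt I$. Hence $L = G/\ker\rho$ is a binary polyhedral quotient of $G$, as required.

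The only ingredient that is not a routine manipulation of $\R G$ and $\H$ is the classification of finite subgroups of $SU(2)$ invoked in the converse, so that is where I would expect the real work to sit; one could instead phrase the converse via Frobenius--Schur indicators (noting that $m_{\H}(G)$ counts the irreducible complex characters of $G$ of quaternionic type and complex degree $2$), but any such argument ends up re-deriving the same list, so I would keep the quaternion-subgroup proof above.
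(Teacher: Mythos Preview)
Your argument is correct. The paper does not actually prove this proposition; it merely states that the result is well known and refers the reader to \cite[Proposition 1.3]{Ni18}. So there is nothing to compare against in the paper itself.

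Your route --- interpreting a Wedderburn factor $\H$ of $\R G$ as a surjection $G \to \H^\times$, observing that the image must be a non-abelian finite subgroup of $Sp(1)$, and then invoking the classification of finite subgroups of $SU(2)$ --- is the standard one and is essentially what lies behind the cited reference. The key facts you use (finite subgroups of $\H^\times$ lie in the unit sphere; a non-commutative $\R$-subalgebra of $\H$ is all of $\H$; the ADE classification of finite subgroups of $SU(2)$) are all correct and assembled in the right order. Your remark at the end that one could rephrase the converse via Frobenius--Schur indicators also anticipates the paper's later \cref{prop:m_H-formula}, though as you say this does not avoid the classification.
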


We will now state an analogous characterisation of when a pair $(G,H)$ satisfies the relative Eichler condition.
In \cite[Section 1]{Ni20b}, we defined 
\begin{align*}
\B(G) &= \{ f : G \twoheadrightarrow Q \mid \text{$Q$ is a binary polyhedral group}\}/\sim 
\end{align*}
where, if $f_1 : G \twoheadrightarrow Q_1$ and $f_2 : G \twoheadrightarrow Q_2$ for $Q_1$, $Q_2$ binary polyhedral groups, then $f_1 \sim f_2$ if $\ker(f_1)=\ker(f_2)$ are equal as subsets of $G$ (which implies that $Q_1 \cong Q_2$).
The following is \cite[Proposition 3.3]{Ni18}.

\begin{prop} \label{prop:relative-eichler-group}
Let $G$, $H$ be finite groups and let $f: G \twoheadrightarrow H$ be a surjective group homomorphism. Then $(G,H)$ satisfies the relative Eichler condition if and only if every $g \in \B(G)$ factors through $f$, i.e. $f^* : \B(H) \to \B(G)$, $\varphi \mapsto \varphi \circ f$ is bijective.
\end{prop}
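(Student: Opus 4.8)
The plan is to reduce everything to counting copies of $M_1(\H)$ in real group algebras and then to use the classification of finite subgroups of $\mathrm{SU}(2)$. The starting point is the standard identification: a Wedderburn factor $\H = M_1(\H)$ of $\R G$ corresponds to an irreducible complex representation $\chi$ of $G$ with $\dim_\C \chi = 2$ and Frobenius--Schur indicator $\nu(\chi) = -1$ (a quaternionic-type complex irreducible of dimension $d$ contributes the factor $M_{d/2}(\H)$, so the condition $M_1(\H)$ is exactly $d=2$). So the first step I would take is to record that $m_\H(G)$ is the number of such $\chi$, and that, since $f \colon G \twoheadrightarrow H$ induces a surjection of $\R$-algebras $\R G \twoheadrightarrow \R H$, one may write $\R G \cong \R H \times B$ where $B$ is the product of the simple factors of $\R G$ not occurring in $\R H$; a factor $M_1(\H)$ of $\R G$ occurs in $\R H$ precisely when its associated $\chi$ factors through $f$, i.e. $\ker f \subseteq \ker\chi$. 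Then $m_\H(G) = m_\H(H) + m_\H(B)$, so the relative Eichler condition for $(G,H)$ is equivalent to $B$ containing no factor $M_1(\H)$.

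The key structural input is the translation of $2$-dimensional quaternionic complex irreducibles into group theory. If $\chi$ is such a representation of $G$, irreducibility forces the image $\rho_\chi(G) \subseteq \GL_2(\C)$ to act irreducibly on $\C^2$, hence to be non-abelian; after unitarising and using $\nu(\chi)=-1$ we may conjugate $\rho_\chi$ into $\mathrm{SU}(2)$, and since the non-abelian finite subgroups of $\mathrm{SU}(2)$ are precisely the binary polyhedral groups, $Q := \rho_\chi(G)$ is binary polyhedral. Thus $\chi$ yields a well-defined class $[\rho_\chi \colon G \twoheadrightarrow Q] \in \B(G)$ with $\ker\rho_\chi = \ker\chi$. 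Conversely, for every binary polyhedral $Q$ the standard inclusion $Q \hookrightarrow \mathrm{SU}(2)$ is a faithful $2$-dimensional complex irreducible of $Q$ with $\nu = -1$ (it preserves the quaternionic structure of $\C^2 \cong \H$), and pulling it back along any $\pi \colon G \twoheadrightarrow Q$ gives a $2$-dimensional quaternionic irreducible of $G$ with kernel $\ker\pi$; here one uses that the Frobenius--Schur indicator is unchanged under pullback along a surjection. In particular every binary polyhedral group has $m_\H \geq 1$.

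With these ingredients the proposition follows by a short dichotomy. I would first check that $f^* \colon \B(H) \to \B(G)$, $\varphi \mapsto \varphi\circ f$, is well defined and injective: since $f$ is surjective, $\ker(\varphi\circ f) = f^{-1}(\ker\varphi)$ determines $\ker\varphi$, so $\sim$-classes go to $\sim$-classes injectively; moreover $\operatorname{im}(f^*)$ is exactly the set of $[\pi] \in \B(G)$ with $\ker f \subseteq \ker\pi$, i.e. those whose representing surjection factors through $f$. Now suppose the relative Eichler condition fails, so $B$ has a factor $M_1(\H)$, with associated $\chi$ satisfying $\ker f \not\subseteq \ker\chi$; then $[\rho_\chi] \in \B(G) \setminus \operatorname{im}(f^*)$, so $f^*$ is not surjective. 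Conversely, if $f^*$ is not surjective, choose $[\pi \colon G \twoheadrightarrow Q] \in \B(G)$ with $\ker f \not\subseteq \ker\pi$ and pull back the standard quaternionic irreducible of $Q$ along $\pi$: this produces a factor $M_1(\H)$ of $\R G$ with kernel $\ker\pi$, hence a factor lying in $B$, so $m_\H(B)\geq 1$ and the relative Eichler condition fails. As $f^*$ is always injective, "not surjective" equals "not bijective", giving the claimed equivalence.

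The main obstacle is entirely in the middle step: the passage from the algebra invariant $m_\H$ to the combinatorics of binary polyhedral quotients, via (i) the identification of $M_1(\H)$-factors of $\R G$ with $2$-dimensional complex irreducibles of quaternionic type and (ii) the classification of finite subgroups of $\mathrm{SU}(2)$. Once these are available, the facts about $f^*$ and the pullback-invariance of $\nu$ are routine. One could equally organise the argument around the refined identity $m_\H(G) = \sum_{[\pi]\in\B(G)} c(Q_\pi)$, where $Q_\pi$ is the target of $\pi$ and $c(Q)\geq 1$ counts faithful quaternionic $2$-dimensional irreducibles of $Q$; since $c$ is constant on $\operatorname{im}(f^*)$ and matches the analogous sum for $H$, subtracting gives $m_\H(G) - m_\H(H) = \sum_{[\pi]\in\B(G)\setminus\operatorname{im}(f^*)} c(Q_\pi)$, a sum of positive integers, from which the equivalence is immediate.
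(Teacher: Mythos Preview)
Your argument is correct. The paper does not actually prove this proposition; it simply cites \cite[Proposition 3.3]{Ni18}. Your approach is the natural one and is entirely consistent with the surrounding material in the paper: the identification of $M_1(\H)$-factors with $2$-dimensional quaternionic irreducibles is exactly \cref{prop:m_H-formula}, and the passage from such irreducibles to binary polyhedral quotients via the classification of finite subgroups of $\mathrm{SU}(2)$ is the same mechanism that underlies \cref{prop:eichler-to-group-theory}. The refined counting identity you mention at the end, $m_\H(G) - m_\H(H) = \sum_{[\pi]\in\B(G)\setminus\operatorname{im}(f^*)} c(Q_\pi)$ with each $c(Q_\pi)\ge 1$, is a clean way to package the equivalence and is essentially how the cited reference proceeds as well.
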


We will next establish formulae for $m_{\H}(G)$ in terms of characters of complex representations.
This will be used in \cref{ss:implementation} to compute $m_{\H}(G)$ (see the remarks following \cref{alg:MNEC(G)-original}).
Let $V$ be a simple $\C G$-module, which can be equivalently defined as an irreducible complex representation $\rho_V : G \to \GL_n(\C)$, and let $\chi_V : G \to \C^{\times}$ denote its character. If $\rho : G \to \GL_n(\C)$ is given, we write $V_{\rho}$ and $\chi_{\rho}$ for its corresponding simple $\C G$-module and character.
The \textit{Frobenius-Schur indicator} is defined as
\[  \FS(\chi_V) = \frac{1}{|G|} \sum_{g \in G} \chi_V(g^2) \in \C^{\times}.  \]
It can be shown that $\FS(\chi_V) \in \{1,0,-1\}$. We say $\rho$ is \textit{real} if $\FS(\chi_V) = 1$,  \textit{complex} if $\FS(\chi_V) = 0$ and \textit{quaternionic} if $\FS(\chi_V) = -1$. See, for example, \cite[Section 13.2]{Se77} for further details.

The following is standard but we were not able to locate a suitable reference in the literature, so we include a proof below.

\begin{prop} \label{prop:m_H-formula}
Let $G$ be a finite group. Then $m_{\H}(G)$ is equal to the number of $2$-dimensional irreducible complex representations $\rho : G \to \GL_2(\C)$ such that $\FS(\chi_{\rho}) = -1$.	
\end{prop}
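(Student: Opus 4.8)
The plan is to pass through the real group algebra $\R G$ via its Wedderburn decomposition and match up the simple factors isomorphic to $\H = M_1(\H)$ with the quaternionic irreducible complex representations of $G$. Recall that $\R G \cong \prod_i M_{n_i}(D_i)$ where each $D_i$ is $\R$, $\C$ or $\H$, and $m_{\H}(G)$ counts the factors with $(n_i, D_i) = (1, \H)$. The standard dictionary (see \cite[Section 13.2]{Se77}) relates these real factors to the Galois/complex-conjugation orbits of simple $\C G$-modules $V$ via the Frobenius--Schur indicator: a factor with $D_i = \R$ arises from a real $V$ (with $\R$-factor $M_{\dim V}(\R)$), a factor with $D_i = \C$ arises from a conjugate pair of complex $V$, $\bar V$ (with $\R$-factor $M_{\dim V}(\C)$), and a factor with $D_i = \H$ arises from a quaternionic $V$, which is necessarily self-conjugate, with $\R$-factor $M_{\dim V / 2}(\H)$. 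Hence the factor is a copy of $\H = M_1(\H)$ precisely when $V$ is quaternionic and $\dim V / 2 = 1$, i.e. $\dim V = 2$.

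Concretely, the steps I would carry out are: (i) state the decomposition $\R G \cong \prod_i M_{n_i}(D_i)$ and recall that $m_{\H}(G) = \#\{ i : n_i = 1, D_i = \H\}$; (ii) recall (or cite \cite[Section 13.2]{Se77}) the bijection between simple factors of $\R G$ and complex-conjugation orbits of simple $\C G$-modules, together with the three cases $\FS(\chi_V) \in \{1, 0, -1\}$ and the identification of the corresponding real factor in each case; (iii) observe that in the quaternionic case the real factor is $M_{(\dim V)/2}(\H)$, so it equals $\H$ iff $\dim_{\C} V = 2$; (iv) conclude that $m_{\H}(G)$ equals the number of quaternionic simple $\C G$-modules of dimension $2$, equivalently the number of irreducible complex representations $\rho : G \to \GL_2(\C)$ with $\FS(\chi_\rho) = -1$ (noting that distinct such $\rho$ give distinct isomorphism classes and, being quaternionic, each is self-conjugate so no double-counting from the conjugation action occurs).

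The only mildly delicate point --- and the part I would take most care with --- is step (iii): verifying that when $V$ is quaternionic the associated simple summand of $\R G$ is $M_{(\dim_\C V)/2}(\H)$ rather than, say, $M_{\dim_\C V}(\H)$. This is where the dimension bookkeeping matters, since $\H$ has $\R$-dimension $4$ and the real envelope $\R \otimes_\R (\text{end ring of } V \text{ over } \R)$ must have the same $\R$-dimension as the isotypic block, which over $\C$ is $M_{\dim V}(\C)$ of $\C$-dimension $(\dim V)^2$, i.e. $\R$-dimension $2(\dim V)^2$; matching against $M_m(\H)$, of $\R$-dimension $4m^2$, forces $m = (\dim V)/\sqrt 2$ --- no, the correct accounting is that the self-conjugate complex block $M_{\dim V}(\C)$ descends to $M_{(\dim V)/2}(\H)$, of $\R$-dimension $4 \cdot (\dim V/2)^2 = (\dim V)^2$, which is the $\R$-dimension of $M_{\dim V}(\C)$ as a complex algebra — and this matches. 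In particular $\dim_\C V$ is even whenever $V$ is quaternionic, so ``$\dim V = 2$'' is the smallest case. This is entirely standard representation theory; the proof is essentially an organized invocation of the Frobenius--Schur theory, with the one substantive check being this dimension count, which I would dispatch by comparing $\R$-dimensions of the blocks on both sides.
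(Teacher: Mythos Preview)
Your proposal is correct and follows essentially the same route as the paper: both invoke the Frobenius--Schur trichotomy (citing \cite[Section 13.2]{Se77}) to match the simple factors of $\R G$ with simple $\C G$-modules and then pick out the $M_1(\H)$-factors as the quaternionic $2$-dimensional representations. The only difference is that the paper handles your ``delicate'' dimension check on the module side---observing that for quaternionic $V'$ the restriction $i^*(V')$ is a simple $\R G$-module with endomorphism ring $\H$, so $\dim_\H(i^*(V')) = 1$ iff $\dim_\C(V') = 2$---which sidesteps the algebra-side bookkeeping you wrestled with.
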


\begin{proof}
By the Artin-Wedderburn theorem, there is a one to one correspondence between:
\begin{clist}{(1)}
\item
Simple components of $\R G$ of the form $M_{n_i}(D_i)$ where $D_i \in \{\,\R,\C,\H\,\}$ is a real division algebra.
\item
Simple $\R G$-modules $V$ such that $\End_{\R G}(V) \cong D_i$ and, using this isomorphism to view $V$ as a $D_i$-module, $\dim_{D_i}(V) = n_i$.
\end{clist}
Hence $m_{\H}(G)$ is the number of simple $\R G$-modules $V$ such that $\End_{\R G}(V) \cong \H$ and $\dim_{\H}(V) = 1$.

Let $i : \R G \hookrightarrow \C G$ and let $i^*$ denote the restriction of scalars map.
By \cite[p108 \& Proposition 39]{Se77}, the simple $\R G$-modules $V$ such that $\End_{\R G}(V) \cong \H$ are precisely the $\R G$-modules of the form $i^*(V')$ where $V'$ is a simple $\C G$-module such that $\FS(\chi_{V'}) = -1$. Since $\dim_{\R}(V') = \dim_{\R}(i^*(V'))$, the condition that $\dim_{\H}(i^*(V'))$ is equivalent to $\dim_{\C}(V')=2$. The result follows.
\end{proof}

\subsection{Minimal non-Eichler covers}
\label{ss:MNEC-groups}

A finite group $G$ is a \textit{minimal non-Eichler cover} (MNEC) of $H$ if it is a non-Eichler cover and, for some quotient map $f: G \twoheadrightarrow H$, $f$ does not factor through a map $f' : G' \twoheadrightarrow H$ where $G' \ne G$ is another non-Eichler cover of $H$. Let $\MNEC(H)$ denote the class of minimal non-Eichler covers of $H$.

\begin{prop} \label{lemma:minimal-BPG}
$\MNEC(C_1) = \{\text{\,$Q_{2^n}$ for $n \ge 3$, $Q_{4p}$ for $p$ an odd prime, $\wt T$, $\wt O$, $\wt I$\,}\}$.
\end{prop}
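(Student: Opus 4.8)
The plan is to reduce the assertion to a purely structural statement about binary polyhedral groups, and then verify that statement case by case.

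\textbf{Reduction.} Every finite group surjects onto $C_1$, so I would first note that $(G,C_1)$ satisfies the relative Eichler condition exactly when $m_{\H}(G)=m_{\H}(C_1)=0$, i.e. exactly when $G$ satisfies the Eichler condition; by \cref{prop:eichler-to-group-theory} this holds iff $G$ has no binary polyhedral quotient. Hence $G$ is a non-Eichler cover of $C_1$ iff $G$ has a binary polyhedral quotient. Since the unique surjection $G\twoheadrightarrow C_1$ factors through every quotient of $G$, unwinding the definition of $\MNEC$ gives: $G\in\MNEC(C_1)$ iff $G$ has a binary polyhedral quotient but no \emph{proper} quotient of $G$ does. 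I would then observe that if $G$ had a binary polyhedral quotient $B\neq G$, then $B$ is a proper quotient which is itself a non-Eichler cover of $C_1$, so $G\notin\MNEC(C_1)$; hence membership in $\MNEC(C_1)$ forces $G$ to be binary polyhedral. Conversely, a binary polyhedral quotient of a proper quotient of $G$ is itself a proper binary polyhedral quotient of $G$, so the condition reduces to: $G$ is a binary polyhedral group with no proper binary polyhedral quotient. It then remains to identify these groups.

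\textbf{The groups $\wt T$, $\wt O$, $\wt I$.} For these three I would simply read off the quotients from the (well-known) normal subgroup lattice: $\wt I\cong\SL_2(\F_5)$ is perfect with centre $C_2$, giving proper quotients $A_5$ and $C_1$; $\wt T\cong\SL_2(\F_3)$ has normal subgroups $1,C_2,Q_8,\wt T$, giving proper quotients $A_4,C_3,C_1$; and $\wt O$, of order $48$ with centre $C_2$ and $\wt O/C_2\cong S_4$, has normal subgroups $1,C_2,Q_8,\wt T,\wt O$, giving proper quotients $S_4,S_3,C_2,C_1$. None of $A_5,A_4,C_3,S_4,S_3,C_2,C_1$ is binary polyhedral (each is abelian or has more than one involution), so $\wt T,\wt O,\wt I\in\MNEC(C_1)$.

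\textbf{The groups $Q_{4n}$.} The heart of the argument is to describe the binary polyhedral quotients of $Q_{4n}=\langle x,y\mid x^{2n}=1,\ y^2=x^n,\ yxy^{-1}=x^{-1}\rangle$ for $n\geq2$. I would use that $z=x^n$ is the unique involution (every element outside $\langle x\rangle$ has order $4$), so that a nontrivial normal subgroup $K$ either has even order, whence $z\in K$ and $Q_{4n}/K$ is a quotient of $Q_{4n}/\langle z\rangle\cong D_{2n}$, which is dihedral or abelian and hence never binary polyhedral; or has odd order, whence $K\leq\langle x\rangle$, say $K=\langle x^{2n/m}\rangle$ with $m=|K|$ odd and $m\mid n$, and a short presentation computation (using that $m$ is odd, so $n\equiv n/m\pmod{2n/m}$) yields $Q_{4n}/K\cong Q_{4(n/m)}$, which is binary polyhedral precisely when $n/m\geq2$. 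Thus $Q_{4n}$ has a proper binary polyhedral quotient iff $n$ has an odd divisor $m$ with $1<m\leq n/2$; such an $m$ fails to exist exactly when $n$ is a power of $2$ or an odd prime (if $n$ is neither, an odd prime divisor $p$ of $n$ satisfies $n/p\geq2$ and serves as $m$). Since $Q_{4n}=Q_{2^k}$ with $k\geq3$ iff $n$ is a power of $2$, this shows $Q_{4n}\in\MNEC(C_1)$ iff $Q_{4n}=Q_{2^k}$ for some $k\geq3$ or $Q_{4n}=Q_{4p}$ for some odd prime $p$. Combining the three cases gives the stated description of $\MNEC(C_1)$.

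\textbf{Main obstacle.} The reduction and the $\wt T,\wt O,\wt I$ cases are routine; I expect the delicate step to be the analysis of $Q_{4n}$ — making sure the dichotomy on normal subgroups is exhaustive (for $n$ even, the two extra index-$2$ subgroups both contain $z$ and so contribute nothing new), verifying the isomorphism $Q_{4n}/\langle x^{2n/m}\rangle\cong Q_{4(n/m)}$ correctly, and settling the elementary number-theoretic equivalence "$n$ has no odd divisor $m$ with $1<m\leq n/2$" $\Leftrightarrow$ "$n$ is a power of $2$ or an odd prime" without gaps.
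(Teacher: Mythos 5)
Your proof is correct and follows the same overall strategy as the paper: reduce to the statement that $G\in\MNEC(C_1)$ iff $G$ is a binary polyhedral group with no proper binary polyhedral quotient, then classify such $G$. The one substantive difference is in how you handle $Q_{4n}$. The paper cites \cite[Lemmas 6.5--6.6]{Ni19} for the quotient structure and then observes directly that $Q_{8n}$ (for $n$ not a power of $2$) surjects onto some $Q_{2^m}$ and that $Q_{4n}$ for $n\geq3$ odd surjects onto $Q_{4p}$; you instead give a self-contained analysis of the normal subgroup lattice of $Q_{4n}$. Your version is arguably sharper: you make explicit that an odd-order normal subgroup must lie in $\langle x\rangle$, that quotienting by the order-$m$ odd subgroup gives $Q_{4(n/m)}$ (with the parity observation $n\equiv n/m\pmod{2n/m}$ needed to match the presentations), and that the even-order case always factors through $D_{2n}$. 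This correctly captures that the only \emph{quaternionic} quotients of $Q_{4n}$ are those $Q_{4m'}$ with $n/m'$ odd (so, for instance, $Q_8$ is \emph{not} a quotient of $Q_{16}$), a point that the paper's informal paraphrase of \cite{Ni19} glosses over. Both routes then finish by the same elementary number theory (no odd divisor $m$ with $1<m\leq n/2$ iff $n$ is a power of $2$ or an odd prime) and a direct check on $\wt T,\wt O,\wt I$ via their normal subgroup lattices. The trade-off is that the paper's proof is shorter because it outsources the $Q_{4n}$ structure to a reference, while yours is self-contained and slightly more careful.
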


\begin{proof}
If $G$ is a non-Eichler cover of $C_1$, then $G$ does not satisfy the Eichler condition. By \cref{prop:eichler-to-group-theory}, $G$ has a quotient which is a BPG. This implies that, if $G$ is minimal, then $G$ is a BPG which admits no other BPG as a proper quotient. 
For $n \ge 2$, we have $Q_{8n} \twoheadrightarrow Q_{2^m}$ for some $m \ge 3$. For $n \ge 3$ odd, we have $Q_{4n} \twoheadrightarrow Q_{4p}$ for $p \mid n$ an odd prime. Hence $G$ is one of the groups listed.

Conversely, each of these groups have no proper quotients which are BPGs. To see this, note that the quotients of $Q_{4n}$ are abelian or of the form $Q_{4m}$ and $D_{2m}$ for $m \ge 2$ and $m \mid n$ (see, for example, by \cite[Lemmas 6.5-6.6]{Ni19}). For $\wt T$, $\wt O$, $\wt I$, this follows from GroupNames \cite{groupnames}.
\end{proof}

Note that a group $G$ has a BPG quotient if and only if $G$ has a quotient in $\MNEC(C_1)$, i.e. the list of groups (i)-(iii) above. This is because, by definition, all BPGs have a quotient which is one of these groups. This generalises to the following, which was stated briefly in the introduction. 

\begin{prop} \label{prop:groups1}
Let $G$ be a finite group which has a quotient $H$. Then $G$ is $H$-Eichler if and only if $G$ has no quotient in $\MNEC(H)$.
\end{prop}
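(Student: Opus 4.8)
The plan is to prove this by reducing to the definition of $\MNEC(H)$ and using a "minimality descent" argument, essentially the same strategy that appears in the proof of \cref{lemma:minimal-BPG} but in relative form. Write $G \twoheadrightarrow H$ and recall that, by definition, $G$ is $H$-Eichler means precisely that $(G,H)$ satisfies the relative Eichler condition, i.e. $H$ is an Eichler quotient of $G$; and $G \in \MNEC(H)$ means $G$ is a non-Eichler cover of $H$ which is minimal among non-Eichler covers of $H$ that factor through it in the appropriate sense.

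The forward direction (contrapositive): suppose $G$ has a quotient $K \in \MNEC(H)$, via maps $G \twoheadrightarrow K \twoheadrightarrow H$ whose composite is the given surjection $G \twoheadrightarrow H$ (if $K$ is a quotient of $G$ and also has $H$ as a quotient, one can arrange a compatible triple — this needs a small argument that the surjection $K \twoheadrightarrow H$ can be taken so that its composite with $G \twoheadrightarrow K$ is a surjection $G \twoheadrightarrow H$; since all we need is \emph{some} Eichler quotient, and $\B$-equivalence only depends on kernels, this is harmless). Since $K$ is a non-Eichler cover of $H$, we have $m_{\H}(K) > m_{\H}(H)$. Because $\R H$ is a summand of $\R K$ which is a summand of $\R G$, we get $m_{\H}(G) \ge m_{\H}(K) > m_{\H}(H)$, so $(G,H)$ fails the relative Eichler condition; hence $G$ is not $H$-Eichler.

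The reverse direction: suppose $G$ is not $H$-Eichler, i.e. $G$ is a non-Eichler cover of $H$. I want to produce a quotient of $G$ lying in $\MNEC(H)$. Consider the (finite, nonempty) collection of quotients $G'$ of $G$ such that $G \twoheadrightarrow G' \twoheadrightarrow H$ (compatibly with the fixed $G \twoheadrightarrow H$) and $G'$ is a non-Eichler cover of $H$; this set is nonempty since $G$ itself is in it. Choose $G'$ minimal in this set with respect to "being a further quotient" — concretely, choose $G'$ with $|G'|$ as small as possible, or more precisely minimal in the partial order given by factorisation of the surjections onto $H$. I claim such a minimal $G'$ lies in $\MNEC(H)$: by construction it is a non-Eichler cover of $H$, and if the surjection $G' \twoheadrightarrow H$ factored through some $G'' \ne G'$ that is also a non-Eichler cover of $H$, then $G''$ would be a quotient of $G$ (compose $G \twoheadrightarrow G' \twoheadrightarrow G''$) and a strictly smaller non-Eichler cover of $H$ in our set, contradicting minimality. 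Hence $G$ has a quotient, namely $G'$, in $\MNEC(H)$.

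The main obstacle — really the only subtlety — is the bookkeeping around \emph{which} surjection onto $H$ is in play. The definition of non-Eichler cover and of $\MNEC(H)$ as given (via \cref{prop:relative-eichler-group}) is phrased in terms of a chosen surjection $f: G \twoheadrightarrow H$, but \cref{prop:relative-eichler-group} shows the relative Eichler condition depends only on $m_{\H}(G)$ and $m_{\H}(H)$ and not on $f$; so "non-Eichler cover of $H$" is really a property of the pair of groups. The care needed is just to verify that in the descent step the intermediate group $G'$ genuinely admits $H$ as a quotient via a map compatible with $G \twoheadrightarrow G'$ — which holds by taking the quotient of $G \twoheadrightarrow H$ by the kernel of $G \twoheadrightarrow G'$ when the latter kernel is contained in the former, and in general one works with the chain of quotients of $G$ interpolating between $G$ and $H$ (i.e. quotients $G/N$ with $\ker(G \twoheadrightarrow H) \supseteq N$), which form a finite lattice and on which "is a non-Eichler cover of $H$" is a downward-nonempty subset, making the minimal-element argument immediate. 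I would present this last point cleanly by fixing $N_0 = \ker(G \twoheadrightarrow H)$ at the outset and only ever considering quotients $G/N$ with $N \subseteq N_0$, so that every group in sight automatically surjects onto $H = G/N_0$.
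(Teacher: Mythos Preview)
Your proposal is correct and follows essentially the same approach as the paper: the forward direction via the $m_{\H}$ inequality, and the converse via minimality descent among intermediate quotients. The paper compresses the latter into the single sentence ``by definition, $f$ factors through some $f' : G' \twoheadrightarrow H$ where $G' \in \MNEC(H)$'', which is exactly your argument about choosing a minimal $G/N$ with $N \subseteq \ker(G \twoheadrightarrow H)$.

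Your hedging about compatibility of surjections in the forward direction is unnecessary and can be dropped entirely. As the paper notes (and as you observe), the relative Eichler condition depends only on the numbers $m_{\H}(G)$ and $m_{\H}(H)$, not on any choice of surjection. So if $G$ has \emph{any} quotient $K$ with $K \in \MNEC(H)$, then $m_{\H}(G) \ge m_{\H}(K) > m_{\H}(H)$ and $G$ is not $H$-Eichler, full stop; there is no need to arrange that the composite $G \twoheadrightarrow K \twoheadrightarrow H$ agrees with the given $G \twoheadrightarrow H$.
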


\begin{proof}
Suppose $G$ has a quotient $G' \in \MNEC(G)$. Since $G' \in \MNEC(H)$, $G'$ has a non-Eichler quotient $H$, and so $m_{\H}(G') > m_{\H}(H)$. Hence $m_{\H}(G) \ge m_{\H}(G') > m_{\H}(H)$ and so $G$ is not $H$-Eichler.
Conversely, suppose $f : G \twoheadrightarrow H$ is a quotient and $G$ is not $H$-Eichler. By definition, $f$ factors through some $f' : G' \twoheadrightarrow H$ where $G' \in \MNEC(H)$ and so $G$ has a quotient in $\MNEC(H)$.
\end{proof}

For a class of finite groups $S$, define $\MNEC(S)$ to be the class of groups $G$ such that $G \in \MNEC(H) \setminus S$ for some $H \in S$ and, if $G \twoheadrightarrow G'$ and $G' \in \MNEC(H') \setminus S$ for some $H' \in S$, then $G \cong G'$.
We also define $\wt \MNEC(S) = \left(\bigcup_{H \in S} \MNEC(H)\right) \, \setminus \, S$. For a class of groups $S$, define the \textit{quotient filter} $S^{\QF}$ to be the subclass of groups in $S$ with no proper quotients in $S$.
The following is then immediate from the definition.

\begin{prop} \label{prop:MNEC-basic-prop}
	Let $S$ be a class of groups. Then $\MNEC(S) = (\wt \MNEC(S))^{\QF}$.
\end{prop}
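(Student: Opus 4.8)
The statement to prove is Proposition~\ref{prop:MNEC-basic-prop}: for a class of groups $S$, we have $\MNEC(S) = (\wt\MNEC(S))^{\QF}$. This is described as ``immediate from the definition,'' so the plan is simply to unwind both sides of the claimed equality and check they describe the same class of groups.

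First I would recall the definitions. By definition, $\wt\MNEC(S) = \left(\bigcup_{H \in S} \MNEC(H)\right) \setminus S$, so a group $G$ lies in $\wt\MNEC(S)$ precisely when $G \notin S$ and $G \in \MNEC(H)$ for some $H \in S$. The quotient filter $(\,\cdot\,)^{\QF}$ of a class $T$ consists of those $G \in T$ with no proper quotient in $T$; here ``proper quotient'' should be read as a quotient $G'$ with $G' \not\cong G$. On the other side, $\MNEC(S)$ is defined to be the class of groups $G$ such that (a) $G \in \MNEC(H) \setminus S$ for some $H \in S$, and (b) whenever $G \twoheadrightarrow G'$ with $G' \in \MNEC(H') \setminus S$ for some $H' \in S$, then $G \cong G'$.

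The proof is then a direct translation. Condition (a) says exactly that $G \in \wt\MNEC(S)$, using the identification $\MNEC(H) \setminus S$ ranging over $H \in S$ with $\bigcup_H \MNEC(H) \setminus S = \wt\MNEC(S)$ (one should note that $\left(\bigcup_H \MNEC(H)\right)\setminus S = \bigcup_H\left(\MNEC(H)\setminus S\right)$, which is a set-theoretic triviality). Condition (b) says: every quotient $G'$ of $G$ that lies in $\wt\MNEC(S)$ must be isomorphic to $G$; equivalently, $G$ has no \emph{proper} quotient in $\wt\MNEC(S)$. Combining (a) and (b), $G \in \MNEC(S)$ if and only if $G \in \wt\MNEC(S)$ and $G$ has no proper quotient in $\wt\MNEC(S)$, which is precisely the statement that $G \in (\wt\MNEC(S))^{\QF}$. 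I would write this as a short two-implication argument (or a single chain of ``if and only if''s), being careful only about the one point of potential friction.

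The only place requiring a moment's care --- and the closest thing to an obstacle, though it is minor --- is matching the quantifier structure in condition (b). In (b) the witness $H'$ may differ from the $H$ in (a), and one must confirm this causes no discrepancy: the condition ``$G' \in \MNEC(H') \setminus S$ for some $H' \in S$'' is by definition ``$G' \in \wt\MNEC(S)$,'' so (b) is genuinely the statement ``no proper quotient of $G$ lies in $\wt\MNEC(S)$'' with no residual dependence on $H$. I would also explicitly remark that membership in $\MNEC(H)$ is invariant under group isomorphism (as are all the classes here, per the paper's conventions), so ``$G \cong G'$'' and ``$G' \twoheadrightarrow G$ is an isomorphism'' may be used interchangeably. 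With these small observations recorded, the equality $\MNEC(S) = (\wt\MNEC(S))^{\QF}$ follows by inspection.
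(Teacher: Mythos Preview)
Your proposal is correct and is exactly the unwinding of definitions that the paper has in mind; the paper does not even supply a proof, merely stating that the result ``is then immediate from the definition.'' Your careful check that conditions (a) and (b) translate precisely to membership in $\wt\MNEC(S)$ and absence of proper quotients there is the intended argument.
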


\begin{remark}
(i) 
The analogue of \cref{prop:groups1} does not hold for arbitrary sets of groups $S$.
For example, if $S = \{C_1,Q_{24}\}$, then $G = Q_{24}$ is $S$-Eichler but has a quotient $Q_8 \in \MNEC(S)$.

(ii) Whilst $S \cap \MNEC(S) = \emptyset$ by definition, it is possible that $S \cap \MNEC^n(S) \ne \emptyset$ for some $n \ge 2$. 
For example, if $S = \{C_1,Q_8 \times C_2\}$, then $Q_8 \times C_2 \in S \cap \MNEC^2(S)$.
\end{remark}

\subsection{Eichler simple groups} 
\label{ss:MNEC-graph}

Recall from the introduction that a finite group $G$ is \textit{Eichler simple} if it has no proper Eichler quotients. We let $\EE$ denote the class of Eichler simple groups, and say a subset $S \subseteq \EE$ is \textit{closed under quotients} if $G \twoheadrightarrow H$ for $G \in S$ and $H \in \EE$ implies $H \in S$.

The main result of this section is the following.

\begin{thm}
\label{thm:Eichler-simple-alt-def}
$\EE = \bigcup_{n \ge 0} \MNEC^n(C_1)$. That is, a finite group $G$ is Eichler simple if and only if $G \in \MNEC^n(C_1)$ for some $n \ge 0$.
\end{thm}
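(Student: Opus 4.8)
The plan is to prove the two inclusions $\bigcup_{n\ge 0}\MNEC^n(C_1)\subseteq\EE$ and $\EE\subseteq\bigcup_{n\ge 0}\MNEC^n(C_1)$ separately, in both cases by induction. Two reformulations will be used throughout. First, for a normal subgroup $N$ of $G$ let $R(N)$ be the set of quaternionic $2$-dimensional irreducible complex representations $\rho$ of $G$ with $N\subseteq\ker\rho$; by \cref{prop:m_H-formula} one has $m_{\H}(G/N)=|R(N)|$, and hence $G$ is Eichler simple if and only if $\bigcap_{\rho}\ker\rho=1$, while more generally $G/N$ is Eichler simple if and only if $\bigcap_{\rho\in R(N)}\ker\rho=N$. (These facts are also delivered by the Eichler Pushout Lemma \cref{lemma:EPL}, which furnishes every finite group with a canonical Eichler simple quotient $G_{\mathrm{es}}=G/(\bigcap_{\rho}\ker\rho)$.) Second, unwinding the definition of a minimal non-Eichler cover: $G\in\MNEC(H)$ if and only if $H\cong G/N$ for some normal subgroup $N$ of $G$ with $m_{\H}(G/N)<m_{\H}(G)$ such that $m_{\H}(G/N')=m_{\H}(G/N)$ for every normal subgroup $N'$ of $G$ with $1\ne N'\subsetneq N$ (the minimality clause forbids exactly the quotient map $G\twoheadrightarrow G/N$ from factoring through an intermediate non-Eichler cover).

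For the inclusion $\bigcup_{n\ge 0}\MNEC^n(C_1)\subseteq\EE$ the crucial step is: \emph{if $H$ is Eichler simple and $G\in\MNEC(H)$, then $G$ is Eichler simple.} Granting this, induction on $n$ finishes, since $\MNEC^0(C_1)=\{C_1\}\subseteq\EE$ and, by \cref{prop:MNEC-basic-prop}, $\MNEC^{n+1}(C_1)=(\wt\MNEC(\MNEC^n(C_1)))^{\QF}\subseteq\bigcup_{H\in\MNEC^n(C_1)}\MNEC(H)$. To prove the crucial step, write $H\cong G/N$ as in the second reformulation. For any quaternionic $2$-dimensional irreducible $\rho$ of $G$, the normal subgroup $\ker\rho\cap N$ must lie in $\{1,N\}$: otherwise the minimality clause applied with $N'=\ker\rho\cap N$ gives $R(\ker\rho\cap N)=R(N)$, which is absurd because $\rho\in R(\ker\rho\cap N)\setminus R(N)$. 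Since $m_{\H}(G)>m_{\H}(G/N)$ there is a $\rho_0$ with $N\not\subseteq\ker\rho_0$, hence $\ker\rho_0\cap N=1$; and since $G/N$ is Eichler simple, $\bigcap\{\ker\rho:N\subseteq\ker\rho\}=N$. Therefore $\bigcap_{\rho}\ker\rho\subseteq N\cap\ker\rho_0=1$, so $G$ is Eichler simple.

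For the reverse inclusion I would first introduce a depth function on Eichler simple groups: $\mathrm{depth}(C_1)=0$, and for $G\ne C_1$, $\mathrm{depth}(G)$ is one plus the maximum of $\mathrm{depth}(G')$ over all Eichler simple proper quotients $G'$ of $G$ (well-defined as $C_1$ is always such a quotient; note it is strictly monotone along Eichler simple proper quotients). The key ingredient is a \emph{refined Key Lemma}: for Eichler simple $G\ne C_1$, if $G/L$ is an Eichler simple proper quotient of maximal depth, then $G\in\MNEC(G/L)$. Indeed, were this false, the second reformulation would produce a normal subgroup $N'$ with $1\ne N'\subsetneq L$ and $m_{\H}(G/N')>m_{\H}(G/L)$; setting $\wt{N}'=\bigcap_{\rho\in R(N')}\ker\rho$, one checks $N'\subseteq\wt{N}'\subsetneq L$ (the inclusion $\wt{N}'\subseteq L$ holds because $R(L)\subseteq R(N')$ and $G/L$ is Eichler simple, and $\wt{N}'\ne L$ because $m_{\H}(G/\wt{N}')=m_{\H}(G/N')>m_{\H}(G/L)$), so the Eichler simple quotient $G/\wt{N}'$ is a proper quotient of $G$ with $\mathrm{depth}(G/\wt{N}')>\mathrm{depth}(G/L)$, contradicting maximality. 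Using this, I would prove by induction on $|G|$, for Eichler simple $G$, the conjunction (A) $G\in\MNEC^{\mathrm{depth}(G)}(C_1)$ and (B) $G\in\MNEC^m(C_1)\Rightarrow m=\mathrm{depth}(G)$; the inclusion $\EE\subseteq\bigcup_{n}\MNEC^n(C_1)$ is then immediate from (A). In the inductive step one uses the refined Key Lemma to obtain a maximal-depth Eichler simple quotient $G'$ with $G\in\MNEC(G')$ and $\mathrm{depth}(G')=\mathrm{depth}(G)-1$, which by (A) for $G'$ puts $G$ in $\wt\MNEC(\MNEC^{\mathrm{depth}(G)-1}(C_1))$; the quotient-filter condition then needs that no proper quotient $K$ of $G$ lies in $\wt\MNEC(\MNEC^{\mathrm{depth}(G)-1}(C_1))$, which follows because such a $K$ would be Eichler simple by the crucial step (hence a proper Eichler simple quotient of $G$, so $\mathrm{depth}(K)<\mathrm{depth}(G)$) yet its membership combined with (B) for smaller groups forces $\mathrm{depth}(K)\ge\mathrm{depth}(G)$. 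Statement (B) is proved just before (A) by a parallel descent that iterates the refined Key Lemma down to level $m-1$.

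I expect the main obstacle to be precisely this last bookkeeping — verifying the quotient-filter condition built into $\MNEC^{\mathrm{depth}(G)}(C_1)$. It is here that a naive Key Lemma (that $G$ is a minimal non-Eichler cover of \emph{some} Eichler simple proper quotient — obtainable, for instance, by taking $G/N$ with $N$ a minimal normal subgroup admitting an Eichler simple quotient) is not enough, and the maximal-depth refinement together with the simultaneous induction on (A) and (B) and the monotonicity of depth is what does the work. By contrast, the inclusion $\bigcup_{n}\MNEC^n(C_1)\subseteq\EE$ and the two reformulations are essentially formal once \cref{prop:m_H-formula} (and optionally \cref{lemma:EPL}) are available.
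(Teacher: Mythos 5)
Your proof is correct, but it takes a genuinely different route from the paper's, which proves the statement as an immediate corollary of two separate propositions: \cref{prop:quotients<=>path} (if $G\in\EE$ and $G\twoheadrightarrow H$ then $G\in\MNEC^n(H)$ for some $n$, proved by a short finite-descent argument on orders) and \cref{prop:MNEC-of-ES=ES} (MNECs of Eichler simple groups are Eichler simple, proved via the Eichler Pushout Lemma \cref{lemma:EPL}). In the forward inclusion you reprove \cref{prop:MNEC-of-ES=ES} from scratch by working directly with the quaternionic $2$-dimensional irreducible representations: the reformulation of ``Eichler simple'' as ``the kernels of these $\rho$ have trivial common intersection'' is clean, the dichotomy $\ker\rho\cap N\in\{1,N\}$ (forced by the minimality clause comparing $R(\ker\rho\cap N)$ with $R(N)$) is the crux, and the argument is sound. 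This buys you a proof that avoids \cref{lemma:EPL} entirely, at the cost of more explicit representation-theoretic bookkeeping; the paper's pushout formulation has the advantage that the same \cref{lemma:EPL} is re-used immediately afterwards in the Fundamental Lemma. In the reverse inclusion you replace the paper's boundedness descent with a depth function and a simultaneous induction on your statements (A) and (B); this is considerably more elaborate than \cref{prop:quotients<=>path}, but it does work (the key points --- that $\wt{N}'=\bigcap_{\rho\in R(N')}\ker\rho$ satisfies $N'\subseteq\wt{N}'\subsetneq L$ with $G/\wt{N}'$ Eichler simple, the strict monotonicity of depth along Eichler simple proper quotients, and the use of (B) for $H'$ to defeat the quotient filter --- all check out), and it has the side benefit of also establishing the disjointness $\MNEC^n(C_1)\cap\MNEC^m(C_1)=\emptyset$, which the paper records separately as \cref{prop:MNEC^n(C_1)} after the theorem.
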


This allows us to view $\EE$ as a directed graph where there is an edge from $G$ to $H$ if $G \in \MNEC(H)$. This approach will be the basis for computing groups in $\EE$ (see \cref{fig:G_2}).

Before turning to the proof, we will establish a number of properties of Eichler simple groups.

\begin{prop} \label{prop:quotients<=>path}
Let $G\in \EE$ and let $H$ be a finite group. Then there is a quotient $f: G \twoheadrightarrow H$ if and only if $G \in \MNEC^n(H)$ for some $n \ge 0$.
\end{prop}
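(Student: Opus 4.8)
I would prove the two implications separately.

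The implication ($\Leftarrow$) is a short induction on $n$ and does not use $G\in\EE$. For $n=0$ we have $\MNEC^0(H)=\{H\}$, so $G\cong H$ and the identity map works. For $n\ge 1$, \cref{prop:MNEC-basic-prop} gives $\MNEC^n(H)=\MNEC(\MNEC^{n-1}(H))\subseteq\wt\MNEC(\MNEC^{n-1}(H))$, so $G\in\MNEC(K)$ for some $K\in\MNEC^{n-1}(H)$; then $G$ is a non-Eichler cover of $K$, hence $G\twoheadrightarrow K$, and $K\twoheadrightarrow H$ by induction, so $G\twoheadrightarrow H$.

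For ($\Rightarrow$) the plan is to induct on $d:=m_{\H}(G)-m_{\H}(H)$, a non-negative integer since a surjection never increases $m_{\H}$. If $d=0$ then $H$ is an Eichler quotient of $G$; as $G\in\EE$ has no proper Eichler quotient, $G\cong H\in\MNEC^0(H)$. Suppose $d>0$. Then $G\not\cong H$, so $H$ is a proper quotient of $G$, and Eichler simplicity forces it to be a non-Eichler quotient; in particular $G$ is not $H$-Eichler, so \cref{prop:groups1} supplies a quotient of $G$ lying in $\MNEC(H)$. Choose one, $K$, of least order. Any proper quotient of $K$ is again a quotient of $G$, so minimality of $|K|$ forces $K$ to have no proper quotient in $\MNEC(H)$; as $H\notin\MNEC(H)$, \cref{prop:MNEC-basic-prop} gives $K\in(\MNEC(H))^{\QF}=\MNEC^1(H)$. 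Since $K$ is a non-Eichler cover of $H$ we have $m_{\H}(K)>m_{\H}(H)$, hence $m_{\H}(G)-m_{\H}(K)<d$, and the inductive hypothesis applied to the pair $(G,K)$ (valid since $G\in\EE$ and $G\twoheadrightarrow K$) yields $G\in\MNEC^m(K)$ for some $m\ge 0$. It then remains to glue the two stages: from $K\in\MNEC^1(H)$ and $G\in\MNEC^m(K)$ one must deduce $G\in\MNEC^{m+1}(H)$, which closes the induction.

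The gluing step is where I expect the real work — and the main obstacle — to lie. Writing $\MNEC^{m+1}(H)=\MNEC^m(\MNEC^1(H))$ and taking a chain $G=L_m\twoheadrightarrow\cdots\twoheadrightarrow L_0=K$ witnessing $G\in\MNEC^m(K)$ (so $L_i\in\MNEC(L_{i-1})$ and $L_i\in\MNEC^i(K)$ for each $i$), I would prepend $H$ and prove by induction on $i$ that $L_i\in\MNEC^{i+1}(H)$. The requirement $L_i\notin\MNEC^i(H)$ in the definition of $\MNEC$ of a class is easy: every class of the form $S^{\QF}$, hence every $\MNEC^j(H)$ with $j\ge1$, is a quotient filter, so $L_i$ and $L_{i-1}$ cannot both lie in $\MNEC^i(H)$ (as $L_i\twoheadrightarrow L_{i-1}$ properly) while $L_{i-1}\in\MNEC^i(H)$. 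The hard part is the quotient-filter condition: that no proper quotient $Q$ of $L_i$ lies in $\wt\MNEC(\MNEC^i(H))$. Here Eichler simplicity of $G$ must re-enter: such a $Q$ would be a quotient of $G$, its non-Eichler base $M'\in\MNEC^i(H)$ would then also be a quotient of $G$, and comparing $M'$ with the intermediate groups $L_0,\dots,L_{i-1}$ of the chain — all quotients of $G$ — by repeated use of \cref{prop:groups1} should produce a proper quotient of $G$ inside $\wt\MNEC(\MNEC^{i-1}(K))$, contradicting the minimality built into $G\in\MNEC^m(K)$. Making this comparison precise — essentially an Eichler-pushout statement showing that the two iterated $\MNEC$-towers, rooted at $H$ and at $K$, agree along quotients of $G$ up to an index shift — is the crux of the proof; the case $H=C_1$ is consistent with \cref{lemma:minimal-BPG}.
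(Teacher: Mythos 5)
Your $(\Leftarrow)$ direction is correct and is essentially how the paper proves it (peel off one $\MNEC$ level at a time using \cref{prop:MNEC-basic-prop}). Your $(\Rightarrow)$ direction also takes the same basic approach as the paper: build an $\MNEC$-chain from $H$ up toward $G$, terminate via a finiteness bound, and conclude from Eichler simplicity of $G$. The paper iteratively applies \cref{prop:groups1} to produce $H_0=H,H_1,\ldots$ with $H_{i+1}\in\MNEC(H_i)$, notes the sequence cannot be infinite since $|H_i|\le|G|$, and concludes $G\cong H_n$; your recursion on $d=m_{\H}(G)-m_{\H}(H)$ is the same construction packaged as a strong induction.

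The gluing step you flag is, however, a genuine gap in your write-up, and it is also the point at which the paper's proof is terse. After producing the chain the paper simply asserts $G\cong H_n\in\MNEC^n(H)$, but having $H_{i+1}\in\MNEC(H_i)$ at each step does not automatically give $H_{i+1}\in\MNEC(\MNEC^i(H))=\MNEC^{i+1}(H)$: the class-level $\MNEC$ carries a quotient-filter condition requiring that $H_{i+1}$ have no proper quotient in $\wt\MNEC(\MNEC^i(H))$, where that quotient may lie over some element of $\MNEC^i(H)$ other than $H_{i-1}$. In your formulation this becomes: from $K\in\MNEC^1(H)$ and $G\in\MNEC^m(K)$ one cannot directly conclude $G\in\MNEC^{m+1}(H)$, since the $\MNEC$-tower rooted at $K$ need not coincide level-for-level (shifted by one) with the tower rooted at $H$; your sketch of comparing the two towers via \cref{prop:groups1} does not obviously close. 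A clean repair, available to both arguments, is to sharpen the greedy choice: at stage $i$, take $H_{i+1}$ to be a quotient of $G$ in $\wt\MNEC(\MNEC^i(H))$ of minimal order (not merely one in $\MNEC(H_i)$). Then the quotient-filter condition for $H_{i+1}\in\MNEC^{i+1}(H)$ holds for free, because any proper quotient of $H_{i+1}$ lying in $\wt\MNEC(\MNEC^i(H))$ would again be a quotient of $G$ of smaller order, contradicting the choice; a candidate still exists at each stage by applying \cref{prop:groups1} to the non-Eichler quotient $G\twoheadrightarrow H_i$, and termination holds since the $H_i$ are pairwise non-isomorphic quotients of $G$.
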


Taking the graph theoretic perspective on $\EE$, this implies that there is a quotient $G \twoheadrightarrow H$ for $G, H \in \EE$ if and only if there is a directed path in $\EE$ from $G$ to $H$.

\begin{proof}
Let $G \in \EE$. If $G \in \MNEC^n(H)$, then $G \twoheadrightarrow H$. Conversely, suppose $G$ has a quotient $H$. 
If $G \twoheadrightarrow H$ is Eichler, then $G \cong H \in \MNEC^0(H)$ since $G$ is Eichler simple. If not, then $G \twoheadrightarrow H_1$ for some $H_1 \in \MNEC(H)$ by the definition of minimal non-Eichler cover. 
If $G \twoheadrightarrow H_1$ is Eichler, then $G \cong H_1 \in \MNEC(H_1)$. If not, we can repeat this to obtain a sequence of quotients
\[ G \dhxrightarrow[]{} H_{n} \dhxrightarrow[]{\MNEC} \cdots \dhxrightarrow[]{\MNEC} H_1 \dhxrightarrow[]{\MNEC} H_0 = H \]
where, for each $0 \le i < n$, we have $H_{i+1} \in \MNEC(H_i)$. If there exists $n \ge 0$ for which $G \twoheadrightarrow H_n$ is an Eichler quotient, then $G \cong H_n \in \MNEC^n(H)$.
If no such $n$ exists, then the sequence $H_0, H_1, \ldots$ is infinite.
Since $H_{i+1} \twoheadrightarrow H_i$ is non-Eichler, it is proper and so $|H_{i+1}| > |H_i|$. In particular, $|H_i| \to \infty$ as $i \to \infty$. This is a contradiction since $G \twoheadrightarrow H_i$ for all $i$ and so $|H_i| \le |G|$ for all $i$.
\end{proof}

\begin{prop} \label{prop:MNEC-of-ES=ES}
Minimal non-Eichler covers of Eichler simple groups are Eichler simple, i.e. if $H \in \EE$ and $G \in \MNEC(H)$, then $G \in \EE$.	
\end{prop}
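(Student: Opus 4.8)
\textbf{Proof proposal for \cref{prop:MNEC-of-ES=ES}.}

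The plan is to use the characterisation of Eichler simple groups in terms of iterated minimal non-Eichler covers (\cref{thm:Eichler-simple-alt-def}), namely that $G \in \EE$ if and only if $G \in \MNEC^n(C_1)$ for some $n \ge 0$. Strictly speaking this theorem is stated above but its proof depends on \cref{prop:MNEC-of-ES=ES}, so to avoid circularity I would work directly from the definitions. Let $H \in \EE$ and $G \in \MNEC(H)$; I must show $G$ has no proper Eichler quotient.

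First I would unwind what it means for $G$ to have a proper Eichler quotient. Suppose for contradiction that $G \twoheadrightarrow K$ is a proper quotient with $m_{\H}(G) = m_{\H}(K)$. Since $G \in \MNEC(H)$, we have in particular $G \twoheadrightarrow H$; the strategy is to relate the quotient $K$ to $H$. There are two cases according to whether the surjection $G \twoheadrightarrow H$ factors through $G \twoheadrightarrow K$. The key quantitative observation I would use throughout is that if $G \twoheadrightarrow H'$ is any quotient then $\R H'$ is a summand of $\R G$, so $m_{\H}(G) \ge m_{\H}(H')$, with equality precisely when $(G,H')$ satisfies the relative Eichler condition.

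The heart of the argument is a minimality/pushout manoeuvre. If $G \twoheadrightarrow H$ does \emph{not} factor through $G \twoheadrightarrow K$, I would form the pushout (i.e. the quotient $G/(N_K N_H)$ where $N_K, N_H$ are the relevant normal subgroups) to produce a common proper quotient $L$ of both $K$ and $H$, and argue that $G \twoheadrightarrow L$ is a non-Eichler cover which is strictly smaller than $G$ yet still a non-Eichler cover of $H$ — contradicting $G \in \MNEC(H)$, provided $L \ne G$, which holds because $L$ is a proper quotient of $K$ hence proper in $G$. This uses that the composite $G \twoheadrightarrow K$ being Eichler forces $m_{\H}(G) = m_{\H}(K) \ge m_{\H}(L) \ge m_{\H}(H)$, while $G \in \MNEC(H)$ non-Eichler gives $m_{\H}(G) > m_{\H}(H)$, so $L$ is indeed a non-Eichler cover of $H$; one then checks $L$ refines the factorisation, contradicting minimality of $G$. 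In the other case, where $G \twoheadrightarrow H$ \emph{does} factor as $G \twoheadrightarrow K \twoheadrightarrow H$, the quotient $K \twoheadrightarrow H$ is itself non-Eichler (since $m_{\H}(K) = m_{\H}(G) > m_{\H}(H)$), so $K$ is a non-Eichler cover of $H$ through which $G \twoheadrightarrow H$ factors; minimality of $G$ forces $K \cong G$, contradicting properness of $G \twoheadrightarrow K$. Either way we reach a contradiction, so $G$ has no proper Eichler quotient, i.e. $G \in \EE$.

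I expect the main obstacle to be the pushout step: one must verify carefully that the common quotient $L = G/(N_K N_H)$ genuinely realises a factorisation of $G \twoheadrightarrow H$ through a \emph{distinct} non-Eichler cover, so that the minimality hypothesis in the definition of $\MNEC(H)$ applies. The subtlety is that the definition of MNEC refers to a particular choice of surjection $f : G \twoheadrightarrow H$ and non-factorisation through \emph{some} smaller non-Eichler cover; so I would need to be careful that the non-Eichler cover $L$ I construct is not accidentally isomorphic to $G$ and that the factorisation $G \twoheadrightarrow L \twoheadrightarrow H$ is compatible with the chosen $f$. This is where an Eichler Pushout Lemma of the kind alluded to in \cref{ss:MNEC-graph} (\cref{lemma:EPL}) would do the bookkeeping cleanly; if available earlier it should be invoked here, otherwise the pushout computation on $m_{\H}$ must be done by hand using that $m_{\H}$ is additive over the Wedderburn decomposition and monotone under quotients.
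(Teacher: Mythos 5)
Your high-level plan is right — apply a pushout/minimality argument and reduce to the Eichler Pushout Lemma — and you correctly flag the circularity risk in invoking \cref{thm:Eichler-simple-alt-def}; the paper's actual proof is the ``invoke \cref{lemma:EPL}'' version of your argument, with no case split. The two cases you distinguish are already unified by \cref{lemma:EPL}: it produces a commutative square $G \twoheadrightarrow H \twoheadrightarrow \ol{H}$, $G \twoheadrightarrow K \twoheadrightarrow \ol{H}$ with $H \twoheadrightarrow \ol{H}$ Eichler, and since $H \in \EE$ this forces $\ol{H} \cong H$; then $K$ (not $L$) is the non-Eichler cover of $H$ through which $f\colon G \twoheadrightarrow H$ factors, and minimality of $G$ in $\MNEC(H)$ gives $K \cong G$.

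The genuine gap is in your proposed fallback ``by hand'' argument. First, the inequality $m_{\H}(L) \ge m_{\H}(H)$ is backwards: $L = G/(N_K N_H)$ is a \emph{quotient} of $H$, so $m_{\H}(H) \ge m_{\H}(L)$, and $L$ cannot be ``a non-Eichler cover of $H$'' — once the dust settles $L \cong H$, and the non-Eichler cover is $K$. Second, and more seriously, monotonicity and additivity of $m_{\H}$ over the Wedderburn decomposition are \emph{not} sufficient to show that $H \twoheadrightarrow L$ is Eichler. From $m_{\H}(G) = m_{\H}(K) \ge m_{\H}(L)$ and $m_{\H}(H) \ge m_{\H}(L)$ you cannot deduce $m_{\H}(H) = m_{\H}(L)$. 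That equality is precisely the content of \cref{lemma:EPL}, and its proof uses the characterisation of the relative Eichler condition in terms of binary polyhedral quotients (\cref{prop:relative-eichler-group}): one shows every $g \in \B(H)$ factors through $H \twoheadrightarrow \ol{H}$ by first pulling it back along $G \twoheadrightarrow H$, then using that $G \twoheadrightarrow K$ is Eichler to factor it through $K$, then descending to $\ol{H}$. So the Eichler Pushout Lemma is not optional bookkeeping here; it (or an equivalent argument via \cref{prop:relative-eichler-group}) is genuinely needed, and your proposed replacement would not close the gap.
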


The proof of \cref{prop:MNEC-of-ES=ES} will be an application of the following general group theoretic lemma. This will also be used later in the proof of the Fundamental Lemma (\cref{lemma:fundamental-lemma}).

\begin{lemma}[Eichler Pushout Lemma] \label{lemma:EPL}
Let $f_1 : G \twoheadrightarrow H_1$ be a quotient and let $f_2 : G \twoheadrightarrow H_2$ be an Eichler quotient. Then there exists a finite group $\ol{H}$, an Eichler quotient $\ol{f}_1 : H_1 \twoheadrightarrow \ol{H}$ and a quotient $\ol{f}_2 : H_2 \twoheadrightarrow \ol{H}$ such that $\ol{f}_1 \circ f_1 = \ol{f}_2 \circ f_2$. 
i.e. the following diagram commutes:
\[
\begin{tikzcd}
	G \ar[r,twoheadrightarrow] \ar[d,twoheadrightarrow,"\text{\normalfont E}"'] & H_1 \ar[d,twoheadrightarrow,dashed,"\text{\normalfont E}"'] \\
	H_2 \ar[r,twoheadrightarrow,dashed] & \ol{H}
\end{tikzcd}
\]
where \normalfont{E} denotes an Eichler map.	 Furthermore, $H_2 \twoheadrightarrow \ol{H}$ is Eichler if and only if $G \twoheadrightarrow H_1$ is Eichler.
\end{lemma}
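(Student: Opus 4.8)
The plan is to take $\ol H$ to be the pushout of $f_1$ and $f_2$ in the category of groups. Concretely, writing $N_1 = \ker f_1$ and $N_2 = \ker f_2$, the product $N_1 N_2$ is a normal subgroup of $G$ (being a product of normal subgroups), and I would set $\ol H := G/(N_1 N_2)$ with $\ol f_1 : H_1 = G/N_1 \twoheadrightarrow \ol H$ and $\ol f_2 : H_2 = G/N_2 \twoheadrightarrow \ol H$ the canonical projections. With these choices the square commutes automatically, since both $\ol f_1 \circ f_1$ and $\ol f_2 \circ f_2$ are the quotient map $G \twoheadrightarrow \ol H$. So the entire content of the lemma reduces to the two Eichler claims: that $\ol f_1$ is an Eichler quotient, and that $\ol f_2$ is Eichler exactly when $f_1$ is.

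To show $\ol f_1$ is Eichler I would argue at the level of binary polyhedral quotients via \cref{prop:relative-eichler-group}. Let $\varphi : H_1 \twoheadrightarrow Q$ be any surjection onto a binary polyhedral group $Q$. Then $\varphi \circ f_1 \in \B(G)$, so, since $f_2$ is an Eichler quotient, \cref{prop:relative-eichler-group} provides a factorisation $\varphi \circ f_1 = \psi \circ f_2$ for some $\psi : H_2 \twoheadrightarrow Q$. Hence $N_2 \subseteq \ker(\psi \circ f_2) = \ker(\varphi \circ f_1)$, and trivially $N_1 \subseteq \ker(\varphi \circ f_1)$, so $N_1 N_2 \subseteq \ker(\varphi \circ f_1)$ and therefore $\varphi$ factors through $\ol f_1$. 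This shows the restriction map $\B(\ol H) \to \B(H_1)$ is surjective — it is always injective since $\ol f_1$ is onto — so $(H_1, \ol H)$ satisfies the relative Eichler condition by \cref{prop:relative-eichler-group}; that is, $\ol f_1$ is an Eichler quotient.

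For the final clause I would simply track $m_{\H}$, which is monotone along quotients because $\R H$ is a ring summand of $\R G$ whenever $G \twoheadrightarrow H$. Since $f_2$ and $\ol f_1$ are Eichler we have $m_{\H}(G) = m_{\H}(H_2)$ and $m_{\H}(H_1) = m_{\H}(\ol H)$, so
\[ f_1 \text{ Eichler} \iff m_{\H}(G) = m_{\H}(H_1) \iff m_{\H}(H_2) = m_{\H}(\ol H) \iff \ol f_2 \text{ Eichler},\]
which finishes the proof. The construction of $\ol H$ is essentially forced and the $m_{\H}$-bookkeeping in the last step is immediate, so the only step with real substance is the Eichler-ness of $\ol f_1$: the worry is that collapsing $H_1$ onto the pushout could create new quaternionic Wedderburn factors (so that $m_{\H}$ could a priori drop from $H_1$ to $\ol H$ by a strict amount), and the reason it cannot is precisely the factorisation argument above, which is available only because $f_2$ was assumed Eichler — that hypothesis is exactly what forces every binary polyhedral quotient of $G$, and hence of $H_1$, to already be visible on $\ol H$.
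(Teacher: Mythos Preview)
Your proof is correct and follows essentially the same approach as the paper: both construct $\ol H = G/(N_1 N_2)$, verify that $\ol f_1$ is Eichler by using \cref{prop:relative-eichler-group} to factor an arbitrary binary polyhedral quotient of $H_1$ through $\ol f_1$ via the Eichler hypothesis on $f_2$, and then deduce the final equivalence by comparing $m_{\H}$ values. The only (inconsequential) slip is in your closing commentary, where ``create new quaternionic Wedderburn factors'' should read ``lose'' --- quotienting can only decrease $m_{\H}$, which is indeed what you correctly rule out.
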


\begin{proof}
Let $N_1, N_2 \le G$ be normal subgroups such that $G/N_1 \cong H_1$ and $G/N_2 \cong H_2$. Let $\ol{H} = G/(N_1 \cdot N_2)$, let $\ol{f}_1$ and $\ol{f}_2$ be the natural quotient maps.

We now claim that $\ol{f}_1$ is Eichler. By \cref{prop:relative-eichler-group} it suffices to show that every binary polyhedral quotient $f: H_1 \twoheadrightarrow Q$ (i.e. a quotient with $Q$ binary polyhedral) factors through $\ol{f}_1$. Let $g_1 : H_1 \twoheadrightarrow Q$ be a binary polyhedral quotient. Then $g_1 \circ f_1 : G \twoheadrightarrow Q$ is a binary polyhedral quotient. Since $f_2 : G \twoheadrightarrow H_2$ is Eichler, \cref{prop:relative-eichler-group} implies that $g_1 \circ f_1$ factors through $f_2$, i.e. there exists a binary polyhedral quotient $g_2 : H_2 \twoheadrightarrow Q$ such that $g_1 \circ f_1 = g_2 \circ f_2$. 
Let $F := g_1 \circ f_1 : G \twoheadrightarrow Q$. Then $N_1 = \ker(f_1) \subseteq \ker(F)$ and, since $F= g_2 \circ f_2$ also, we have $N_2 = \ker(f_2) \subseteq \ker(F)$. This implies that $N_1 \cdot N_2 \subseteq \ker(F)$ and so $F$ factors through the quotient map $\ol{f}_1 \circ f_2 : G \twoheadrightarrow G/(N_1 \cdot N_2) =: \ol{H}$, i.e. there exists $q : \ol{H} \twoheadrightarrow Q$ such that $g_1 \circ f_1 = q \circ \ol{f}_1 \circ f_1$. Since $f_1$ is surjective, this implies that $g_1 = q \circ \ol{f}_1$. In particular, $g_1$ factors through $\ol{f}_1$ as required.

For the last part, note that $m_{\H}(G) = m_{\H}(H_2)$ and $m_{\H}(H_1)=m_{\H}(\ol{H})$. Hence $m_{\H}(G)=m_{\H}(H_1)$ if and only if $m_{\H}(H_2)=m_{\H}(\ol{H})$.
\end{proof}

\begin{proof}[Proof of \cref{prop:MNEC-of-ES=ES}]
For convenience, we will use the notation of \cref{lemma:EPL}.
Let $H_1 \in \EE$ and $G \in \MNEC(H_1)$. If $G \twoheadrightarrow H_2$ is an Eichler quotient, then there exists an Eichler quotient $H_1 \twoheadrightarrow \ol{H}$ and a quotient $H_2 \twoheadrightarrow \ol{H}$ such that there is a commutative diagram of quotients as in \cref{lemma:EPL}. Since $H_1 \in \EE$, the map $H_1 \twoheadrightarrow \ol{H}$ is an isomorphism. It follows that $G \twoheadrightarrow H$ factors through the quotient $H_1 \twoheadrightarrow H$ which is non-Eichler by the last part of \cref{lemma:EPL}. Since $G \in \MNEC(H)$, it follows that the map $G \twoheadrightarrow H_1$ is an isomorphism. Hence $G \in \EE$, as required.
\end{proof}

We can now deduce \cref{thm:Eichler-simple-alt-def} from Propositions \ref{prop:quotients<=>path} and \ref{prop:MNEC-of-ES=ES}.

\begin{proof}[Proof of \cref{thm:Eichler-simple-alt-def}]
If $G \in \EE$, then $G \twoheadrightarrow C_1$ and so \cref{prop:quotients<=>path} implies that $G \in \MNEC^n(C_1)$ for some $n \ge 0$. Conversely, if $G \in \MNEC^n(C_1)$ for some $n \ge 0$, then \cref{prop:MNEC-of-ES=ES} implies that $G \in \EE$. Hence $\EE = \bigcup_{n \ge 0} \MNEC^n(C_1)$.	
\end{proof}

We will next show the following. This implies that $\EE = \bigsqcup_{n \ge 0} \MNEC^n(C_1)$ is a disjoint union and, for all $N \ge 1$, the class $S = \bigcup_{n = 0}^N \MNEC^n(C_1) \le \EE$ is closed under quotients.

\begin{prop} \label{prop:MNEC^n(C_1)}
\begin{clist}{(i)}
\item
In $n \ne m$, then $\MNEC^n(C_1) \cap \MNEC^m(C_1)  = \emptyset$.
\item
If $G \in \MNEC^n(C_1)$ and $H \in \MNEC^m(C_1)$ and $G \twoheadrightarrow H$, then $n > m$ or $n=m$ and $G 
\cong H$.	
\end{clist}
\end{prop}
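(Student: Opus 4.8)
The plan is to prove (i) and (ii) together by strong induction on $|G|$, using a rank function on Eichler simple groups to pin down the level. Set $f(C_1)=0$ and, for $G\in\EE$ with $G\ne C_1$,
\[ f(G) \;=\; 1 + \max\{\,f(H) : H\in\EE \text{ a proper quotient of }G\,\}, \]
which is well defined by induction on order since proper quotients are strictly smaller and $C_1$ is always one of them. This $f$ is strictly monotone along proper quotients of Eichler simple groups: if $G\twoheadrightarrow H$ is a proper quotient with $G,H\in\EE$, then $H$ and all its Eichler simple proper quotients are Eichler simple proper quotients of $G$, so $f(G)\ge 1+f(H)>f(H)$. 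The key assertion, which implies both parts, is that for every $G\in\EE$ the set $\{\,n\ge 0 : G\in\MNEC^n(C_1)\,\}$ equals $\{f(G)\}$: uniqueness of the level is exactly (i), and if $G\in\MNEC^n(C_1)$, $H\in\MNEC^m(C_1)$ and $G\twoheadrightarrow H$, then $n=f(G)$, $m=f(H)$, and either $G\cong H$ (whence $n=m$) or the quotient is proper and $m=f(H)<f(G)=n$ by monotonicity, which is (ii).

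For the base case $G=C_1$: level $0$ works as $\MNEC^0(C_1)=\{C_1\}$, while no level $n\ge 1$ does, since every member of $\MNEC^n(C_1)$ with $n\ge 1$ is a non-Eichler cover of some group and so fails the Eichler condition. For the inductive step, assume the assertion for all smaller Eichler simple groups and let $G\in\EE$, $G\ne C_1$. By \cref{thm:Eichler-simple-alt-def} there is $n\ge 1$ with $G\in\MNEC^n(C_1)$; I will show $n=f(G)$. First, by \cref{prop:MNEC-basic-prop}, $G\in\MNEC^n(C_1)=(\wt\MNEC(\MNEC^{n-1}(C_1)))^{\QF}$, so $G\in\MNEC(H_0)$ for some $H_0\in\MNEC^{n-1}(C_1)$; as $G\twoheadrightarrow H_0$ is non-Eichler it is proper, so $H_0$ is an Eichler simple proper quotient of $G$ with $f(H_0)=n-1$ by induction, giving $f(G)\ge n$.

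The remaining, and main, point is $f(G)\le n$. Suppose not: then $G$ has an Eichler simple proper quotient $H$ with $f(H)=k\ge n$, i.e.\ $H\in\MNEC^k(C_1)$. Iterating \cref{prop:MNEC-basic-prop} (each member of $\MNEC^j(C_1)$ surjects onto a member of $\MNEC^{j-1}(C_1)$), there is $H'\in\MNEC^{n-1}(C_1)$ with $H\twoheadrightarrow H'$; since $H\in\EE$, this surjection cannot be an isomorphism (else $k=n-1$ by induction), so it is non-Eichler. Now pick a normal subgroup $M$ of $G$ that is maximal subject to containing $\ker(G\twoheadrightarrow H)$, being contained in $\ker(G\twoheadrightarrow H')$, and having $G/M\twoheadrightarrow H'$ non-Eichler (such $M$ exist, e.g.\ $\ker(G\twoheadrightarrow H)$), and set $H''=G/M$. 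Then $H''$ is a proper quotient of $G$ (as $M\supseteq\ker(G\twoheadrightarrow H)\ne\{1\}$), the induced $H''\twoheadrightarrow H'$ is non-Eichler, and maximality of $M$ prevents this map from factoring through a proper quotient of $H''$ that is a non-Eichler cover of $H'$; hence $H''\in\MNEC(H')$. Also $H''\notin\MNEC^{n-1}(C_1)$, for otherwise $H''\twoheadrightarrow H'$ would be a proper quotient between members of $\MNEC^{n-1}(C_1)$, contradicting strict monotonicity of $f$ together with the inductive hypothesis. Thus $H''\in\wt\MNEC(\MNEC^{n-1}(C_1))$ is a proper quotient of $G$, contradicting $G\in(\wt\MNEC(\MNEC^{n-1}(C_1)))^{\QF}$. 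Hence $f(G)\le n$, so $f(G)=n$, and as $n$ was an arbitrary level of $G$ the assertion follows.

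The hard part is this last step — ruling out an Eichler simple proper quotient of $G$ of level $\ge n$ — and in particular extracting a genuine \emph{minimal} non-Eichler cover $H''$ of a level-$(n-1)$ group that is still a proper quotient of $G$, so that the quotient-filter clause in the definition of $\MNEC^n(C_1)$ can be invoked. Some care is needed because the definition of $\MNEC(H')$ only asks for the non-factorisation property for \emph{some} surjection $H''\twoheadrightarrow H'$; here one verifies it for the surjection induced by the maximal $M$, translating a hypothetical factorisation into a strictly larger admissible normal subgroup. (A more direct attempt, comparing two presentations $G\in\MNEC(H_0)$ and $G\in\MNEC(H_0')$ via the Eichler Pushout Lemma \cref{lemma:EPL}, does not obviously work, since both surjections $G\twoheadrightarrow H_0$ and $G\twoheadrightarrow H_0'$ are non-Eichler.)
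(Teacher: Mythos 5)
Your proof is correct, and it takes a genuinely different route from the paper. The paper proves (ii) directly: if $G\in\MNEC^n(C_1)$, $H\in\MNEC^m(C_1)$, $G\twoheadrightarrow H$ and $n<m$, then iterating the MNEC construction from $H$ gives a proper non-Eichler quotient $H\twoheadrightarrow H'$ with $H'\in\MNEC^n(C_1)$, so $G\twoheadrightarrow H'$ is a proper quotient between two members of $\MNEC^n(C_1)=\MNEC(\MNEC^{n-1}(C_1))$, which contradicts the uniqueness clause built into the definition of $\MNEC(S)$; then (i) follows from (ii) applied to the identity surjection. You instead set up a rank function $f$ on $\EE$ and establish by strong induction on $|G|$ that the level of $G$ is exactly $f(G)$. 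That is more machinery, but it buys an explicit invariant (and a strict monotonicity statement along proper quotients in $\EE$) that the paper only gets implicitly.

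There is, however, a significant and avoidable detour in your argument. After producing an Eichler simple proper quotient $H\in\MNEC^k(C_1)$ of $G$ with $k\ge n$, you iterate down only to level $n-1$ and then spend the bulk of the proof constructing a genuine \emph{minimal} non-Eichler cover $H''$ of some $H'\in\MNEC^{n-1}(C_1)$ by a carefully maximal choice of normal subgroup $M\unlhd G$, so as to land inside $\wt\MNEC(\MNEC^{n-1}(C_1))$. But the quotient filter does not require you to exhibit a group in $\MNEC(\MNEC^{n-1}(C_1))$ — only one in $\wt\MNEC(\MNEC^{n-1}(C_1))$, and $\MNEC^n(C_1)=(\wt\MNEC(\MNEC^{n-1}(C_1)))^{\QF}$ is by definition already a subclass of $\wt\MNEC(\MNEC^{n-1}(C_1))$. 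So iterate \cref{prop:MNEC-basic-prop} only down to level $n$: there is a (possibly improper, if $k=n$) quotient $H\twoheadrightarrow H_{k-n}$ with $H_{k-n}\in\MNEC^n(C_1)$, and composing with the proper quotient $G\twoheadrightarrow H$ gives a proper quotient $G\twoheadrightarrow H_{k-n}$ landing in $\wt\MNEC(\MNEC^{n-1}(C_1))$, immediately contradicting $G\in(\wt\MNEC(\MNEC^{n-1}(C_1)))^{\QF}$. This is exactly the kernel of the paper's two-sentence proof of (ii), and it makes the maximal-$M$ construction, and the verification that $H''\in\MNEC(H')$, unnecessary.
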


\begin{proof}
We will start by proving (ii).
If $n < m$, then there is a non-Eichler quotient $H \twoheadrightarrow H'$ for some $H' \in \MNEC^n(C_1)$. By composition, this implies that there is a proper quotient $G \twoheadrightarrow H'$ with $G, H' \in \MNEC^n(C_1) = \MNEC(S)$ where $S=\MNEC^{n-1}(C_1)$. This is a contradiction by the definition of $\MNEC(S)$. If $n = m$, the same argument implies that $G \cong H$.

To prove (i), suppose $G \in \MNEC^n(C_1) \cap \MNEC^m(C_1)$ for $n \ne m$. Since $G \twoheadrightarrow G$ and $n \ne m$, applying (ii) twice gives that $n > m$ and $m > n$ which is a contradiction.
\end{proof}

For the purposes of computations in \cref{p:Computations}, we will need to consider Eichler simple groups with no quotients in a given set. 
For sets of groups $S$, $B$, define
\begin{align*}
\EE_{B} &= \{ G \in \EE : \text{$G$ has no quotient in $B$} \} \\
\MNEC_B(S) &= \{ G \in \MNEC(S) : \text{$G$ has no quotient in $B$}\}.
\end{align*}
It follows from \cref{thm:Eichler-simple-alt-def} that $\EE_B = \bigcup_{n \ge 0} \MNEC_B^n(C_1)$. As before, we say a subset $S \subseteq \EE_B$ is \textit{closed under quotients} if $G \twoheadrightarrow H$ for $G \in S$ and $H \in \EE_B$ implies $H \in S$.

\subsection{The Fundamental Lemma}
\label{ss:proof-fundamental-lemma}

The following is the main result of this section. For $S$ a specially chosen collection of Eichler simple groups, this makes it possible to determine the class of $S$-Eichler groups in terms of minimal non-Eichler covers.

\begin{replemma}{lemma:fundamental-lemma}[Fundamental Lemma]
Let $S \subseteq \EE$ be a class of Eichler simple groups which is closed under quotients. 
Then a finite group $G$ is $S$-Eichler if and only if $G$ has no quotient in $\MNEC(S)$.	
\end{replemma}

\begin{proof}
($\Rightarrow$):
Suppose $G$ is $S$-Eichler and has a quotient in $\MNEC(S)$. So there is an Eichler quotient $G \twoheadrightarrow H$ for $H \in S \subseteq \EE$ and a quotient $G \twoheadrightarrow H'$ for $H' \in \MNEC(S)$. By \cref{prop:MNEC-of-ES=ES}, we have $H' \in \EE$. By the Eichler Pushout Lemma (\cref{lemma:EPL}), there exists a finite group $\ol{H}$ and a commutative diagram
\[
\begin{tikzcd}
	G \ar[r,twoheadrightarrow] \ar[d,twoheadrightarrow,"\text{\normalfont E}"'] & H' \ar[d,twoheadrightarrow,"\text{\normalfont E}"'] \\
	H \ar[r,twoheadrightarrow] & \ol{H}
\end{tikzcd}
\]
where \normalfont{E} denotes an Eichler map.	 Since $H' \in \EE$ is Eichler simple, the map $H' \twoheadrightarrow \ol{H}$ is an isomorphism and so there is a quotient map $H \twoheadrightarrow H'$. Since $H \in S$ and $S$ is closed under quotients, this implies that $H' \in S$. This is a contradiction since $H' \in \MNEC(S)$ and $S \cap \MNEC(S) = \emptyset$.

($\Leftarrow$):
Suppose $G$ has no quotient in $\MNEC(S)$ but is not $S$-Eichler. Let $H \in S$ be a group of maximal order such that $G \twoheadrightarrow H$. Since $S$ is closed under quotients, we have $C_1 \in S$ and so such a group $H$ exists. Since $G$ is not $S$-Eichler, $G$ is not $H$-Eichler. By \cref{prop:groups1}, $G$ has a quotient $H' \in \MNEC(H)$.  Since $H' \twoheadrightarrow H$ is a non-Eichler cover, we have that $|H'| > |H|$ and so $H' \not \in S$ since $G$ has no quotients in $S$ or order $>|H|$. This implies that $H'$ has a quotient in $\MNEC(S)$ and so $G$ has a quotient in $\MNEC(S)$, which is a contradiction.
\end{proof}

The following version for $\EE_B$ will is useful from the perspective of computations. This follows immediately from \cref{lemma:fundamental-lemma} since, if $G$ has no quotient in $B$, then $G$ has a quotient in $\MNEC(S)$ if and only if $G$ has a quotient in $\MNEC_B(S)$.	

\begin{corollary}
	\label{cor:boundary-lemma-general}
Let $B \subseteq \EE$ be a subset and let $S \le \EE_B$ be a class which is closed under quotients. If $G$ is a finite group which has no quotients in $B$, then $G$ is $S$-Eichler if and only if $G$ has no quotient in $\MNEC_B(S)$.
\end{corollary}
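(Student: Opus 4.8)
The plan is to deduce this immediately from the Fundamental Lemma (\cref{lemma:fundamental-lemma}). Since $S \le \EE_B \subseteq \EE$, it is a class of Eichler simple groups, so the only genuine subtlety is that $S$ is assumed closed under quotients merely as a subset of $\EE_B$, whereas \cref{lemma:fundamental-lemma} requires closure under quotients as a subset of $\EE$. The first thing I would check is that these two notions agree here: if $G \in S$ and $G \twoheadrightarrow H$ with $H \in \EE$, then every quotient of $H$ is also a quotient of $G$, and $G \in \EE_B$ has no quotient in $B$, so $H$ has no quotient in $B$ either; hence $H \in \EE_B$, and therefore $H \in S$ by the hypothesis on $S$. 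Thus $S$ is closed under quotients as a subset of $\EE$, and \cref{lemma:fundamental-lemma} applies: a finite group $G$ is $S$-Eichler if and only if it has no quotient in $\MNEC(S)$.

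Next I would show that, for any finite group $G$ with no quotient in $B$, having a quotient in $\MNEC(S)$ is equivalent to having a quotient in $\MNEC_B(S)$. One direction is immediate from the inclusion $\MNEC_B(S) \subseteq \MNEC(S)$. For the converse, if $G \twoheadrightarrow H'$ with $H' \in \MNEC(S)$, then every quotient of $H'$ is a quotient of $G$ and hence avoids $B$, so $H'$ has no quotient in $B$ and therefore $H' \in \MNEC_B(S)$ by definition. Combining this equivalence with the conclusion of the previous paragraph yields the corollary.

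The whole argument is formal, so I do not anticipate any real obstacle; the only point that requires attention is the compatibility of the two ``closed under quotients'' conventions addressed in the first paragraph, without which the appeal to \cref{lemma:fundamental-lemma} would not be justified.
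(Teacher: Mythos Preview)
Your proof is correct and follows the same approach as the paper, which simply states that the corollary follows from \cref{lemma:fundamental-lemma} because, for $G$ with no quotient in $B$, having a quotient in $\MNEC(S)$ is equivalent to having a quotient in $\MNEC_B(S)$. Your first paragraph carefully verifies that closure under quotients in $\EE_B$ implies closure under quotients in $\EE$, a point the paper leaves implicit; this check is genuine and worth including, since without it the appeal to \cref{lemma:fundamental-lemma} is not formally justified.
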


\section{Algorithms for computing Eichler simple groups}
\label{s:MNEC-algorithms}

We will now develop methods for computing Eichler simple groups, with a view to explicit computations which we implement in GAP \cite{GAP4} and Magma \cite{magma}. In \cref{ss:MNEC-pullback}, we describe the groups in $\MNEC(H)$ in terms of pullbacks. This is the basis for algorithms for computing $\MNEC(H)$ given in \cref{ss:implementation}.
As usual, $G$ and $H$ will always denote finite groups.

\subsection{MNECs as pullbacks} \label{ss:MNEC-pullback}

We will now prove the following. This gives us a method for computing $\MNEC(H)$ for a group $H$.
Define $\wh{\Quot}(G,H) := \Aut(H) \setminus \Quot(G,H) / \Aut(G)$ where $\Aut(G)$ and $\Aut(H)$ act on $\Quot(G,H)$ by pre-composition and post-composition respectively.

\begin{prop} \label{prop:groups2}
Let $H$ be a finite group and let $G \in \MNEC(H)$.
Then there exists a binary polyhedral group $Q$, a finite group $A$ and quotient maps $f_1: H \twoheadrightarrow A$, $f_2 : Q \twoheadrightarrow A$ such that: 
\[ G \cong H \times_{f_1,f_2} Q := \{ (x_1,x_2) \in H \times Q : f_1(x_1) = f_2(x_2) \in A \}. \]

Furthermore, the isomorphism class of $H \times_{f_1,f_2} Q$ only depends on the classes $\ol{f_1} \in \wh{\Quot}(H,A)$ and $\ol{f_2} \in \wh{\Quot}(Q,A)$.
\end{prop}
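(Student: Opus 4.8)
The plan is to prove the two assertions separately: first, that every MNEC $G$ of $H$ is a fibre product of $H$ with a binary polyhedral group over a common quotient; second, that the isomorphism type of this fibre product depends only on the double-coset classes of the two structure maps.

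For the first assertion, I would start from the definition: since $G$ is a non-Eichler cover of $H$, we have $m_{\H}(G) > m_{\H}(H)$, so there is a binary polyhedral quotient $g : G \twoheadrightarrow Q$ which does not factor through the given quotient $f : G \twoheadrightarrow H$ (otherwise $\B(G)$ would be in bijection with $\B(H)$ via $f$, forcing the relative Eichler condition by \cref{prop:relative-eichler-group}). Let $N_1 = \ker(f)$ and $N_2 = \ker(g)$, and set $A = G/(N_1 N_2)$ with $f_1 : H \twoheadrightarrow A$ and $f_2 : Q \twoheadrightarrow A$ the induced quotient maps; these exist because $N_1 N_2 \supseteq N_1$ and $N_1 N_2 \supseteq N_2$. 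The map $(f,g) : G \to H \times Q$ lands in the fibre product $H \times_{f_1,f_2} Q$ by construction, and is injective precisely when $N_1 \cap N_2 = 1$. To arrange this, I would choose $Q$ and $g$ minimally: among all binary polyhedral quotients $g : G \twoheadrightarrow Q$ not factoring through $f$, pick one with $\ker(g)$ maximal. Then I claim $G$ is the fibre product. Consider $G' := G/(N_1 \cap N_2)$; the quotient $G \twoheadrightarrow G'$ has $f$ and $g$ both factoring through it, so $G'$ is still a non-Eichler cover of $H$ (it retains the quotient $Q$, hence $m_{\H}(G') > m_{\H}(H)$). Since $G$ is a minimal non-Eichler cover, the map $G \twoheadrightarrow G'$ must be an isomorphism, i.e. $N_1 \cap N_2 = 1$, so $(f,g)$ embeds $G$ into $H \times_{f_1,f_2} Q$. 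For surjectivity onto the fibre product, I would argue that $H \times_{f_1,f_2} Q$ is itself a non-Eichler cover of $H$ (it surjects onto $Q$), that it receives a surjection from $G$... wait — actually the cleaner route is: the image $(f,g)(G)$ is a subgroup of $P := H \times_{f_1,f_2} Q$ which surjects onto both factors $H$ and $Q$; by Goursat's lemma applied to $P \le H \times Q$, such a subgroup of $P$ that still surjects onto both $H$ and $Q$ must equal $P$ (the Goursat correspondence for $P$ is the graph of the isomorphism $H/N_1 \cong Q/N_2$, and $(f,g)(G)$ corresponds to the same isomorphism). So $(f,g)$ is an isomorphism onto $H \times_{f_1,f_2} Q$.

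For the second assertion, suppose $\ol{f_1} = \ol{f_1'}$ in $\wh{\Quot}(H,A)$ and $\ol{f_2} = \ol{f_2'}$ in $\wh{\Quot}(Q,A)$. By definition this means there are automorphisms $\alpha \in \Aut(H)$, $\beta \in \Aut(Q)$, and $\gamma, \delta \in \Aut(A)$ with $f_1' = \gamma \circ f_1 \circ \alpha$ and $f_2' = \delta \circ f_2 \circ \beta$. First, pre-composition by $\alpha$ and $\beta$ induces an isomorphism $H \times_{f_1,f_2} Q \cong H \times_{f_1 \alpha, f_2 \beta} Q$ via $(x_1, x_2) \mapsto (\alpha^{-1}(x_1), \beta^{-1}(x_2))$ — one checks directly this lands in the new fibre product. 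So it remains to handle post-composition by automorphisms of $A$; but here I need $\gamma = \delta$, which need not hold a priori. The fix: replace $f_2$ by $\delta^{-1}\gamma \circ f_2$ — since $\delta^{-1}\gamma \in \Aut(A)$, the fibre products $H \times_{f_1, f_2} Q$ and $H \times_{f_1, \delta^{-1}\gamma f_2} Q$ agree (the defining condition $f_1(x_1) = f_2(x_2)$ is equivalent to $f_1(x_1) = (\delta^{-1}\gamma f_2)(x_2)$ after applying the bijection $\delta^{-1}\gamma$). More cleanly: for any $\varepsilon \in \Aut(A)$, the condition $f_1(x_1) = f_2(x_2)$ is equivalent to $(\varepsilon f_1)(x_1) = (\varepsilon f_2)(x_2)$, so $H \times_{f_1,f_2} Q = H \times_{\varepsilon f_1, \varepsilon f_2} Q$ as subsets of $H \times Q$. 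Combining: $H \times_{f_1',f_2'} Q = H \times_{\gamma f_1 \alpha, \delta f_2 \beta} Q \cong H \times_{\gamma f_1, \delta f_2} Q$ (pre-composition step) $= H \times_{\gamma^{-1}\gamma f_1, \gamma^{-1}\delta f_2} Q = H \times_{f_1, \gamma^{-1}\delta f_2} Q$, and since $\gamma^{-1}\delta \in \Aut(A)$, a further pre-composition-type identification... here is the subtlety: $\gamma^{-1}\delta f_2$ and $f_2$ differ by an automorphism of $A$ on the \emph{target}, which is not absorbed by an automorphism of $Q$ in general. But they have the same \emph{kernel}, hence define the same normal subgroup $N_2 \trianglelefteq Q$, and the fibre product depends only on $(N_1, N_2)$ together with the induced isomorphism $H/N_1 \cong A \cong Q/N_2$; changing $f_2$ by $\gamma^{-1}\delta$ precomposes this iso with an automorphism of $A$ that is \emph{also} applied to the $H$-side (since we simultaneously replaced $f_1$), so the composite iso $H/N_1 \to Q/N_2$ is unchanged. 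I would make this precise by phrasing the whole argument in terms of the pair of normal subgroups and the gluing isomorphism, which is manifestly the data the fibre product depends on.

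The main obstacle I anticipate is the surjectivity in the first part — showing that the embedded copy of $G$ fills out the entire fibre product $H \times_{f_1,f_2} Q$ and not just a proper subgroup. The Goursat's lemma argument handles this, but one must be careful that minimality of $G$ is genuinely used and that the choice of $Q$ with maximal $\ker(g)$ is what forces $N_1 \cap N_2 = 1$; the delicate point is that $H \times_{f_1,f_2} Q$ must be checked to still be a non-Eichler cover of $H$ (so that it cannot be a strictly smaller MNEC sitting below $G$), which follows because it surjects onto the binary polyhedral group $Q$ and hence $m_{\H} \geq m_{\H}(Q) > m_{\H}(H)$ — but one should double check $m_\H$ of the fibre product is large enough, using that $\R[H\times_{f_1,f_2}Q]$ contains $\R Q$ as a quotient summand. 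The second part is essentially bookkeeping with automorphisms once one commits to tracking the pair of kernels plus the gluing isomorphism rather than the maps themselves.
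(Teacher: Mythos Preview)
Your argument for the first part is the same as the paper's (Lemmas~6.3--6.6): pick a binary polyhedral quotient $g$ not factoring through $f$, show $\ker f \cap \ker g = 1$ using minimality of $G$, and identify $G$ with the fibre product. One imprecision to fix: the claim that $G' = G/(N_1 \cap N_2)$ is a non-Eichler cover of $H$ because ``it retains the quotient $Q$, hence $m_{\H}(G') > m_{\H}(H)$'' is not valid as stated --- merely having a binary polyhedral quotient does not bound $m_{\H}(G')$ against $m_{\H}(H)$. The correct justification (the paper's \cref{lemma:min-non-Eichler}) is the one you already have the ingredients for: if $\bar f : G' \to H$ were Eichler, then by \cref{prop:relative-eichler-group} the induced $\bar g : G' \to Q$ would factor through $\bar f$, forcing $g$ to factor through $f$, a contradiction. (The ``choose $\ker g$ maximal'' step is superfluous; you never use it, and the paper does not either.) For surjectivity, the paper argues directly from the pullback square in \cref{lemma:classify-disjoint-quotients} (equivalently, by counting $|P| = |H|\cdot|Q|/|A| = |G|$), which is cleaner than invoking Goursat's lemma but reaches the same conclusion.

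Your unease in the second part is well placed. The paper's proof of the ``Furthermore'' clause appeals to \cref{lemma:Quot-coset}, which establishes exactly what you worked out: invariance under pre-composition by $\Aut(H) \times \Aut(Q)$ (part~(i)) together with the ability to shift a \emph{single} $\Aut(A)$-twist from one factor to the other (part~(ii)). Combined, this shows the isomorphism class depends only on the orbit of $(f_1,f_2)$ under the \emph{diagonal} $\Aut(A)$-action on the two single-coset spaces --- not on the pair of double cosets $(\bar f_1,\bar f_2) \in \wh{\Quot}(H,A) \times \wh{\Quot}(Q,A)$ separately. Your worry that $H \times_{f_1,f_2} Q$ and $H \times_{f_1,\sigma f_2} Q$ need not be isomorphic for arbitrary $\sigma \in \Aut(A)$ is justified in general. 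For the paper's applications (the enumeration in \cref{alg:MNEC(G)-original} and the calculations in \cref{ss:proof-of-T^nxI^m}) only the weaker invariance is actually required: one ranges over representatives and filters by isomorphism afterwards. So your level of proof matches what is genuinely used.
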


The proof of \cref{prop:groups2} will consist of the following chain of lemmas.
For a finite group $G$, we say that two quotients $f_1 : G \twoheadrightarrow H_1$ and $f_2 : G \twoheadrightarrow H_2$ are \textit{disjoint} if $\ker(f_1) \cap \ker(f_2) = \{1\}$.

\begin{lemma} \label{lemma:classify-disjoint-quotients} 
Let $G$ be a finite group with disjoint quotients $f_1: G \twoheadrightarrow H_1$ and $f_2: G \twoheadrightarrow H_2$. Let  $N_i = \ker(f_i)$. Then:
\begin{clist}{(i)}
\item $f_1(N_2) \unlhd H_1$ and $f_2(N_1) \unlhd H_2$ are normal subgroups
\item $f_1(N_2) \cong N_2$ and $f_2(N_1) \cong N_1$ as groups
\item $G/(N_1 \times N_2) \cong H_1/f_1(N_2) \cong H_2/f_2(N_1)$
\item
There is a pullback diagram of groups:
\[
\begin{tikzcd}[column sep={6em,between origins}]
	& N_1 \ar[d] \ar[r,equals] & N_1 \ar[d] \\
	N_2 \ar[d,equals] \ar[r] & G \ar[r] \ar[d] & H_2 \ar[d] \\
	N_2 \ar[r] & H_1 \ar[r] & G/(N_1 \times N_2)
\end{tikzcd}
\]
\end{clist}
\end{lemma}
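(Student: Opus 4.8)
The plan is to work entirely inside the finite group $G$ using the two normal subgroups $N_1 = \ker(f_1)$ and $N_2 = \ker(f_2)$, together with the disjointness hypothesis $N_1 \cap N_2 = \{1\}$. The starting observation is that $N_1$ and $N_2$ commute elementwise: for $a \in N_1$, $b \in N_2$, the commutator $[a,b]$ lies in $N_1 \cap N_2 = \{1\}$ since both $N_1$ and $N_2$ are normal in $G$. Hence $N_1 N_2 = N_1 \times N_2$ is an internal direct product, and this subgroup is normal in $G$ (being a product of two normal subgroups). This makes $G/(N_1 \times N_2)$ a well-defined quotient and already pins down the bottom-right corner of the desired diagram.

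First I would prove (i) and (ii) together. Since $f_1 : G \twoheadrightarrow H_1$ is surjective and $N_2 \unlhd G$, the image $f_1(N_2)$ is normal in $H_1$; symmetrically $f_2(N_1) \unlhd H_2$. This gives (i). For (ii), restrict $f_1$ to $N_2$: its kernel is $N_2 \cap N_1 = \{1\}$, so $f_1|_{N_2} : N_2 \to f_1(N_2)$ is an isomorphism; symmetrically for $f_2|_{N_1}$. This gives (ii). Then for (iii): $f_1$ induces a surjection $G/(N_1 \times N_2) \to H_1 / f_1(N_2)$ (because $f_1(N_1 N_2) = f_1(N_2)$), and this is injective since the preimage of $f_1(N_2)$ under $f_1$ is exactly $N_1 N_2$ (using $N_1 = \ker f_1$ and the fact that $f_1|_{N_2}$ is injective onto $f_1(N_2)$). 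Symmetrically $G/(N_1 \times N_2) \cong H_2/f_2(N_1)$, so all three are isomorphic.

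For (iv), I would identify the two squares in the diagram. The outer columns $N_i \to G \to H_{3-i}$ are just the inclusion followed by the quotient $f_{3-i}$; the composite is surjective onto $f_{3-i}(N_i) \unlhd H_{3-i}$. The bottom-left square (with $N_2$ on the left and $H_1$ in the middle) and the top-right square are exact rows in the obvious sense. The key claim is that the central square
\[
\begin{tikzcd}
G \ar[r,"f_2"] \ar[d,"f_1"'] & H_2 \ar[d] \\
H_1 \ar[r] & G/(N_1 \times N_2)
\end{tikzcd}
\]
is a pullback of groups, i.e. $G \cong \{(x_1,x_2) \in H_1 \times H_2 : \bar{f_1}(x_1) = \bar{f_2}(x_2)\}$ where the bars denote the maps to $G/(N_1\times N_2)$. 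The map $(f_1,f_2) : G \to H_1 \times H_2$ is injective (its kernel is $N_1 \cap N_2 = \{1\}$) and lands in this fibre product; surjectivity onto the fibre product is a counting/diagram-chase argument: given $(x_1,x_2)$ with matching image, lift $x_1$ to some $g \in G$, then $x_2 \cdot f_2(g)^{-1}$ has trivial image in $G/(N_1\times N_2)$ so lies in $f_2(N_1)$, hence equals $f_2(n)$ for $n \in N_1$ (uniquely, by (ii)), and $gn$ maps to $(x_1, x_2)$ since $n \in N_1 = \ker f_1$. This proves (iv).

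The whole lemma is essentially formal diagram-chasing, so there is no serious obstacle — the only point requiring a little care is the surjectivity in the pullback claim of (iv), and keeping track of which copy of $N_i$ maps isomorphically onto which normal subgroup of $H_{3-i}$; everything else follows from the elementwise-commuting observation and the isomorphism theorems.
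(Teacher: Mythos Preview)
Your approach is essentially the same as the paper's: parts (i)--(iii) are proved via the standard isomorphism theorems (the paper invokes the second and third isomorphism theorems explicitly, you unpack them by hand), and for (iv) the paper simply asserts that the pullback property follows from (i)--(iii), whereas you give the explicit element chase.

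There is one small slip in your surjectivity argument for (iv). You choose $n\in N_1$ with $f_2(n)=x_2\,f_2(g)^{-1}$ and then claim $gn$ maps to $(x_1,x_2)$. But $f_2(gn)=f_2(g)\,f_2(n)=f_2(g)\,x_2\,f_2(g)^{-1}$, which is only a conjugate of $x_2$. The fix is immediate: either take $n$ with $f_2(n)=f_2(g)^{-1}x_2$ (this still lies in $f_2(N_1)$ by normality) so that $f_2(gn)=x_2$, or keep your $n$ and use $ng$ instead of $gn$. Alternatively, since $G$ is finite, you can bypass the chase entirely: the map $(f_1,f_2):G\to H_1\times_{G/(N_1N_2)} H_2$ is injective, and both sides have order $|G|$ because $|H_1|\,|H_2|/|G/(N_1N_2)|=|G|\cdot|N_1N_2|/(|N_1|\,|N_2|)=|G|$.
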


\begin{proof}
(i) This follows since $f_1$, $f_2$ are surjective.

(ii) By the second isomorphism theorem, we have that
\[ f_1(N_2) = (N_1 \cdot N_2)/N_1 \cong  N_2 / (N_1 \cap N_2) = N_2 \]
where the last equality is since $N_1 \cap N_2 = \{1\}$. Similarly for $f_2(N_1)$.

(iii)
By symmetry, it suffices to prove that $G/(N_1 \times N_2) \cong H_1/f_1(N_2)$.
Since $N_1 \cap N_2 = \{1\}$, we have $N_1 \times N_2 = N_1 \cdot N_2 \, \unlhd \, G$. 
By the third isomorphism theorem, this implies that
\[ G/(N_1 \times N_2) = G/(N_1 \cdot N_2) \cong (G/N_1)/((N_1 \cdot N_2)/N_1) =  H_1/f_1(N_2). \]

(iv) The diagram is formed by taking $N_1 \to G$ to be the inclusion map, $N_1 \to H_2$ to be the composition of $N_1 \cong f_2(N_1)$ and inclusion, and $H_1 \to G/(N_1 \times N_2)$ to be the quotient map. Similarly for $N_2$. This is a pullback diagram by (i), (ii) and (iii).
\end{proof}

\begin{lemma} \label{lemma:pullbacks-of-groups}
Let $G$, $H_1$, $H_2$ be finite groups. Then there exists disjoint quotients $G \twoheadrightarrow H_1$ and $G \twoheadrightarrow H_2$ if and only if there exists a group $A$ and quotients $f_1 : H_1 \twoheadrightarrow A$, $f_2 : H_2 \twoheadrightarrow A$ such that
\[ G \cong H_1 \times_{f_1,f_2} H_2 :=  \{ (x_1,x_2) \in H_1 \times H_2 : f_1(x_1) = f_2(x_2) \in A \}.\]
\end{lemma}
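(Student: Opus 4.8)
The plan is to prove the two implications separately, obtaining ($\Rightarrow$) as a formal consequence of \cref{lemma:classify-disjoint-quotients} and checking ($\Leftarrow$) directly.

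For ($\Rightarrow$), suppose $G$ has disjoint quotients $f_1 : G \twoheadrightarrow H_1$ and $f_2 : G \twoheadrightarrow H_2$, and write $N_i = \ker(f_i)$, so that $N_1 \cap N_2 = \{1\}$. I would set $A = G/(N_1 \times N_2)$ and take $\ol f_1 : H_1 \twoheadrightarrow A$ and $\ol f_2 : H_2 \twoheadrightarrow A$ to be the canonical quotient maps furnished by \cref{lemma:classify-disjoint-quotients}(iii). Then the map $\varphi : G \to H_1 \times_{\ol f_1, \ol f_2} H_2$, $g \mapsto (f_1(g), f_2(g))$, is a well-defined homomorphism into the fibre product, since $\ol f_1 \circ f_1$ and $\ol f_2 \circ f_2$ are both equal to the quotient map $G \twoheadrightarrow A$. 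Its kernel is $N_1 \cap N_2 = \{1\}$, so $\varphi$ is injective, and I would conclude surjectivity by an order count: a fibre product of surjections $H_i \twoheadrightarrow A$ has order $|H_1|\,|H_2|/|A|$, and since $|H_i| = |G|/|N_i|$ and $|A| = |G|/(|N_1|\,|N_2|)$ this equals $|G|$. Hence $\varphi$ is an isomorphism. (Equivalently, one can read this isomorphism straight off the pullback square in \cref{lemma:classify-disjoint-quotients}(iv).)

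For ($\Leftarrow$), suppose $G \cong H_1 \times_{f_1, f_2} H_2$ for some quotients $f_i : H_i \twoheadrightarrow A$, and let $\pi_i : G \to H_i$ be the two coordinate projections. Each $\pi_i$ is surjective: given $x_1 \in H_1$, surjectivity of $f_2$ provides $x_2 \in H_2$ with $f_2(x_2) = f_1(x_1)$, so $(x_1,x_2) \in G$ and $\pi_1(x_1,x_2) = x_1$; the argument for $\pi_2$ is symmetric. Finally, inside $H_1 \times H_2$ one computes $\ker(\pi_1) = \{1\} \times \ker(f_2)$ and $\ker(\pi_2) = \ker(f_1) \times \{1\}$, whence $\ker(\pi_1) \cap \ker(\pi_2) = \{(1,1)\}$. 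Thus $\pi_1$ and $\pi_2$ are disjoint quotients of $G$.

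I do not expect a genuine obstacle here: once \cref{lemma:classify-disjoint-quotients} is available the forward direction is essentially its restatement, and the only point needing a line of care is the surjectivity of $\varphi$, i.e. the order count for a fibre product over a surjection; the converse is entirely elementary.
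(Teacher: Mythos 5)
Your proof is correct and follows essentially the same route as the paper: the forward direction is read off the pullback square in \cref{lemma:classify-disjoint-quotients}(iv) (your order-count verification of surjectivity is a fine substitute for invoking the universal property of the pullback directly), and the converse is the same elementary projection argument.
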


\begin{proof}
($\Rightarrow$) This follows directly from \cref{lemma:classify-disjoint-quotients} (iv) and the definition of pullback square of groups. In particular, take $A = G/(N_1 \times N_2)$ with $f_1$, $f_2$ the maps to $A$ in the pullback square.

($\Leftarrow$) Without loss of generality, take $G = \{ (x_1,x_2) \in H_1 \times H_2 : g_1(x_1) = g_2(x_2) \}$ for a finite group $A$ and quotients $g_1 : H_1 \twoheadrightarrow A$, $g_2 : H_2 \twoheadrightarrow A$.
Let $f_1 : G \to H_1$, $(x_1,x_2) \mapsto x_1$. This is surjective since for all $x_1 \in H_1$ there exists $x_2 \in H_2$ such that $g_2(x_2)=g_1(x_1)$ by the surjectivity of $g_2$. Similarly $f_2 : G \twoheadrightarrow H_2$, $(x_1,x_2) \mapsto x_2$ is surjective.
Now $f_1$, $f_2$ are clearly disjoint since $(x_1,x_2) \in \ker(f_1) \cap \ker(f_2)$ implies $x_1=1$ and $x_2=1$.
\end{proof}

The following observation is straightforward but recorded here for convenience.

\begin{lemma} \label{lemma:disjoint=doesnt-factor}
Let $G$ be a finite group and let $f_1 : G \twoheadrightarrow H_1$,  $f_2 : G \twoheadrightarrow H_2$ be disjoint quotients. 
Then $f_1$ factors through $f_2$ if and only if $f_2$ is not proper.	
In particular, $f_1$ and $f_2$ are proper if and only if $f_1$ does not factor through $f_2$ and $f_2$ does not factor through $f_1$.
\end{lemma}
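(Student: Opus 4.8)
The plan is to translate both sides of the biconditional into statements about kernels and then argue by elementary set theory. Recall the standard fact (from the first isomorphism theorem / universal property of quotients): since $f_2 : G \twoheadrightarrow H_2$ is surjective, a quotient $f_1 : G \twoheadrightarrow H_1$ factors through $f_2$ if and only if $\ker(f_2) \subseteq \ker(f_1)$, in which case the factoring map $\varphi : H_2 \to H_1$ with $f_1 = \varphi \circ f_2$ is unique. Also, $f_2$ is \emph{not} proper precisely when $f_2$ is an isomorphism, i.e. when $\ker(f_2) = \{1\}$.

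First I would prove the ($\Leftarrow$) direction: if $f_2$ is not proper, then $\ker(f_2) = \{1\} \subseteq \ker(f_1)$, so $f_1$ factors through $f_2$ (concretely via $\varphi = f_1 \circ f_2^{-1}$). Next, for ($\Rightarrow$): if $f_1$ factors through $f_2$, then $\ker(f_2) \subseteq \ker(f_1)$, hence $\ker(f_2) = \ker(f_1) \cap \ker(f_2)$. But disjointness of $f_1$ and $f_2$ says exactly that $\ker(f_1) \cap \ker(f_2) = \{1\}$, so $\ker(f_2) = \{1\}$ and $f_2$ is not proper.

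Finally I would deduce the ``in particular'' clause: by the main statement, $f_1$ does not factor through $f_2$ is equivalent to $f_2$ being proper, and symmetrically (swapping the roles of $1$ and $2$, using that $f_1$, $f_2$ are disjoint iff $f_2$, $f_1$ are disjoint) $f_2$ does not factor through $f_1$ is equivalent to $f_1$ being proper. Conjoining these two equivalences gives that $f_1$ and $f_2$ are both proper if and only if neither factors through the other.

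This lemma is purely formal, so I do not anticipate any genuine obstacle; the only point requiring a moment's care is invoking the correct characterisation of ``factors through'' in terms of kernel containment and remembering to use surjectivity of $f_2$ there.
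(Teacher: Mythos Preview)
Your proof is correct. The paper does not actually give a proof of this lemma; it merely records it as a ``straightforward observation'' for later convenience, so your argument via kernel containment and disjointness is exactly the kind of routine verification the authors had in mind.
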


\begin{lemma} \label{lemma:min-non-Eichler}
Let $H$ be a finite group, let $G \in \MNEC(H)$ and let $f : G \twoheadrightarrow H$ be a quotient map. Then there exists $g \in \B(G)$ such that $f$, $g$ are disjoint quotients.
\end{lemma}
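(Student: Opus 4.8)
The plan is to build the required $g$ from the group-theoretic description of the relative Eichler condition (\cref{prop:relative-eichler-group}) and then to force disjointness from $f$ by invoking the minimality built into the definition of $\MNEC(H)$. Throughout, $f : G \twoheadrightarrow H$ is taken to be a quotient realising the minimality clause in that definition.

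First I would note that, since $G \in \MNEC(H)$, the group $G$ is in particular a non-Eichler cover of $H$, so the pair $(G,H)$ fails the relative Eichler condition. By \cref{prop:relative-eichler-group} this means that not every element of $\B(G)$ factors through $f$, so there exists $g \in \B(G)$, say represented by a surjection $g : G \twoheadrightarrow Q$ with $Q$ a binary polyhedral group, which does not factor through $f$. The claim is that this $g$ is already disjoint from $f$. To check this, put $N_1 = \ker(f)$, $N_2 = \ker(g)$, $N = N_1 \cap N_2$, and let $\pi : G \twoheadrightarrow G/N$ be the quotient. Since $N \subseteq N_1$ and $N \subseteq N_2$, both $f$ and $g$ factor through $\pi$; write $f = \bar{f} \circ \pi$ and $g = \bar{g} \circ \pi$ with $\bar{f} : G/N \twoheadrightarrow H$ and $\bar{g} : G/N \twoheadrightarrow Q$. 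Then $\bar{g} \in \B(G/N)$, and $\bar{g}$ does not factor through $\bar{f}$: a factorisation $\bar{g} = h \circ \bar{f}$ would give $g = \bar{g}\circ\pi = h \circ \bar{f}\circ\pi = h \circ f$, contradicting the choice of $g$. Applying \cref{prop:relative-eichler-group} to $\bar{f} : G/N \twoheadrightarrow H$ therefore shows that $(G/N,H)$ fails the relative Eichler condition, i.e. $G/N$ is a non-Eichler cover of $H$. To finish, I would observe that $f = \bar{f} \circ \pi$ factors through the quotient $\bar{f} : G/N \twoheadrightarrow H$ whose source is a non-Eichler cover of $H$; by the minimality of $G$ in $\MNEC(H)$, $f$ cannot factor through a non-Eichler cover of $H$ that is a proper quotient of $G$, so $G/N \cong G$ and hence $N = N_1 \cap N_2 = \{1\}$. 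This is exactly the assertion that $f$ and $g$ are disjoint.

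The argument is short and I do not expect a genuinely hard step. The two points that need care are: (a) verifying that $\bar{g}$ still fails to factor through $\bar{f}$ after passing to $G/N$, so that \cref{prop:relative-eichler-group} can legitimately be reapplied there; and (b) keeping track of the fact that the minimality clause in the definition of $\MNEC(H)$ is about the specific quotient $f$, so that the conclusion $G/N\cong G$ is available. Everything else is routine bookkeeping with kernels of group homomorphisms.
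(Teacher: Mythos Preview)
Your proof is correct and follows essentially the same route as the paper: choose $g\in\B(G)$ not factoring through $f$ via \cref{prop:relative-eichler-group}, pass to $G/(\ker f\cap\ker g)$, verify this is still a non-Eichler cover of $H$, and invoke minimality to force $\ker f\cap\ker g=\{1\}$. The paper's argument is organised identically, only phrasing step~(a) as the contrapositive (``if $\bar f$ were Eichler then $\bar g$ would factor through $\bar f$'').
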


\begin{proof}
Since $f : G \to H$ is a non-Eichler, it follows from \cref{prop:relative-eichler-group} that there exists $g \in \B(G)$ which does not factor through $f$. Let $g: G \twoheadrightarrow Q$ for $Q$ a binary polyhedral group.
Now define $\ol G = G / (\ker(f) \cap \ker(g))$. This has quotients $\ol f : \ol G \twoheadrightarrow G$ and $\ol g : \ol G \twoheadrightarrow Q$ which are induced by $f$ and $g$ respectively. Furthermore, $f$ and $g$ factor through $\ol f$ and $\ol g$ respectively. If $\ol f: G \twoheadrightarrow \ol G$ is Eichler then, by the other direction of \cref{prop:relative-eichler-group}, $\ol g$ must factor through $\ol f$. This now implies that $g$ factors through $f$, which is a contradiction.
Hence $\ol f$ is non-Eichler and so $\ol G$ is another non-Eichler cover of $H$. Since $G$ is a minimal non-Eichler cover, it follows that the map $G \twoheadrightarrow \ol G$ is an isomorphism. This implies that $\ker(f) \cap \ker(g) = \{1\}$ and so $f$, $g$ are disjoint quotients. 
\end{proof}

\begin{lemma} \label{lemma:Quot-coset}
Let $H_1, H_2, A$ be finite groups. Suppose $f_1,f_1' \in \Quot(H_1,A)$ and $f_2,f_2' \in \Quot(H_2,A)$. 
\begin{clist}{(i)}
\item
If $[f_1] = [f_1'] \in \Quot(H_1,A) / \Aut(H_1)$ and $[f_2] = [f_2'] \in \Quot(H_2,A) / \Aut(H_2)$, then \\ $H_1 \times_{f_1,f_2} H_2 \cong H_1 \times_{f_1',f_2'} H_2$.
\item
If $\alpha \in \Aut(A)$, then $H_1 \times_{\alpha \circ f_1,f_2} H_2 \cong H_1 \times_{f_1,\alpha^{-1} \circ f_2} H_2$.
\end{clist}
\end{lemma}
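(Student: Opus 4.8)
The plan is to realise both isomorphisms by completely explicit maps on the underlying product groups $H_1 \times H_2$, so that the only work is checking well-definedness and bijectivity.

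For part (i) I would first unwind the hypothesis. Since $\Aut(H_1)$ acts on $\Quot(H_1,A)$ by precomposition, $[f_1] = [f_1']$ in $\Quot(H_1,A)/\Aut(H_1)$ means that $f_1' = f_1 \circ \varphi_1$ for some $\varphi_1 \in \Aut(H_1)$, and likewise $f_2' = f_2 \circ \varphi_2$ for some $\varphi_2 \in \Aut(H_2)$. I would then define
\[ \Phi : H_1 \times_{f_1',f_2'} H_2 \longrightarrow H_1 \times_{f_1,f_2} H_2, \qquad (x_1,x_2) \longmapsto (\varphi_1(x_1),\varphi_2(x_2)), \]
and verify the three routine points: it is well-defined because the defining relation $f_1'(x_1) = f_2'(x_2)$ is, by construction of $\varphi_1,\varphi_2$, equivalent to $f_1(\varphi_1(x_1)) = f_2(\varphi_2(x_2))$; it is a group homomorphism because $\varphi_1$ and $\varphi_2$ are; and $(y_1,y_2) \mapsto (\varphi_1^{-1}(y_1),\varphi_2^{-1}(y_2))$ is a two-sided inverse (the same computation with $f_1,f_2$ in place of $f_1',f_2'$). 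Hence $\Phi$ is an isomorphism.

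For part (ii) I would observe that for $(x_1,x_2) \in H_1 \times H_2$ the condition $(\alpha \circ f_1)(x_1) = f_2(x_2)$ is literally equivalent to $f_1(x_1) = (\alpha^{-1} \circ f_2)(x_2)$. Thus $H_1 \times_{\alpha \circ f_1,f_2} H_2$ and $H_1 \times_{f_1,\alpha^{-1}\circ f_2} H_2$ are the \emph{same} subgroup of $H_1 \times H_2$, and the identity map is the required isomorphism. (One should also note in passing that $\alpha \circ f_1$ and $\alpha^{-1} \circ f_2$ remain surjective, so both fibre products are defined.)

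I do not expect a genuine obstacle here: the statement is essentially a bookkeeping lemma. The only things requiring care are keeping track of the side on which each automorphism acts in part (i), and recognising in part (ii) that the two fibre products coincide on the nose rather than merely being abstractly isomorphic.
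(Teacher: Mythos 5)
Your proof is correct and takes the same route as the paper: explicit isomorphisms via the automorphisms for (i), and the observation that the two fibre products coincide as subsets of $H_1\times H_2$ for (ii). Your bookkeeping in (i) is in fact slightly tidier than the paper's, which defines the map in the opposite direction after writing $f_i'=f_i\circ\theta_i$; the intended relation there should be $f_i=f_i'\circ\theta_i$ for the stated well-definedness check to go through, whereas your orientation matches your hypothesis exactly.
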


\begin{proof}
(i) For $i=1,2$, let $f_i' = f_i \circ \theta_i$ for $\theta_i \in \Aut(H_i)$. Then consider the map
\[ F : H_1 \times_{f_1,f_2} H_2 \to H_1 \times_{f_1',f_2'} H_2, \quad (x_1, x_2) \mapsto (\theta_1(x_1),\theta_2(x_2)). \]
This is well defined since, if $(x_1, x_2) \in H_1 \times_{f_1,f_2} H_2$, then $f_1(x_1)=f_2(x_2)$ and so $f_1'(\theta_1(x_1)) = f_2'(\theta_2(x_2))$. This implies that $(\theta_1(x_1),\theta_2(x_2)) \in H_1 \times_{f_1',f_2'} H_2$. Then check that $F$ is an isomorphism.

(ii) For all $x_1 \in H_1$ and $x_2 \in H_2$, $\alpha(f_1(x_1)) = f_2(x_2)$ if and only if $f_1(x_1) = \alpha^{-1}(f_2(x_2))$. It follows that $H_1 \times_{\alpha \circ f_1,f_2} H_2$ and $H_1 \times_{f_1,\alpha^{-1} \circ f_2} H_2$ coincide as sets, and so are isomorphic.
\end{proof}

\begin{proof}[Proof of \cref{prop:groups2}]
Let $H$ be a finite group and let $G \in \MNEC(H)$. Then there is a quotient $f : G \twoheadrightarrow H$. By \cref{lemma:min-non-Eichler}, there exists a binary polyhedral group $Q$ and a quotient $g : G \twoheadrightarrow Q$ such that $f$ and $g$ are disjoint. By \cref{lemma:pullbacks-of-groups}, this implies that $G \cong H \times_{f_1,f_2} Q$ for some quotients $f_1 : H \twoheadrightarrow A$ and $f_2 : Q \twoheadrightarrow A$, as required. The last part follows directly from \cref{lemma:Quot-coset}.
\end{proof}

\subsection{Algorithms} 
\label{ss:implementation}

We will now give our main algorithm which computes $\MNEC_B(H)$ for a given group $H$ and a specially determined class of groups $B$.
The algorithm takes as input the normal subgroups and quotients of the groups $Q_8$,  $Q_{12}$,  $Q_{16}$,  $Q_{20}$, $\wt T$, $\wt O$ and $\wt I$. For convenience later on, we list them in \cref{fig:BPG-quotients}. This follows, for example, from the tables in GroupNames \cite{groupnames}.

\begin{figure}[h]
\begin{center}
\begin{tabular}{|c|c|c|}
\hline
Group & Normal subgroups & Quotients \\ \hline
$Q_8$ & $C_1$, $C_2$, $C_4$, $Q_8$ & $C_1$, $C_2$, $C_2^2$, $Q_8$ \\ \hline
$Q_{12}$ & $C_1$, $C_2$, $C_3$, $C_6$, $Q_{12}$ & $C_1$, $C_2$, $C_4$, $D_6$, $Q_{12}$ \\ \hline
$Q_{16}$ & $C_1$, $C_2$, $C_4$, $C_8$, $Q_8$, $Q_{16}$ & $C_1$, $C_2$, $C_2^2$, $D_8$, $Q_{16}$ \\ \hline
$Q_{20}$ & $C_1$, $C_2$, $C_5$, $C_{10}$, $Q_{20}$ & $C_1$, $C_2$, $C_4$, $D_{10}$, $Q_{20}$ \\ \hline
$\widetilde{T}$ & $C_1$, $C_2$, $Q_8$, $\wt T$ & $C_1$, $C_3$, $A_4$, $\wt T$ \\ \hline
$\widetilde{O}$ & $C_1$, $C_2$, $Q_8$, $\wt T$, $\wt O$ & $C_1$, $C_2$, $D_6$, $S_4$, $\wt O$ \\ \hline
$\widetilde{I}$ & $C_1$, $C_2$, $\wt I$ & $C_1$, $A_5$, $\wt I$ \\ \hline
\end{tabular}
\end{center}
\caption{Normal subgroups and quotients of small binary polyhedral groups} \label{fig:BPG-quotients}
\end{figure}

Let $B_{\BPG} = \{Q_{36},Q_{60},Q_{100}\} \sqcup \{Q_{4p} : \text{$p \ge 7$}\} \sqcup \{Q_{8p} : \text{$p \ge 3$}\} \sqcup \{Q_{16p} : \text{$p \ge 3$} \} \sqcup \{Q_{2^n} : n \ge 4\}$, where $p$ is prime and $n$ is an integer.
This is the smallest class of groups for which a finite group $G$ has a quotient in $B_{\BPG}$ if and only if $G$ has a quotient of the form $Q_{4n}$ for some $n \ge 6$. Since $Q_{4n} \in \EE$ for all $n \ge 2$, we have $B_{\BPG} \subseteq \EE$.
Let $B$ be such that $B_{\BPG} \subseteq B \subseteq \EE$.

\begin{algorithm} \label{alg:MNEC(G)-original}
Input a finite group $H$ and a class $B$ such that $B_{\BPG} \subseteq B \subseteq \EE$.
Output $\MNEC_B(H)$.
Define 
\[ \BPG = \{ Q_8, Q_{12}, Q_{16}, Q_{20}, \widetilde{T}, \widetilde{O}, \widetilde{I}\}, \quad \BPG^{N} = \{C_1, C_2, C_3, C_4, C_5, C_6, C_8, Q_8, C_{10}, \widetilde{T}\}.\]
By \cref{fig:BPG-quotients}, $\BPG \cup \BPG^N$ (resp. $\BPG^N$) is the set of normal subgroups (resp. proper normal subgroups) of the groups in $\BPG$.
Note that $\Quot(Q)$ for each $Q \in \BPG$ has been computed already in \cref{fig:BPG-quotients}.
	The steps are as follows:
\begin{clist}{(i)}
\item
For each $N \in \BPG^N$, compute the extensions of $H$ by $N$, i.e. the groups $G$ such that there is an extension $1 \to N \to G \to H \to 1$. Let $\mathcal{S}_N$ denote this class of groups. For each $N \in \BPG \setminus \BPG^N = \{Q_{12},Q_{16},Q_{20},\wt O, \wt I\}$, let $\mathcal{S}_N = \{ H \times N\}$.

\item
Let $\mathcal{S} = \bigcup_{N} \mathcal{S}_{N}$ where $N \in \BPG \cup \BPG^N$.
Compute $m_{\H}(G)$ for $G \in \mathcal{S} \sqcup \{H\}$ using \cref{prop:m_H-formula}.
Remove all groups $G \in \mathcal{S}$ such that $m_{\H}(G) = m_{\H}(H)$.
Then remove all groups which have others as proper quotients. Then remove all groups which have quotients in $B$. This gives a class of groups $\mathcal{S}'$ which coincides with $\MNEC_B(H)$.
\end{clist}	
\end{algorithm}

\begin{remark}
We lose no generality in assuming that $B \supseteq \{Q_{4n} : n \ge 6\}$ rather than $B \supseteq B_{\BPG}$. However, this does make a slight difference to the speed of the algorithm in practice.
\end{remark}

\begin{proof}[Proof of correctness]
We claim that the algorithm terminates and has output $\mathcal{S}' = \MNEC_B(H)$. The algorithm terminates since, for fixed $H$ and $N$ a soluble group, there is a finite time algorithm for determining the groups $G$ such that there is an extension $1 \to N \to G \to H \to 1$ using group cohomology. 
The case where $N$ is soluble suffices since all the groups in $\BPG^N$ are soluble (see GroupNames \cite{groupnames}).
We use \cref{prop:m_H-formula} to compute $m_{\H}(G)$. This amounts to computing the character table of $G$ and the Frobenius-Schur indicators of each character, both of which can be done in finite time.

We now claim that the algorithm computes $\MNEC_B(H)$, i.e. that $\MNEC_B(H) = \mathcal{S}'$.
In order to prove this, let $\mathcal{S}'' \subseteq \mathcal{S}$ be the subset consisting of those $G \in \mathcal{S}$ such that $m_{\H}(G)>m_{\H}(H)$ and which have no quotients in $B$.
So we have $\mathcal{S}' \subseteq \mathcal{S}'' \subseteq \mathcal{S}$.
The order of removing groups from $\mathcal{S}'$ with others as proper quotients and removing those with quotients in $B$ does not matter, and so 
 $\mathcal{S}'$ is obtained from $\mathcal{S}''$ by removing groups with proper quotients.

We will start by showing that $\MNEC_B(H) \subseteq \mathcal{S}''$.
 Let $G \in \MNEC_B(H)$. Then there is a quotient $f : G \twoheadrightarrow H$ and, by \cref{lemma:min-non-Eichler}, there is a quotient $f : G \twoheadrightarrow Q$ for some binary polyhedral group $Q$ such that $f$ and $g$ are disjoint. Since $G$ has no quotients in $B$ and so no quotients of the form $Q_{4n}$ for $n \ge 6$, we must have $Q \in \BPG$. 
By \cref{lemma:classify-disjoint-quotients} (iv), this implies that there is an extension of groups
$ 1 \to N \to G \to  H \to 1 $
where $N \le Q$ is a normal subgroup. In particular, $N \in \BPG \cup \BPG^N$. If $N \in \BPG \setminus \BPG^N$, then we must have $N \cong Q$ and so \cref{lemma:classify-disjoint-quotients} (iii) implies that $G \cong H \times N$. It follows that $G \in \mathcal{S}_N$ and so $G \in \mathcal{S}$. Since $G \in \MNEC_B(H)$, we have $m_{\H}(G) > m_{\H}(H)$ and $G$ has no quotients in $B$. Hence $G \in \mathcal{S}''$.

We will next show that $\mathcal{S}' \subseteq \MNEC_B(H)$. Let $G \in \mathcal{S}'$, and so $G \in \mathcal{S}_N$ for some $N \in \BPG \cup \BPG^N$. This implies that there is an extension
$ 1 \to N \to G \to H \to 1$.
Since $m_{\H}(G) > m_{\H}(H)$ and $G$ has no quotients in $B$, it follows that $G$ is a non-Eichler cover of $H$. Then $G$ has a quotient $G' \in \MNEC_B(H)$. Since $\MNEC_B(H) \subseteq \mathcal{S}''$, we have $G' \in \mathcal{S}''$. Since $G \in \mathcal{S}'$, this implies that $G \cong G'$ and so $G \in \MNEC_B(H)$.

Finally, we will complete the proof by showing that $\MNEC_B(H) \subseteq \mathcal{S}'$.
Let $G \in \MNEC_B(H)$. Then $G \in \mathcal{S}''$. 
Since $G$ is finite, we can keep quotienting until we arrive at a group with no proper quotients in $\mathcal{S}''$.
That is, $G$ has a quotient $G' \in \mathcal{S}'$. Then $G' \in \MNEC_B(H)$. Since $G$ is a minimal non-Eichler cover, we have $G \cong G'$ and so $G \in \mathcal{S}'$.
\end{proof}

We will now discuss the implementation of \cref{alg:MNEC(G)-original} in Magma and GAP.
\begin{clist}{(i)}
\item
In order to determine the extensions of $H$ by $N$ when $N \in \BPG^N$ (which is a soluble group), we use the Magma function \texttt{ExtensionsOfSolubleGroup}. We then filter the groups up to isomorphism using \texttt{IsIsomorphic}. Note that we could not do this for all $N \in \BPG \cup \BPG^N$ since $\wt I$ is not soluble.
\item
We compute $\mathcal{S}$ by filtering up to isomorphism once more. We compute $m_{\H}(G)$ for $G \in \mathcal{S} \sqcup \{H\}$ as in
\cref{prop:m_H-formula}. We compute the characters $\chi_V : G \to \C^{\times}$ of the irreducible complex representations using the GAP functions \texttt{CharacterTable} and \texttt{Irr}. We then compute the degree and Frobenius-Schur indicator of each representation using the GAP functions \texttt{Degree} and \texttt{Indicator}. 
Using this, we filter out groups which are not non-Eichler covers. We filter out groups which have others as proper quotients or have quotients in $B$ using the function \texttt{Homomorphisms} which can be used to determine when surjective homomorphisms exist.	
\end{clist}

We obtain the following more general algorithm as an immediate consequence.

\begin{algorithm} \label{alg:MNEC(S)}
Input $S \le \EE$ a finite closed subgraph and a class $B$ such that $B_{\BPG} \subseteq B \subseteq \EE$.
 Output $\MNEC_B(S)$.
	The steps are as follows:
\begin{clist}{(i)}
\item
For each $G \in S$, compute $\MNEC_B(G)$ using \cref{alg:MNEC(G)-original}.
\item
Let $\mathcal{S} = \bigcup_{G \in S} \MNEC_B(G)$. Remove all groups in $S$. Remove all groups which have others as proper quotients. This gives a class of groups $\mathcal{S}'$ which coincides with $\MNEC_B(S)$.
\end{clist}
\end{algorithm}

\begin{proof}[Proof of correctness]
This follows immediately from \cref{prop:MNEC-basic-prop} and the correctness of \cref{alg:MNEC(G)-original}. Note that the algorithm terminates because $S$ is finite.
\end{proof}

Observe that \cref{alg:MNEC(S)} terminates since $S$ is finite. 
When $S$ is infinite, for example $S =\{\wt T^n \times \wt I^m : n,m \ge 0\}$, \cref{alg:MNEC(S)} does not terminate. We discuss this case in \cref{s:groups-exceptional}.

\part{Explicit computations}
\label{p:Computations}

The aim of this part is to do the explicit computations which feed into our main classification results.
In \cref{s:groups-exceptional}, we will compute $\MNEC_B(\wt T^n \times \wt I^m)$.
We will then use this in \cref{s:results}, combined with computer calculations, to obtain our main results.

\section{MNECs for the infinite family $\widetilde{T}^n \times \widetilde{I}^m$} 
\label{s:groups-exceptional}

The aim of this section will be to compute $\MNEC_B$ for the infinite family $\wt T^n \times \wt I^m$. 
Throughout this section we will fix $B =  \{Q_{4n} : n \ge 6\}$ so that all groups in $B$ fail $\SFC$.

Let $C_3 = \langle t \mid t^3\rangle$ and $Q_8 = \langle x,y \mid x^2y^{-2},yxy^{-1}x\rangle$ and note that $\wt T \cong Q_8 \rtimes_\varphi C_3$ where $\varphi(t) = \theta \in \Aut(Q_8)$ and $\theta : Q_8 \to Q_8$, $x \mapsto y$, $y \mapsto xy$ is an automorphism of order three (see, for example, GroupNames \cite{groupnames}). By taking this as our definition for $\wt T$, we obtain a canonical quotient map $f : \wt T \twoheadrightarrow C_3$.
For $1 \le k \le n$, define $f_k = (f, \cdots, f, 0, \cdots, 0): \wt T^n \twoheadrightarrow C_3$ where there are $k$ copies of $f$.
Define
\[ Q_8 \rtimes_{(k)} \wt T^n := Q_8 \rtimes_{i \circ f_k} \wt T^n \]
where $i : C_3 \to \Aut(Q_8)$, $t \mapsto \theta$. 
We have $Q_8 \rtimes_{(k)} \wt T^n \cong \wt T^{n-k} \times (Q_8 \rtimes_{(k)} \wt T^k)$.
For brevity, we write $Q_8 \rtimes \wt T^n := Q_8 \rtimes_{(n)} \wt T^n$. 
The following is the main result of this section.

\begin{thm} \label{thm:MNEC-calculation-main}
Let $n, m \ge 0$ with $(n,m) \ne (0,0)$. Then 
\[ \MNEC_B(\wt T^n \times \wt I^m) = \{\,\wt T^{n+1} \times \wt I^m, \,\, \wt T^n \times \wt I^{m+1}, \,\, \underbrace{\wt T^{n-k} \times \wt I^m \times (Q_8 \rtimes \wt T^k)}_{k=1, \cdots, n},\,\, \wt T^n \times \wt I^m \times C_2 \}. \]	
Hence $\MNEC_B(\{\wt T^n \times \wt I^m\}_{n,m}) = \{\wt T \times C_2, \wt I \times C_2\} \sqcup \{Q_8 \rtimes \wt T^k : k \ge 1 \}$ where $n,m$ ranges across the values $n,m \ge 0$, $(n,m) \ne (0,0)$.	
\end{thm}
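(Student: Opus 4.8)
The plan is to compute $\MNEC_B(\wt T^n \times \wt I^m)$ for a fixed pair $(n,m)$ with $(n,m)\ne(0,0)$ using \cref{prop:groups2}: every $G \in \MNEC_B(H)$ with $H = \wt T^n\times\wt I^m$ is of the form $H \times_{f_1,f_2} Q$ for a binary polyhedral group $Q$, a finite group $A$, and quotient maps $f_1 : H \twoheadrightarrow A$, $f_2 : Q \twoheadrightarrow A$, with the isomorphism type depending only on the classes $\ol{f_1}\in\wh\Quot(H,A)$ and $\ol{f_2}\in\wh\Quot(Q,A)$. Since $G$ must have no quotient in $B \supseteq \{Q_{4n}:n\ge 6\}$, the group $Q$ can be taken in $\{Q_8,Q_{12},Q_{16},Q_{20},\wt T,\wt O,\wt I\}$, and by \cref{lemma:classify-disjoint-quotients}(iv) $G$ fits into an extension $1 \to N \to G \to H \to 1$ with $N \trianglelefteq Q$, while $A \cong Q/N$. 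So the first step is to enumerate, for each admissible $Q$ and each proper normal subgroup $N \trianglelefteq Q$ (reading off \cref{fig:BPG-quotients}), the possible amalgams, keeping only those that (a) are genuine non-Eichler covers, i.e. $m_{\H}(G) > m_{\H}(H)$, and (b) are minimal among such, i.e. have no proper quotient that is also a non-Eichler cover with no quotient in $B$.

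The key structural input is that $\Q[\wt T^n\times\wt I^m]$ splits as a product of the rational group algebras of the $\wt T^i\times\wt I^j$ for $i\le n$, $j\le m$, so $m_{\H}(\wt T^n\times\wt I^m)$ can be computed explicitly; one then checks that an extension $1\to N\to G\to H\to 1$ raises $m_{\H}$ precisely when the new quaternionic characters of $G$ do not factor through $H$, which (via \cref{prop:relative-eichler-group}) is controlled by whether the corresponding $\B$-quotient factors through $G\twoheadrightarrow H$. The upshot should be: the only amalgams that survive minimality are (i) $Q = \wt T$ glued along the trivial quotient $A = C_1$, giving $\wt T^{n+1}\times\wt I^m$; (ii) $Q = \wt I$ glued along $A = C_1$, giving $\wt T^n\times\wt I^{m+1}$; (iii) $Q = \wt T$ glued to one of the $\wt T$-factors of $H$ along $A = C_3$ via the canonical map $\wt T\twoheadrightarrow C_3$, which by the definition of $Q_8\rtimes_{(k)}\wt T^k$ produces $\wt T^{n-k}\times\wt I^m\times(Q_8\rtimes\wt T^k)$ for $k=1,\dots,n$; and (iv), only when $m=0$, the amalgam $Q = \wt T$ glued to $A = A_4 = \wt T/C_2$ via $\wt T\twoheadrightarrow A_4$ on some factor, whose kernel product with the rest gives $\wt T^n\times C_2$ — here one uses crucially that $\wt I$ has no nontrivial quotient of exponent dividing that of $A_4$, so no analogous $C_2$-central extension is available when $m>0$. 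One must also verify that amalgams along $Q = Q_{12}, Q_{16}, Q_{20}, \wt O$, and along the larger quotients of $\wt T$ such as $A_4$, either fail minimality (they dominate one of the listed groups) or fail to raise $m_{\H}$ or already appear among (i)--(iv), and that amalgams with $N$ a nontrivial normal subgroup of $\wt I$ produce only $\wt I\times C_2 \in B$.

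For the final (second) statement one takes $S = \{\wt T^n\times\wt I^m : n,m\ge 0,\ (n,m)\ne(0,0)\}$ and applies \cref{prop:MNEC-basic-prop}: $\MNEC_B(S) = (\wt\MNEC_B(S))^{\QF}$ where $\wt\MNEC_B(S) = \bigl(\bigcup_{H\in S}\MNEC_B(H)\bigr)\setminus S$. From the per-group computation, the union of the $\MNEC_B(H)$ over $H\in S$ consists of members of $S$ itself (the groups $\wt T^{n+1}\times\wt I^m$ and $\wt T^n\times\wt I^{m+1}$), together with $\wt T\times C_2$ and the family $\{\wt T^{n-k}\times\wt I^m\times(Q_8\rtimes\wt T^k)\}$. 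Removing the members of $S$ leaves $\{\wt T\times C_2\}$ together with all groups of the form $\wt T^{n-k}\times\wt I^m\times(Q_8\rtimes\wt T^k)$ with $k\ge 1$; applying the quotient filter, any such group with $m>0$ or $n>k$ has $Q_8\rtimes\wt T^k$ as a proper quotient (project away the $\wt I$-factors and the extra $\wt T$-factors), and $Q_8\rtimes\wt T^k$ is itself of the stated form, while $\wt T\times C_2$ has no proper quotient in the collection. Hence the quotient filter yields exactly $\{\wt T\times C_2\}\sqcup\{Q_8\rtimes\wt T^k : k\ge 1\}$, and one checks these are pairwise non-isomorphic and that none has the other as a quotient, so the union is genuine.

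The main obstacle I anticipate is the bookkeeping for step (iv): controlling exactly which quotients $A$ of $\wt T$ (and of the other $Q$'s) can serve as the amalgamating group while producing a \emph{minimal} non-Eichler cover that has no quotient in $B$. In particular one must show that gluing along $A = A_4$ to a $\wt T$-factor really does give $\wt T^n\times C_2$ (computing $\ker(\wt T\twoheadrightarrow A_4) = C_2$ and identifying the pullback via \cref{lemma:classify-disjoint-quotients}), that this genuinely raises $m_{\H}$, that it is not dominated by — and does not dominate — any of the groups in (i)--(iii), and that the case distinction on $m$ is forced. Verifying that the larger binary polyhedral groups $Q_{16},\wt O$ and their intermediate quotients contribute nothing new (every candidate amalgam either has a quotient $Q_{4n}$, $n\ge 6$, lying in $B$, or reduces to a previously listed group) is the other place where care is needed; here the explicit quotient lists in \cref{fig:BPG-quotients} and the Eichler Pushout Lemma (\cref{lemma:EPL}) do most of the work, but the casework is lengthy.
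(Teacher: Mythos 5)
Your overall strategy is the same as the paper's: invoke \cref{prop:groups2} to write any $G \in \MNEC_B(H)$ as an amalgam $H \times_{f_1,f_2} Q$ with $Q$ one of the seven small binary polyhedral groups, use \cref{lemma:classify-disjoint-quotients} to read off the extension $1 \to N \to G \to H \to 1$ with $N \unlhd Q$, reduce the amalgamating group $A$ to $\{C_1,C_3,A_4,A_5\}$ using \cref{prop:homs-TxI} (since $H$ has no $C_2$-quotient), and finish with \cref{prop:Q_8:T^n->>Q_8:T^m} and \cref{prop:T^n->>Q_8:T^k} to justify the quotient-filter step. The second paragraph (passing from $\MNEC_B(H)$ per group to $\MNEC_B(S)$ via $(\,\cdot\,)^{\QF}$) is essentially correct, modulo the small slip that the union also contains $\wt T^n \times C_2$ for all $n\ge 1$, not only $\wt T \times C_2$ --- though these are in any case killed by the filter.

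There is, however, a genuine gap in the $A = A_4$ and $A = A_5$ cases. Once you know $N = \ker(\wt T \twoheadrightarrow A_4)\cong C_2$ (so that $\Aut(N)=1$ and the extension $1 \to C_2 \to G \to H \to 1$ is central), you cannot conclude $G \cong H\times C_2$ merely by ``identifying the pullback via \cref{lemma:classify-disjoint-quotients}'': a priori there could be a non-split central extension. The paper's resolution is \cref{prop:H^2=0}, the Künneth computation $H^2(\wt T^n \times \wt I^m;\F_2)=0$, which (via \cref{cor:extension-abelian}) shows the only such $G$ is $H\times C_2$. Your outline never invokes this vanishing, and without it the case analysis does not close; this is the essential lemma your plan is missing. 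Relatedly, your stated reason for why $m=0$ is forced in case (iv) is incorrect: the central extension $\wt T^n\times\wt I^m\times C_2$ exists for every $m$, and what excludes it for $m>0$ is simply that it has quotient $\wt I\times C_2 \in B$ --- not that ``no analogous $C_2$-central extension is available''. Finally, your ``key structural input'' that $\Q[\wt T^n\times\wt I^m]$ splits as a product of group algebras of $\wt T^i\times\wt I^j$ is false ($\Q[G\times H]\cong \Q G\otimes_\Q \Q H$, whose simple factors are tensor products of simple factors, not group algebras of subquotients); fortunately this claim is not needed, since the Eichler-lifting criterion you want is already captured by \cref{prop:relative-eichler-group}, which is what the paper uses.
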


The proof will involve a mixture of two approaches. The first is based on classifying group extensions using group cohomology. This was used to calculate $\MNEC_B$ in Algorithm \ref{alg:MNEC(G)-original}, and is the basis of the Magma function \texttt{ExtensionsOfSolubleGroup} which we used in our implementation.
The second is based on pullbacks of group quotients using \cref{prop:groups2}.

In \cref{ss:group-cohomology}, we dicuss group cohomology and group extensions. In \cref{ss:quot-hat}, we compute $\wh{\Quot}(\wt T^n \times \wt I^m,A)$ for various finite groups $A$, which is needed for the pullback approach. In \cref{ss:proof-of-T^nxI^m}, we combine the two approaches to completely determine $\wt \MNEC_B(\wt T^n \times \wt I^m)$. We then use the results on the non-existence of quotients from \cref{ss:non-existence-quotients} to obtain $\MNEC_B(\wt T^n \times \wt I^m)$.

\subsection{Cohomology and group extensions} \label{ss:group-cohomology}

The following can be found in Eilenberg-Maclane \cite{EM47}. 
Following the notation of \cite[Section IV.6]{Br82}, suppose we have an extension
\[ 1 \to N \to G \to H \to 1.\]
Since $N \, \unlhd \, G$, there is a homomorphism $\varphi : G \to \Aut(N)$ given by conjugation. Let $\Inn(N)$ denote the set of inner automorphisms of $N$ and $\Out(N) = \Aut(N)/\Inn(N)$ the outer automorphism group. Since $\varphi(N) = \Inn(N)$, $\varphi$ induces a map $\psi : H \to \Out(N)$.
If the extension is split, $\psi$ lifts to a map $\bar{\psi} : H \to \Aut(N)$. Let $\mathcal{E}(H,N)$ denote the equivalence class of extensions of the above form and let $\mathcal{E}(H,N,\psi)$ denote the those extensions in $\mathcal{E}(H,N)$ which give rise to $\psi$. 

In \cite{EM47}, Eilenberg-Maclane showed that $\mathcal{E}(H,N,\psi)$ can be computed in terms of group cohomology as follows. First note that, if $Z(N)$ is the centre of $N$, then the restriction map $\Res: \Aut(N) \to \Aut(Z(N))$ has $\Res(\Inn(N)) = \{ \id_{Z(N)}\}$ and so induces a map $\Res : \Out(N) \to \Aut(Z(N))$. Hence if $\psi : H \to \Out(N)$, then the composition 
\[ \bar{\psi} = \Res \circ \psi: H \to \Aut(Z(N))\] 
induces a $\Z H$-module structure on $Z(N)$ which we denote by $Z(N)_\psi$. The following can be found in \cite[Theorems 6.6, 6.7]{Br82}.
We often refer to (i) and (ii) as the primary and secondary obstructions for the extensions $\mathcal{E}(H,N,\psi)$.

\begin{thm} \label{thm:extension-general}
Let $H$ and $N$ be groups, let $\psi : H \to \Out(N)$ be a map and let $Z(N)_\psi$ denote the centre of $N$ which is a $\Z H$-module via $\bar{\psi}$. Then
\begin{enumerate}[\normalfont (i)]
\item
There is an obstruction in $H^3(H,Z(N)_\psi)$ which vanishes if and only if $\mathcal{E}(H,N,\psi) \ne \emptyset$
\item
If $\mathcal{E}(H,N,\psi)$ is non-empty, then there is a bijection 
\[\mathcal{E}(H,N,\psi) \cong H^2(H,Z(N)_\psi).\]
\end{enumerate}
\end{thm}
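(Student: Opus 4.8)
The plan is to run the classical Eilenberg--MacLane obstruction theory for non-abelian extensions, organised as in \cite[Section IV.6]{Br82}. I first record two standard facts. Conjugation by any element of $N$ fixes $Z(N)$ pointwise, so the restriction $\Res \colon \Aut(N) \to \Aut(Z(N))$ kills $\Inn(N)$ and factors through $\Res \colon \Out(N) \to \Aut(Z(N))$; consequently $\bar\psi = \Res \circ \psi$, and hence the $\Z H$-module $Z(N)_\psi$, depend only on $\psi$. Next, an extension $1 \to N \xrightarrow{\iota} G \xrightarrow{\pi} H \to 1$ realising $\psi$, together with a normalised set-theoretic section $s \colon H \to G$ ($s(1)=1$), yields a \emph{factor set} $f \colon H \times H \to N$ defined by $s(g)s(h) = \iota(f(g,h))\,s(gh)$ and a map $\phi \colon H \to \Aut(N)$ defined by $\iota(\phi(g)(n)) = s(g)\iota(n)s(g)^{-1}$; these satisfy $\phi(g)\phi(h) = \gamma_{f(g,h)}\,\phi(gh)$ (where $\gamma_m$ denotes conjugation by $m$ in $N$) and the twisted $2$-cocycle identity $f(g,h)\,f(gh,k) = \phi(g)\big(f(h,k)\big)\,f(g,hk)$. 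Conversely every such pair $(\phi,f)$ produces an extension with underlying set $N \times H$, and two such extensions are equivalent precisely when their data differ by the change induced by another choice of section. I would verify these identities by direct computation.

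For part (i) I would fix a normalised set-theoretic lift $\lambda \colon \Out(N) \to \Aut(N)$ of the projection and set $\ell = \lambda \circ \psi \colon H \to \Aut(N)$, a (generally non-homomorphic) lift of $\psi$. Because $\psi$ is a homomorphism, $\ell(g)\ell(h)$ and $\ell(gh)$ map to the same element of $\Out(N)$, so there is $m(g,h)\in N$, normalised and unique up to $Z(N)$, with $\ell(g)\ell(h) = \gamma_{m(g,h)}\,\ell(gh)$. Comparing the two bracketings of $\ell(g)\ell(h)\ell(k)$ and using $\ell(g)\gamma_n\ell(g)^{-1} = \gamma_{\ell(g)(n)}$ shows that $m(g,h)\,m(gh,k)$ and $\ell(g)\big(m(h,k)\big)\,m(g,hk)$ induce the same inner automorphism, hence differ by a central element $\omega(g,h,k)\in Z(N)$. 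The substantive work is then to check that $\omega$ is a $3$-cocycle for the module $Z(N)_\psi$ and that its class $[\omega]\in H^3(H;Z(N)_\psi)$ is independent of the choice of $\lambda$ and of the representatives $m(g,h)$: altering $m(g,h)$ by a $Z(N)$-valued $2$-cochain changes $\omega$ by the corresponding coboundary, and a change of $\lambda$ reduces to this case. With $[\omega]$ so defined: if an extension realising $\psi$ exists, its data $(\phi,f)$ is an admissible choice of $(\ell,m)$ for which $\omega\equiv 1$, so $[\omega]=0$; conversely if $[\omega]=0$, correcting the $m(g,h)$ by a suitable $2$-cochain makes $\omega\equiv 1$, and then $(\ell,m)$ satisfies both the compatibility identity and the twisted $2$-cocycle identity, hence builds an extension realising $\psi$.

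For part (ii), assume $\mathcal{E}(H,N,\psi)\neq\emptyset$ and fix a basepoint, presented by data $(\phi_0,f_0)$. For a $2$-cocycle $\beta\in Z^2(H;Z(N)_\psi)$, the pair $(\phi_0,\,f_0\cdot\beta)$ --- pointwise product in $N$ via $Z(N)\subseteq N$ --- still satisfies the compatibility identity (it is unchanged) and the twisted $2$-cocycle identity (since $\beta$ is central and a cocycle for the same $H$-action), so it defines another extension realising $\psi$; replacing $\beta$ by a coboundary corresponds to a change of section and yields an equivalent extension, so this descends to an action of $H^2(H;Z(N)_\psi)$ on $\mathcal{E}(H,N,\psi)$. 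I would then check that the action is transitive (any two extensions realising $\psi$ can, after adjusting sections, be presented with the same $\phi_0$, whereupon their factor sets differ by a central $2$-cocycle) and free (if $(\phi_0,f_0\beta)$ is equivalent to $(\phi_0,f_0)$ then $\beta$ is a coboundary). Simple transitivity of the $H^2$-action gives, on choosing the basepoint as origin, the asserted bijection $\mathcal{E}(H,N,\psi)\cong H^2(H;Z(N)_\psi)$. As a consistency check, for $N$ abelian one has $Z(N)=N$, $\Out(N)=\Aut(N)$, $\ell=\psi$ is already a homomorphism, every $m(g,h)=1$ and $\omega=0$, recovering the classical $H^2$-classification of extensions with abelian kernel.

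The main obstacle is the bookkeeping in part (i): verifying that $\omega$ is a $3$-cocycle and that $[\omega]$ is independent of all choices requires careful tracking, through non-abelian products, of which elements lie in $Z(N)$ and of the twisted $H$-action on $Z(N)$. Everything else --- the factor-set dictionary, the two cocycle identities, and the torsor structure in part (ii) --- is routine once the normalisation conventions are fixed.
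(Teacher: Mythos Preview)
Your proposal is correct and follows exactly the classical Eilenberg--MacLane argument. The paper does not give its own proof of this theorem at all: it simply cites \cite[Theorems 6.6, 6.7]{Br82}, and your outline is precisely the construction carried out in that reference.
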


This result takes the following simpler form when $N$ is abelian (see \cite[Section IV.3]{Br82}).

\begin{corollary} \label{cor:extension-abelian}
Let $H$ and $N$ be groups with $N$ abelian, let $\psi : H \to \Aut(N)$ be a map and let $N_\psi$ denote the centre of $N$ which is a $\Z H$-module via $\bar{\psi}$. Then there is a bijection
\[\mathcal{E}(H,N,\psi) \cong H^2(H,N_\psi).\]
\end{corollary}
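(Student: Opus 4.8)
The plan is to prove \cref{cor:extension-abelian} as a direct specialisation of \cref{thm:extension-general} to the case where $N$ is abelian. When $N$ is abelian, $Z(N) = N$ and $\Inn(N) = \{\id_N\}$, so $\Out(N) = \Aut(N)$ and the distinction between $\psi : H \to \Out(N)$ and a lift $\bar\psi : H \to \Aut(N)$ disappears: the map $\psi$ already gives a $\Z H$-module structure on $N$, which is exactly $N_\psi$. Thus $\mathcal{E}(H,N,\psi)$ is the set of equivalence classes of extensions $1 \to N \to G \to H \to 1$ inducing the prescribed action $\psi$ of $H$ on $N$.

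The one thing that needs checking is that the primary obstruction in $H^3(H, Z(N)_\psi) = H^3(H, N_\psi)$ of \cref{thm:extension-general}(i) vanishes automatically, so that $\mathcal{E}(H,N,\psi)$ is always non-empty and part (ii) applies unconditionally. This is because the semidirect product $N \rtimes_\psi H$ is always an extension realising $\psi$ when $N$ is abelian — it exists with no obstruction — so the obstruction class is trivial. Hence \cref{thm:extension-general}(ii) gives the bijection $\mathcal{E}(H,N,\psi) \cong H^2(H, N_\psi)$ directly. Alternatively one can simply cite \cite[Section IV.3]{Br82}, where this abelian case is treated in detail via the classical theory of factor sets: extensions with abelian kernel and fixed action are classified by $H^2(H, N_\psi)$, with the zero class corresponding to the semidirect product.

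I do not expect any real obstacle here; the statement is genuinely a corollary, and the only content is the bookkeeping observation that abelian $N$ forces $Z(N) = N$, collapses $\Out(N)$ to $\Aut(N)$, and kills the primary obstruction via the split extension. The proof is therefore just: apply \cref{thm:extension-general} with $Z(N) = N$, note the obstruction in $H^3$ vanishes because $N \rtimes_\psi H$ realises $\psi$, and read off the bijection from part (ii).
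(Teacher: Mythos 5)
Your proposal is correct and matches the paper's intended argument: the paper presents this as a corollary of \cref{thm:extension-general}, with the citation to \cite[Section IV.3]{Br82} serving as background for the classical abelian-kernel case, and your deduction (abelian $N$ gives $Z(N)=N$ and $\Out(N)=\Aut(N)$, and the semidirect product $N\rtimes_\psi H$ witnesses $\mathcal{E}(H,N,\psi)\neq\emptyset$ so the $H^3$ obstruction vanishes) is exactly the specialisation the paper has in mind.
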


The following will be used to calculate $\MNEC_B$ in the following section.

\begin{prop} \label{prop:H^2=0}
Let $n, m \ge 0$. Then
$H^2(\wt T^n \times \wt I^m; \F_2) =0.
$\end{prop}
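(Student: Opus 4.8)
The plan is to reduce the computation to the two building blocks $\wt T$ and $\wt I$ and then propagate through the direct product using the Künneth formula over the field $\F_2$. Concretely, I would first show that $H^1(G;\F_2) = H^2(G;\F_2) = 0$ for $G \in \{\wt T,\wt I\}$, and then prove by induction on $n+m$ that $H^1(\wt T^n \times \wt I^m;\F_2) = H^2(\wt T^n \times \wt I^m;\F_2) = 0$ for all $n,m \ge 0$, of which the desired statement is a special case.

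For the base computations, recall that $\wt T \cong \SL_2(\F_3)$ has abelianisation $C_3$ and trivial Schur multiplier, so $H_1(\wt T;\Z) \cong C_3$ and $H_2(\wt T;\Z) = 0$; and $\wt I \cong \SL_2(\F_5)$ is the universal central extension of $A_5$, hence perfect and superperfect, so $H_1(\wt I;\Z) = H_2(\wt I;\Z) = 0$. By the universal coefficient theorem, $H^n(G;\F_2) \cong \Hom_{\Z}(H_n(G;\Z),\F_2) \oplus \Ext^1_{\Z}(H_{n-1}(G;\Z),\F_2)$, and since $C_3$ has odd order $\Hom_{\Z}(C_3,\F_2) = \Ext^1_{\Z}(C_3,\F_2) = 0$. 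Hence $H^1(G;\F_2) = H^2(G;\F_2) = 0$ for $G \in \{\wt T,\wt I\}$, while of course $H^0(G;\F_2) = \F_2$.

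For the inductive step, the case $n = m = 0$ is trivial; otherwise write $P = P' \times G$ with $G \in \{\wt T,\wt I\}$ and $P'$ a product with one fewer factor. Since $\F_2$ is a field and all the cohomology groups involved are finite-dimensional, the Künneth formula gives $H^1(P;\F_2) \cong \big(H^1(P';\F_2)\otimes_{\F_2} H^0(G;\F_2)\big) \oplus \big(H^0(P';\F_2)\otimes_{\F_2} H^1(G;\F_2)\big)$ and $H^2(P;\F_2) \cong \big(H^2(P';\F_2)\otimes_{\F_2} H^0(G;\F_2)\big) \oplus \big(H^1(P';\F_2)\otimes_{\F_2} H^1(G;\F_2)\big) \oplus \big(H^0(P';\F_2)\otimes_{\F_2} H^2(G;\F_2)\big)$. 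Every summand vanishes: $H^1(P';\F_2) = H^2(P';\F_2) = 0$ by the inductive hypothesis, and $H^1(G;\F_2) = H^2(G;\F_2) = 0$ by the base computation. Hence $H^1(P;\F_2) = H^2(P;\F_2) = 0$, completing the induction, and in particular $H^2(\wt T^n\times\wt I^m;\F_2) = 0$.

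There is no genuinely hard step here; the only points requiring care are recording the (standard) facts that $\SL_2(\F_3)$ and $\SL_2(\F_5)$ have trivial Schur multiplier — equivalently, that $\wt T$ is a Schur cover of $A_4$ and $\wt I$ is the universal central extension of $A_5$ — and observing that the Künneth formula carries no correction terms because the coefficients form a field.
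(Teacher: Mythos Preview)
Your proof is correct and follows essentially the same approach as the paper: establish $H^1 = H^2 = 0$ with $\F_2$-coefficients for $\wt T$ and $\wt I$, then propagate to the product via K\"unneth. The only difference is that the paper cites a reference for the base case while you give a self-contained argument via Schur multipliers and the universal coefficient theorem.
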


\begin{proof}
Since $\widetilde{T} \cong \SL_2(\F_3)$ and $\widetilde{I} \cong \SL_2(\F_5)$, we can use \cite[Corollary 6.18]{AM94} to deduce that $H^*(\widetilde{T}; \F_2) = H^*(\widetilde{I}; \F_2) = 0$ for $* = 1,2$. By the K\"{u}nneth formula for group cohomology \cite[Exercise 6.1.10]{We94}, we have that $H^*(\widetilde{T}^n \times \widetilde{I}^m;\F_2)=0$ for $\ast = 1,2$.
\end{proof}

\subsection{Classifying surjective group homomorphisms} \label{ss:quot-hat}

Motivated by \cref{prop:groups2}, we now calculate $\wh{\Quot}(\wt T^n \times \wt I^m,A)$ for certain finite groups $A$.
Let $\BPG = \{ Q_8, Q_{12}, Q_{16}, Q_{20}, \widetilde{T}, \widetilde{O}, \widetilde{I}\}$.
By \cref{fig:BPG-quotients}, the non-trivial proper quotients of the groups in $\BPG$ are
\[ \BPG^{*} = \{C_2, C_3, C_4, C_2^2, D_6, D_8, D_{10}, A_4, S_4, A_5\}.\]
Recall from \cref{ss:MNEC-pullback} that, for groups $G$ and $H$, we let $\Quot(G,H)$ denote the set of quotient maps $f: G \twoheadrightarrow H$ and $\Quot(G)$ denote the set of quotients of $G$, i.e. $\{ H : \Quot(G,H) \ne \emptyset\}$.
We also let $\wh{\Quot}(G,H) := \Aut(H) \setminus \Quot(G,H) / \Aut(G)$.

Let $q : Q_8 \rtimes \wt T^k \twoheadrightarrow \wt T^k$ denote the standard surjection induced by the semidirect product. As before, let $f : \wt T = Q_8 \rtimes C_3 \twoheadrightarrow C_3$ and let $f_r = (f, \cdots, f, 0, \cdots, 0): \wt T^k \to C_3$ where $0 \le r \le k$ and there are $r$ copies of $f$.
Similarly, let $g : \wt T = Q_8 \rtimes C_3 \twoheadrightarrow C_2^2 \rtimes C_3 \cong A_4$ and let $g_r = (g, \cdots, g, 0, \cdots, 0): \wt T^k \to A_4$ where $0 \le r \le k$. Finally, let $h : \wt I = \SL_2(\F_5) \twoheadrightarrow \PSL_2(\F_5) \cong A_5$ and $h_r = (h, \cdots, h, 0, \cdots, 0) : \wt I^m \to A_5$ where $0 \le r \le m$.

\begin{prop} \label{prop:homs-TxI}
Let $n,m  \ge 0$. Then
$\Quot(\wt T^n \times \wt I^m) \cap \BPG^{*} \subseteq \{C_3, A_4, A_5\}
$. Furthermore:
\begin{clist}{(i)}
\item
If $n \ge 1$, then 
$\wh{\Quot}(\wt T^n \times \wt I^m,C_3) = \{(f_r,0) : 1 \le r \le n\}$.
\item
If $m \ge 1$, then
$\wh{\Quot}(\wt T^n \times \wt I^m,A_5) = \{(0,h_t) : 1 \le t \le m\}$.
\end{clist}
\end{prop}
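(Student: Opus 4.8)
I would begin from the identification $(\wt T^n \times \wt I^m)^{\mathrm{ab}} = C_3^{\,n}$, which holds because $[\wt T,\wt T]=Q_8$ gives $\wt T^{\mathrm{ab}} = C_3$ while $\wt I = \SL_2(\F_5)$ is perfect. To prove $\Quot(\wt T^n \times \wt I^m) \cap \BPG^{*} \subseteq \{C_3, A_4, A_5\}$: any quotient $Q$ of $\wt T^n \times \wt I^m$ has $Q^{\mathrm{ab}}$ a quotient of $C_3^{\,n}$, hence an elementary abelian $3$-group; and among $\BPG^{*} = \{C_2, C_3, C_4, C_2^2, D_6, D_8, D_{10}, A_4, S_4, A_5\}$ only $C_3$, $A_4$ (both with abelianisation $C_3$) and $A_5$ (perfect) have this property — equivalently, each of the other seven admits $C_2$ as a quotient, which $\wt T^n \times \wt I^m$ does not.

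For part (i), since $C_3$ is abelian every surjection $\wt T^n \times \wt I^m \twoheadrightarrow C_3$ factors through the abelianisation $C_3^{\,n}$, so $\Quot(\wt T^n \times \wt I^m, C_3)$ is identified with the set of nonzero $\F_3$-linear functionals on $\F_3^{\,n}$, on which $\Aut(C_3) = \{\pm 1\}$ acts by scaling and $\Aut(\wt T^n \times \wt I^m)$ acts through its image in $\GL_n(\F_3) = \Aut(C_3^{\,n})$. The crux is to show this image is exactly the monomial group $\{\pm 1\} \wr S_n$ of signed permutation matrices. I would argue that $\wt I^m$ is characteristic in $\wt T^n \times \wt I^m$ (being generated by all perfect subgroups, as $\wt T$ is solvable) and acts trivially on the abelianisation, so the action on $C_3^{\,n}$ factors through $\Aut(\wt T^n)$; that $\Aut(\wt T^n) = \Aut(\wt T) \wr S_n$, since $\wt T$ is directly indecomposable with $\Hom(\wt T, Z(\wt T)) = \Hom(C_3, C_2) = 1$; and that $\Aut(\wt T)$ acts on $\wt T^{\mathrm{ab}} = C_3$ through $\Out(\wt T) = C_2$, whose nontrivial element inverts $C_3$. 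The $\{\pm 1\} \times (\{\pm 1\} \wr S_n)$-orbits on nonzero functionals — equivalently, the elements of $\wh{\Quot}(\wt T^n \times \wt I^m, C_3)$ — are then classified by the size $r$ of the support, with representatives $(1, \dots, 1, 0, \dots, 0)$, i.e. the maps $f_r$; these are pairwise inequivalent because signed permutations preserve support size, and the second coordinate of each representative is forced to be $0$ since any map to $C_3$ kills the perfect subgroup $\wt I^m$.

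For part (ii), the plan is to first observe that $\wt T^n$ is a normal solvable subgroup of $\wt T^n \times \wt I^m$, so its image under any surjection onto the simple group $A_5$ is trivial and the surjection factors through $(\wt T^n \times \wt I^m)/(\wt T^n \times 1) = \wt I^m$; and since $Z(\wt I^m) = C_2^m$ is central while $A_5$ is centreless, it factors further through $\wt I^m/C_2^m = A_5^{\,m}$ (indeed the centraliser of the characteristic subgroup $\wt I^m$ in $\wt T^n \times \wt I^m$ is $\wt T^n \times Z(\wt I^m)$, hence characteristic, so $A_5^{\,m}$ is a canonical quotient on which $\Aut(\wt T^n \times \wt I^m)$ acts). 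A surjection $A_5^{\,m} \twoheadrightarrow A_5$ has kernel a normal subgroup of $A_5^{\,m}$, and by simplicity of $A_5$ these are precisely the sub-products $\prod_{i \in I} A_5$; hence the kernel is $\prod_{i \ne i_0} A_5$ for a unique $i_0$ and the map is a coordinate projection composed with an automorphism of $A_5$. Quotienting by $\Aut(A_5)$ on the left and, on the right, by the automorphisms of $A_5^{\,m}$ induced by $\Aut(\wt T^n \times \wt I^m)$ — which include all coordinate permutations and, via $\Aut(\wt I) \cong \Aut(A_5)$ (as $\wt I$ is the Schur cover of $A_5$), all factorwise automorphisms — one reads off the claimed set of representatives $\wh{\Quot}(\wt T^n \times \wt I^m, A_5) = \{(0, h_t) : 1 \le t \le m\}$.

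The chief obstacle throughout will be pinning down the automorphism groups $\Aut(\wt T^n \times \wt I^m)$, $\Aut(\wt T^n)$ and $\Aut(\wt I^m)$ precisely enough to identify their actions on the relevant abelian or simple quotients. The delicate point is that $\wt T^n \times \wt I^m$ genuinely has ``exotic'' automorphisms: there are nonzero homomorphisms $\wt T \to \wt I$ (indeed $\SL_2(\F_3) \le \SL_2(\F_5)$), so $\wt T^n \times 1$ is not characteristic and $\Aut(\wt T^n \times \wt I^m)$ is strictly larger than $(\Aut \wt T \wr S_n) \times (\Aut \wt I \wr S_m)$. What rescues the argument is that these extra automorphisms act trivially on $C_3^{\,n}$ and on $A_5^{\,m}$, so the effective symmetry group is only the monomial one; making this rigorous — via characteristicity of $\wt I^m$ and of the centraliser $\wt T^n \times Z(\wt I^m)$, vanishing of the relevant $\Hom(-, Z(-))$ by coprimality, and the standard description of the automorphism group of a direct power of an indecomposable group — is the technical heart of the proof.
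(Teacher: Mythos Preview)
Your argument for the first claim and for part (i) is correct and more thorough than the paper's. Where the paper simply restricts a surjection $G\twoheadrightarrow C_3$ to each direct factor and asserts that automorphisms normalise the result, you identify the characteristic quotient $G^{\mathrm{ab}}\cong C_3^{\,n}$, argue that the $\Aut(G)$-action on it factors through $\Aut(\wt T^n)=\Aut(\wt T)\wr S_n$ and hence lands exactly in the signed-permutation group, and then classify orbits of nonzero functionals by support size. This actually proves that the $f_r$ are pairwise inequivalent, a point the paper's proof does not verify, and your treatment of the exotic automorphisms arising from embeddings $\wt T\hookrightarrow\wt I$ addresses a genuine subtlety the paper passes over.

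There is, however, an internal inconsistency in your part (ii). You correctly establish that every surjection $G\twoheadrightarrow A_5$ factors through $A_5^{\,m}$ as a \emph{single} coordinate projection composed with an element of $\Aut(A_5)$, and you correctly note that $\Aut(G)$ induces all coordinate permutations of $A_5^{\,m}$. But then those $m$ projections lie in one orbit, so your own argument yields $|\wh{\Quot}(\wt T^n\times\wt I^m,A_5)|=1$, not $m$; your conclusion does not follow from your reasoning. Indeed the expression $h_t=(h,\ldots,h,0,\ldots,0)$ with $t\ge 2$ does not even define a homomorphism to the nonabelian group $A_5$: the images of two distinct $\wt I$-factors would have to centralise each other in $A_5$, forcing $A_5\subseteq Z(A_5)$. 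So the statement of part (ii) as written is itself problematic (the paper's own proof shares the same defect), and what your argument genuinely establishes is the corrected version with a single class. This is harmless for the downstream application, since the proof of \cref{thm:MNEC-calculation-main} in the $A=A_5$ case only uses that $\ker(\wt I\twoheadrightarrow A_5)\cong C_2$, not the precise orbit count.
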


We will begin by establishing the following useful lemma. The case $k=0$ will be used in the proof of \cref{prop:homs-TxI}, and the general case will be used later in the proof of \cref{thm:MNEC-calculation-main}.

\begin{lemma} \label{lemma:TxIxQ_8:T^k->>C_2}
Let $n,m,k \ge 0$. Then $\wt T^n \times \wt I^m \times (Q_8 \rtimes \wt T^k) \nsurj C_2$.	
\end{lemma}

\begin{proof}
Note that, if $G$ and $H$ are finite groups with no $C_2$ quotients, then $G \times H$ has no $C_2$ quotients. This follows from
the fact that $G$ has a $C_2$ quotient if and only if the abelianisation $G^{\text{ab}}$ has even order, and $(G \times H)^{\text{ab}} \cong G^{\text{ab}} \times H^{\text{ab}}$.
Since $\wt T$ and $\wt I$ have no $C_2$ quotients (by \cref{fig:BPG-quotients}), $\wt T^n \times \wt I^m$ has no $C_2$ quotients.
It remains to show that $Q_8 \rtimes \wt T^k$ has no $C_2$ quotients for $k \ge 1$.

We will identify $Q_8 = \langle x,y \mid yxy^{-1}=x^{-1}\rangle$. Recall that $Q_8 \rtimes \wt T^k$ has action $\varphi : \wt T^k \twoheadrightarrow C_3 \le \Aut(Q_8)$, where the image is generated by $\theta : x \mapsto y$, $y \mapsto xy$.
Suppose that $f : Q_8 \rtimes \wt T^k \to C_2$ is a quotient map. Since $\wt T^k$ has no $C_2$ quotients, $f\mid_{\wt T^k} : \wt T^k \to C_2$ is trivial. Since $f$ is non-trivial, this implies that $f\mid_{Q_8} : Q_8 \to C_2$ is a quotient map. We can act by automorphisms of $Q_8$, which extend to $Q_8 \rtimes \wt T^k$, to assume that $\ker(f\mid_{Q_8}) = \langle y \rangle$. This gives $\ker(f) = \langle y, \wt T^k \rangle$. Let $z \in \wt T^k$ be such that $\varphi(z)=\theta$. Then $zyz^{-1} = \theta(y)=xy$ and so $x=zyz^{-1}y^{-1} \in \ker(f)$. This implies that $\ker(f) = \langle x,y,\wt T^n \rangle = Q_8 \rtimes \wt T^k$, which 
is a contradiction since we assumed $f$ was non-trivial.
\end{proof}

\begin{proof}[Proof of \cref{prop:homs-TxI}]
By \cref{lemma:TxIxQ_8:T^k->>C_2}, $\wt T^n \times \wt I^m$ has no $C_2$ quotient. The first part now follows since $\{C_3, A_4, A_5\}$ are precisely the groups in $\BPG^*$ with no $C_2$ quotients.

(i) From now on, let $G = \wt T^n \times \wt I^m$.
Let $f : G \to C_3$ be a quotient map. Then, for each copy of $\wt I$, we have $f \mid_{\wt I} : \wt I \to C_3$ which is trivial since $\wt I$ has no $C_3$ quotients (by \cref{fig:BPG-quotients}).
It therefore suffices to work in the case $m=0$. For each copy of $\wt T$, we have $f \mid_{\wt T} : \wt T \to C_3$.
By GroupNames \cite{groupnames}, we can deduce that
$\Hom(\widetilde{T},C_3) = \{ 1, \pm f \}$.
Since $\widetilde{T} \le \H_{\R}$, the involution on $\H_{\R}$ induces an automorphism which maps $f \mapsto -f$. Since automorphisms can also permute the copies of $\wt T^n$, we can act by $\Aut(\wt T^n) \le \Aut(G)$ to get that $f\mid_{\wt T^n} = f_k$ for some $k$.
Hence $f = (f_r,0)$ for some $0 \le r \le n$, $0 \le s \le k$. Since $f$ is surjective, we have $(r,s) \ne (0,0)$.

(ii) Let $f : G \to A_5$ be a quotient map. For each copy of $\wt I$, we have $f\mid_{\wt I} : \wt I \to A_5$. By GroupNames \cite{groupnames}, this is trivial or the unique quotient map $h$. By acting by $\Aut(\wt I^m) \le \Aut(G)$, we can therefore assume that $f \mid_{\wt I^m} = h_t$ for some $t$. For each copy of $\wt T$, we have $f \mid_{\wt T} : \wt T \to A_5$. This is trivial since $A_5$ is simple and $\wt T$ has no quotient $A_5$ (by \cref{fig:BPG-quotients}). This implies that $f = (0,h_t)$ for $0 \le t \le m$. Since $f$ is surjective, we have $t \ne 0$.
\end{proof}

\subsection{Non-existence of quotients} \label{ss:non-existence-quotients}

We now show that non-trivial quotients do not exist between certain groups in $\wt \MNEC(\wt T^n \times \wt I^m)$.
We will use the following version of Goursat's lemma.

\begin{lemma} \label{lemma:goursat}
Let $G_1, G_2$ be groups. If $H_1 \le G_1$, $H_2 \le G_2$ are subgroups, $N_1 \unlhd H_1$, $N_2 \unlhd H_2$ are normal subgroups and $\varphi : H_1/N_1 \to H_2/N_2$ is an isomorphism, then we can define a subgroup
\[ N_{\varphi} = \{ (x,y) \in H_1 \times H_2 : \varphi(xN_1)=yN_2 \} \le G_1 \times G_2. \]

Every subgroup of $G_1 \times G_2$ is of the form $N_{\varphi}$ for some $H_1,H_2,N_1,N_2$ and $\varphi$.
Furthermore, if $N_{\varphi}$ is a normal subgroup, then $H_1 \unlhd G_1$ and $H_2 \unlhd G_2$ are normal.
\end{lemma}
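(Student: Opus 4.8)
The plan is to establish the three assertions in turn, all by elementary manipulation inside $G_1 \times G_2$. For the first, I note that $N_\varphi$ is exactly the preimage of the graph $\Gamma_\varphi = \{(\bar x, \varphi(\bar x)) : \bar x \in H_1/N_1\}$ under the canonical surjection $q : H_1 \times H_2 \twoheadrightarrow (H_1/N_1) \times (H_2/N_2)$. Since $\varphi$ is a homomorphism, $\Gamma_\varphi$ is a subgroup of $(H_1/N_1) \times (H_2/N_2)$, so $N_\varphi = q^{-1}(\Gamma_\varphi)$ is a subgroup of $H_1 \times H_2$ and hence of $G_1 \times G_2$.

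For the second assertion, given an arbitrary subgroup $K \le G_1 \times G_2$, I would put $H_i = \pi_i(K)$ (with $\pi_i$ the $i$-th coordinate projection) and $N_1 = \{x \in H_1 : (x,1) \in K\}$, $N_2 = \{y \in H_2 : (1,y) \in K\}$. Conjugating an element $(x,1) \in K$ by a lift $(h,h') \in K$ of an arbitrary $h \in H_1$ shows $hxh^{-1} \in N_1$, so $N_1 \unlhd H_1$, and symmetrically $N_2 \unlhd H_2$. One then defines $\varphi : H_1/N_1 \to H_2/N_2$ by $\varphi(xN_1) = yN_2$ for any $y$ with $(x,y) \in K$; well-definedness, the homomorphism property and injectivity all reduce to the single observation that $(x,y),(x,y') \in K$ forces $(1, y^{-1}y') \in K$, i.e. $y^{-1}y' \in N_2$, while surjectivity is immediate from $\pi_2(K) = H_2$. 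Finally $K = N_\varphi$: the inclusion $\subseteq$ holds by construction of $\varphi$, and if $(x,y) \in N_\varphi$ then, choosing $y_0$ with $(x,y_0) \in K$, one has $y_0^{-1}y \in N_2$, so $(x,y) = (x,y_0)(1,y_0^{-1}y) \in K$.

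For the last assertion, suppose $N_\varphi \unlhd G_1 \times G_2$. Given $g \in G_1$ and $h_1 \in H_1 = \pi_1(N_\varphi)$, pick $h_2$ with $(h_1,h_2) \in N_\varphi$; then $(g,1)(h_1,h_2)(g,1)^{-1} = (g h_1 g^{-1}, h_2) \in N_\varphi$ by normality, so $g h_1 g^{-1} \in \pi_1(N_\varphi) = H_1$, giving $H_1 \unlhd G_1$, and the argument for $H_2 \unlhd G_2$ is symmetric. I do not anticipate any real obstacle here: this is the classical proof of Goursat's lemma, and the only point needing a little care is arranging the well-definedness, injectivity and surjectivity of $\varphi$ simultaneously around the single identity $(x,y),(x,y') \in K \Rightarrow (1,y^{-1}y') \in K$.
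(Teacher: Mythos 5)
Your proof is correct and in essence matches the paper's. For the main Goursat assertion the paper simply cites Hall, so you have filled in what the paper delegates to a reference; for the normality assertion, your conjugation argument is just an unfolding of the paper's one-line observation that $H_i = p_i(N_\varphi)$ is the image of a normal subgroup under a surjective projection and hence normal.
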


\begin{proof}
The main part can be found in \cite[Theorem 5.5.1]{Hall18}. Suppose $N_{\varphi}$ is a normal subgroup. For $i=1,2$, $H_i = p_i(N_{\varphi})$ where $p_i : G_1 \times G_2 \twoheadrightarrow G_i$ is projection map. This implies that $H_i \unlhd G_i$ is a normal subgroup since its is the image of a normal subgroup under a surjective homomorphism.
\end{proof}

We will begin by considering the quotients of $\wt T^n \times \wt I^m$. 

\begin{lemma} \label{lemma:Quot(T^n)}
Let $n \ge 1$. If $N \unlhd \wt T^n$ is a normal subgroup and $|N| = 2^a 3^b$, then $a \ge 3b$.
\end{lemma}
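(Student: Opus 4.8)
The plan is to analyze the normal subgroups $N \unlhd \wt T^n$ via Goursat's lemma (\cref{lemma:goursat}), reducing to an inductive understanding of how normal subgroups of $\wt T^n$ are built from those of $\wt T^{n-1}$ and $\wt T$. The key structural input is that $\wt T$ has exactly four normal subgroups: $C_1, C_2, Q_8, \wt T$, of orders $1, 2, 8, 24$, and that the only proper quotients of $\wt T$ are $C_3$ (order $3$) and $A_4$ (order $12$), together with $C_1$ and $\wt T$ itself. In particular, for every normal subgroup $M \unlhd \wt T$ we have $|M| = 2^{a_0} 3^{b_0}$ with $(a_0, b_0) \in \{(0,0), (1,0), (3,0), (3,1)\}$, so $a_0 \ge 3 b_0$ holds in each case, giving the base case $n = 1$.

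For the inductive step, write $\wt T^n = \wt T^{n-1} \times \wt T$ and let $N \unlhd \wt T^n$. By \cref{lemma:goursat}, there are normal subgroups $H_1 \unlhd \wt T^{n-1}$, $H_2 \unlhd \wt T$, further normal subgroups $N_1 \unlhd H_1$, $N_2 \unlhd H_2$, and an isomorphism $\varphi : H_1/N_1 \to H_2/N_2$, with $N = N_\varphi$. Then $|N| = |N_1| \cdot |H_2| = |N_2| \cdot |H_1|$, and in particular $|N| = |N_1| \cdot |H_2|$. By induction applied to $N_1 \unlhd \wt T^{n-1}$ (a normal subgroup, hence of the controlled form) we get $v_2(|N_1|) \ge 3 \, v_3(|N_1|)$, where $v_p$ denotes $p$-adic valuation; and by the $n=1$ analysis applied to $H_2 \unlhd \wt T$ we get $v_2(|H_2|) \ge 3 \, v_3(|H_2|)$. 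Adding valuations, $v_2(|N|) = v_2(|N_1|) + v_2(|H_2|) \ge 3(v_3(|N_1|) + v_3(|H_2|)) = 3 \, v_3(|N|)$, which is exactly the claim $a \ge 3b$.

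The one subtlety to be careful about is that the inductive hypothesis is stated for normal subgroups of $\wt T^{n-1}$, and Goursat gives us precisely that $N_1 \unlhd H_1 \unlhd \wt T^{n-1}$ with $N_1$ normal in $\wt T^{n-1}$ as well — this follows because, under the projection $p_1 : \wt T^n \twoheadrightarrow \wt T^{n-1}$, one has $N_1 = p_1(N \cap (\wt T^{n-1} \times \{1\}))$ up to the identification, and intersections and images of normal subgroups under surjections are normal; I would record this compatibility carefully since it is what makes the induction go through cleanly. (Alternatively, one avoids the issue entirely: since $H_2 \unlhd \wt T$ forces $H_2 \in \{C_1, C_2, Q_8, \wt T\}$, one can check directly that $H_1/N_1 \cong H_2/N_2$ constrains $|H_1|/|N_1|$ to lie in $\{1, 2, 4, 3, 12, \dots\}$ and track the prime factorizations, but the valuation-additivity argument above is cleaner.) The main obstacle is therefore bookkeeping rather than any deep point: one must ensure that every normal subgroup arising in the Goursat decomposition is genuinely a normal subgroup of the relevant power of $\wt T$, so that the inductive hypothesis applies, and that no normal subgroup of $\wt T$ slips through with $3 \mid |N|$ but $2^3 \nmid |N|$ — which is ruled out precisely because $\wt T$ has no normal subgroup of order $3, 6$, or $12$ (equivalently, $\wt T$ has no quotient of order $8$ or $4$ other than via $Q_8 \unlhd \wt T$, and its only order-divisible-by-3 normal subgroup is $\wt T$ itself).
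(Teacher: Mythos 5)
Your proof is correct, but it takes a genuinely different path from the paper's. Both proofs open with Goursat's lemma (\cref{lemma:goursat}) and the base case $n=1$ via the list $\{1,C_2,Q_8,\wt T\}$ of normal subgroups of $\wt T$, but then they diverge on which order formula to use. The paper uses $|N_\varphi| = |H_1|\,|N_2|$: it applies the inductive hypothesis to $H_1 \unlhd \wt T^{n-1}$ (normality being exactly what the stated form of Goursat's lemma gives) and then constrains $|N_2|$ by brute enumeration — $H_2 \in \{1,C_2,Q_8,\wt T\}$, so $N_2 \in \{1,C_2,C_4,Q_8,\wt T\}$, so $|N_2|$ is a power of $2$ up to $8$ or equals $24$, and one checks $a \ge 3b$ is preserved in each case. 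You instead use $|N_\varphi| = |N_1|\,|H_2|$ and apply the inductive hypothesis to $N_1$ and the base case to $H_2$, giving the valuation inequality uniformly in both factors. This is arguably cleaner (no case-check on $|N_2|$), but it pays for that by requiring the additional observation that $N_1 \unlhd \wt T^{n-1}$, which is \emph{not} part of the paper's \cref{lemma:goursat} — that lemma only asserts $H_1 \unlhd G_1$ and $H_2 \unlhd G_2$. You do flag and address this subtlety; the cleanest version of your justification is that $N_1 \times \{1\} = N_\varphi \cap (\wt T^{n-1} \times \{1\})$ is an intersection of two normal subgroups of $\wt T^n$, hence normal in $\wt T^n$, hence $N_1 \unlhd \wt T^{n-1}$ (your phrasing via $p_1$ of an intersection works out to the same thing but is slightly less direct). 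With that fixed, your valuation-additivity argument closes the induction correctly, so this is a valid alternative to the paper's $|H_1|\,|N_2|$ route; the trade-off is an extra normality lemma in exchange for avoiding the explicit enumeration of possible $|N_2|$.
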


\begin{proof}
We will prove this by induction on $n$. It is true for $n=1$ since, by GroupNames \cite{groupnames}, the normal subgroups of $\wt T$ have orders $1$, $2$, $8$ and $24=2^3 \cdot 3$. Suppose it is true for all $k < n$ and let $N 
\unlhd \wt T^n$ be a normal subgroup. By Goursat's Lemma (\cref{lemma:goursat}), we have that $N \cong N_{\varphi}$ where $\varphi : H_1/ N_1 \to H_2/N_2 =: A$ is an isomorphism and $N_1 \unlhd H_1 \unlhd \wt T^{n-1}$, $N_2 \unlhd H_2 \unlhd \wt T$ are normal subgroups.
It follows that $|N_{\varphi}| = |H_1||N_2|$. 
By the inductive hypothesis, we know that $|H_1| = 2^{a}3^{b}$ where $a \ge 3b$.
By \cref{fig:BPG-quotients}, we must have $H_2 \in \{1,C_2,Q_8,\wt T\}$ and so $N_2 \in \{1,C_2, C_4, Q_8,\wt T\}$ and $|N_2| \in \{1,2,4,8,24\}$.
It follows that $|N_{\varphi}| = 2^{a'}3^{b'}$ where $(a',b')=(a+d,b)$ for $d \in \{0,1,2,3\}$ or $(a',b')=(a+3,b+1)$. In each case, $a \ge 3b$ implies that $a' \ge 3b'$. This completes the proof.
\end{proof}

\begin{prop} \label{prop:T^n->>Q_8:T^k}
Let $n,m \ge 0$ and $k \ge 1$. Then $\wt T^n \times \wt I^m \nsurj Q_8 \rtimes \wt T^k$.
\end{prop}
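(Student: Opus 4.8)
The plan is to assume a surjection exists and derive a contradiction, first reducing to the case $m=0$ and then playing the resulting order constraint against \cref{lemma:Quot(T^n)}.

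First I would record that $Q := Q_8 \rtimes \wt T^k$ has order $|Q_8|\cdot|\wt T^k| = 8 \cdot 24^k = 2^{3k+3}3^k$, which is coprime to $5$. Given a hypothetical surjection $\pi : \wt T^n \times \wt I^m \twoheadrightarrow Q$, I would restrict $\pi$ to each of the $m$ coordinate copies of $\wt I$: the image is a quotient of $\wt I$, hence one of $C_1$, $A_5$, $\wt I$ by \cref{fig:BPG-quotients}, and since $5$ divides $|A_5|$ and $|\wt I|$ but not $|Q|$, every such image is trivial. As these images generate $\pi(\{1\}^n \times \wt I^m)$, it follows that $\pi$ kills $\{1\}^n \times \wt I^m$ and therefore factors through $(\wt T^n \times \wt I^m)/(\{1\}^n \times \wt I^m) \cong \wt T^n$. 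This reduces the problem to showing $\wt T^n \nsurj Q$.

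For that, I would suppose $\bar\pi : \wt T^n \twoheadrightarrow Q$ with kernel $N \unlhd \wt T^n$. The case $n = 0$ is immediate since $Q \ne 1$ (as $k \ge 1$), so assume $n \ge 1$. Comparing orders, $2^{3k+3}3^k = |Q|$ divides $|\wt T^n| = 2^{3n}3^n$, which forces $n \ge k+1$ and $|N| = |\wt T^n|/|Q| = 2^{3(n-k-1)}\cdot 3^{n-k}$. Writing $|N| = 2^a 3^b$ with $a = 3(n-k-1) \ge 0$ and $b = n-k \ge 1$, \cref{lemma:Quot(T^n)} forces $a \ge 3b$, i.e.\ $3(n-k-1) \ge 3(n-k)$, i.e.\ $-1 \ge 0$, which is absurd. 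This contradiction completes the proof.

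I do not expect a genuine obstacle: the substantive content lies entirely in \cref{lemma:Quot(T^n)}, which is already established, so the remaining work is an elementary order count. The only point requiring care is the reduction step — observing that the factors $\wt I$, whose order is divisible by $5$, have no nontrivial homomorphic image inside the $5'$-group $Q$ and are therefore killed by $\pi$.
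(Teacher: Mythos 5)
Your proof is correct and follows essentially the same route as the paper's: reduce to $m = 0$ by noting each copy of $\wt I$ must map trivially (since $5 \mid |A_5|, |\wt I|$ but $5 \nmid |Q_8 \rtimes \wt T^k|$), then apply \cref{lemma:Quot(T^n)} to the kernel of $\wt T^n \twoheadrightarrow Q_8 \rtimes \wt T^k$, whose order $2^{3(n-k-1)}3^{n-k}$ violates the constraint $a \ge 3b$. The only cosmetic difference is that you separate out the degenerate case $n = 0$ explicitly, which is harmless; the paper subsumes it in the observation that $n \ge k+1$ is forced by order divisibility.
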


\begin{proof}
Suppose $f : \wt T^n \times \wt I^m \twoheadrightarrow Q_8 \rtimes \wt T^k$ is a quotient map. Then the image $f(\wt I)$ of each copy of $\wt I$ is isomorphic to $1$, $A_5$ or $\wt I$ (by \cref{fig:BPG-quotients}). Since $5 \mid |A_5|, |\wt I|$ and $5 \nmid |Q_8 \rtimes \wt T^k|$, $f(\wt I) \not \cong A_5$ or $\wt I$ and so we must have $f(\wt I)=1$ for all copies of $\wt I$. It therefore suffices to consider the case where $m = 0$, so that $f : \wt T^n \twoheadrightarrow Q_8 \rtimes \wt T^k$ and so we necessarily have that $n \ge k+1$ by the orders of the groups. 
Then $N = \ker(f)$ is a normal subgroup of $\wt T^n$ with $|N| = \frac{1}{8} |\wt T|^{n-k} = 2^{3(n-k)-3} 3^{n-k}$. This contradicts \cref{lemma:Quot(T^n)}.
\end{proof}

We will now move on to the larger family $\wt T^n \times \wt I^m \times (Q_8 \rtimes \wt T^k)$.

\begin{lemma} \label{lemma:Quot(Q_8:T^k)}
Let $k \ge 1$. If $N \unlhd Q_8 \rtimes \wt T^k$ is a normal subgroup and $|N|  = 2^a3^b > 1$, then $a \ne 3b$.
\end{lemma}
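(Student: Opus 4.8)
The plan is to suppose, for contradiction, that $a=3b$ and to deduce that $|N|=1$. Write $G=Q_8\rtimes\wt T^k$ and let $q\colon G\twoheadrightarrow\wt T^k$ be the canonical projection, whose kernel is the normal subgroup $Q_8$; then $|N|=|N\cap Q_8|\cdot|q(N)|$. The group $N\cap Q_8$ is normal in $Q_8$, hence of order $2^c$ with $c\in\{0,1,2,3\}$, while $q(N)\unlhd\wt T^k$ has order $2^{a'}3^{b'}$ with $a'\ge 3b'$ by \cref{lemma:Quot(T^n)}. Comparing with $|N|=2^a3^b$ gives $a=c+a'$ and $b=b'$, so $a=3b$ together with $a'\ge 3b'=3b$ and $c\ge 0$ forces $c=0$ and $a'=3b$. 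Thus $N\cap Q_8=\{1\}$ and $|q(N)|=2^{3b}3^b$.

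Next I would use that $N$ and $Q_8$ are both normal in $G$ with $N\cap Q_8=\{1\}$, so that $[N,Q_8]\subseteq N\cap Q_8=\{1\}$ and hence $N\subseteq C_G(Q_8)$. A direct computation in the semidirect product identifies this centraliser: an element $(w,t)$ with $w\in Q_8$ and $t\in\wt T^k$ centralises $Q_8$ if and only if the automorphism $i(f_k(t))$ of $Q_8$ agrees with conjugation by $w^{-1}$, i.e. lies in $\Inn(Q_8)$; since $i(f_k(t))$ has order $1$ or $3$ while $\Inn(Q_8)\cong C_2^2$ has order $4$, this forces $f_k(t)=0$ and then $w\in Z(Q_8)$. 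Hence $C_G(Q_8)=Z(Q_8)\times\ker(f_k)$, and applying $q$, which kills $Z(Q_8)\subseteq Q_8$, gives $q(N)\subseteq\ker(f_k)\unlhd\wt T^k$.

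It then remains to show that a normal subgroup $M\unlhd\wt T^k$ with $|M|=2^{3b}3^b$ contained in $\ker(f_k)$ must be trivial. The key structural input is that the normal subgroups of $\wt T^k$ of order exactly $2^{3b}3^b$ are precisely the coordinate sub-products $\wt T_S:=\prod_{i\in S}\wt T$ with $|S|=b$. I would prove this by induction on $k$ using Goursat's lemma (\cref{lemma:goursat}) in the same manner as the proof of \cref{lemma:Quot(T^n)}, but now tracking the equality case of the inequality $a'\ge 3b'$: writing $\wt T^k=\wt T^{k-1}\times\wt T$ and $M=N_\varphi$ as in \cref{lemma:goursat}, the identity $|M|=|H_1|\cdot|N_2|$ with $|N_2|\in\{1,2,4,8,24\}$ forces equality in \cref{lemma:Quot(T^n)} to hold for $H_1$ and forces $N_2\in\{1,\wt T\}$; a short additional argument, using only that the $k$-th coordinate subgroup $\wt T_{\{k\}}$ is normal in $\wt T^k$, rules out the remaining options and leaves $M=H_1\times\wt T$ or $M=H_1\times\{1\}$, so the induction closes. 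Granting this, $q(N)=\wt T_S$ with $|S|=b$. But $\ker(f_k)\cap\wt T_{\{i\}}$ equals the commutator subgroup $Q_8$ of the $i$-th factor, which is a proper subgroup of $\wt T_{\{i\}}$, so $\ker(f_k)$ contains no coordinate copy of $\wt T$. Hence $q(N)\subseteq\ker(f_k)$ forces $S=\emptyset$, i.e. $b=0$; then $a=3b=0$ and $|N|=1$, contradicting $|N|>1$.

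The order bookkeeping of the first step and the centraliser computation are routine; the main obstacle is the last step, i.e. determining the normal subgroups of $\wt T^k$ of the extremal order $2^{3b}3^b$. It in fact suffices to establish the weaker assertion that any such subgroup with $b\ge 1$ contains some coordinate copy $\wt T_{\{i\}}$, which is precisely what the Goursat induction provides.
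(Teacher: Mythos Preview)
Your proposal is correct. The first three steps---showing $N\cap Q_8=1$ via \cref{lemma:Quot(T^n)}, deducing $N\subseteq C_G(Q_8)$ from $[N,Q_8]\subseteq N\cap Q_8$, and computing $C_G(Q_8)=Z(Q_8)\times\ker(f_k)$---are exactly the paper's argument, phrased slightly differently (the paper does the conjugation computation explicitly rather than invoking the commutator identity, but the content is identical). One small slip: since $M\unlhd\wt T^{k-1}\times\wt T$ is normal, both $N_2$ and $H_2$ are normal in $\wt T$ (not merely $N_2\unlhd H_2$), so $|N_2|\in\{1,2,8,24\}$ rather than $\{1,2,4,8,24\}$; this only helps you.

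Where you and the paper diverge is the final step. You finish by classifying the normal subgroups of $\wt T^k$ of extremal order $24^b$: a Goursat induction leaves only $M=H_1\times\{1\}$ or $M=H_1\times\wt T$, once the mixed case $(N_2,H_2)=(1,\wt T)$ is eliminated (which follows since normality would then force $\wt T$ to be abelian). Hence every such $M$ contains a coordinate $\wt T_{\{i\}}$, which cannot lie in $\ker(f_k)$. The paper instead avoids this classification entirely by observing that $\ker(f_k)\cong Q_8\rtimes\wt T^{k-1}$ (via projection onto the first $k-1$ coordinates, split by the section coming from $C_3\hookrightarrow\wt T$), so that the whole situation recurs with $k$ replaced by $k-1$; iterating lands in $Q_8$, where $3\nmid|Q_8|$ gives the contradiction. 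The paper's recursion is slicker and needs no structural analysis of $\wt T^k$, while your route yields the extra information that the normal subgroups of $\wt T^k$ of order $24^b$ are precisely the coordinate products $\wt T_S$.
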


\begin{proof}
Suppose for contradiction that $Q_8 \rtimes \wt T^{k}$ has a normal subgroup $N$ of order $2^{3t}3^t$ where $t \ge 1$. Analogously to Goursat's Lemma (\cref{lemma:goursat}), let $N_1 = N \cap Q_8$ and let $H_2$ denote the projection of $N$ to $\wt T^{k}$. Then $N_1 \unlhd Q_8$, $H_2 \unlhd \wt T^{k}$ are normal subgroups. For each $y \in H_2$, consider $S_y = \{x \in Q_8 : (x,y) \in N\}$ where $(x,y) \in Q_8 \rtimes \wt T^{k}$. Then $S_y = N_1x$ for any $x \in S_y$ and so $|N| = |N_1||H_2|$. Since $N_1 \unlhd Q_8$, we have $|N_1| \in \{1,2,4,8\}$. Since $H_2 \unlhd \wt T^{k}$, \cref{lemma:Quot(T^n)} implies that $|H_2|=2^a3^b$ for $a \ge 3b$. Since $|N|=2^{3t}3^t$, this now implies that $|N_1|=1$ and $|N|=|H_2|$. In particular, the projection map $p_2 : Q_8 \rtimes \wt T^{k} \twoheadrightarrow \wt T^{k}$ restricts to an isomorphism $p_2 \mid_N : N \to H_2$.

Now suppose $(x,y) \in N$. Since $N$ is normal, we have that $(z,1)(x,y)(z^{-1},1) \in N$ for all $z \in Q_8$. We have $(z,1)(x,y)(z^{-1},1) = (zx\varphi_y(z^{-1}),y) \in N$ where $\varphi_{(\cdot)} : \wt T^{k} \twoheadrightarrow C_3 \le  \Aut(Q_8)$ is the action defined by the semidirect product.
Since $p_2 \mid_N$ is an isomorphism, this implies that $x=zx\varphi_y(z^{-1})$ and so $\varphi_y(z) =x^{-1}zx$ for all $z \in Q_8$ and so $\varphi_y \in \Inn(Q_8)$. Since $\Inn(Q_8) \cong Q_8/Z(Q_8) \cong C_2^2$ and $|\IM(\varphi_{(\cdot)})|=3$, we have that $\IM(\varphi_{(\cdot)}) \cap \Inn(Q_8) = \{\id\}$. It follows that $\varphi_y = \id$ and so $y \in \ker(\varphi_{(\cdot)} : \wt T^{k} \to \Aut(Q_8))$. In particular, we have
\[ N \cong H_2 \unlhd \ker(\varphi_{(\cdot)} : \wt T^{k} \to \Aut(Q_8)) \cong \ker(f_k : \wt T^{k} \twoheadrightarrow C_3) \]
where $f_k = (f,\cdots, f)$ is as defined at the start of this section. Projection onto the first $k-1$ coordinates gives a split surjection $\ker(f_k) \twoheadrightarrow \wt T^{k-1}$ with kernel isomorphic to $Q_8$. The action of $\wt T^{k-1}$ on $Q_8$ is given by $\varphi' = (i \circ f_{k-1}) : \wt T^{k-1} \twoheadrightarrow C_3 \le \Aut(Q_8)$. That is, $\ker(f_k : \wt T^{k} \twoheadrightarrow C_3) \cong Q_8 \rtimes \wt T^{k-1}$. Hence we have a normal subgroup $H_2 \unlhd Q_8 \rtimes \wt T^{k-1}$ of order $2^{3t}3^t$.
We can repeat this argument $k$ times to get that there is a normal subgroup $N' \unlhd Q_8$ of order $2^{3t}3^{t}$. This is a contradiction since $3 \nmid |Q_8|$.	
\end{proof}

\begin{prop} \label{prop:Q_8:T^n->>Q_8:T^m}
Let $n,m \ge 0$ and $k, r \ge 1$ with $k \ne r$. Then $\wt T^n \times \wt I^m \times (Q_8 \rtimes \wt T^k) \nsurj Q_8 \rtimes \wt T^r$.
\end{prop}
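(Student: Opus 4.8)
The plan is to assume, for contradiction, a surjection $f\colon \wt T^n \times \wt I^m \times (Q_8 \rtimes \wt T^k) \twoheadrightarrow Q_8 \rtimes \wt T^r$ and to rule it out in stages. First I would reduce to $m=0$: since $|Q_8 \rtimes \wt T^r| = 8^{r+1}3^r$ is coprime to $5$, while the nontrivial quotients of $\wt I$ are $A_5$ and $\wt I$, both of order divisible by $5$ (by \cref{fig:BPG-quotients}), each copy of $\wt I$ maps trivially, so $\wt I^m \le \ker f$ and $f$ descends to a surjection $\wt T^n \times (Q_8 \rtimes \wt T^k) \twoheadrightarrow Q_8 \rtimes \wt T^r$. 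Comparing orders forces $r \le n+k$, so $N := \ker f$ has order $24^{\,n+k-r} = 2^{3b}3^{b}$ where $b := n+k-r \ge 0$.

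Next I would apply Goursat's Lemma (\cref{lemma:goursat}) to $N \unlhd \wt T^n \times (Q_8 \rtimes \wt T^k)$, writing $N \cong N_\varphi$ for an isomorphism $\varphi\colon H_1/N_1 \to H_2/N_2$ with $N_1 \unlhd H_1 \unlhd \wt T^n$ and $N_2 \unlhd H_2 \unlhd Q_8 \rtimes \wt T^k$; here $H_1, H_2$ are normal in the ambient factors by \cref{lemma:goursat}, and so are $N_1, N_2$ since $N_1 \times 1 = N_\varphi \cap (\wt T^n \times 1)$ and likewise for $N_2$. Then $|N| = |N_1|\,|H_2|$, with $|N_1| = 2^{p}3^{q}$, $p \ge 3q$ by \cref{lemma:Quot(T^n)}, and $|H_2| = 2^{s}3^{t}$, $s \ge 3t$ (since $|H_2 \cap Q_8| \in \{1,2,4,8\}$ and \cref{lemma:Quot(T^n)} bounds the image of $H_2$ in $\wt T^k$, exactly as in the first paragraph of the proof of \cref{lemma:Quot(Q_8:T^k)}); moreover $s > 3t$ if $H_2 \ne 1$ by \cref{lemma:Quot(Q_8:T^k)}. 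Matching $2$- and $3$-parts in $|N_1|\,|H_2| = 2^{3b}3^{b}$ gives $(p-3q)+(s-3t)=0$, hence $p=3q$ and $s=3t$, forcing $H_2 = 1$, hence $N_2 = 1$, $H_1 = N_1$, and $N = N_1 \times 1 \subseteq \wt T^n \times 1$. Thus $24^{b} = |N_1| \le |\wt T^n| = 24^{n}$, so $k \le r$; the case $k>r$ is excluded, and $k=r$ is excluded by hypothesis, leaving $k<r$.

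For the remaining case $k<r$, since $N \subseteq \wt T^n \times 1$ we have $G/N \cong A \times (Q_8 \rtimes \wt T^k)$ with $A := \wt T^n/N$ a quotient of $\wt T^n$ of order $24^{\,r-k}$, while also $G/N \cong Q := Q_8 \rtimes \wt T^r$; I would obstruct this via the $2$-local structure of $Q$. For each $s \ge 0$, $O_2(Q_8 \rtimes \wt T^s)$ is the normal Sylow $2$-subgroup $B \times O_2(\wt T^s) \cong Q_8^{\,s+1}$, where $B$ is the base $Q_8$ (it commutes with $O_2(\wt T^s)$ because $\wt T^s$ acts on $B$ through $f_s\colon \wt T^s \to C_3$, which is trivial on $O_2(\wt T^s)$), and $Q_8 \rtimes \wt T^s / O_2 \cong C_3^s$. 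A direct computation of the conjugation action gives
\[
O_2(Q_8 \rtimes \wt T^s)^{\text{ab}} \;\cong\; M_{\chi_1\cdots\chi_s} \;\oplus\; \bigoplus_{j=1}^{s} M_{\chi_j}
\]
as $\F_2[C_3^s]$-modules, where $\chi_1,\dots,\chi_s$ are the coordinate characters of $C_3^s$ and $M_\psi$ is the $2$-dimensional irreducible $\F_2[C_3^s]$-module attached to a nontrivial character $\psi$ (defined up to replacing $\psi$ by $\psi^{-1}$); each $O_2(\wt T)$-block contributes $M_{\chi_j}$, and $B$ contributes $M_{\chi_1\cdots\chi_s}$ because $\wt T^s$ acts on $B$ through the sum map $C_3^s \to C_3$. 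Applying this with $s=r$, and noting that $A$ is a quotient of $\wt T^n$ so that $O_2(A)$ is a $2$-group and $A/O_2(A)$ is a quotient of $\wt T^n/O_2(\wt T^n) \cong C_3^n$ of order $3^{\,r-k}$, hence $\cong C_3^{\,r-k}$, the splitting $Q \cong A \times (Q_8 \rtimes \wt T^k)$ yields $W := Q/O_2(Q) \cong D_1 \times D_2$ with $D_1 := A/O_2(A) \cong C_3^{\,r-k}$ and $D_2 \cong C_3^k$, where $D_2$ acts trivially on $O_2(A)^{\text{ab}}$ and $D_1$ acts trivially on $(Q_8^{\,k+1})^{\text{ab}}$. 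Hence every irreducible $\F_2[W]$-constituent $M_\psi$ of $O_2(Q)^{\text{ab}}$ has $\psi|_{D_1}=1$ or $\psi|_{D_2}=1$.

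The contradiction then comes from $\F_3$-linear algebra in $\widehat{W} \cong \F_3^r$. By the displayed formula with $s=r$, each $\chi_j$ and also $\psi_0 := \chi_1\cdots\chi_r$ lies in $D_1^{\perp} \cup D_2^{\perp}$; since $\widehat W = D_1^{\perp} \oplus D_2^{\perp}$ with $\dim D_1^{\perp}=k$, $\dim D_2^{\perp}=r-k$ and $D_1^{\perp}\cap D_2^{\perp}=0$, the linearly independent $\chi_j$ must split into a basis of $D_1^{\perp}$ (of size $k \ge 1$) and a basis of $D_2^{\perp}$ (of size $r-k \ge 1$). Therefore $\psi_0 = u+v$ with $0 \ne u \in D_1^{\perp}$ and $0 \ne v \in D_2^{\perp}$, so $\psi_0$ lies in neither $D_1^{\perp}$ nor $D_2^{\perp}$ --- a contradiction. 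The main obstacle is this last case $k<r$: equivalently, one must show that $Q_8 \rtimes \wt T^r$ has no direct factor isomorphic to $Q_8 \rtimes \wt T^k$ when $k<r$, which is exactly what the $2$-local module computation achieves; the reductions above and the case $k>r$ are routine, and $k=r$ is excluded by hypothesis.
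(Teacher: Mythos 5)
Your proof is correct. The first half is essentially the paper's Goursat argument, but cleaner: you apply \cref{lemma:goursat} once to $N \unlhd \wt T^n \times (Q_8 \rtimes \wt T^k)$ and then match $2$- and $3$-parts of $|N_1|$ and $|H_2|$ using \cref{lemma:Quot(T^n)} and \cref{lemma:Quot(Q_8:T^k)} to force $H_2 = 1$ and $N \subseteq \wt T^n \times 1$, whereas the paper instead iterates a Goursat-type reduction through the chain $Q_8 \rtimes_{(i)} \wt T^{n+i}$, $i = k, k-1, \dots, 0$, before a final Goursat on $\wt T^n \times Q_8$. The second half diverges and is over-engineered. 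Once $Q_8 \rtimes \wt T^r \cong A \times (Q_8 \rtimes \wt T^k)$ with $A = \wt T^n/N$ of order $24^{r-k} > 1$, the factor $A \times 1$ is already a nontrivial normal subgroup of $Q_8 \rtimes \wt T^r$ of order $2^{3(r-k)}3^{r-k}$, which contradicts \cref{lemma:Quot(Q_8:T^k)} --- the very lemma you just used --- in one line; this is all the paper does. Your substitute, computing $O_2(Q_8 \rtimes \wt T^s)^{\text{ab}}$ as an $\F_2[C_3^s]$-module and then doing $\F_3$-linear algebra with the characters $\chi_1,\dots,\chi_r$ and $\chi_1\cdots\chi_r$ inside $\wh{W} = D_1^{\perp} \oplus D_2^{\perp}$, is correct (and the module decomposition of $O_2(Q_8\rtimes\wt T^s)^{\text{ab}}$ is a nice fact in its own right), but it establishes only the weaker statement that $Q_8 \rtimes \wt T^r$ has no direct factor isomorphic to $Q_8 \rtimes \wt T^k$, at considerably greater length than the lemma already at hand gives the full contradiction.
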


\begin{proof}
By the same argument as given for \cref{prop:T^n->>Q_8:T^k}, 
we can restrict to the case where $m=0$ and so there is a quotient map 
$f : Q_8 \rtimes_{(k)} \wt T^{n+k} \twoheadrightarrow Q_8 \rtimes \wt T^r$ 
where $Q_8 \rtimes_{(k)} \wt T^{n+k} \cong \wt T^n \times (Q_8 \rtimes \wt T^k)$. This implies that $Q_8 \rtimes_{(k)} \wt T^{n+k}$ has a normal subgroup $N$ of order $2^{3t}3^t$ where $t = n+k-r \ge 1$. 
Let $N_1 = N \cap Q_8$ and let $H_2$ denote the projection of $N$ to $\wt T^{n+k}$, so that $N_1 \unlhd Q_8$, $H_2 \unlhd \wt T^{n+k}$ are normal subgroups.
By an identical argument to the one given in \cref{lemma:Quot(Q_8:T^k)}, we get that
\[ N \cong H_2 \unlhd \ker(\varphi_{(\cdot)} : \wt T^{n+k} \twoheadrightarrow C_3) \cong \wt T^n \times \ker(f_k : \wt T^{k} \twoheadrightarrow C_3) \cong Q_8 \rtimes_{(k-1)} \wt T^{n+k-1}. \]

We can repeat this argument $k$ times to get that $N$ projects to a normal subgroup $N' \unlhd \wt T^n \times Q_8$ of order $2^{3t}3^{t}$. By Goursat's Lemma (\cref{lemma:goursat}), we get that $|N'| = |H_1'||N_2'|$ where $H_1' \unlhd \wt T^n$ and $N_2' \le Q_8$. By \cref{lemma:Quot(T^n)}, we have that $|H_1'| = 2^a3^b$ where $a \ge 3b$. This implies that $|N_2'|=1$ and it follows that $N' \unlhd \wt T^n$. It follows that
\[ Q_8 \rtimes \wt T^r = f(\wt T^n \times (Q_8 \rtimes \wt T^k)) \cong (\wt T^n/N') \times (Q_8 \rtimes \wt T^k). \]
However, this implies that $Q_8 \rtimes \wt T^r$ has a normal subgroup of order $2^{3(n-t)}3^{n-t}$ induced by $\wt T^n/N'$. This contradicts \cref{lemma:Quot(Q_8:T^k)}.
\end{proof}

\subsection{Proof of \cref{thm:MNEC-calculation-main}} \label{ss:proof-of-T^nxI^m}

Let $H= \wt T^n \times \wt I^m$ and $G \in \MNEC_B(H)$. By Proposition \ref{prop:groups2}, there exists $Q \in \BPG$, a finite group $A$ and representatives $f_1 \in \wh{\Quot}(H,A)$, $f_2 \in \wh{\Quot}(Q,A)$ such that $G \cong H \times_{f_1,f_2} Q$.
If $A = 1$, then $G \cong H \times Q$. If $Q \cong \wt T$ or $\wt I$, then we obtain $\wt T^{n+1} \times \wt I^m$ and $\wt T^n \times \wt I^{m+1}$ respectively.
If $Q \ne \wt T, \wt I$, then $Q \twoheadrightarrow C_2$ (by \cref{fig:BPG-quotients}) and so there is a proper quotient $G \twoheadrightarrow H \times C_2$. Since $H \times C_2 \twoheadrightarrow H$ is non-Eichler, this implies that $G$ is not minimal which is a contradiction.
If $A \cong Q$, then $G \cong H$ which is again a contradiction. 
If $A \ne 1, Q$, then $A \in \Quot(H) \cap \BPG^* \subseteq \{C_3,A_4,A_5\}$ by \cref{prop:homs-TxI}. We will now consider each case in turn.

If $A = C_3$, then $Q = \wt T$ (by \cref{fig:BPG-quotients}). We have $\wh{\Quot}(H,C_3) = \{(f_k,0)\}_{k=1}^n$ (by \cref{prop:homs-TxI}) and $\wh{\Quot}(\wt T,C_3) = \{f\}$.
This implies that
\[ G \cong H \times_{(f_k,0),f} \wt T \cong \wt T^{n-k} \times \wt I^m \times (\wt T^k \times_{f_k,f} \wt T). \]
Projection gives a split surjection $\wt T^k \times_{f_k,f} \wt T \twoheadrightarrow \wt T^k$ with kernel isomorphic to $\ker(\wt T \twoheadrightarrow C_3) \cong Q_8$.
The action of $\wt T^k$ on $Q_8$ coincides with the action $\varphi : \wt T^k \twoheadrightarrow C_3 \le \Aut(Q_8)$ defined previously and so $\wt T^k \times_{f_k,f} \wt T \cong Q_8 \rtimes_{\varphi} \wt T^k$. Hence $G \cong \wt T^{n-k} \times \wt I^m \times (Q_8 \rtimes \wt T^k)$ for some $1 \le k \le n$.

If $A = A_4$, then $Q = \wt T$ (by \cref{fig:BPG-quotients}). Since $\ker(\wt T \twoheadrightarrow A_4) \cong C_2$, \cref{lemma:classify-disjoint-quotients} (iv) implies that
\[ 1 \to C_2 \to G \to H \to 1. \]
Since $\Aut(C_2) = 1$, there is a unique map $\psi : H \to \Aut(C_2)$. By \cref{cor:extension-abelian} and \cref{prop:H^2=0}, we have
$\mathcal{E}(H,C_2) \cong H^2(H,\F_2) = 0$.
This implies that $\mathcal{E}(H,C_2)$ contains only the trivial extension, and so $G \cong H \times C_2$. 
If $A = A_5$, then $Q = \wt I$ (by \cref{fig:BPG-quotients}). Since $\ker(\wt I \twoheadrightarrow A_5) \cong C_2$, we have that $G \cong H \times C_2$ by the same argument.

Conversely, all groups obtained are non-Eichler covers of $\wt T^n \times \wt I^m$ and it can be shown that each group has no quotient in $B$. No groups are pairwise isomorphic: they each have distinct orders except the groups $\wt T^{n-k} \times \wt I^m \times (Q_8 \rtimes \wt T^k)$ for $k=1, \cdots, n$ which are distinct by \cref{prop:Q_8:T^n->>Q_8:T^m}. No groups have others as proper quotients: $\wt T^n \times \wt I^m \times C_2$ does not quotient onto any other groups since it has the smallest order, and no groups have quotient $\wt T^n \times \wt I^m \times C_2$ since they do not have $C_2$ quotients (by \cref{lemma:TxIxQ_8:T^k->>C_2}). Similarly, by the order of the groups, none of the groups $\wt T^{n-k} \times \wt I^m \times (Q_8 \rtimes \wt T^k)$ can have quotient $\wt T^{n+1} \times \wt I^m$ or $\wt T^n \times \wt I^{m+1}$, and the converse follows from \cref{prop:T^n->>Q_8:T^k}.

This completes the computation of $\MNEC_B(\wt T^n \times \wt I^m)$. 
It follows immediately that
\[ \MNEC_B(\{\wt T^n \times \wt I^m\}_{n,m}) = (\wt{\MNEC}_B(\{\wt T^n \times \wt I^m\}_{n,m}))^{\QF} = (\{\wt T \times C_2, \wt I \times C_2\} \sqcup \{Q_8 \rtimes \wt T^k : k \ge 1 \})^{\QF}. \]
By \cref{prop:Q_8:T^n->>Q_8:T^m}, we have that $Q_8 \rtimes \wt T^k \nsurj Q_8 \rtimes \wt T^r$ for all $r \ne k$, and \cref{lemma:TxIxQ_8:T^k->>C_2} implies that $Q_8 \rtimes \wt T^k \nsurj \wt T \times C_2, \wt I \times C_2$.
Hence $\MNEC_B(\{\wt T^n \times \wt I^m\}_{n,m}) = \{\wt T \times C_2, \wt I \times C_2\} \sqcup \{Q_8 \rtimes \wt T^k : k \ge 1 \}$.

\section{Final results} \label{s:results}

We will now discuss the various partial classification results which we obtain in light of the calculations done both via computer, which can be found in Appendix \ref{s:tables}, and for the infinite families which were dealt with in \cref{s:groups-exceptional}.

\subsection{Proof of \cref{thm:main-group-theory}}

We will now prove the following result from the introduction. We will make frequent use of the computer calculations which are presented in Appendix \ref{s:tables}.

\begin{reptheorem}{thm:main-group-theory}
Let $G$ be a finite group. Then the following are equivalent:
\begin{clist}{(i)}
\item 
$G$ has an Eichler quotient $H$ of the form
\[ C_1, \, Q_8, \, Q_{12}, \, Q_{16}, \, Q_{20}, \, \widetilde{T}, \, \widetilde{O}, \, \widetilde{I} \quad \text{or} \quad \widetilde{T}^n \times \widetilde{I}^m \text{ for $n, m \ge 0$}\]
\item 
$G$ has no quotient $H$ of the form:
\begin{clist}{(a)}
\item
$Q_{4n}$ for $n \ge 6$
\item 
$Q_8 \rtimes \widetilde{T}^n$ for $n \ge 1$
\item
$Q_8 \times C_2$, \, $Q_{12} \times C_2$, \, $Q_{16} \times C_2$, \, $Q_{20} \times C_2$, \, $\widetilde{T} \times C_2$, \, $\widetilde{O} \times C_2$, \, $\widetilde{I} \times C_2$, \, $G_{(32,14)}$, $G_{(36,7)}$, $G_{(64,14)}$, $G_{(96,66)}$, $G_{(100,7)}$, $G_{(384, 18129)}$, $G_{(1152,155476)}$.
\end{clist}
\end{clist}
\end{reptheorem}

\begin{proof}
Let $S$ denote the groups $H$ listed in (i) above, i.e. the groups in \cref{eq:BPGs+TxI}.
The seven binary polyhedral groups listed, which we write as $\BPG$, are in $\MNEC(C_1)$ (by \cref{lemma:minimal-BPG}) and $\wt T^n \times \wt I^m \in \MNEC^{n+m}(C_1)$ (by \cref{thm:MNEC-calculation-main}). In particular, we have $S \subseteq \EE$. 

To see that $S$ is closed under quotients, suppose that there exists $G \in S$ and $H \in \EE \,\setminus\, S$ with $G \twoheadrightarrow H$. The only reasonable option is $G = \wt T^n \times \wt I^m$ for some $n, m \ge 0$.
Since $\wt T^n \times \wt I^m \nsurj C_2$ for $n,m \ge 0$ (by \cref{prop:homs-TxI}), it follows that $H \nsurj C_2$ and so $H \nsurj Q_8, \, Q_{12}, \, Q_{16}, \, Q_{20}, \, \widetilde{O}$. In particular, $H \twoheadrightarrow \wt T$ or $\wt I$. Let $a, b \ge 0$ be such that $H \twoheadrightarrow \wt T^a \times \wt I^b$ and $a+b$ is maximal. Then $H \not \in S$ and $H \in \EE$ implies that $H \twoheadrightarrow H' \in \MNEC(\wt T^a \times \wt I^b)$. By assumption, $H \nsurj \wt T^{a+1} \times \wt I^b$, $\wt T^a \times \wt I^{b+1}$, $C_2$ and so $H' \cong \wt T^{a-k} \times \wt I^b \times (Q_8 \rtimes \wt T^k)$ for some $1 \le k \le a$ by \cref{thm:MNEC-calculation-main}. This implies that $\wt T^n \times \wt I^m$ has a quotient $Q_8 \rtimes \wt T^k$, which is a contradiction by \cref{prop:T^n->>Q_8:T^k}.

By the Fundamental Lemma (\cref{lemma:fundamental-lemma}), a finite group $G$ has an Eichler quotient in $S$ if and only if $G$ has no quotient in $\MNEC(S)$. 
Let $B = \{Q_{4n} : n \ge 6\} \subseteq \EE$. If $G$ has an Eichler quotient in $S$, then $G$ has no quotient in $B$. It follows that a finite group $G$ has an Eichler quotient in $S$ if and only if $G$ has no quotient in $B$ or $\MNEC_{B}(S)$. It remains to determine
\[ \MNEC_{B}(S) = \left\{ (\MNEC_{B}(\BPG) \cup \MNEC_{B}(\{\wt T^n \times \wt I^m\}_{n,m})) \, \setminus \, (\BPG \cup \{\wt T^n \times \wt I^m\}_{n,m})  \right\}^{\QF} \]
where  $n,m \ge 0$ and $(n,m) \ne (0,0)$, and equality follows from \cref{prop:MNEC-basic-prop}.
 
Firstly, we have $\MNEC_{B}(\BPG) = \MNEC_{B}(\G_1) = \G_2$. By \cref{fig:G_2} (or the tables in Appendix \ref{s:tables}), this consists of the groups listed in (ii) (c) as well as the groups $Q_8 \rtimes \wt T$,  $\wt T^2$,  $\wt T \times \wt I$ and $\wt I^2$.

Secondly, by \cref{thm:MNEC-calculation-main}, we have
$ \MNEC_{B'}(\{\wt T^n \times \wt I^m\}_{n,m}) = \{\, Q_8 \rtimes \wt T^k : k \ge 1 \,\} $
where $B' = B \cup \{\wt T \times C_2\} \cup \{\wt I \times C_2\}$. In particular, we have
\[ \MNEC_{B}(S) = \{ (c) \cup \{\, Q_8 \rtimes \wt T^k : k \ge 1 \,\}\}^{\QF}   \]
where (c) denote the sets of groups listed in (ii) (c). Firstly, note that $Q_8 \rtimes \wt T^k$ has no quotient in (c) since $Q_8 \rtimes \wt T^k$ has no $C_2$ quotient (by \cref{lemma:TxIxQ_8:T^k->>C_2}) and the groups in (c) all have $C_2$ quotients (this follows from the BPG quotients arising in \cref{fig:G_2}). The groups in (c) have no $Q_8 \rtimes \wt T$ quotient since $Q_8 \rtimes \wt T \in \G_2$ (so they would have been filtered out otherwise). Since $|Q_8 \rtimes \wt T^k| \ge 4608$ for $k \ge 2$, they have no $Q_8 \rtimes \wt T^k$ quotients for any $k \ge 1$. Hence $\MNEC_{B}(S)$ consists of the groups in (c) and the groups $Q_8 \rtimes \wt T^k$ for $k \ge 1$. This completes the proof of \cref{thm:main-group-theory}.
\end{proof}

We can similarly deduce the following result from the introduction.

\begin{reptheorem}{thm:TxC2-group-theory}
Let $G$ be a finite group such that $G \twoheadrightarrow \wt T \times C_2$. Then the following are equivalent:
\begin{clist}{(i)}	
\item
	$G$ has an Eichler quotient $\wt T \times C_2$
\item	
	$G$ has no quotient of the form 
	$Q_{4n}$ for $n \ge 6$,
	$\wt T \times C_2^2$, $\wt T \times Q_{12}$, $(Q_8 \rtimes \wt T) \times C_2$, $\wt T \times Q_{20}$, $\wt T \times \wt O$ or $\wt T^2 \times C_2$.
\end{clist}	
\end{reptheorem}

\begin{proof}
By \cref{prop:groups1}, it suffices to compute $\MNEC_{B}(H)$ where $B = \{Q_{4n} : n \ge 6\}$. By \cref{fig:G_2}, this consists of the groups $\wt T \times C_2^2$, $\wt T \times Q_{12}$, $(Q_8 \rtimes \wt T) \times C_2$, $\wt T \times Q_{20}$, $\wt T \times \wt O$ and $\wt T^2 \times C_2$.
\end{proof}

\subsection{Further partial classifications}
\label{ss:classification-partial}

We now consider two additional partial classifications.

\begin{thm} \label{thm:classification-order<=383}
Let $G$ be a finite group with $|G| \le 383$. Precisely one of the following holds.
\begin{clist}{(A)}
\item
$G$ has a quotient of the form 
$Q_{4n}$ for $n \ge 6$, 
$Q_8 \times C_2$,  $Q_{12} \times C_2$, $Q_{16} \times C_2$, $Q_{20} \times C_2$, $\widetilde{T} \times C_2^2$, $\widetilde{O} \times C_2$, $G_{(32,14)}$, $G_{(36,7)}$, $G_{(64,14)}$ or $G_{(100,7)}$ (in which case $\Z G$ does not have $\SFC$)
\item
$G \cong Q_8 \rtimes \wt T$, $\widetilde{I} \times C_2$ or $\wt T \times Q_{12}$
\item
$G$ has an Eichler quotient $H$ where $H$ is one of the following groups:
\begin{clist}{(i)}
\item
$C_1, \, Q_8, \, Q_{12}, \, Q_{16}, \, Q_{20}, \, \widetilde{T}, \, \widetilde{O}, \, \widetilde{I}$ (in which case $\Z G$ has $\PC$)
\item
$\wt T \times C_2$, $G_{(96,66)}$.
\end{clist}
\end{clist}
\end{thm}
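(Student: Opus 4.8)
The plan is to deduce the trichotomy directly from \cref{thm:main-group-theory}, using the bound $|G|\le 383$ to reduce its infinite lists to finitely many small groups, and then to handle the few residual possibilities with \cref{thm:main-cancellation}, \cref{thm:TxC2-group-theory}, and the failures of $\SFC$ recorded in \cref{thm:BHJ-part-1}. Throughout I would use that every quotient of $G$ has order $\le|G|\le 383$, that $\SFC$ passes to quotients, and that $\PC$ (equivalently $\LFC$) implies $\SFC$.

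First I would apply \cref{thm:main-group-theory}. If $G$ has an Eichler quotient $H$ from \eqref{eq:BPGs+TxI} then, since $|\widetilde T^n\times\widetilde I^m|\ge|\widetilde T^2|=576>383$ for $n+m\ge 2$, necessarily $H\in\{C_1,Q_8,Q_{12},Q_{16},Q_{20},\widetilde T,\widetilde O,\widetilde I\}$; this is case (C)(i), and $\Z G$ has $\PC$ by \cref{thm:main-cancellation}. Otherwise $G$ has a quotient $H$ in family (a)--(c) of \cref{thm:main-group-theory}, and $|H|\le 383$ forces $H$ to be one of: $Q_{4n}$ with $6\le n\le 15$; the group $Q_8\rtimes\widetilde T$ (since $|Q_8\rtimes\widetilde T^n|=8\cdot 24^n\ge 4608$ for $n\ge 2$); or one of the twelve groups of (ii)(c) of order $\le 383$, namely $Q_8\times C_2$, $Q_{12}\times C_2$, $Q_{16}\times C_2$, $Q_{20}\times C_2$, $\widetilde T\times C_2$, $\widetilde O\times C_2$, $\widetilde I\times C_2$, $G_{(32,14)}$, $G_{(36,7)}$, $G_{(64,14)}$, $G_{(96,66)}$, $G_{(100,7)}$. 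If this quotient $H$ is $Q_{4n}$ ($n\ge 6$) or one of the ten groups among the above twelve that fail $\SFC$, then $H$ occurs in the list of case (A), so $\Z G$ fails $\SFC$ and $G$ lies in (A). If $H\cong Q_8\rtimes\widetilde T$ and $G$ has no such failing quotient, then $|G|\le 383<2\cdot 192$ forces $G\cong Q_8\rtimes\widetilde T$, which has $\SFC$ by \cref{thm:BHJ-part-1}: this is case (B).

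The remaining situation is that the only bad quotients of $G$ are among $\widetilde T\times C_2$ and $G_{(96,66)}$ (in particular $G$ has no $Q_{4n}$ quotient with $n\ge 6$). If $G\twoheadrightarrow\widetilde T\times C_2$, I would invoke \cref{thm:TxC2-group-theory}: either $G$ has an Eichler quotient $\widetilde T\times C_2$, which is case (C)(ii); or $G$ has a quotient among $Q_{4n}$ ($n\ge 6$), $\widetilde T\times C_2^2$, $\widetilde T\times Q_{12}$, $(Q_8\rtimes\widetilde T)\times C_2$, $\widetilde T\times Q_{20}$, $\widetilde T\times\widetilde O$, $\widetilde T^2\times C_2$, of which only $Q_{4n}$ ($n\ge6$), $\widetilde T\times C_2^2$ (order $96$) and $\widetilde T\times Q_{12}$ (order $288$) can occur by the order bound --- the first two lying in the list of case (A) and the third forcing $G\cong\widetilde T\times Q_{12}$, i.e.\ case (B). If instead $G\twoheadrightarrow G_{(96,66)}$, then $G_{(96,66)}$ is Eichler simple (being a minimal non-Eichler cover of a group in $S$, by \cref{prop:MNEC-of-ES=ES}), so by \cref{prop:groups1} $G$ has an Eichler quotient $G_{(96,66)}$ unless it has a quotient in $\MNEC(G_{(96,66)})$; a quotient in $\MNEC(G_{(96,66)})\setminus\MNEC_{B_1}(G_{(96,66)})$ (where $B_1=\{Q_{4n}:n\ge 6\}$) would give $G$ a $Q_{4n}$ quotient with $n\ge 6$, contrary to assumption, and the computer calculations in Appendix \ref{s:tables} show that every group in $\MNEC_{B_1}(G_{(96,66)})$ has order $>383$; hence $G$ has an Eichler quotient $G_{(96,66)}$, again case (C)(ii). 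This exhausts all cases. It then remains to check that the three cases are mutually exclusive: in (A) one has $\Z G$ without $\SFC$; in (B) the groups $Q_8\rtimes\widetilde T$ and $\widetilde T\times Q_{12}$ have $\SFC$ by \cref{thm:BHJ-part-1} and, being Eichler simple, have no Eichler quotient among the (pairwise non-isomorphic, Eichler simple) groups of (C); and in (C) either $\Z G$ has $\PC$ (in (C)(i)) or $G$ is an Eichler cover of $\widetilde T\times C_2$ --- which has $\SFC$ lifting by \cref{prop:TxC2-lifting} --- or of $G_{(96,66)}$, so $\Z G$ has $\SFC$. Disjointness of (C)(i) from (C)(ii), and of (B) from (C), follows from the Eichler Pushout Lemma (\cref{lemma:EPL}), which implies a finite group has at most one Eichler simple Eichler quotient up to isomorphism, together with the observation (via abelianisations) that $Q_8\rtimes\widetilde T$ and $\widetilde T\times Q_{12}$ have no quotient in the list of case (A).

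The hard part is the treatment of the two ``surviving'' groups $\widetilde T\times C_2$ and $G_{(96,66)}$ of order $\le 383$ that have $\SFC$ but are not handled by \cref{thm:main-cancellation}: one must show that any $G$ of order $\le 383$ with $\widetilde T\times C_2$ (resp.\ $G_{(96,66)}$) as a quotient either fails $\SFC$, is one of the two exceptional groups of case (B), or already has the relevant Eichler quotient. For $\widetilde T\times C_2$ this is delivered cleanly by \cref{thm:TxC2-group-theory}, but for $G_{(96,66)}$ it relies on the explicit computation of $\MNEC_{B_1}(G_{(96,66)})$ in Appendix \ref{s:tables} and the verification that all its members have order exceeding $383$. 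The other delicate point is the mutual-exclusivity bookkeeping, which requires the $\SFC$-lifting inputs to apply to every Eichler cover arising in case (C)(ii) --- for $G_{(96,66)}$ this can be obtained by verifying condition (i) of \cref{thm:main-group-rings} via hyperelementary induction, exactly as in the proof of \cref{lemma:TxC2}.
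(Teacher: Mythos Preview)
Your argument is essentially correct and runs parallel to the paper's, but is organised as a cascade of case distinctions off \cref{thm:main-group-theory} and \cref{thm:TxC2-group-theory}, whereas the paper applies the Fundamental Lemma once to the enlarged set $S=\{\text{groups in \eqref{eq:BPGs+TxI}}\}\cup\{\wt T\times C_2,\,G_{(96,66)},\,Q_8\rtimes\wt T,\,\wt T\times Q_{12}\}$ and reads off the trichotomy (with mutual exclusivity built in) directly from $\MNEC(S)$. Both routes ultimately rely on the same computer calculation that $\MNEC_{B_1}(G_{(96,66)})$ contains no group of order $\le 383$; note that this fact is stated in the paper's proof but is \emph{not} actually recorded in the table of Appendix~\ref{s:tables}, which only tabulates $\MNEC_{B_1}(H)$ for $H\in\{\wt T\times C_2\}\cup\{\wt T^n\times\wt I^m:n+m=2\}$.

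There is, however, a genuine gap in your mutual-exclusivity argument. To separate (A) from (C)(ii) you assert that every Eichler cover of $G_{(96,66)}$ has $\SFC$, proposing to verify condition (i) of \cref{thm:main-group-rings} for $G_{(96,66)}$ by hyperelementary induction ``exactly as in the proof of \cref{lemma:TxC2}''. The paper does \emph{not} establish $\SFC$ lifting for $G_{(96,66)}$; after \cref{cor:|G|<=383} it only checks $\SFC$ for the single proper Eichler cover $G_{(192,183)}$ of order $\le 383$. Carrying out the hyperelementary induction would require computing $K_1$-surjectivity for every hyperelementary subgroup of $G_{(96,66)}$, and you have not done this. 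Fortunately this detour is unnecessary: you can separate (A) from (C) by the same Eichler Pushout Lemma argument you already invoke for (B) versus (C). If $G$ has an Eichler quotient $H$ from the list in (C) and also a quotient $H'$ from the list in (A), then since every group in (A) is Eichler simple, \cref{lemma:EPL} forces $H\twoheadrightarrow H'$; but inspection of the table in Appendix~\ref{s:tables} shows that no group $H$ in the (C) list has any quotient in $\EE$ lying in the (A) list. This is exactly how the paper's single application of the Fundamental Lemma to the enlarged $S$ delivers exclusivity for free.
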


\begin{proof}
Let $S$ be the list of groups in \cref{eq:BPGs+TxI} as well as $\{\wt T \times C_2, G_{(96,66)}, Q_8 \rtimes \wt T, \wt I \times C_2, \wt T \times Q_{12}\}$, so that $S \subseteq \EE$ is closed under quotients. Let $B$ be the quotient filter of the sets of groups listed in \cref{thm:main-group-theory} (ii) (a)-(c) as well as $\MNEC_{B}(H)$ for each $H \in \{\wt T \times C_2, G_{(96,66)}, Q_8 \rtimes \wt T, \wt I \times C_2, \wt T \times Q_{12}\}$ where $B = \{Q_{4n} : n \ge 6\}$. 
Define $\ol{S} = S \cap \{G : |G| \le 383\}$ and $\ol{B} = B \cap \{G : |G| \le 383\}$.
If $G$ is a finite group such that $|G| \le 383$ then, by the Fundamental Lemma (\cref{lemma:fundamental-lemma}) and \cref{thm:main-group-theory}, either $G$ has an Eichler quotient in $\ol{S}$ or a quotient in $\ol{B}$ (but not both). 
Next note that $\ol{S}$ consists of the groups in (B) and (C) (i)-(ii), and $\ol{B}$ consists of the groups listed in (A). This requires that $\MNEC_{B}(G_{(96,66)}) \cap \{G : |G| \le 383\} = \emptyset$, which follows by a computer calculation.
Finally note that the groups in (B) have order $\ge 383/2$ and so, if $G$ has a quotient $H$ in (B), then $G \cong H$.
\end{proof}

This reduces the complete classification of when PC holds for groups of order $\le 383$ to checking PC (resp. SFC) for $Q_8 \rtimes \wt T$, $\wt I \times C_2$ and $\wt T \times Q_{12}$, and checking a weak version of PC lifting (resp. SFC lifting) for $\wt T \times C_2$ and $G_{(96,66)}$. 
By results of Bley-Hofmann-Johnston \cite{BHJ24} (see \cref{thm:BHJ-part-1}), the groups $Q_8 \rtimes \wt T$, $\wt I \times C_2$ and $\wt T \times Q_{12}$ have SFC. By \cref{thm:TxC2}, $\wt T \times C_2$ has SFC lifting. 
By calculating group extensions, we can show that $G_{(192,183)}$ is the unique group $G$ with $|G| \le 383$ which has $G_{(96,66)}$ as a proper Eichler quotient. By \cref{thm:BHJ-part-1}, $G_{(192,183)}$ has SFC. Hence we obtain the following, which is one of the main results in \cite{BHJ24}:

\begin{corollary}[Bley-Hofmann-Johnston] \label{cor:|G|<=383}
Let $G$ be a finite group with $|G| \le 383$. Then the following are equivalent:
\begin{clist}{(i)}
\item
$\Z G$ has stably free cancellation
\item
$G$ has no quotient of the form 
$Q_{4n}$ for $n \ge 6$, 
$Q_8 \times C_2$,  $Q_{12} \times C_2$, $Q_{16} \times C_2$, $Q_{20} \times C_2$, $\widetilde{T} \times C_2^2$, $\widetilde{O} \times C_2$, $G_{(32,14)}$, $G_{(36,7)}$, $G_{(64,14)}$ or $G_{(100,7)}$.	
\end{clist}
\end{corollary}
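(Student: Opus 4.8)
The plan is to deduce Corollary~\ref{cor:|G|<=383} from \cref{thm:classification-order<=383} together with the SFC computations of Swan, Chen and Bley--Hofmann--Johnston (\cref{thm:BHJ-part-1}) and the cancellation and lifting results established above. The first observation is that the list of groups appearing in condition (ii) is \emph{exactly} the list appearing in case (A) of \cref{thm:classification-order<=383}; hence a finite group $G$ with $|G|\le 383$ satisfies (ii) if and only if it lies in case (B) or case (C) rather than case (A). Since cases (A), (B), (C) are mutually exclusive and jointly exhaustive for $|G|\le 383$, it then suffices to prove that $\Z G$ fails $\SFC$ in case (A) and has $\SFC$ in cases (B) and (C); assembling these gives the equivalence (i)$\Leftrightarrow$(ii).

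For case (A), the assertion that $\Z G$ fails $\SFC$ is already built into the statement of \cref{thm:classification-order<=383}: failure of $\SFC$ for $Q_{4n}$ with $n\ge 6$ is due to Swan, for $Q_{4n}\times C_2$ with $n\in\{2,4,5\}$ and for $G_{(36,7)}$, $G_{(100,7)}$ to Chen, and for $\wt T\times C_2^2$, $\wt O\times C_2$, $\wt I\times C_2$, $G_{(32,14)}$, $G_{(64,14)}$ to Bley--Hofmann--Johnston (\cref{thm:BHJ-part-1}). Failure of $\SFC$ is inherited by any $G$ admitting one of these as a quotient, since $\SFC$ (like $\PC$) is closed under quotients by \cref{thm:LF-COR}. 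Thus case (A) forces (i) and (ii) both to fail.

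For case (C)(i), $G$ has an Eichler quotient among $C_1, Q_8, Q_{12}, Q_{16}, Q_{20}, \wt T, \wt O, \wt I$, so \cref{thm:main-cancellation} gives that $\Z G$ has $\PC$, hence $\SFC$. For case (C)(ii), $G$ has an Eichler quotient $\wt T\times C_2$ or $G_{(96,66)}$. If the Eichler quotient is $\wt T\times C_2$, then $\Z G$ has $\SFC$ by \cref{prop:TxC2-lifting} (i.e. $\wt T\times C_2$ has $\SFC$ lifting). If the Eichler quotient is $G_{(96,66)}$, I would enumerate the Eichler covers $G$ of $G_{(96,66)}$ with $|G|\le 383$ by a group extension computation, showing that the only possibilities are $G\cong G_{(96,66)}$ and $G\cong G_{(192,183)}$; both have $\SFC$ by \cref{thm:BHJ-part-1}. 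Finally, case (B) is immediate: $Q_8\rtimes\wt T$ and $\wt T\times Q_{12}$ both have $\SFC$ by \cref{thm:BHJ-part-1}.

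Beyond the external input of \cref{thm:BHJ-part-1}, the one step requiring genuine work is the $G_{(96,66)}$ case, since $G_{(96,66)}$ is not known to satisfy $\SFC$ lifting (only that it itself has $\SFC$). I would verify that, among all finite groups of order at most $383$, the only proper Eichler cover of $G_{(96,66)}$ is $G_{(192,183)}$ by classifying extensions $1\to N\to G\to G_{(96,66)}\to 1$ with $N$ a (normal subgroup of a) binary polyhedral group and discarding those $G$ with $m_{\H}(G)>m_{\H}(G_{(96,66)})$ or with a quotient in the list of case (A), exactly as in \cref{alg:MNEC(G)-original}; this is a finite but nontrivial computation. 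Everything else is bookkeeping: reading off the case analysis of \cref{thm:classification-order<=383} and checking which case forces $\SFC$.
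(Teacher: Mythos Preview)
Your proof is correct and follows essentially the same route as the paper: deduce the corollary from \cref{thm:classification-order<=383} by handling cases (A), (B), (C)(i), (C)(ii) exactly as you describe, using \cref{thm:BHJ-part-1} for the groups in (B) and for $G_{(192,183)}$, and \cref{prop:TxC2-lifting} for Eichler covers of $\wt T\times C_2$. One small imprecision: for the $G_{(96,66)}$ step you want to enumerate \emph{Eichler} covers of order $\le 383$, which just means classifying all extensions by groups $N$ with $|N|\le 3$ and keeping those with $m_{\H}(G)=m_{\H}(G_{(96,66)})$ --- this is not quite what \cref{alg:MNEC(G)-original} does (that algorithm computes minimal \emph{non}-Eichler covers), though the required computation is straightforward either way and the paper handles it the same way you do.
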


\begin{remark}
Note that we have given a proof that (ii) implies (i) using a slightly different approach to the one taken in \cite{BHJ24}. Whilst they used the classification of groups of order $\le 383$ (using GAP's Small Groups library \cite{BEO02,GAP4}), we proved the result as a consequence of \cref{thm:main-group-theory}. 
\end{remark}

We also have the following for $m_{\H}(G)$ bounded. The proof is similar to that of \cref{thm:classification-order<=383} except that we use the computations of $m_{\H}(G)$ given in Appendix \ref{s:tables}. We also use that $Q_8 \times C_2$ and $Q_{12} \times C_2$ are the only groups in \cref{thm:main-group-theory} (ii) (c) with $m_{\H}(G) \le 2$.

\begin{thm} \label{thm:classification-mH<=2}
Let $G$ be a finite group with $m_{\H}(G) \le 2$. Precisely one of the following holds.
\begin{clist}{(A)}
\item
$G$ has a quotient of the form $Q_8 \times C_2$ or $Q_{12} \times C_2$ (in which case $\Z G$ does not have $\PC$)
\item
$G$ has an Eichler quotient $H$ where $H$ is one of the following groups:
\begin{clist}{(i)}
\item
$C_1, \, Q_8, \, Q_{12}, \, Q_{16}, \, Q_{20}, \, \widetilde{T}, \, \widetilde{O}, \, \widetilde{I}$ (in which case $\Z G$ has $\PC$)
\item
$\wt T \times C_2$, $Q_8 \rtimes \wt T$, $\wt T^2$.
\end{clist}
\end{clist}
\end{thm}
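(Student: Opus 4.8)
The plan is to follow the proof of \cref{thm:classification-order<=383} essentially verbatim, replacing the restriction $|G| \le 383$ by the restriction $m_{\H}(G) \le 2$ and reading off the relevant values of $m_{\H}$ from Appendix \ref{s:tables}. Let $S$ be the class consisting of the groups in \cref{eq:BPGs+TxI} together with $\wt T \times C_2$ and $Q_8 \rtimes \wt T$. The first step is to check that $S \subseteq \EE$ and that $S$ is closed under quotients: the groups in \cref{eq:BPGs+TxI} already form a closed subclass of $\EE$ by (the proof of) \cref{thm:main-group-theory}, while $\wt T \times C_2, Q_8 \rtimes \wt T \in \G_2 \subseteq \EE$, and one checks using \cref{fig:BPG-quotients}, \cref{prop:homs-TxI} and the fact that $(\wt T \times C_2)^{\text{ab}}$, $(Q_8 \rtimes \wt T)^{\text{ab}}$ have no $C_2$-quotient beyond what is already present, that every Eichler simple quotient of $\wt T \times C_2$ or of $Q_8 \rtimes \wt T$ lies in $S$.

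By the Fundamental Lemma (\cref{lemma:fundamental-lemma}), a finite group $G$ is $S$-Eichler if and only if it has no quotient in $\MNEC(S)$, and these two alternatives are mutually exclusive. The heart of the proof is to show that the only groups in $\MNEC(S)$ with $m_{\H} \le 2$ are $Q_8 \times C_2$ and $Q_{12} \times C_2$, each with $m_{\H} = 2$. For the finite part this is read off from Appendix \ref{s:tables}: among $\G_2 = \MNEC_{B_1}(\{C_1,Q_8,Q_{12},Q_{16},Q_{20},\wt T,\wt O,\wt I\})$ and $\MNEC_{B_1}(\wt T \times C_2)$, $\MNEC_{B_1}(Q_8 \rtimes \wt T)$, with $B_1 = \{Q_{4n} : n \ge 6\}$, the groups $Q_8 \times C_2$ and $Q_{12} \times C_2$ are the only ones with $m_{\H} \le 2$; in particular $m_{\H}(Q_{4n}) \ge 3$ for $n \ge 6$, and every non-Eichler cover of $\wt T \times C_2$ or of $Q_8 \rtimes \wt T$ has $m_{\H} \ge 3$ since $m_{\H}(\wt T \times C_2) = m_{\H}(Q_8 \rtimes \wt T) = 2$. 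For the infinite families occurring in $\MNEC(S)$ by \cref{thm:MNEC-calculation-main}, I would use the product formula $m_{\H}(X \times Y) = r_1(X)\,m_{\H}(Y) + m_{\H}(X)\,r_1(Y)$, where $r_1(Z)$ denotes the number of $1$-dimensional real representations of $Z$ (a consequence of \cref{prop:m_H-formula} and the multiplicativity of the Frobenius--Schur indicator), together with $r_1(\wt T) = r_1(\wt I) = 1$, $m_{\H}(\wt T) = 1$ and $m_{\H}(\wt I) = 2$; this gives $m_{\H}(\wt T^n \times \wt I^m) = n + 2m$, so $\wt T$, $\wt I$, $\wt T^2$ are the only members of $\{\wt T^n \times \wt I^m\}$ with $m_{\H} \le 2$. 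Finally, $Q_8 \rtimes \wt T^k$ is Eichler simple (\cref{prop:MNEC-of-ES=ES}) and surjects onto $\wt T^k$, so $m_{\H}(Q_8 \rtimes \wt T^k) > m_{\H}(\wt T^k) = k$, forcing $m_{\H}(Q_8 \rtimes \wt T^k) \ge 3$ for $k \ge 2$; combined with \cref{prop:T^n->>Q_8:T^k} (used as in \cref{thm:main-group-theory}) this shows no further element of $\MNEC(S)$ has $m_{\H} \le 2$.

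With this the theorem follows. Assume $m_{\H}(G) \le 2$. If $G$ is $S$-Eichler, choose an Eichler quotient $H \in S$; since Eichler quotients preserve $m_{\H}$ we have $m_{\H}(H) = m_{\H}(G) \le 2$, so $H$ is one of $C_1, Q_8, Q_{12}, Q_{16}, Q_{20}, \wt T, \wt O, \wt I, \wt T^2, \wt T \times C_2, Q_8 \rtimes \wt T$, placing us in case (B); moreover if $H$ is one of the first eight of these then $\Z G$ has $\PC$ by \cref{thm:main-cancellation}. If $G$ is not $S$-Eichler, it has a quotient $H' \in \MNEC(S)$, and $m_{\H}(H') \le m_{\H}(G) \le 2$ forces $H' \in \{Q_8 \times C_2, Q_{12} \times C_2\}$, placing us in case (A); then $\Z G$ fails $\PC$, since $\PC$ descends to quotients (Fr\"ohlich \cite{Fr75}) and $\Z[Q_8 \times C_2]$, $\Z[Q_{12} \times C_2]$ fail $\PC$ by Swan \cite{Sw83} and Chen \cite{Ch86}. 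That exactly one of (A), (B) holds is then immediate from the Fundamental Lemma together with $Q_8 \times C_2, Q_{12} \times C_2 \in \MNEC(S)$.

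I expect the main obstacle to be the second step: verifying that $S$ is closed under quotients and, above all, pinning down precisely which elements of $\MNEC(S)$ satisfy $m_{\H} \le 2$. This rests on the computed values of $m_{\H}$ in Appendix \ref{s:tables} for the finitely many minimal non-Eichler covers of the binary polyhedral groups (and of $\wt T \times C_2$ and $Q_8 \rtimes \wt T$), together with the two structural inputs $m_{\H}(\wt T^n \times \wt I^m) = n + 2m$ and $m_{\H}(Q_8 \rtimes \wt T^k) \ge 3$ for $k \ge 2$, which dispose of the infinite families.
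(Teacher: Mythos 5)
Your proof is correct and follows the same route as the paper: apply the Fundamental Lemma to $S = \{\text{groups in \cref{eq:BPGs+TxI}}\} \cup \{\wt T \times C_2,\, Q_8 \rtimes \wt T\}$, verify $S$ is quotient-closed in $\EE$, and then cut $\MNEC(S)$ against the constraint $m_{\H}(G) \le 2$. Where you go beyond the paper's terse sketch (which simply points to the $m_{\H}$ values in Appendix \ref{s:tables}) is in handling the infinite families: the product formula $m_{\H}(X \times Y) = r_1(X)\, m_{\H}(Y) + m_{\H}(X)\, r_1(Y)$, giving $m_{\H}(\wt T^n \times \wt I^m) = n + 2m$, and the monotonicity bound $m_{\H}(Q_8 \rtimes \wt T^k) > k$, are exactly what is needed to see that every member of $\MNEC(S)$ lying in the families $\{\wt T^{n-k} \times \wt I^m \times (Q_8 \rtimes \wt T^k)\}$, $\{\wt T^n \times C_2\}$, $\{Q_8 \rtimes \wt T^k\}$ has $m_{\H} \ge 3$; this makes the argument self-contained in a way the paper's proof (and indeed the paper's parenthetical claim about the groups in \cref{thm:main-group-theory}(ii)(c)) is not. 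One small wording slip: among the finite list $\G_2$ the groups with $m_{\H} \le 2$ are not only $Q_8 \times C_2$ and $Q_{12} \times C_2$ but also $\wt T \times C_2$, $Q_8 \rtimes \wt T$, $\wt T^2$ (each with $m_{\H} = 2$); your sentence should say these are the only such groups \emph{not already in $S$}. Since the latter three are in $S$ and hence excluded from $\MNEC(S)$ by definition, this does not affect the conclusion, and the final case analysis and the appeal to \cite{Fr75}, \cite{Sw83}, \cite{Ch86} for the (A) direction are correct.
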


This reduces the complete classification of when PC holds for such groups to checking PC for one group $Q_8 \rtimes \wt T$ (which has SFC by  \cref{thm:BHJ-part-1}) and checking PC lifting for at most two groups $\wt T \times C_2$ and $Q_8 \rtimes \wt T$.
If it was known that $Q_8 \rtimes \wt T$ had SFC lifting then, similarly to \cref{cor:|G|<=383}, we would have that a finite group $G$ with $m_{\H}(G) \le 2$ has SFC if and only if $G$ has no quotient of the form $Q_8 \times C_2$ or $Q_{12} \times C_2$.

In fact, since $m_{\H}(H) = 2$ for $H = Q_8 \times C_2, \, Q_{12} \times C_2, \, \wt T \times C_2, \, Q_8 \rtimes \wt T, \, \wt T^2$, we obtain:

\begin{corollary}
Let $G$ be a finite group with $m_{\H}(G) \le 1$. Then $\Z G$ has $\PC$.		
\end{corollary}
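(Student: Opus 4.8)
The plan is to obtain this as an immediate consequence of \cref{thm:classification-mH<=2}, using the monotonicity of $m_{\H}$ under passage to quotients. First I would record the two elementary facts about $m_{\H}$ already noted in \cref{ss:relative-Eichler}: if $G$ has a quotient $H$ then $\R H$ is a ring direct summand of $\R G$ and so $m_{\H}(G) \ge m_{\H}(H)$; and if moreover $H$ is an Eichler quotient of $G$, then by definition of the relative Eichler condition $m_{\H}(G) = m_{\H}(H)$. I would also recall the values $m_{\H}(H) = 2$ for $H \in \{Q_8 \times C_2,\, Q_{12} \times C_2,\, \wt T \times C_2,\, Q_8 \rtimes \wt T,\, \wt T^2\}$ stated just before this corollary (for instance $\R[Q_8 \times C_2] \cong \R Q_8^2 \cong \R^8 \oplus \H_{\R}^2$, as in the proof of \cref{lemma:TxC2}; the other four can be read off the tables in Appendix \ref{s:tables}).

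Next, given a finite group $G$ with $m_{\H}(G) \le 1$, I would note that in particular $m_{\H}(G) \le 2$, so \cref{thm:classification-mH<=2} applies and exactly one of cases (A), (B) holds. Case (A) cannot occur: it would give $G$ a quotient of the form $Q_8 \times C_2$ or $Q_{12} \times C_2$, both of which have $m_{\H} = 2$, forcing $m_{\H}(G) \ge 2$ and contradicting $m_{\H}(G) \le 1$. Within case (B), subcase (ii), where $G$ has an Eichler quotient $H \in \{\wt T \times C_2,\, Q_8 \rtimes \wt T,\, \wt T^2\}$, also cannot occur: an Eichler quotient $H$ would give $m_{\H}(G) = m_{\H}(H) = 2$, again a contradiction. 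Hence subcase (B)(i) holds, i.e. $G$ has an Eichler quotient $H$ of the form $C_1,\, Q_8,\, Q_{12},\, Q_{16},\, Q_{20},\, \wt T,\, \wt O$ or $\wt I$, and therefore $\Z G$ has $\PC$ by \cref{thm:main-cancellation} (this conclusion is in fact already recorded in \cref{thm:classification-mH<=2} (B)(i)).

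There is essentially no obstacle to overcome here: all the substance lies in \cref{thm:classification-mH<=2} and in the character-theoretic computation that the five groups listed above have $m_{\H} = 2$. The only content of the corollary itself is the trivial numerical observation that $2 > 1$, which is what rules out the two bad alternatives.
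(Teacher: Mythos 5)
Your argument matches the paper's own reasoning: the paper deduces the corollary from \cref{thm:classification-mH<=2} together with the observation that $m_{\H}(H)=2$ for $H \in \{Q_8 \times C_2,\, Q_{12} \times C_2,\, \wt T \times C_2,\, Q_8 \rtimes \wt T,\, \wt T^2\}$, which rules out cases (A) and (B)(ii) by monotonicity of $m_{\H}$ under quotients. Your write-up is correct and makes the implicit elimination explicit; no gap.
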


This generalises \cite[Theorem C]{Ni18} which established the analogous result for $\SFC$.

\subsection{Proof of \cref{thm:main-prescribed}}
\label{ss:classification-prescribed-quotient}

We will now prove the following result from the introduction.

\begin{reptheorem}{thm:main-prescribed}
Let $G$ be a finite group such that $G \twoheadrightarrow C_2^2$. Then the following are equivalent:
\begin{clist}{(i)}
\item
$\Z G$ has projective cancellation
\item
$\Z G$ has stably free cancellation
\item
$G$ has no quotients of the form $Q_{4n}$ for $n \ge 6$ even, $Q_{4n} \times C_2$ for $n=2,4$ or $n \ge 3$ odd, $\wt T \times C_2^2$, $\wt O \times C_2$, $\wt I \times C_2^2$, $G_{(32,14)}$ or $G_{(64,14)}$
\item
$G$ has an Eichler quotient of the form $C_2^2$, $Q_8$ or $Q_{16}$.
\end{clist}
\end{reptheorem}

For a class of groups $A$, define $\EE(A) = \bigcup_{n \ge 0} \MNEC^n(A)$ which we will view as a directed graph with edge set $\{(G,H) : G \in \MNEC(H)\}$ where $(G,H)$ is an edge from $G$ to $H$. 
We refer to the groups in $\EE(A)$ as \textit{Eichler simple groups over $A$}.

Similarly to Appendix \ref{ss:main-diagram}, we start by making a number of definitions. Let $\G_0(C_2^2) = \{C_2^2\}$ and $\G_1(C_2^2) = \MNEC(\G_0(C_2^2))$. If $B = \{ Q_{4n} : \text{ $n \ge 6$ }\}$, then our GAP program can compute $\{G \in \G_1(C_2^2) : \text{ $G$ has no quotient in $B$ }\}$. Note that $Q_{4n} \twoheadrightarrow C_2$ for all $n \ge 6$, but $Q_{4n} \twoheadrightarrow C_2^2$ if and only if $n$ is even. If $G \twoheadrightarrow C_2^2, Q_{4n}$ for $n$ odd, then $G \twoheadrightarrow C_2^2 \times_{C_2} Q_{4n} \cong Q_{4n} \times C_2$ since we can choose generators for $C_2^2$ such that the quotient map $C_2^2 \twoheadrightarrow C_2$ is projection. Combining this with the computations gives $\G_1(C_2^2)$, which can be found in \cref{fig:G_2(C_2^2)}. (Note that technically the groups in $Q_{4n}$ and $Q_{4n} \times C_2$ for $n \ge 6$ are not all in $\G_1(C_2^2)$, but $(\,\cdot\,)^{\QF}$ of this class.)

Let $B_1(C_2^2) = \G_1(C_2^2) \, \setminus \, \{Q_8, Q_{16}\}$. By \cref{fig:G_2} (or Appendix \ref{s:tables}), these are the groups in $\G_1(C_2^2)$ which fail PC (and which fail SFC). Let $\G_2(C_2^3) = \MNEC_{B_1(C_2^2)}(\G_1(C_2^2))$. Again by \cref{fig:G_2}, we have that $\G_2(C_2^2)$ consists of the four groups $Q_8 \times C_2$, $G_{(32,14)}$, $G_{16} \times C_2$, $G_{(64,14)}$ listed in \cref{fig:G_2(C_2^2)}.
All four groups fail PC (and SFC) by \cref{fig:G_2}.

\captionsetup{aboveskip=8pt}

\begin{figure}[h]
\def\layersep{4cm}
\begin{center}
\hspace{4mm}
\begin{tikzpicture}[draw=black!75, node distance=\layersep]
\definecolor{BLUE}{HTML}{318CE7}
\definecolor{RED}{HTML}{	E32636}
    \tikzstyle{neuron}=[fill=white,draw=black!75,minimum size=20pt,inner sep=0pt,rounded corners]
    \tikzstyle{bbneuron}=[pattern=horizontal lines, pattern color=BLUE, draw=black!75,minimum size=20pt,inner sep=0pt,rounded corners]
    \tikzstyle{bneuron}=[fill=BLUE!65, draw=black!75,minimum size=20pt,inner sep=0pt,rounded corners]
    \tikzstyle{rneuron}=[fill=red!50, draw=black!75,minimum size=20pt,inner sep=0pt,rounded corners]
    \tikzstyle{annot} = [text width=4em, text centered]

    \node[bneuron] (I) at (0,-2.6) {$C_2^2$};

\def\sep{0.8}
\def\shift{-0.4cm}
        \path[yshift=1cm]node[bneuron] (H-1) at (\layersep,\shift-\sep*1 cm) {\text{ $\quad Q_8 \quad$ }};
        \path[yshift=1cm]node[bneuron] (H-2) at (\layersep,\shift-\sep*2 cm) {\text{ $\quad Q_{16} \quad$ }};
        \path[yshift=1cm]node[rneuron] (H-6) at (\layersep,\shift-\sep*3 cm) {\text{ $\wt T \times C_2^2$ }};        
        \path[yshift=1cm]node[rneuron] (H-5) at (\layersep,\shift-\sep*4 cm) {\text{ $\wt O \times C_2$ }};
        \path[yshift=1cm]node[rneuron] (H-7) at (\layersep,\shift-\sep*5 cm) {\text{ $\wt I \times C_2^2$ }};
        \path[yshift=1cm]node[rneuron] (H-8) at (\layersep,\shift-\sep*6 cm) {\text{ $Q_{4n}$,  $\substack{\text{\normalsize $n \ge 6$} \\ \text{\normalsize even}}$ }};
        \path[yshift=1cm]node[rneuron] (H-9) at (\layersep,\shift-\sep*7 cm) {\text{ $Q_{4n} \times C_2$, $\substack{\text{\normalsize $n \ge 3$} \\ \text{\normalsize odd}}$ }};

\node (I) at ($(I)+(0:-0.1)$) {};

\foreach \x in {1,2,5,6,7,8,9} \node (H-\x) at ($(H-\x) +(0:-0.55) $) {}; 

\node (H-8)  at ($(H-8) +(0:-0.2) $) {}; 
\node (H-9)  at ($(H-9) +(0:-0.6) $) {}; 

\begin{scope}[on background layer]
    \foreach \source in {1}
        \foreach \dest in {1,2,5,6,7,8,9}
            \path (I) edge (H-\dest);
\end{scope}

\foreach \x in {1,2,5,6,7} \node (H-\x) at ($(H-\x) +(0:1.125) $) {}; 

\def\sep{1.45}
\def\shiftt{0.5cm}
        \path[yshift=0.5cm]node[rneuron] (L-1) at (2*\layersep,\shiftt-\sep*1 cm) {\text{ $Q_8 \times C_2$ }};
        \path[yshift=0.5cm]node[rneuron] (L-2) at (2*\layersep,\shiftt-\sep*2 cm) {\text{ $\, G_{(32,14)} \,$ }};
        \path[yshift=0.5cm]node[rneuron] (L-3) at (2*\layersep,\shiftt-\sep*3 cm) {\text{ $Q_{16} \times C_2$ }};
        \path[yshift=0.5cm]node[rneuron] (L-4) at (2*\layersep,\shiftt-\sep*4 cm) {\text{ $\, G_{(64,14)} \,$ }};   

\foreach \x in {1,...,4} \node (L-\x) at ($(L-\x) +(0:-0.5) $) {}; 

\begin{scope}[on background layer]
	\path (H-1) edge (L-1) edge (L-2);
	\path (H-2) edge (L-2) edge (L-3) edge (L-4);
\end{scope}
      	
    \node (g0) at (0,0.6) {};
    \node[annot,right of=g0] (g1) {$\G_1(C_2^2)$};
    \node[annot,right of=g1] (g2) {$\G_2(C_2^2)$};
\end{tikzpicture}
\end{center}
\caption{Graphical structure of $\G_0(C_2^2)$, $\G_1(C_2^2)$ and $\G_2(C_2^2)$}
\label{fig:G_2(C_2^2)}
\end{figure}
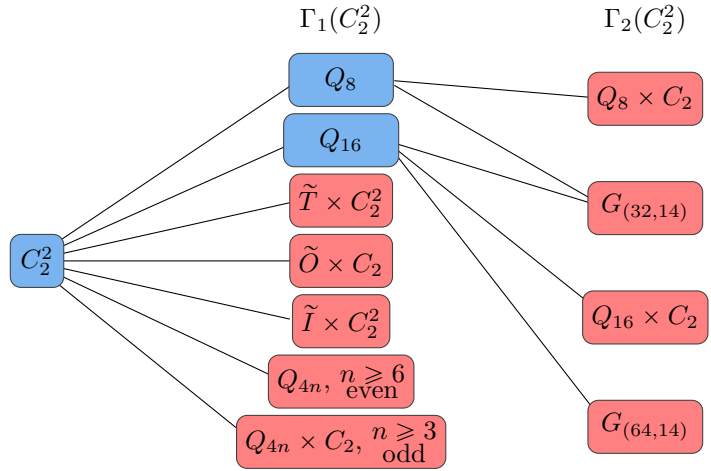

\begin{proof}
First note that (iv) $\Rightarrow$ (i) $\Rightarrow$ (ii) $\Rightarrow$ (iii) where the first arrow follows from \cref{thm:main-cancellation} and the third follows from \cref{fig:G_2}. It therefore remains to prove that (iii) implies (iv). We will prove the contrapositive.
The Fundamental Lemma (\cref{lemma:fundamental-lemma}) as stated only applies to $\EE$ and not $\EE(C_2^2)$, but we will use the same idea that went into its proof.

Let $G$ be a finite group such that $G \twoheadrightarrow C_2^2$ but which is not $H$-Eichler for $H=C_2^2$, $Q_8$ or $Q_{16}$, i.e. (iv) is not satisfied. 
Suppose for contradiction that (iii) is satisfied, i.e. $G$ has no quotient which is one of the groups listed in (iii).
Since $G$ is not $C_2^2$-Eichler, \cref{prop:groups1} implies that $G \twoheadrightarrow H' \in \MNEC(C_2^2) = \G_1(C_2^2)$. By \cref{fig:G_2(C_2^2)}, all the groups in $\G_1(C_2^2)$ are listed in (iii) except $Q_8$, $Q_{16}$, and so $H' = Q_8$, $Q_{16}$. By assumption, $G$ is not $H'$-Eichler and so \cref{prop:groups1} implies that $G \twoheadrightarrow H'' \in \MNEC(H')$. Since $G$ has no quotient in $B_1(C_2^2)$, we have that $H'' \in \MNEC_{B_1(C_2^2)}(H')$ and so $H''$ must have a quotient in $\MNEC_{B_1(C_2^2)}(\{Q_8,Q_{16}\}) = \G_2(C_2^2)$. By \cref{fig:G_2(C_2^2)}, all these groups are listed in (iii), which is a contradiction. Hence (iii) is not satisfied.
\end{proof}

\subsection{Groups with periodic cohomology} \label{ss:periodic-cohomology}

Recall that, if $P$ is a (finitely generated) projective $\Z G$-module, we say that $[P] \in \wt K_0(\Z G)$ has \textit{cancellation} if $P \oplus \Z G \cong P' \oplus \Z G$ implies $P \cong P'$ for all projective $\Z G$-modules $P'$.
Let $D(\Z G) \le \wt K_0(\Z G)$ denote the kernel group.

We say that a finite group $G$ has \textit{periodic cohomology} if, for some $k \ge 1$, its Tate cohomology groups satisfy $\hat{H}^i(G;\Z) = \hat{H}^{i+k}(G;\Z)$ for all $i \in \Z$. We will now prove the following.

\begin{thm} \label{thm:main-periodic-cohomology-later}
Let $G$ be a finite group with periodic cohomology. Then $\Z G$ has $\pc$ if and only if $m_{\H}(G) \le 2$. Furthermore, if $P$ is a projective $\Z G$-module, then:
\begin{clist}{(i)}
\item If $m_{\H}(G) \le 2$, then $[P]$ has cancellation
\item If $m_{\H}(G) = 3$, then:
\begin{clist}{(a)}
\item If the Sylow $2$-subgroup of $G$ is cyclic, then $[P]$ has non-cancellation
\item If $[P] \in D(\Z G)$, then $[P]$ has non-cancellation
\end{clist}
\item If $m_{\H}(G) \ge 4$, then $[P]$ has non-cancellation.
\end{clist}
\end{thm}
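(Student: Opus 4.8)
The plan is to deduce \cref{thm:main-periodic-cohomology-later} from the general machinery already developed, using the special structure of groups with periodic cohomology. First I would recall the classical fact (Suzuki--Zassenhaus, or see \cite{CR87}) that $G$ has periodic cohomology if and only if every abelian subgroup of $G$ is cyclic, equivalently $G$ has no subgroup $C_p \times C_p$, and that in this case the Wedderburn decomposition of $\R G$ consists only of matrix algebras over $\R$, $\C$ and copies of $\H = M_1(\H)$ (no higher quaternionic blocks $M_n(\H)$ with $n \ge 2$). In particular $m_\H(G)$ counts exactly the number of surjections (up to the relevant equivalence) from $G$ onto binary polyhedral groups, and $G$ periodic means the binary polyhedral quotients of $G$ are tightly controlled. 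The key reduction is that, for $G$ with periodic cohomology, cancellation behaviour is governed entirely by $m_\H(G)$ via the Eichler simple quotient that $G$ maps onto.

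The main steps would be as follows. Step 1: show that a group $G$ with periodic cohomology has $m_\H(G) \le 2$ precisely when $G$ has an Eichler quotient among $C_1, Q_8, Q_{12}, Q_{16}, Q_{20}, \wt T$, by checking that all the obstruction groups in \cref{thm:main-group-theory}(ii) with $m_\H \le 2$ are excluded --- the only relevant obstruction of $m_\H$-value $\le 2$ is $Q_8 \times C_2$ and $Q_{12} \times C_2$, and each of these contains $C_2 \times C_2$, hence cannot be a quotient of a periodic group. Combined with \cref{thm:main-cancellation}, this gives that $m_\H(G) \le 2$ implies $\Z G$ has PC, and moreover that every class $[P]$ has cancellation; this is part (i). Step 2: for $m_\H(G) = 3$, one shows $\Z G$ fails PC by exhibiting the appropriate non-cancellation. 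Here I would use the known non-cancellation phenomena: if the Sylow $2$-subgroup is cyclic, then $G$ is (by the classification of periodic groups with cyclic Sylow $2$-subgroup) built from metacyclic pieces together with exactly the quaternionic contribution, and one invokes the results of Swan/Chen type on such groups, or more directly observes that $G$ must have $Q_{4n} \times C_2$ or a similar quotient known to fail SFC; the statement about $[P] \in D(\Z G)$ uses that the kernel group classes are detected by the fibre-square analysis of \cref{subsection:fibre-squares} and cannot have cancellation when the quaternionic part is large enough. Step 3: for $m_\H(G) \ge 4$, every class fails cancellation --- this should follow from an inductive/fibre-square argument using \cref{thm:coset=K_R} applied to the splitting separating off the $\ge 4$ copies of $\H$, together with Swan's lower bounds on $|\SF|$ for such orders, showing the relevant double coset space is strictly larger than $K_{\mathcal{R}}$ by a factor that persists across all classes.

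The hard part will be Step 2, the borderline case $m_\H(G) = 3$, and in particular teasing apart exactly which classes $[P]$ fail cancellation. The difficulty is that cancellation for a non-free class $[P]$ is not automatically inherited from or equivalent to cancellation for $[\Z G]$ in general, so one must use the bimodule-action structure (the maps $I \otimes_\l -$ of \cref{lemma:lifting-two-sided-ideals} and \cref{thm:coset=K_R}) to transport the non-cancellation from one class to all classes in the appropriate sub-cases. The two sub-cases (a) cyclic Sylow $2$-subgroup and (b) $[P] \in D(\Z G)$ suggest that the argument splits: in case (a) one has enough control over the order $\G_{\Z G}$ to run the fibre-square comparison directly; in case (b) one uses that the kernel group is precisely where the failure is concentrated, via the Mayer--Vietoris sequence of \cref{prop:MV-special}. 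I would expect to lean on the periodicity hypothesis heavily here, since it forces the quaternion algebra blocks to all be totally definite with small centres ($\Q$, $\Q(\sqrt 2)$, $\Q(\sqrt 5)$ and the metacyclic analogues), which is exactly what makes Swan's explicit computations in \cite{Sw83} applicable.

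Finally, I would assemble these pieces: the equivalence ``$\Z G$ has PC $\iff m_\H(G) \le 2$'' is immediate from Step 1 ($\Leftarrow$) and Steps 2--3 ($\Rightarrow$, since $m_\H(G) \ge 3$ produces a non-cancelling projective), and parts (i)--(iii) are Steps 1--3 respectively. I would also note that this recovers and strengthens the topological applications of \cite{Ni19, Ni20b}, as promised in \cref{ss:periodic-cohomology}, since the cancellation of $[P]$ for $m_\H \le 2$ is exactly what those applications require, and the proof here avoids the case-by-case analysis used previously by routing everything through \cref{thm:main-group-theory} and the Eichler simple quotient.
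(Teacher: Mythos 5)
Your Step 1 contains a false lemma: you claim that a group containing $C_2\times C_2$ cannot be a quotient of a group with periodic cohomology, but $A_4 \supseteq C_2^2$ is a quotient of $\wt T$, which has periodic cohomology. The correct mechanism, which the paper never needs to invoke, is that the image of a Sylow $2$-subgroup under a surjection is a Sylow $2$-subgroup, so any quotient of a periodic group has $2$-Sylow a quotient of a cyclic or generalised quaternion group; this does rule out $Q_8\times C_2$ and $Q_{12}\times C_2$, but by a different argument than you gave. Moreover your accounting of the obstruction list from \cref{thm:main-group-theory}(ii) is incomplete: $\wt T\times C_2$ and $Q_8\rtimes\wt T$ also have $m_\H = 2$ (see the table in Appendix~\ref{s:tables}), so they need to be ruled out as quotients as well.

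More fundamentally, the paper's proof takes a quite different and much more self-contained route that you do not reconstruct. Instead of passing through \cref{thm:main-group-theory}, it first proves the statement directly for binary polyhedral groups using Swan's Theorems I and III from \cite{Sw83}, then handles the general periodic $G$ by casing on $\#\mathcal{B}_{\text{max}}(G)$. When $\#\mathcal{B}_{\text{max}}(G)=1$ there is a unique maximal binary polyhedral quotient $f\colon G\twoheadrightarrow H$ with $m_\H(H)=m_\H(G)$ by \cref{prop:relative-eichler-group}, and the non-cancellation is transported back to $G$ via the surjection $f_\#\colon\Cls^{[P]}(\Z G)\twoheadrightarrow\Cls^{[f_\#(P)]}(\Z H)$ of \cref{thm:LF-COR} --- a tool you never invoke, and which is the key to all of Steps 2--3. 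When $\#\mathcal{B}_{\text{max}}(G)>1$, the paper uses the classification from \cite[Theorems 1.5 \& 1.10]{Ni20b} that $\mathcal{B}_{\text{max}}(G)=\{Q_{8m_1},\dots,Q_{8m_b}\}$ with $m_i$ odd coprime and $b\le3$, together with an explicit formula for $m_\H(G)$, to force either the $Q_{24}$ case (handled via $D(\Z Q_{24})$ and Swan's Theorem III) or $m_\H\ge 5$ with a $Q_{8m}$ quotient for $m\ge5$. Your Steps 2--3, which propose a Mayer--Vietoris / fibre-square argument via \cref{thm:coset=K_R} and \cref{lemma:lifting-two-sided-ideals}, do not identify this transport mechanism; those tools are designed to prove cancellation holds, not to show it fails, and you would in any case need the $\mathcal{B}_{\text{max}}(G)$ structure theorem, which you do not mention.
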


Recall from \cref{ss:relative-Eichler} that $\mathcal{B}(G)$ is the set of equivalence classes, denote by $\sim$, of quotients $f: G \twoheadrightarrow H$ where $H$ is a binary polyhedral group.
If $G$ has periodic cohomology and $f_1 , f_2 \in \mathcal{B}(G)$, then \cite[Corollary 1.4]{Ni20b} implies that $f_1 \simeq f_2 $ if and only if $\IM(f_1) \cong \IM(f_2)$.
This shows that there is a bijection $\mathcal{B}(G) \cong \{ H \text{ a binary polyhedral group} : G \twoheadrightarrow H \}$.
Given this, we will often write $H \in \mathcal{B}(G)$ when there exists $f: G \twoheadrightarrow H$ with $f \in \mathcal{B}(G)$.
 In order to determine $\mathcal{B}(G)$, it suffices to determine the set of maximal binary polyhedral quotients $\mathcal{B}_{\text{\normalfont max}}(G)$ which are the subset containing those $f \in \mathcal{B}(G)$ such that $f$ does not factor through any other $g \in \mathcal{B}(G)$.

\begin{proof} It suffices to prove (i)-(iii) since this implies that $\Z G$ has $\PC$ if and only if $m_{\H}(G) \le 2$.
We begin by showing the result for $G$ a binary polyhedral group (BPG). The BPGs with $m_{\H}(G) \le 2$ are $Q_8$, $Q_{12}$, $Q_{16}$, $Q_{20}$, $\wt T$, $\wt O$, $\wt I$ which all have $\PC$. Recall that $m_{\H}(Q_{4n}) = \lfloor n/2 \rfloor$.
The BPGs with $m_{\H}(G)=3$ are $Q_{24}$ and $Q_{28}$.
For $Q_{24}$, the Sylow $2$-subgroup is $Q_8$ (which is non-cyclic). The fact that $[P]$ has non-cancellation whenever $[P] \in D(\Z Q_{24})$ follows from close inspection of \cite[Theorem III]{Sw83}.
For $Q_{28}$, the Sylow $2$-subgroup is $C_4$ and $\Z Q_{28}$ has non-cancellation in every class by \cite[Theorem I]{Sw83}.
The BPGs with $m_{\H}(G) \ge 4$ are $Q_{4n}$ for $n \ge 7$. Again by \cite[Theorem I]{Sw83}, $\Z Q_{4n}$ has non-cancellation in every class for $n \ge 7$.

We will now prove the general case by verifying it in three separate cases according to $\# \mathcal{B}_{\text{max}}(G)$. 
First, if $\# \mathcal{B}_{\text{max}}(G) = 0$, then \cref{prop:relative-eichler-group} implies that $G$ is Eichler and so has $\PC$.
 
Next consider the case $\# \mathcal{B}_{\text{max}}(G) = 1$. Let $f : G \twoheadrightarrow H$ denote this quotient, with $H$ a BPG. By \cref{prop:relative-eichler-group}, we have $m_{\H}(G) = m_{\H}(H)$. If $m_{\H}(G) \le 2$, then $m_{\H}(H) \le 2$ and so $H$ is of the form $Q_8, Q_{12}, Q_{16}, Q_{20}, \wt T, \wt O, \wt I$ and $G$ has $\PC$ by \cref{thm:main-cancellation}. This proves (i) in this case.
Now assume $m_{\H}(G) \ge 3$ and let $P$ be a projective $\Z G$-module. If $P' = f_\#(P)$ and $[P']$ has non-cancellation, then $[P]$ has non-cancellation by \cref{thm:LF-COR}. Hence, to verify \cref{thm:main-periodic-cohomology-later}, it suffices to show that cancellation fails for $[P']$ in the cases (ii) and (iii).
 Suppose $m_{\H}(G) = m_{\H}(H) =3$. If the Sylow $2$-subgroup $\Syl_2(G)$ is cyclic, then $\Syl_2(H)$ is a quotient of $\Syl_2(G)$ and so is cyclic. Since the result holds for BPGs, this implies that $[P']$ has non-cancellation. If $[P] \in D(\Z G)$ then, since $f_\#$ induces a map $f_\# : D(\Z G) \to D(\Z H)$, we have $P' = f_\#(P) \in D(\Z H)$. Similarly, this implies that $[P']$ has non-cancellation which completes the proof of (ii). If $m_{\H}(G) = m_{\H}(H) \ge 4$, then $[P']$ has non-cancellation by the result for BPGs and this completes the proof of (iii) in this case.

Finally, we consider the case $\# \mathcal{B}_{\text{max}}(G) > 1$.
By \cite[Theorems 1.5 \& 1.10]{Ni20b}, we know that $\# \mathcal{B}_{\text{max}}(G) = 2$ or $3$. It can be shown (see the proof of \cite[Theorem 1.10]{Ni20b}) that $\mathcal{B}_{\text{max}}(G) = \{ Q_{8m_1}, \cdots, Q_{8m_b}\}$ where the $m_i$ are odd coprime and 
\[ m_{\H}(G) =
\begin{cases}
(m_1+m_2)-1, & \text{if $b=2$}\\
(m_1+m_2+m_3)-2, & \text{if $b=3$}
\end{cases}
\]
depending on the two cases that arise. We will assume that $m_1 < \cdots < m_b$. 
Note that $m_{\H}(G) =3$ only when $b=2$, $m_1 = 1$ and $m_2=3$. In this case, $G$ has a quotient $Q_{24}$ and so $\Syl_2(G)$ is non-cyclic. Furthermore, if $P$ is a projective $\Z G$ module with $[P] \in D(\Z G)$, then $[P]$ has non-cancellation by \cref{thm:LF-COR} since its image lies in $D(\Z Q_{24})$ under the extension of scalars map. In all other cases, we have $m_{\H}(G) \ge 5$ and $G$ must have a quotient $Q_{8m}$ for some $m \ge 5$. By the BPG case, we have that $\Z Q_{8m}$ has non-cancellation in every class. By \cref{thm:LF-COR} again, $\Z G$ has non-cancellation in every class. This completes the proof.
\end{proof}

In contrast to (ii), we have the following which follows from \cite[Theorem III]{Sw83}.

\begin{prop}
The quaternion group $Q_{24}$ of order $24$ has $m_{\H}(Q_{24})=3$ and a (non-cyclic) Sylow $2$-subgroup $Q_8$. Furthermore, $[\Z G]$ has non-cancellation but there exists a projective $\Z Q_{24}$ module $P$ with $[P] \not \in D(\Z Q_{24})$ for which $[P]$ has cancellation.	
\end{prop}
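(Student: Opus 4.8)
The plan is to handle the two numerical assertions by direct computation and to derive the two cancellation assertions from Swan's explicit determination of the genus of $\Z Q_{24}$ in \cite[Theorem III]{Sw83}.

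First I would verify the arithmetic. Writing $Q_{24} = Q_{4n}$ with $n = 6$, the formula $m_{\H}(Q_{4n}) = \lfloor n/2 \rfloor$ recalled in the proof of \cref{thm:main-periodic-cohomology-later} gives $m_{\H}(Q_{24}) = 3$; equivalently one has $\Q Q_{24} \cong B' \times A_1 \times A_2$, where $B'$ satisfies the Eichler condition, $A_1 = \H_{\Q}$ is the Hamilton quaternions (contributing one copy of $\H$), and $A_2$ is a totally definite quaternion algebra over $\Q(\sqrt 3)$ (contributing two, as $\Q(\sqrt 3)$ has two real places); here $A_1$, $A_2$ are the components indexed by the divisors $d = 4, 12$ of $2n$ not dividing $n$. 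For the Sylow $2$-subgroup, take the presentation $Q_{24} = \langle x, y \mid x^{12},\ y^2 = x^6,\ y x y^{-1} = x^{-1}\rangle$ and the subgroup $\langle x^3, y\rangle$; it has index $3$, hence order $8$, and the relations $(x^3)^4 = 1$, $y^2 = (x^3)^2$, $y x^3 y^{-1} = (x^3)^{-1}$ identify it with $Q_8$, which is non-cyclic.

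For the cancellation assertions I would read off two facts from \cite[Theorem III]{Sw83}, which determines $\Cls(\Z Q_{24})$, the class group $C(\Z Q_{24}) = \wt K_0(\Z Q_{24})$, the kernel subgroup $D(\Z Q_{24})$, and the partition $\Cls(\Z Q_{24}) = \bigsqcup_{[P] \in C(\Z Q_{24})} \Cls^{[P]}(\Z Q_{24})$ into genus fibres. (a) The free fibre is not a singleton, i.e. $|\SF(\Z Q_{24})| = |\Cls^{[\Z Q_{24}]}(\Z Q_{24})| > 1$, so $[\Z Q_{24}]$ has non-cancellation (in agreement with the failure of SFC for $Q_{4n}$, $n \ge 6$, and, since $0 \in D(\Z Q_{24})$, with the assertion recalled in the proof of \cref{thm:main-periodic-cohomology-later} that every class in $D(\Z Q_{24})$ has non-cancellation). (b) There is a locally free ideal $P \subseteq \Z Q_{24}$ — realisable as one of the explicit two-sided ideals in Swan's tables, for instance a Swan module $(N,r)$ or a Beyl--Waller type ideal $P_{a,b}$ — whose class in $C(\Z Q_{24})$ is detected on one of the totally definite quaternionic components $A_1$, $A_2$ and hence does not lie in $D(\Z Q_{24})$, and whose genus fibre $\Cls^{[P]}(\Z Q_{24})$ is a singleton, so that $[P]$ has cancellation.

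The main obstacle is that \cite[Theorem III]{Sw83} does not record (a) and (b) in this form, so the work consists of tracing through Swan's computation to pin down the fibre sizes and the location of the relevant class relative to $D(\Z Q_{24})$. For a self-contained derivation one could instead recompute the genus of $\Z Q_{24}$ via the fibre square $\mathcal{R}(\Z Q_{24}, A_1 \times A_2, B')$ attached to the splitting above, together with \cref{thm:coset=K_R}, \cref{prop:milnor-square} and \cref{prop:K_1-square}; the delicate point in that approach is controlling the image of the reduced norm and the unit group of the (non-maximal) order coming from $A_2$, which is precisely why $Q_{24}$ is the smallest quaternion group exhibiting this mixed cancellation behaviour.
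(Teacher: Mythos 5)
Your proposal is correct and takes essentially the same route as the paper: the arithmetic facts ($m_{\H}(Q_{24})=3$ from the formula $m_{\H}(Q_{4n})=\lfloor n/2\rfloor$, Sylow $2$-subgroup $Q_8$) are verified directly, and the two cancellation claims are read off from Swan's explicit determination of the genus of $\Z Q_{24}$ in \cite[Theorem III]{Sw83}, exactly as the paper does. The paper's proof is a bare citation; the extra detail you give — the Wedderburn decomposition over $\Q$ and $\Q(\sqrt 3)$, the observation that a class detected on a totally definite quaternionic component cannot lie in $D(\Z Q_{24})$, and the sketch of a self-contained recomputation via the fibre square — is consistent with the paper and would be what ``close inspection'' of Swan's theorem amounts to.
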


\FloatBarrier
\appendix

\section{Computation of Eichler simple groups}
\label{s:tables}

\label{ss:main-diagram}

Let $\EE = \bigcup_{n \ge 0} \MNEC^n(\{C_1\})$ and let $B \subseteq \EE$ denote the subset consisting of the groups:
\begin{clist}{(i)}
\item
$Q_{4n}$ for $n \ge 6$
\item
$Q_8 \times C_2$, \, $Q_{12} \times C_2$, \, $Q_{16} \times C_2$, \, $Q_{20} \times C_2$, \, $\widetilde{T} \times C_2^2$, \, $\widetilde{O} \times C_2$, \, $\widetilde{I} \times C_2^2$
\item
$G_{(32,14)}$, $G_{(36,7)}$, $G_{(64,14)}$, $G_{(100,7)}$.
\end{clist}
These are the groups in $\EE$ which, by \cite{Sw83,Ch86,BHJ24}, are currently known to fail PC and have no proper quotients in $\EE$ which fail PC.
Let $B_1 = \{\,Q_{4n} : n \ge 6\,\}$ and, for each $n \ge 2$, let $B_{n} = B_{n-1} \cup (B \cap \MNEC^n(\{C_1\})$. For each $n \ge 1$, define $\G_n = \MNEC^n_{B_n}(\{C_1\})$ and $\G = \bigcup_{n \ge 1} \G_n$. 
We have $B_2 = B \,\setminus\, \{\,\wt T \times C_2^2\,\}$ and $B_n = B$ for all $n \ge 3$.
We compute $\G_0$, $\G_1$ and $\G_2$ completely as well as those groups $G \in \G_3$ which have a quotient $H \in \G_2$ currently known to have PC, i.e. $\wt T \times C_2$ and $\wt T^n \times \wt I^m$ for $n+m=2$. We present the list of groups in both a diagram and a table.

\captionsetup{belowskip=-15pt}

\begin{figure}[h]
\def\layersep{4cm}
\begin{center}
\hspace{-16mm}
\begin{tikzpicture}[draw=black!75, node distance=\layersep]
\definecolor{BLUE}{HTML}{318CE7}
\definecolor{RED}{HTML}{	E32636}
    \tikzstyle{neuron}=[fill=white,draw=black!75,minimum size=20pt,inner sep=0pt,rounded corners]
    \tikzstyle{bbneuron}=[pattern=horizontal lines, pattern color=BLUE, draw=black!75,minimum size=20pt,inner sep=0pt,rounded corners]
    \tikzstyle{bneuron}=[fill=BLUE!65, draw=black!75,minimum size=20pt,inner sep=0pt,rounded corners]
    \tikzstyle{rneuron}=[fill=red!50, draw=black!75,minimum size=20pt,inner sep=0pt,rounded corners]
    \tikzstyle{annot} = [text width=4em, text centered]

    \node[bneuron] (I) at (1.5,-8) {$C_1$};

\def\sep{1.8}
        \path[yshift=1cm]node[bneuron] (H-1) at (\layersep,-\sep*1 cm) {$Q_8$};
        \path[yshift=1cm]node[bneuron] (H-4) at (\layersep,-\sep*2 cm) {$Q_{12}$};
        \path[yshift=1cm]node[bneuron] (H-2) at (\layersep,-\sep*3 cm) {$Q_{16}$};
        \path[yshift=1cm]node[bneuron] (H-3) at (\layersep,-\sep*4 cm) {$Q_{20}$};        
        \path[yshift=1cm]node[bneuron] (H-6) at (\layersep,-\sep*5 cm) {$\wt T$};        
        \path[yshift=1cm]node[bneuron] (H-5) at (\layersep,-\sep*6 cm) {$\wt O$};
        \path[yshift=1cm]node[bneuron] (H-7) at (\layersep,-\sep*7 cm) {$\wt I$};
        \path[yshift=1cm]node[rneuron] (H-8) at (\layersep,-\sep*8 cm) {\text{ $Q_{4n}$, $n \ge 6$ }};

\foreach \x in {1,...,8} \node (H-\x) at ($(H-\x) +(0:-0.2) $) {}; 

\begin{scope}[on background layer]
    \foreach \source in {1}
        \foreach \dest in {1,...,8}
            \path (I) edge (H-\dest);
\end{scope}

\foreach \x in {1,...,7} \node (H-\x) at ($(H-\x) +(0:0.4) $) {}; 

\def\sep{0.8}
        \path[yshift=0.5cm]node[rneuron] (L-1) at (2*\layersep,-\sep*1 cm) {\text{ $Q_8 \times C_2$ }};
        \path[yshift=0.5cm]node[rneuron] (L-7) at (2*\layersep,-\sep*2 cm) {\text{ $Q_{12} \times C_2$ }};
        \path[yshift=0.5cm]node[rneuron] (L-2) at (2*\layersep,-\sep*3 cm) {\text{ $G_{(32,14)}$ }}; 
        \path[yshift=0.5cm]node[rneuron] (L-3) at (2*\layersep,-\sep*4 cm) {\text{ $Q_{16} \times C_2$ }};
        \path[yshift=0.5cm]node[rneuron] (L-8) at (2*\layersep,-\sep*5 cm) {\text{ $\,G_{(36,7)}\,$ }}; 
        \path[yshift=0.5cm]node[rneuron] (L-5) at (2*\layersep,-\sep*6 cm) {\text{ $Q_{20} \times C_2$ }};
       \path[yshift=0.5cm]node[bneuron] (L-13) at (2*\layersep,-\sep*7 cm) {\text{ $\,\wt T \times C_2\,$ }};
        \path[yshift=0.5cm]node[rneuron] (L-4) at (2*\layersep,-\sep*8 cm) {\text{ $G_{(64,14)}$ }}; 
        \path[yshift=0.5cm]node[neuron] (L-9) at (2*\layersep,-\sep*9 cm) {\text{ \hspace{11.5mm} }}; 
       \path[yshift=0.5cm]node[bbneuron] (L-9) at (2*\layersep,-\sep*9 cm) {\text{ $G_{(96,66)}$ }}; 
        \path[yshift=0.5cm]node[rneuron] (L-10) at (2*\layersep,-\sep*10 cm) {\text{ $\wt O \times C_2$ }};
        \path[yshift=0.5cm]node[rneuron] (L-6) at (2*\layersep,-\sep*11 cm) {\text{ $G_{(100,7)}$ }}; 
        \path[yshift=0.5cm]node[neuron] (L-14) at (2*\layersep,-\sep*12 cm) {\text{ \hspace{12.25mm} }}; 
       \path[yshift=0.5cm]node[bbneuron] (L-14) at (2*\layersep,-\sep*12 cm) {\text{ $\,Q_8 \rtimes \wt T\,$ }};
        \path[yshift=0.5cm]node[bbneuron] (L-18) at (2*\layersep,-\sep*13 cm) {\text{ $\wt I \times C_2$ }};
         \path[yshift=0.5cm]node[neuron] (L-11) at (2*\layersep,-\sep*14 cm) {\text{ $G_{(384,18129)}$ }}; 
        \path[yshift=0.5cm]node[bneuron] (L-15) at (2*\layersep,-\sep*15 cm) {\text{ $\quad\wt T^2\quad$ }};        
         \path[yshift=0.5cm]node[neuron] (L-12) at (2*\layersep,-\sep*16 cm) {\text{ $G_{(1152, 155476)}$ }}; 
        \path[yshift=0.5cm]node[bneuron] (L-16) at (2*\layersep,-\sep*17 cm) {\text{ $\,\,\wt T \times \wt I\,\,$ }};
        \path[yshift=0.5cm]node[bneuron] (L-17) at (2*\layersep,-\sep*18 cm) {\text{ $\quad\wt I^2\quad$ }};

\foreach \x in {1,...,18} \node (L-\x) at ($(L-\x) +(0:-0.5) $) {}; 

\begin{scope}[on background layer]
	\path (H-1) edge (L-1) edge (L-2);
	\path (H-2) edge (L-2) edge (L-3) edge (L-4);
	\path (H-3) edge (L-5) edge (L-6);
	\path (H-4) edge (L-7) edge (L-8) edge (L-9);
	\path (H-5) edge (L-9) edge (L-10) edge (L-11) edge (L-12);
	\path (H-6) edge (L-13) edge (L-14) edge (L-15) edge (L-16);
	\path (H-7) edge (L-16) edge (L-17) edge (L-18);
\end{scope}

\foreach \x in {1,...,18} \node (L-\x) at ($(L-\x) +(0:1.08) $) {}; 

\def\sep{1.1}
        \path[yshift=0.5cm]node[rneuron] (M-3) at (3*\layersep,-\sep*1 cm) {\text{ $\,\,\wt T \times C_2^2\,\,$ }};       
        \path[yshift=0.5cm]node[neuron] (M-4) at (3*\layersep,-\sep*2 cm) {\text{ \hspace{13.75mm} }}; 
        \path[yshift=0.5cm]node[bbneuron] (M-4) at (3*\layersep,-\sep*2 cm) {\text{ $\,\wt T \times Q_{12}\,$ }};         
        \path[yshift=0.5cm]node[neuron] (M-7) at (3*\layersep,-\sep*3 cm) {\text{ $(Q_8 \rtimes \wt T) \times C_2$ }};     
        \path[yshift=0.5cm]node[neuron] (M-5) at (3*\layersep,-\sep*4 cm) {\text{ \hspace{12.6mm} }}; 
        \path[yshift=0.5cm]node[bbneuron] (M-5) at (3*\layersep,-\sep*4 cm) {\text{ $\wt T \times Q_{20}$ }};
        \path[yshift=0.5cm]node[neuron] (M-6) at (3*\layersep,-\sep*5 cm) {\text{ $\,\,\wt T \times \wt O\,\,$ }};
        \path[yshift=0.5cm]node[neuron] (M-8) at (3*\layersep,-\sep*6 cm) {\text{ $\wt T^2 \times C_2$ }};        
        \path[yshift=0.5cm]node[neuron] (M-11) at (3*\layersep,-\sep*7 cm) {\text{ $Q_8 \rtimes \wt T^2$ }};
        \path[yshift=0.5cm]node[neuron] (M-9) at (3*\layersep,-\sep*8 cm) {\text{ $\wt T \times (Q_8 \rtimes \wt T)$ }};   
        \path[yshift=0.5cm]node[bneuron] (M-12) at (3*\layersep,-\sep*9 cm) {\text{ $\quad\wt T^3\quad$ }};
        \path[yshift=0.5cm]node[neuron] (M-10) at (3*\layersep,-\sep*10 cm) {\text{ $\wt I \times (Q_8 \rtimes \wt T)$ }};               
        \path[yshift=0.5cm]node[bneuron] (M-13) at (3*\layersep,-\sep*11 cm) {\text{ $\,\wt T^2 \times \wt I\,$ }};
        \path[yshift=0.5cm]node[bneuron] (M-14) at (3*\layersep,-\sep*12 cm) {\text{ $\,\wt T \times \wt I^2\,$ }};
        \path[yshift=0.5cm]node[bneuron] (M-15) at (3*\layersep,-\sep*13 cm) {\text{ $\quad\wt I^3\quad$ }};
       
\foreach \x in {3,...,15} \node (M-\x) at ($(M-\x) +(0:-0.6) $) {}; 

\node (M-3) at ($(M-3) +(0:-0.1) $) {}; 
\node (M-7) at ($(M-7) +(0:-0.5) $) {};
 
\begin{scope}[on background layer]        
	\path (L-13) edge (M-3) edge (M-4) edge (M-5) edge (M-6) edge (M-7) edge (M-8);
	\path (L-15) edge (M-8) edge (M-9) edge (M-11) edge (M-12) edge (M-13);
	\path (L-16) edge (M-10) edge (M-13) edge (M-14);
	\path (L-17) edge (M-14) edge (M-15);
	
	\path (H-4) edge [dashed] (M-4);
	\path (H-3) edge [dashed] (M-5);
	\path (H-5) edge [dashed] (M-6);
\end{scope}       	
       	    	
    \node (g0) at (0,0.6) {};
    \node[annot,right of=g0] (g1) {$\G_1$};
    \node[annot,right of=g1] (g2) {$\G_2$};
    \node (g3) at (12.05,0.6) {$\G_3$ \text{(partial)}};
\end{tikzpicture}
\end{center}
\caption{Graphical structure of $\G_0$, $\G_1$, $\G_2$ and some of $\G_3$. Key: Blue (PC), Red (fails PC), White/Blue (SFC with PC unknown), White (No information).
}
\label{fig:G_2}
\end{figure}
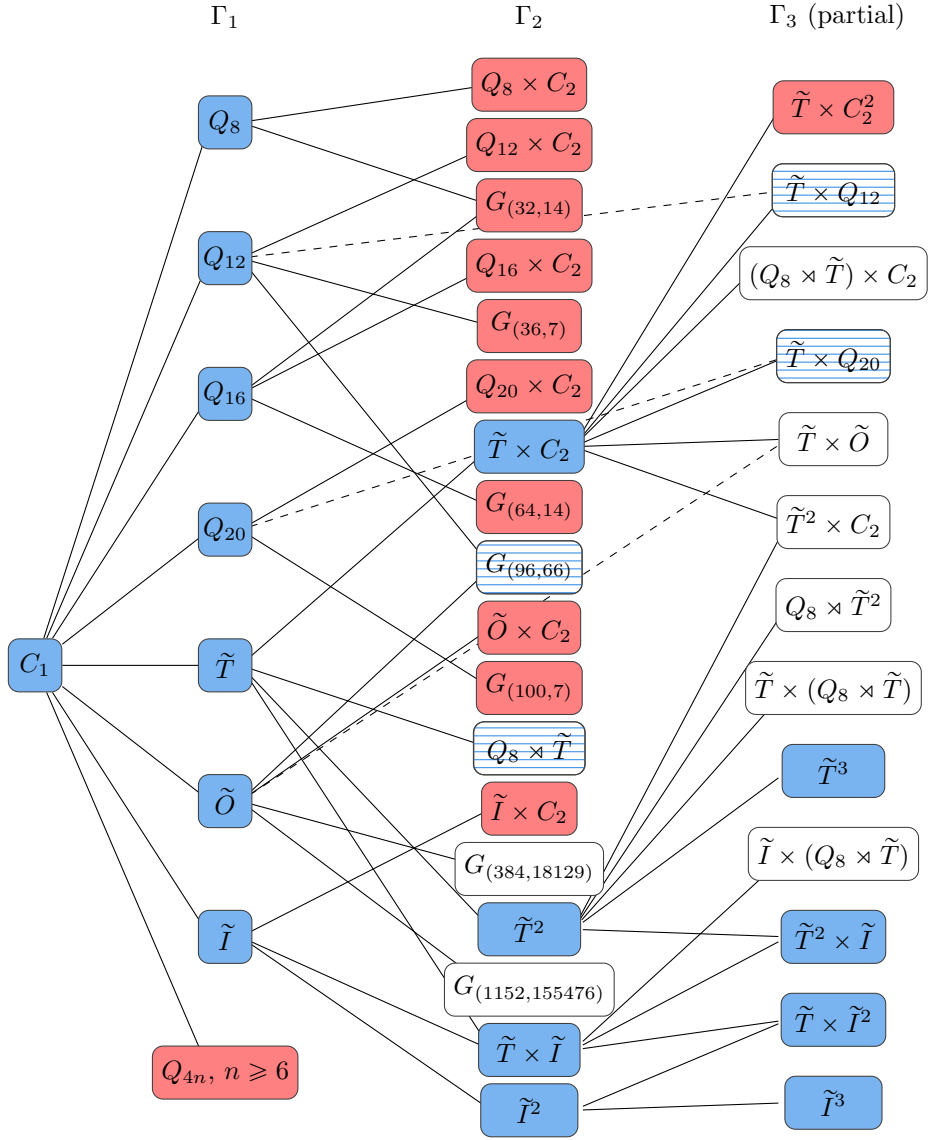

\subsection{Diagram of groups}

In \cref{fig:G_2}, we sketch $\mathcal{G} = \G_0 \sqcup \G_1 \sqcup \G_2 \sqcup \G_3'$ where $\G_3' \subseteq \G_3$ is the subset defined above. If $G, H \in \mathcal{G}$ and $G \in \MNEC(H)$, we draw a line from $G$ to $H$. Three of the lines are dashed to indicate that they pass between groups in $\G_1$ and $\G_3$. For example, there is an edge from $\wt T \times Q_{12} \in \G_3$ to $Q_{12} \in \G_1$.

\FloatBarrier
\subsection{Table of groups}
  
The column headings are as follows.
\begin{clist}{1.}
\item 
\textbf{MNEC ID.}
We name each group by an MNEC ID $(n,m)$ which refers to the $m$th group in $\G_n$. The corresponding group is denoted by $\G_{(n,m)}$.
The ordering is chosen so that $|\G_{(n,m)}| \le |\G_{(n,m')}|$ for $m \le m'$, but otherwise follows no specific pattern. For each $n \le 3$, the groups $\G_{(n,1)}, \G_{(n,2)}, \ldots$ coincide with groups in the diagram listed from top to bottom.

\item
\textbf{Group ID / Order.} Where possible, we will list the Small Group ID $(n,m)$ which refers to the $m$th group of order $n$ in GAP's Small Groups library \cite{BEO02,GAP4}. The corresponding group is denoted by $G_{(n,m)}$. Otherwise, we just list the order of the group.
\item
\textbf{Edges To.} For a group $G \in \G_n$, we list the MNEC IDs of all groups $H \in \G_{m}$ for which $G \in \MNEC(H)$, i.e. they are connected by an edge in $\EE$. Note that this determines all quotients between groups in $\mathcal{G}$ since they correspond to directed paths in $\EE$ (see \cref{prop:quotients<=>path}).
\item
\textbf{m}$_{\H}$($\bm G$). We compute $m_{\H}(G)$ using \cref{prop:m_H-formula}.
\item
\textbf{Description.}
For groups which are direct products of previously defined groups, we list the groups (see \cref{s:groups-exceptional} for the definition of $Q_8 \rtimes \wt T^n$).
We give descriptions to the groups (32,14), (36,7), (64,14), (96,66), (100,7), (384, 18129) and (1152,155476).
\item
\textbf{Cancellation.} 
We record whether $G$ is known to have PC, known to fail SFC, or is known to have SFC but with PC currently unknown. If we have no information, we write a dash.
\end{clist}

\begin{center}
\begin{longtable}{|c|c|c|c|Sc|c|}
\hline
MNEC ID & Group ID/Order & Edges To & $m_{\H}(G)$ & Description & Cancellation \\ \hline 
$(1,1)$ & $(8,4)$ & $(0,1)$ & 1 & $Q_8$ & PC \\ \hline
$(1,2)$ & $(12,1)$ & $(0,1)$ & 1 & $Q_{12}$ & PC \\ \hline
$(1,3)$ & $(16,9)$ & $(0,1)$ & 2 & $Q_{16}$ & PC \\ \hline
$(1,4)$ & $(20,1)$ & $(0,1)$ & 2 & $Q_{20}$ & PC \\ \hline
$(1,5)$ & $(24,3)$ & $(0,1)$ & 1 & $\wt T$ ($=\SL_2(\F_3)$) & PC \\ \hline
$(1,6)$ & $(48,28)$ & $(0,1)$ & 2 & $\wt O$ ($=\text{CSU}_2(\F_3)$) & PC \\ \hline
$(1,7)$ & $(120,5)$ & $(0,1)$ & 2 & $\wt I$ ($=\SL_2(\F_5)$) & PC \\ \hline
$(2,1)$ & $(16,12)$ & $(1,1)$ & 2 & $Q_8 \times C_2$ & Fails SFC \\ \hline
$(2,2)$ & $(24,7)$ & $(1,2)$ & 2 & $Q_{12} \times C_2$ & Fails SFC \\ \hline
$(2,3)$ & $(32,14)$ & $(1,1), (1,3)$ & 3 & $C_4 \cdot Q_8$ & Fails SFC \\ \hline
$(2,4)$ & $(32,41)$ & $(1,3)$ & 4 & $Q_{16} \times C_2$  & Fails SFC \\ \hline
$(2,5)$ & $(36,7)$ & $(1,2)$ & 4 & $C_3 \rtimes Q_{12}$ & Fails SFC \\ \hline
$(2,6)$ & $(40,7)$ & $(1,4)$ & 4 & $Q_{20} \times C_2$ & Fails SFC \\ \hline
$(2,7)$ & $(48,32)$ & $(1,5)$ & 2 & $\wt T \times C_2$ & PC  \\ \hline
$(2,8)$ & $(64,14)$ & $(1,3)$ & 4 & $C_4 \cdot Q_{16}$ & Fails SFC \\ \hline
$(2,9)$ & $(96,66)$ & $(1,2), (1,6)$ & 3 & $Q_8 \rtimes Q_{12}$ & SFC (PC?) \\ \hline
$(2,10)$ & $(96,188)$ & $(1,6)$ & 4 & $\wt O \times C_2$ & Fails SFC \\ \hline
$(2,11)$ & $(100,7)$ & $(1,4)$ & 12 & $C_5 \rtimes Q_{20}$ & Fails SFC \\ \hline
$(2,12)$ & $(192,1022)$ & $(1,5)$ & 2 & $Q_8 \rtimes \wt T$ & SFC (PC?) \\ \hline
$(2,13)$ & $(240,94)$ & $(1,7)$ & 4 & $\wt I \times C_2$ & SFC (PC?) \\ \hline
$(2,14)$ & $(384,18129)$ & $(1,6)$ & 4 & $Q_8 \rtimes \wt O$ & - \\ \hline
$(2,15)$ & $(576,5128)$ & $(1,5)$ & 2 & $\wt T^2$ & PC \\ \hline
$(2,16)$ & $(1152,155476)$ & $(1,6)$ & 4 & $\wt T \times \wt O$ & -\\ \hline
$(2,17)$ & $2880$ & $(1,5), (1,7)$ & 3 & $\wt T \times \wt I$ & PC \\ \hline
$(2,18)$ & $14400$ & $(1,7)$ & 4 & $\wt I^2$ & PC \\ \hline
$(3,1)$ & $(96,198)$ & $(2,7)$ & 4 & $\wt T \times C_2^2$ & Fails SFC \\ \hline
$(3,2)$ & $(288,409)$ & $(1,2), (2,7)$ & 3 & $\wt T \times Q_{12}$ & SFC (PC?) \\ \hline
$(3,3)$ & $(384,18228)$ & $(2,7), (2,12)$ & 4 & $(Q_8 \rtimes \wt T) \times C_2$ & - \\ \hline
$(3,4)$ & $(480,266)$ & $(1,4), (2,7)$ & 4 & $\wt T \times Q_{20}$ & SFC (PC?) \\ \hline
$(3,5)$ & $(1152,155456)$ & $(1,6), (2,7)$ & 4 & $\wt T \times \wt O$ & - \\ \hline
$(3,6)$ & $(1152,156570)$ & $(2,7), (2,15)$ & 4 & $\wt T^2 \times C_2$ & - \\ \hline
$(3,7)$ & $4608$ & $(2,15)$ & 3 & $Q_8 \rtimes \wt T^2$ & - \\ \hline
$(3,8)$ & $4608$ & $(2,12), (2,15)$ & 3 & $\wt T \times (Q_8 \rtimes \wt T)$ & - \\ \hline
$(3,9)$ & $13824$ & $(2,15)$ & 3 & $\wt T^3$ & PC \\ \hline
$(3,10)$ & $23040$ & $(2,12), (2,17)$ & 4 & $\wt I \times (Q_8 \rtimes \wt T)$ & - \\ \hline
$(3,11)$ & $69120$ & $(2,15), (2,17)$ & 4 & $\wt T^2 \times \wt I$ & PC \\ \hline
$(3,12)$ & $345600$ & $(2,17), (2,18)$ & 5 & $\wt T \times \wt I^2$ & PC \\ \hline
$(3,13)$ & $1728000$ & $(2,18)$ & 6 & $\wt I^3$ & PC \\ \hline
\end{longtable}
\end{center}

\FloatBarrier
\bibliography{biblio.bib}

\providecommand{\bysame}{\leavevmode\hbox to3em{\hrulefill}\thinspace}
\providecommand{\MR}{\relax\ifhmode\unskip\space\fi MR }
\providecommand{\MRhref}[2]{%
  \href{http://www.ams.org/mathscinet-getitem?mr=#1}{#2}
}
\providecommand{\href}[2]{#2}
\begin{thebibliography}{MOV83}

\bibitem[AM04]{AM94}
A.~Adem and R.~J. Milgram, \emph{Cohomology of finite groups}, Grundlehren der
  mathematischen Wissenschaften, vol. 309, Springer-Verlag Berlin Heidelberg,
  2004.

\bibitem[BCP97]{magma}
W~Bosma, J.~Cannon, and C.~Playoust, \emph{The magma algebra system {I}: the
  user language}, J. Symbolic Comput. \textbf{24} (1997), 235--265, See also
  \href{http://magma.maths.usyd.edu.au/magma/}{\texttt{http://magma.maths.usyd.edu.au/magma/}}.

\bibitem[BEO02]{BEO02}
Hans~Ulrich Besche, Bettina Eick, and E.~A. O'Brien, \emph{A millennium
  project: constructing small groups}, Internat. J. Algebra Comput. \textbf{12}
  (2002), no.~5, 623--644.

\bibitem[BHJ24]{BHJ24}
Werner Bley, Tommy Hofmann, and Henri Johnston, \emph{Determination of the
  stably free cancellation property for orders}, 2024, arXiv:2407.02294.

\bibitem[Bro82]{Br82}
K.~S. Brown, \emph{Cohomology of groups}, Springer-Verlag New York Inc, 1982.

\bibitem[Bur22]{Bu22}
David Burrell, \emph{On the number of groups of order 1024}, Comm. Algebra
  \textbf{50} (2022), no.~6, 2408--2410.

\bibitem[BW05]{BW05}
F.~R. Beyl and N.~Waller, \emph{A stably free nonfree module and its relevance
  for homotopy classification, case {$Q_{28}$}}, Algebr. Geom. Topol.
  \textbf{5} (2005), 899--910.

\bibitem[BW08]{BW08}
\bysame, \emph{Examples of exotic free complexes and stably free nonfree
  modules for quaternion groups}, Algebr. Geom. Topol. \textbf{8} (2008),
  1--17.

\bibitem[Che86]{Ch86}
H.~F. Chen, \emph{The locally free cancellation property of the group ring {$\Z
  G$}}, PhD Thesis, University of Illinois at Urbana-Champaign (1986).

\bibitem[CR81]{CR81}
C.~W. Curtis and I.~Reiner, \emph{Methods of representation theory: with
  applications to finite groups and orders}, vol.~1, Wiley Classics Library,
  1981.

\bibitem[CR87]{CR87}
\bysame, \emph{Methods of representation theory: With applications to finite
  groups and orders}, vol.~2, Wiley Classics Library, 1987.

\bibitem[Dok]{groupnames}
T.~Dokchitser, \emph{Groupnames},
  https://people.maths.bris.ac.uk/~matyd/GroupNames/.

\bibitem[Eic37]{Ei37}
M.~Eichler, \emph{\"{U}ber die idealklassenzahl total defmiter
  quaternionalgebren}, Math. Z. \textbf{43} (1937), 102--109.

\bibitem[EM47]{EM47}
S.~Eilenberg and S.~MacLane, \emph{Cohomology theory in abstract groups. {II}.
  {G}roup extensions with a non-{A}belian kernel}, Ann. of Math. (2)
  \textbf{48} (1947), 326--341.

\bibitem[Fr{\"{o}}73]{Fr73}
A.~Fr{\"{o}}hlich, \emph{The picard group of noncommutative rings, in
  particular of orders}, Trans. Amer. Math. Soc. \textbf{180} (1973), 1--45.

\bibitem[Fr{\"{o}}75]{Fr75}
\bysame, \emph{Locally free modules over arithmetic orders}, J. Reine Angew.
  Math \textbf{274/275} (1975), 112--138.

\bibitem[FRU74]{FRU74}
A.~Fr{\"{o}}hlich, I.~Reiner, and S.~Ullom, \emph{Class groups and picard
  groups of orders}, Proc. London Math. Soc. \textbf{29} (1974), no.~3,
  405--434.

\bibitem[GAP]{GAP4}
The GAP~Group, \emph{{GAP -- Groups, Algorithms, and Programming, Version
  4.13.0, 2024}}.

\bibitem[GR88]{GR88}
W.~H. Gustafson and K.~W. Roggenkamp, \emph{A {M}ayer-{V}ietoris sequence for
  {P}icard groups, with applications to integral group rings of dihedral and
  quaternionic groups}, Ill. Jour. of Math. \textbf{32} (1988), no.~3,
  375--406.

\bibitem[Hal18]{Hall18}
M.~Hall, \emph{The theory of groups}, Dover Publications, 2018.

\bibitem[HK88a]{HK88}
I.~Hambleton and M.~Kreck, \emph{On the classification of topological
  4-manifolds with finite fundamental group}, Math. Annalen \textbf{280}
  (1988), 85--104.

\bibitem[HK88b]{HK88b}
Ian Hambleton and Matthias Kreck, \emph{Smooth structures on algebraic surfaces
  with cyclic fundamental group}, Invent. Math. \textbf{91} (1988), no.~1,
  53--59.

\bibitem[Jac68]{Ja68}
H.~Jacobinski, \emph{Genera and decompositions of lattices over orders}, Acta
  mathematica \textbf{121} (1968), 1--29.

\bibitem[JM80]{JM80}
S.~Jajodia and B.~Magurn, \emph{Surjective stability of units and simple
  homotopy type}, J. Pure Appl. Algebra \textbf{18} (1980), 45--58.

\bibitem[Joh03]{Jo03-book}
F.~E.~A. Johnson, \emph{Stable modules and the {D(2)}-problem}, London Math.
  Soc. Lecture Note Ser., vol. 301, Cambridge University Press, 2003.

\bibitem[Mil26]{Mi26}
G.~A. Miller, \emph{Subgroups of {I}ndex {$p^2$} {C}ontained in a {G}roup of
  {O}rder {$p^m$}}, Amer. J. Math. \textbf{48} (1926), no.~4, 253--256.

\bibitem[Mil71]{Mi71}
J.~Milnor, \emph{Introduction to algebraic {K}-theory}, Ann. of Math. Stud.,
  vol.~72, Princeton University Press, 1971.

\bibitem[MOV83]{MOV83}
B.~Magurn, R.~Oliver, and L.~Vaserstein, \emph{Units in {W}hitehead groups of
  finite groups}, J. Algebra \textbf{84} (1983), 324--360.

\bibitem[Nic20]{Ni20b}
J.~Nicholson, \emph{Cancellation for {$(G,n)$}-complexes and the {S}wan
  finiteness obstruction}, Int. Math. Res. Not., to appear (2020),
  arXiv:2005.01664.

\bibitem[Nic21a]{Ni18}
\bysame, \emph{A cancellation theorem for modules over integral group rings},
  Math. Proc. Cambridge Philos. Soc. \textbf{171} (2021), no.~2, 317--327.

\bibitem[Nic21b]{Ni19}
\bysame, \emph{On {CW}-complexes over groups with periodic cohomology}, Trans.
  Amer. Math. Soc. \textbf{374} (2021), no.~9, 6531--6557.

\bibitem[Nic21c]{Ni21b}
\bysame, \emph{Projective modules over integral group rings and {W}all's {D}2
  problem}, Ph.D. thesis, University College London, 2021.

\bibitem[Oli88]{Ol88}
R.~Oliver, \emph{Whitehead groups of finite groups}, London Math. Soc. Lecture
  Note Ser., vol. 132, Cambridge University Press, 1988.

\bibitem[RU74]{RU74}
I.~Reiner and S.~Ullom, \emph{A {M}ayer-{V}ietoris sequence for class groups},
  J. of Algebra \textbf{31} (1974), 305--342.

\bibitem[Ser77]{Se77}
Jean-Pierre Serre, \emph{Linear representations of finite groups}, Graduate
  Texts in Mathematics, Vol. 42, Springer-Verlag, New York-Heidelberg, 1977,
  Translated from the second French edition by Leonard L. Scott.

\bibitem[SV19]{SV19}
D.~Smertnig and J.~Voight, \emph{Definite orders with locally free
  cancellation}, Trans. of the London Math. Soc. \textbf{6} (2019), 53--86.

\bibitem[Swa60a]{Sw60-II}
R.~G. Swan, \emph{Induced representations and projective modules}, Ann. of Math
  \textbf{71} (1960), no.~2, 552--578.

\bibitem[Swa60b]{Sw60-I}
\bysame, \emph{Periodic resolutions for finite groups}, Ann. of Math
  \textbf{72} (1960), no.~2, 267--291.

\bibitem[Swa62]{Sw62}
\bysame, \emph{Projective modules over group rings and maximal orders}, Ann. of
  Math. (2) \textbf{76} (1962), 55--61.

\bibitem[Swa80]{Sw80}
\bysame, \emph{Strong approximation and locally free modules}, Ring Theory and
  Algebra III, Proceedings of the third Oklahoma Conference 3, 1980,
  pp.~153--223.

\bibitem[Swa83]{Sw83}
\bysame, \emph{Projective modules over binary polyhedral groups}, J. Reine
  Angew. Math \textbf{342} (1983), 66--172.

\bibitem[Tay81]{Ta81}
M.~J. Taylor, \emph{On {F}r{\"{o}}hlich's conjecture for rings of integers of
  tame extensions}, Invent. Math. \textbf{63} (1981), no.~1, 41--79.

\bibitem[Wal74]{Wa74}
C.~T.~C. Wall, \emph{Norms of units in group rings}, Proc. London Math. Soc.
  (3) \textbf{29} (1974), 593--632.

\bibitem[Wei94]{We94}
C.~A. Weibel, \emph{An introduction to homological algebra}, Cambridge Studies
  in Advanced Mathematics, vol.~38, Cambridge University Press, 1994.

\end{thebibliography}
\bibliographystyle{amsalpha}

\end{document}